\theoremstyle{plain}
\newtheorem{theorem}{Theorem}[section]
\newtheorem{proposition}[theorem]{Proposition}
\newtheorem{lemma}[theorem]{Lemma}
\newtheorem{corollary}[theorem]{Corollary}
\crefname{theorem}{Theorem}{Theorems}
\crefname{proposition}{Proposition}{Propositions}
\theoremstyle{definition}
\newtheorem{definition}[theorem]{Definition}
\newtheorem{example}[theorem]{Example}
\newtheorem{notation}[theorem]{Notation}
\newtheorem{remark}[theorem]{Remark}
\newtheorem{conjecture}[theorem]{Conjecture}
\newtheorem{warning}[theorem]{Warning}
\numberwithin{equation}{section}
\newcommand{\R}{\mathbb{R}}
\newcommand{\SC}{\mathcal{C}}
\newcommand{\Aut}{\operatorname{Aut}}
\newcommand{\Cont}{\operatorname{Cont}}
\newcommand{\Hom}{\operatorname{Hom}}
\newcommand{\Ind}{\operatorname{Ind}}
\newcommand{\Loc}{\operatorname{Loc}}
\DeclareMathOperator{\Mod}{{\operatorname{Mod}}}
\newcommand{\msh}{{\operatorname{\mu sh}}}
\newcommand{\ol}{\overline}
\newcommand{\PrLst}{ {\operatorname{Pr}^{\mathrm{L}}_{st}} }
\newcommand{\Sh}{\operatorname{Sh}}
\newcommand{\supp}{\operatorname{supp}}
\newcommand{\cSS}{\operatorname{SS}}
\newcommand{\cSSif}{\cSS^\infty}
\newcommand{\id}{ {\operatorname{id}} }
\newcommand{\bN}{\mathbb{N}}
\newcommand{\bR}{\mathbb{R}}
\newcommand{\bZ}{\mathbb{Z}}
\newcommand{\bS}{\mathbb{S}}
\newcommand{\cC}{\mathcal{C}}
\newcommand{\cD}{\mathcal{D}}
\newcommand{\cF}{\mathcal{F}}
\newcommand{\cP}{\mathcal{P}}
\newcommand{\cS}{\mathcal{S}}
\newcommand{\cU}{\mathcal{U}}
\DeclareMathOperator{\colim}{colim}
\newcommand\clmi[1]{  \underset{ {#1} }{\operatorname{colim}} }
\newcommand\lmi[1]{  \underset{ {#1} }{\lim} }
\newcommand{\dT}{\dot{T}} 
\newcommand{\HS}{\mathrm{HS}}
\begin{document}

\title{$\boldsymbol{C^0}$-Rigidity of Legendrians and Coisotropics via Sheaf Quantization}
\date{}
\author{Tomohiro Asano \and Yuichi Ike \and Christopher Kuo \and Wenyuan Li}

\address{Department of Mathematics, Kyoto University, Kitashirakawa-Oiwake-Cho, Sakyo-ku, 606-8502, Kyoto, Japan}
\email{tasano[at]math.kyoto-u.ac.jp}

\address{Graduate School of Mathematical Sciences, The University of Tokyo, 3-8-1 Komaba Meguro-ku Tokyo 153-8914, Japan}
\email{ike[at]ms.u-tokyo.ac.jp}

\address{Max Planck Institute for Mathematics, Vivatsgasse 7, 53111 Bonn, Germany}
\email{chrislpkuo[at]berkeley.edu }

\address{Department of Mathematics, University of Southern California, 3551 Trousdale Parkway, Los Angeles, CA 90089, USA}
\email{wenyuan.li[at]usc.edu}

\begin{abstract}
    We prove that in the standard cosphere bundle, for any contact homeomorphism in the closure of the compactly supported contactomorphism group, when the image of a coisotropic submanifold (not necessarily properly embedded) is smooth, it is still coisotropic. Moreover, when contactomorphisms in the sequence are in the identity component and the image of a Legendrian is smooth, the Maslov data is preserved, and the category of sheaves with singular support on the Legendrian and the microstalk corepresentative are also preserved (and thus so is the wrapped Floer cochains of the linking disks). The main ingredients are the result of Guillermou--Viterbo, a new sheaf quantization result for $C^0$-small contactomorphisms (not necessarily in the identity component) different from Guillermou--Kashiwara--Schapira, and continuity of the interleaving distance of sheaves with respect to the Hofer--Shelukhin distance and the $C^0$-distance. The appendix contains different arguments for local $C^0$-limits and certain Hausdorff limits of Legendrians without appealing to the interleaving distance.
\end{abstract}

\maketitle

\tableofcontents 

\section{Introduction}

\subsection{Context and background}
Our objective in this paper is to show the rigidity of contact topology and dynamics in cosphere bundles of a manifold under the $C^0$-topology using microlocal theory of sheaves. Using the sheaf-theoretic interleaving distance, we prove a number of new rigidity results on Legendrian submanifolds.

One of the central topics in symplectic and contact topology is to understand the dichotomy between flexibility and rigidity. Gromov \cite{GromovPDR} and Eliashberg \cite{Eliashberg87} showed that for a symplectic manifold, the symplectomorphism group is closed in the diffeomorphism group under the $C^0$-topology (see also \cite{HoferZehnder}), which initiates the study of $C^0$-symplectic topology. Since then, people have proved various results on the rigidity of Lagrangians and coisotropics under symplectic homeomorphisms \cite{LaudenbachSikorav94,HumiliereLeclercqSeyfaddinni15}.

Following the idea of Gromov and Eliashberg, M\"uller--Spaeth showed that the contactomorphism group is also closed in the diffeomorphism group under the $C^0$-topology \cite{MullerSpaeth14}, so we can define contact homeomorphisms as $C^0$-limits of contactomorphisms accordingly. While contactomorphisms always lift to symplectomorphisms of the symplectizations, it is not true that the $C^0$-convergence of contactomorphisms will imply $C^0$-convergence of the corresponding symplectomorphisms, making the studies of $C^0$-contact topology more subtle \cite{Usher21Conformal}. For the rigidity of Legendrians, Dimitroglou Rizell--Sullivan \cite{DRG24C0Knot,DRG24C0Legendrian}, after series of previous works by others \cite{RosenZhang20Dichotomy,Usher21Conformal,Nakamura20C0,Stokic22}, 
showed that the image of properly embedded Legendrians under contact homeomorphisms (that arise as limits of contactomorphisms supported in a given compact subset), if smooth, are still Legendrians. However, the coisotropic rigidity is only known assuming uniform lower bounds on the conformal factor by Usher \cite{Usher21Conformal}, after \cite{RosenZhang20Dichotomy}. On the contrary, on the flexbility side, it is also known that any smooth manifold with correct dimension can be $C^0$ approximated by embedded Legendrians \cite{CE12,MurphyLoose}.

The studies of $C^0$-behavior in symplectic and contact geometry are closely related to many other topics in symplectic and contact geometry. As suggested by Buhovsky--Opshtein \cite{BuhovskyOpshtein}, one natural perspective to study the nearby Lagrangian conjecture is to study the problem through the action of symplectic homeomorphisms in a Weinstein neighborhood (see also the recent work \cite{ChasseLeclercq}). This naturally generalizes to the setting of Legendrians (we know closed Legendrians in 1-jet bundles are not unique even within the same formal Legendrian embedding class, and thus the contact homeomorphism assumption becomes necessary in the Legendrian case).

Our approach is to understand the rigidity in $C^0$-contact topology using microlocal theory of sheaves, developed by Kashiwara--Schapira \cite{KS90}. Sheaf-theoretic techniques have been used in symplectic geometry since the work of Nadler--Zaslow \cite{NZ,Nad} and Tamarkin \cite{Tamarkin}, and was used by Guillermou to give a new proof of the Eliashberg--Gromov theorem \cite{guillermou2013gromov,Guillermou23}. More recently, Guillermou--Viterbo and the first two authors demonstrate the strength of microlocal theory of sheaves in $C^0$-symplectic topology \cite{GV24,AI24,AI24rectangular}. 

The results mentioned above cannot be directly applied to $C^0$-contact topology. In this paper, we will show some persistence distance estimations with respect to the Hofer--Shelukhin distance and the $C^0$-distance. For the estimation with respect to the $C^0$-distance, we will need to prove a new sheaf quantization construction for any $C^0$-small contactomorphism (potentially not compactly supported nor in the identity component) that is different from Guillermou--Kashiwara--Schapira \cite{GKS}. These results together with the previous ones \cite{GV24,AI24} will allow us to deduce new $C^0$-rigidity results of Legendrians and more generally coisotropics.

\subsection{Main results and applications}
Let $(Y, \xi)$ be a co-oriented contact manifold, where $\xi$ is a co-oriented maximally non-integrable hyperplane distribution given by the kernel of a 1-form $\alpha$. A contactomorphism is a diffeomorphism $\varphi \colon Y \to Y$ such that $\varphi_*\xi = \xi$, in other words, 
$\varphi^*\alpha = e^h\alpha$
for some smooth function $h \colon Y \to \bR$. We call $h$ the conformal factor of $\varphi$.
We denote the group of contactomorphisms by $\operatorname{Cont}(Y, \xi)$ and the path connected component of the identity map be $\operatorname{Cont}_0(Y, \xi)$. Fix a Riemannian metric on $Y$. We consider the uniform $C^0$-norm on the contactomorphism group $d_{C^0}(\varphi, \psi) = \sup_{y\in Y}d(\varphi(y), \psi(y)).$

Our first result shows the rigidity of any (not necessarily properly embedded) Legendrian and coisotropic submanifolds under contact homeomorphisms, in other words, $C^0$-limits of contactomorphisms in the group of homeomorphisms. The rigidity result generalizes the result of Dimitroglou Rizell--Sullivan \cite{DRG24C0Legendrian} for properly embedded Legendrians after the works of \cite{RosenZhang20Dichotomy,Usher21Conformal,Nakamura20C0,Stokic22}, and Usher \cite{Usher21Conformal} for coisotropics assuming uniform lower bound of the conformal factors after \cite{RosenZhang20Dichotomy}.\footnote{For non-properly embedded Legendrians, our result is not compatible with the recent preprint \cite{Stokic24}.} 
As the discussion on $C^0$-geometry relies on a certain non-degeneracy result, precisely \Cref{thm:non-deg_kernel}, we will assume that the metric on $S^* M$ is induced from a complete Riemannian metric with bounded geometry (in particular, with positive injectivity radius).

\begin{theorem}\label{thm:main-rigidity}
    Let $(S^*M, \xi_{\mathrm{std}})$ be the cosphere bundle with the standard contact structure and $C \subseteq S^*M$ be a locally closed (embedded) coisotropic. Consider contactomorphisms $\varphi_n \in \operatorname{Cont}(S^*M, \xi_{\mathrm{std}})$ each of which has bounded conformal factor $h_n$. Suppose $\varphi_n \to \varphi_\infty$ in the $C^0$-topology and $\varphi_\infty$ is a homeomorphism.
    If $\varphi_\infty(C)$ is smooth, then $\varphi_\infty(C)$ is also coisotropic.\footnote{We say that a submanifold is coisotropic in a contact manifold $(Y, \xi)$ if $TC \cap \xi$ is coisotropic in $\xi$, following Huang \cite{Huang15}, which is compatible with \cite{RosenZhang20Dichotomy,Usher21Conformal}. This is different from the notion of regular coisotropic in \cite{SerrailleStokic25}.}
\end{theorem}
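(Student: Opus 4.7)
My plan is to reduce the theorem to the Kashiwara--Schapira involutivity principle: the reduced microsupport $\dcSS(F)$ of any sheaf $F$ on $M$ is automatically a coisotropic conic subset of $T^*M$. Since coisotropy of $\varphi_\infty(C)$ in $S^*M$ is a pointwise condition at smooth points, it is enough to produce, near each $q = \varphi_\infty(p)$ with $p \in C$, a sheaf whose reduced microsupport is the conification of $\varphi_\infty(C)$ in a neighborhood of $q$. The smoothness hypothesis on $\varphi_\infty(C)$ will be what lets me upgrade a containment of microsupports to an equality at the final step.

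The sheaves would be built by transporting a local sheaf quantization of $C$ itself along the sequence $\varphi_n$. Locally near $p$, since $C$ is a smooth embedded coisotropic, standard microlocal constructions (for instance a constant sheaf on a submanifold transverse to the characteristic foliation, after a suitable contact normal-form reduction) yield a sheaf $F_0$ with $\dcSS(F_0)$ equal to the cone over $C$ near $p$. I would then fix a large index $N$ and transport $F_0$ by $\varphi_N$ to obtain a sheaf $F_N$ with $\dcSS(F_N)$ equal to the cone over $\varphi_N(C)$: this step uses an existing sheaf quantization of $\varphi_N$ (Guillermou--Kashiwara--Schapira when $\varphi_N$ is in the identity component, or the paper's new $C^0$-small quantization otherwise), and the bounded conformal factor hypothesis to control the resulting sheaf. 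For $n \ges N$, the composition $\varphi_n \circ \varphi_N^{-1}$ is $C^0$-small but may fail to lie in the identity component, so the paper's new sheaf quantization for $C^0$-small contactomorphisms applies, producing kernels $K_n$ and sheaves $F_n := K_n \circ F_N$ with $\dcSS(F_n)$ equal to the cone over $\varphi_n(C)$ near $q$.

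The next stage is to pass to the $C^0$-limit. Using the paper's continuity estimate for the sheaf-theoretic interleaving distance with respect to the $C^0$-distance, the sequence $\{F_n\}$ is Cauchy in interleaving distance and converges to some $F_\infty$. Upper semicontinuity of microsupport under interleaving limits, combined with the Hausdorff convergence $\varphi_n(C) \to \varphi_\infty(C)$ secured by $\varphi_\infty$ being a homeomorphism, gives the containment $\dcSS(F_\infty) \subseteq$ cone over $\varphi_\infty(C)$ near $q$.

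The main obstacle, and the step that requires the most care, is the reverse inclusion: ruling out the microsupport collapsing onto a proper subset of the smooth limit, which would leave the Kashiwara--Schapira conclusion insufficient for coisotropy of the full $\varphi_\infty(C)$. This is exactly where the non-degeneracy result \Cref{thm:non-deg_kernel} alluded to in the setup of the theorem enters, guaranteeing that the microstalks of $F_\infty$ remain nonzero along every point of $\varphi_\infty(C)$; the smoothness hypothesis is needed here to identify the ambient microstalks with those on the limit submanifold. Once $\dcSS(F_\infty)$ is forced to equal the full cone over $\varphi_\infty(C)$ near $q$, Kashiwara--Schapira involutivity applied to $F_\infty$ immediately produces the coisotropy of $\varphi_\infty(C)$, and running this construction around every point of $\varphi_\infty(C)$ completes the proof.
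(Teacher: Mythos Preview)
Your overall architecture is close in spirit to the paper's, but there is a genuine gap at the very first step, and the paper in fact takes a different route precisely to avoid it.

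The gap is your construction of a sheaf $F_0$ with $\dcSS(F_0)$ equal to the cone over $C$ near $p$ for a \emph{general} coisotropic $C$. The suggestion ``a constant sheaf on a submanifold transverse to the characteristic foliation, after a suitable contact normal-form reduction'' does not obviously produce such a sheaf: constant sheaves on submanifolds of $M$ have \emph{Lagrangian} conormal microsupport, and there is no clean local normal form for a general coisotropic in a contact manifold analogous to the Darboux or Weinstein models. Realizing a prescribed smooth conic coisotropic as the reduced microsupport of some sheaf is already a serious quantization problem in the Legendrian case (this is Guillermou's theorem), and for higher-dimensional coisotropics no such result is available. Without $F_0$, the rest of your argument cannot begin.

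The paper sidesteps this by never trying to quantize $C$ itself. Instead it invokes Usher's observation that a smooth coisotropic contains, through an open dense set of points, locally embedded smooth \emph{Legendrians} $\Lambda_x \subseteq C$. For each such $\Lambda_x$ the sheaf-of-categories machinery applies: the invertible kernel $K^{\varphi_\infty \circ \varphi_N^{-1}}$ (this is where \Cref{thm:non-deg_kernel} enters, via the invertibility theorem) yields an equivalence $(\varphi_\infty)_*\msh_{\Lambda_x} \simeq \msh_{\varphi_\infty(\Lambda_x)}$, so the latter has nowhere-zero stalks. The paper then appeals not to Kashiwara--Schapira involutivity but to the Guillermou--Viterbo $\gamma$-coisotropicity theorem, which concludes that each (possibly non-smooth) $\varphi_\infty(\Lambda_x)$ is $\gamma$-coisotropic in the sense of Viterbo's spectral norm. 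Finally, a criterion of Usher shows that a smooth submanifold containing a dense family of $\gamma$-coisotropic subsets is itself coisotropic, giving the conclusion for $\varphi_\infty(C)$. The payoff of this route is that one only ever needs microsheaves along Legendrians, where the theory is well understood, and one never needs equality of microsupports with the full coisotropic.
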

\begin{remark}\label{rem:intro-stokic}
    We require only that the conformal factor $h_n$ of each contactomorphism $\varphi_n$ is bounded by some (non-uniform) constant $C_n$. For example, this always holds when each $\varphi_n$ is supported in a compact subset $K_n$. Thus, for proper Legendrians, our theorem is also slightly stronger than the result of Dimitroglou Rizell--Sullivan \cite{DRG24C0Legendrian}.
    When $\varphi_n$ are all supported in one given compact subset and $\Lambda$ is a properly embedded Legendrian, they showed that if $\varphi_\infty(\Lambda)$ is smooth then it is Legendrian \cite{DRG24C0Legendrian}. To the best of our knowledge, the rigidity result was also not known for non-properly embedded Legendrians in the literature.
\end{remark}
\begin{remark}
    Other than the argument we use in the main theorem, in \Cref{sec:appendix}, we will provide a straightforward argument independent of the main body of the paper (relying on only standard sheaf theory techniques), which proves local $C^0$-rigidity of Legendrians, that when $\varphi_n$ are all equal to the identity on a fixed open subset $\Omega$ and $\Lambda \cap \Omega = \varnothing$, then the result holds. The appendix will also contain results on certain local rigidity of Hausdorff limits of Legendrians.
\end{remark}

Our next result shows the rigidity of Maslov data of Legendrian submanifolds under contact homeomorphisms. This is the natural Legendrian analogue of the result that nearby Lagrangians have vanishing Maslov classes and some higher obstructions \cite{Kragh13Parametrized,AK16,Guillermou23,Jin20Jhomomorphism}. 

\begin{theorem}\label{thm:main-Maslov}
    Let $(S^*M, \xi_{\mathrm{std}})$ be the cosphere bundle with the standard contact structure and $\Lambda \subseteq S^*M$ be a locally closed (embedded) Legendrian. Consider contactomorphisms $\varphi_n \in \operatorname{Cont}_0(S^*M, \xi_{\mathrm{std}})$ each of which has bounded conformal factor $h_n$. Suppose $\varphi_n \to \varphi_\infty$ in the $C^0$-topology and $\varphi_\infty$ is a homeomorphism.
    When $\varphi_\infty(\Lambda)$ is smooth, the composition of Lagrangian Gauss map and the delooping of $J$-homomorphism remains the same:
        \[\begin{tikzcd}[row sep=scriptsize]
    \Lambda \ar[dr,dashed] \ar[drr] \ar[dd, "\varphi_\infty" left] & & \\
    & U/O \ar[r,dashed] & B\mathrm{Pic}(\mathbb{S}) \\
    \varphi_\infty(\Lambda) \ar[ur,dashed] \ar[urr] & &
    \end{tikzcd}\]
    In particular, $\Lambda$ and $\varphi_\infty(\Lambda)$ have the same Maslov class and relative second Stiefel--Whitney classes.
\end{theorem}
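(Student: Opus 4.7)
The plan is to transport the Gauss--$J$ diagram through each $\varphi_n$ using Guillermou--Kashiwara--Schapira (GKS) sheaf quantization, then pass to the $C^0$-limit via the interleaving-distance continuity established earlier in the paper. As a preliminary, Theorem 1.1 applied to the Legendrian $\Lambda$ (a coisotropic of codimension $n$) already guarantees that $\varphi_\infty(\Lambda)$ is itself Legendrian, so its Lagrangian Gauss map and the composite with the delooped $J$-homomorphism are well-defined.

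For each finite $n$, since $\varphi_n \in \Cont_0(S^*M,\xi_{\mathrm{std}})$, the GKS construction produces a kernel $K_n$ whose convolution gives an equivalence $K_n \circ (-) \colon \Sh_\Lambda \xrightarrow{\sim} \Sh_{\varphi_n(\Lambda)}$ that intertwines microlocalization. The composite Gauss--$J$ map is the classifying map for the microstalk corepresentative as a $\mathrm{Pic}(\mathbb{S})$-module in the microlocal sheaf category, and GKS convolution preserves this corepresentative. Consequently the triangular diagram of the theorem commutes with $\varphi_n$ in place of $\varphi_\infty$ for every $n$.

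To take the limit, fix a sheaf $F$ realizing $\Lambda$ together with its microstalk corepresentative (working in an appropriately twisted category if the Maslov data does not vanish) and set $F_n \coloneqq K_n \circ F$. The $C^0$-continuity of the interleaving distance, one of the main technical results of the paper, implies that $\{F_n\}$ is Cauchy with a limit $F_\infty$ whose singular support lies in $\varphi_\infty(\Lambda)$. Because $\varphi_\infty(\Lambda)$ is smooth, the non-degeneracy result (Theorem~\ref{thm:non-deg_kernel}) identifies the singular support exactly with $\varphi_\infty(\Lambda)$ and extracts the Gauss--$J$ datum on $\varphi_\infty(\Lambda)$ from $F_\infty$. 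Interleaving continuity forces this datum to agree with the one transported through every $\varphi_n$, and hence to equal the image under $\varphi_\infty$ of the datum on $\Lambda$, producing the commuting diagram.

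The main obstacle will be verifying that the interleaving limit controls not merely the underlying sheaf up to quasi-isomorphism but the full coherent $\mathrm{Pic}(\mathbb{S})$-module structure on the microstalk corepresentative, so that the classifying map into $B\mathrm{Pic}(\mathbb{S})$ is genuinely preserved. Handling this requires leveraging the naturality of microlocalization with respect to both GKS kernels and the new $C^0$-small sheaf quantizations, and checking that the $\Pic(\mathbb{S})$-action is detected by finitely many microlocal operations each of which is continuous in interleaving distance.
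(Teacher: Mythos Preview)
Your outline has the right ingredients (GKS kernels, $C^0$-continuity of the interleaving distance, passage to a limit) but there is a genuine conceptual gap in how you intend to read off the Gauss--$J$ datum. The map $\Lambda \to U/O \to B\mathrm{Pic}(\mathbb{S})$ is not an invariant attached to a single sheaf $F$ or to ``the microstalk corepresentative as a $\mathrm{Pic}(\mathbb{S})$-module''; by Jin's theorem (\Cref{thm:musheaf}) it is the \emph{classifying map of the locally constant sheaf of categories} $\msh_\Lambda$ with stalk~$\cC$. An individual object $F$ (even a simple one) does not determine this map---indeed, when the Maslov data is nontrivial no simple object exists in $\Sh_\Lambda(M;\mathbb{S})$ at all, and your parenthetical ``work in a twisted category'' just relocates the problem to comparing the two twists, which is exactly the statement to be proved. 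Your acknowledged ``main obstacle'' is therefore not a technical verification but the heart of the matter, and tracking finitely many microlocal operations on a fixed $F_\infty$ cannot recover a datum that lives one categorical level higher.

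The paper sidesteps this entirely by working directly with the sheaf of categories. The $C^0$-limit kernel $K^{\varphi_\infty}$ is shown to be invertible (\Cref{thm:quantization-invertible}), and convolution by it induces an equivalence of stacks $\varphi_{\infty*}\msh_\Lambda \xrightarrow{\sim} \msh_{\varphi_\infty(\Lambda)}$ (\Cref{prop:quantization-micro}). Once you have an equivalence of locally constant sheaves of categories over the homeomorphism $\varphi_\infty$, Jin's theorem immediately forces the two classifying maps to agree up to the identification $\varphi_\infty\colon \Lambda \to \varphi_\infty(\Lambda)$---no tracking of individual objects or $\mathrm{Pic}(\mathbb{S})$-actions is needed. (A minor side remark: the result you cite as \Cref{thm:non-deg_kernel} concerns non-degeneracy of the metric, not singular-support identification; the latter is \Cref{thm:ss-limit}.)
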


\begin{remark}
    When $\Lambda$ is a Legendrian knot in a contact 3-manifold, Dimitroglou Rizell--Sullivan showed that $\varphi_\infty(\Lambda)$ has the same Maslov class (and in fact is contactomorphic to $\Lambda$). However, to the best of our knowledge, the statements about Maslov classes in higher dimensions are not known.
    While the Lagrangian analogue of the Maslov class results are known due to Abouzaid--Kragh \cite{Kragh13Parametrized,AK16}, Guillermou \cite{Gu12,Guillermou23}, Jin \cite{Jin20Jhomomorphism} and Membrez--Opshtein \cite{MembrezOpshtein21}, they do not imply the Legendrian version of the results as we do not know whether the Legendrian has no short Reeb chords.
\end{remark}

One of the reasons that we can only show the above results for cosphere bundles is because we do not know whether it is possible to cut off a contactomorphism in a tubular neighborhood of a closed Legendrian without changing the $C^0$-distance from the identity.  
However, in general, we still conjecture the following:

\begin{conjecture}
    Let $\Lambda \subseteq (Y, \xi)$ be a Legendrian embedding. Consider contactomorphisms $\varphi_n \in \operatorname{Cont}_0(Y, \xi)$ such that $\varphi_n \to \varphi_\infty$ in the $C^0$-topology and $\varphi_\infty$ is a homeomorphism. When $\varphi_\infty(\Lambda)$ is smooth, its Lagrangian Gauss map remains the same:
    \[\begin{tikzcd}[row sep=4pt]
    \Lambda \ar[dr] \ar[dd, "\varphi_\infty" left] & \\
    & U/O \\
    \varphi_\infty(\Lambda) \ar[ur] & 
    \end{tikzcd}\]
\end{conjecture}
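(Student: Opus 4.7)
The plan is to reduce the conjecture to \Cref{thm:main-Maslov} by localizing near each point of $\Lambda$ and extending the contactomorphisms to a standard cosphere bundle.

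Since the Lagrangian Gauss map $\Lambda \to U/O$ is determined pointwise by the oriented Lagrangian subspace $T_p\Lambda \subset \xi_p$, it is enough to verify the claim near each $p \in \Lambda$. Around $p$ and $\varphi_\infty(p)$, choose Darboux-type charts $\psi$ and $\psi'$ identifying small contact neighborhoods $V \ni p$ and $V' \ni \varphi_\infty(p)$ with open contact subsets of $S^*\bR^{n+1}$; by the Weinstein Legendrian neighborhood theorem one can further arrange that $\psi$ identifies $(V, \xi|_V, \Lambda \cap V)$ with a piece of the zero section in $J^1(\Lambda) \subset S^*(\Lambda \times \bR)$. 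For $n$ sufficiently large, $\varphi_n$ sends a smaller neighborhood $W \Subset V$ of $p$ into $V'$, so $\psi' \circ \varphi_n \circ \psi^{-1}$ is a contact embedding between open subsets of $S^*\bR^{n+1}$ that $C^0$-converges on $\psi(W)$ to $\psi' \circ \varphi_\infty \circ \psi^{-1}$.

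The next step is to extend these contact embeddings to global contactomorphisms $\tilde\varphi_n \in \operatorname{Cont}_0(S^*\bR^{n+1}, \xi_{\mathrm{std}})$ with bounded conformal factors, so that $\tilde\varphi_n$ $C^0$-converges to a homeomorphism $\tilde\varphi_\infty$ that agrees with $\psi' \circ \varphi_\infty \circ \psi^{-1}$ on the image of $\Lambda \cap W$. Granting such an extension, \Cref{thm:main-Maslov} applied to $\tilde\varphi_n$ and $\psi(\Lambda \cap W)$ yields that the Lagrangian Gauss maps of $\psi(\Lambda \cap W)$ and $\tilde\varphi_\infty(\psi(\Lambda \cap W))$ coincide; pulling back through $\psi$ and $\psi'$ gives the conjecture near $p$, and varying $p$ then gives the full statement.

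The main obstacle is this extension/cut-off step, which is precisely the difficulty singled out preceding the conjecture. Cutting off the generating contact Hamiltonian of $\varphi_n$ by a bump function typically enlarges $d_{C^0}(\tilde\varphi_n, \mathrm{id})$ and can destroy the controlled $C^0$-convergence; moreover, the subtle behavior of the conformal factor under $C^0$-limits (see \cite{Usher21Conformal}) makes it delicate to preserve the bounded conformal factor hypothesis. A possible route is to adapt the cut-off profile to a Hausdorff neighborhood of $\varphi_n(\Lambda)$ that shrinks as $n \to \infty$, so that $\tilde\varphi_n$ agrees with $\varphi_n$ on $\Lambda$ and is supported in a region on which $\varphi_n$ already lies close to the identity. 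Alternatively, one could try to bypass the cut-off by extending the sheaf-theoretic interleaving-distance machinery of \cite{GV24,AI24} to general contact manifolds via local sheaf quantizations glued along a partition of unity. Both routes appear to require tools beyond those developed in the present paper.
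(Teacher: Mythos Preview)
This statement is a \emph{conjecture} in the paper, not a theorem; the paper gives no proof. You correctly identify the cut-off obstruction and note that it is precisely the difficulty the authors single out before stating the conjecture. Your proposal is honest that this step is a genuine gap.

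There is, however, a second gap that your reduction does not address. Even if the cut-off problem were solved, \Cref{thm:main-Maslov} does \emph{not} prove that the Lagrangian Gauss map $\Lambda \to U/O$ is preserved; it proves only that the composition $\Lambda \to U/O \to B\mathrm{Pic}(\mathbb{S})$ is preserved (note which arrows are dashed in that theorem's diagram). The conjecture asks for the stronger statement about the Gauss map itself, and this is open even when $(Y,\xi) = (S^*M,\xi_{\mathrm{std}})$. So appealing to \Cref{thm:main-Maslov} after localizing would at best yield the $B\mathrm{Pic}(\mathbb{S})$ version of the conjecture, not the $U/O$ version actually stated.

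Your localization step is also problematic on its own terms. The assertion that the triangle commutes is necessarily a homotopy statement (since $\varphi_\infty$ is only a homeomorphism and does not act on tangent Lagrangians), and homotopy commutativity of maps to $U/O$ is a global condition on $\Lambda$. On a contractible Darboux patch any two maps to the connected space $U/O$ are homotopic, so the local check is vacuous; one would need a coherent way to glue the local homotopies, and nothing in the argument provides that. In short, both the extension step and the target of the reduction fall short of what the conjecture demands.
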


Our second result shows the rigidity of the category of sheaves with singular support on Legendrian submanifolds in the cosphere bundle of a manifold $M$ equipped with the natural contact structure $(S^*M, \xi_{\mathrm{std}})$ under contact homeomorphisms when the conformal factors are uniformly bounded. Using the result of Ganatra--Pardon--Shende \cite{GPS3}, this implies the rigidity of the partially wrapped Fukaya category. When further combining with the Legendrian surgery formula \cite{BEE,EkholmLekili,Ekholm19Surgery,AsplundEkholm}, this implies the rigidity of the Legendrian contact homology. This is a natural Legendrian analogue of the results that nearby Lagrangians define the same object as the zero section in the sheaf category or Fukaya category \cite{AbouzaidNearby,Nad,AK16,Guillermou23,Jin20Jhomomorphism,AsplundDeshmukhPieloch}.

\begin{theorem}\label{thm:main-sheaf-invariance}
    Let $(S^*M, \xi_{\mathrm{std}})$ be the cosphere bundle with the standard contact structure and $\Lambda \subseteq S^*M$ be a proper Legendrian embedding. Consider contactomorphisms $\varphi_n \in \operatorname{Cont}_0(Y, \xi)$ each of which has bounded conformal factor $h_n$. Suppose $\varphi_n \to \varphi_\infty$ in the $C^0$-topology and $\varphi_\infty$ is a homeomorphism. Then there exists a functor
    \[
        K^{\varphi_\infty} \colon \Sh_{\Lambda}(M) \to \Sh_{\varphi_\infty(\Lambda)}(M).
    \]
    Furthermore, $K^{\varphi_\infty^{-1}}$ is the inverse of $K^{\varphi_\infty}$ and it preserves microstalks. \footnote{The $C^0$-distance is equivalent to its two-sided counterpart $\bar{d}_{C^0}(\varphi,\psi) = \sup_{y \in Y}d\left(\varphi(y),\psi(y)\right) + \sup_{y \in Y}d\left(\varphi^{-1}(y),\psi^{-1}(y)\right)$, and thus if $\varphi_\infty$ is a contact homeomorphism, then $\varphi_\infty^{-1}$ is a also a contact homeomorphism.}
\end{theorem}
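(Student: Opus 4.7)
The plan is to construct $K^{\varphi_\infty}$ as a Cauchy limit of the sheaf quantizations $K^{\varphi_n}$ of the $\varphi_n$, measured in the interleaving pseudo-metric. This combines the Guillermou--Kashiwara--Schapira quantization (available since each $\varphi_n \in \operatorname{Cont}_0$), the new sheaf quantization for $C^0$-small contactomorphisms announced in the abstract, and the paper's continuity estimate of interleaving distance with respect to $d_{C^0}$.

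First, for each $n$, I apply the GKS quantization to obtain an equivalence $K^{\varphi_n} \colon \Sh_\Lambda(M) \to \Sh_{\varphi_n(\Lambda)}(M)$ that preserves microstalks. For $F \in \Sh_\Lambda(M)$ I rewrite
\[
K^{\varphi_m}(F) \simeq K^{\varphi_m \varphi_n^{-1}}\bigl(K^{\varphi_n}(F)\bigr).
\]
Since $\varphi_n \to \varphi_\infty$ (and the two-sided $C^0$-distance is equivalent to $d_{C^0}$), the composition $\varphi_m \varphi_n^{-1}$ converges to the identity in $C^0$ as $m,n \to \infty$. The new sheaf quantization for $C^0$-small contactomorphisms, combined with the $C^0$-continuity of the interleaving distance, then gives
\[
d_{\mathrm{int}}\bigl(K^{\varphi_m}(F), K^{\varphi_n}(F)\bigr) \xrightarrow[m,n \to \infty]{} 0.
\]
So $\{K^{\varphi_n}(F)\}_n$ is Cauchy; I take its interleaving (homotopy) limit in an appropriate completion of the sheaf category and set this to be $K^{\varphi_\infty}(F)$. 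Running the same argument on Hom-complexes upgrades this to a functor.

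For the singular support, the Guillermou--Viterbo-style semicontinuity combined with $\varphi_n(\Lambda) \to \varphi_\infty(\Lambda)$ in Hausdorff distance (available since $\varphi_\infty$ is a homeomorphism and $\Lambda$ is proper) gives $\operatorname{SS}(K^{\varphi_\infty}(F)) \cap \dot{T}^*M \subseteq \varphi_\infty(\Lambda)$, so $K^{\varphi_\infty}$ lands in $\Sh_{\varphi_\infty(\Lambda)}(M)$. The same procedure applied to the inverse sequence $\{\varphi_n^{-1}\}$ produces $K^{\varphi_\infty^{-1}}$. Since $K^{\varphi_n^{-1}} \circ K^{\varphi_n} \simeq \id$ at every finite stage and $K^{\varphi_\infty^{-1}} \circ K^{\varphi_\infty}$ is the interleaving limit of these, the non-degeneracy of the interleaving distance (\Cref{thm:non-deg_kernel}) forces this composition to be the identity, and similarly in the other order. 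Microstalk preservation then passes to the limit since each $K^{\varphi_n}$ preserves microstalks and the interleaving limit preserves local stalk computations at smooth points of the (varying) Legendrian.

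The hard part is promoting the object-wise Cauchy construction to a genuinely functorial assignment that is well-defined up to canonical equivalence and independent of the approximating sequence $\{\varphi_n\}$: this requires coherent choices across composites of the form $\varphi_m \varphi_n^{-1}$, and crucially uses both the non-degeneracy theorem and the explicit (not merely existential) bounds provided by the new $C^0$-small quantization. A secondary technical difficulty is to control the Maslov/degree shifts in $K^{\varphi_n}$ uniformly in $n$---without this the interleaving bounds would not close up---which is precisely where the boundedness of the conformal factors $h_n$ enters, together with the cosphere-bundle setting that permits the earlier rigidity results to be invoked locally.
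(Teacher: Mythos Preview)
Your overall strategy is close to the paper's, but there is a genuine gap in the Cauchy step. You apply GKS to each $\varphi_n$ separately and then write $K^{\varphi_m}(F) \simeq K^{\varphi_m\varphi_n^{-1}}(K^{\varphi_n}(F))$. At the kernel level this forces $K^{\varphi_m\varphi_n^{-1}}$ to be the GKS kernel coming from the concatenated isotopy $\Phi_m\circ\Phi_n^{-1}$, whereas the $C^0$-continuity bound of \Cref{thm:main-C0-distance} is proved for the \emph{new} $C^0$-small quantization of \Cref{thm:quantization-C0-small}, not for an arbitrary GKS kernel with the same time-$1$ map. These two agree only when there is a $C^0$-small isotopy from $\id$ to $\varphi_m\varphi_n^{-1}$ (\Cref{prop:compare-gks}), and the paper explicitly flags that this is not known in general. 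So your bound $d_\tau(K^{\varphi_m}(F),K^{\varphi_n}(F)) \to 0$ is unjustified as written. The paper sidesteps this by fixing one index $N$ and setting $K^{\varphi_\infty} \coloneqq K^{\varphi_\infty\varphi_N^{-1}} \circ K^{\varphi_N}$: GKS is invoked exactly once (for $\varphi_N$), and the limit is taken only over the $C^0$-small quantizations $K^{\varphi_n\varphi_N^{-1}}$, which do satisfy the functoriality of \Cref{prop:quantization-functorial} and hence the required interleaving bounds (\Cref{thm:quantization-C0}).

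The invertibility step also needs more care. You assert that $K^{\varphi_\infty^{-1}}\circ K^{\varphi_\infty}$ is the interleaving limit of $K^{\varphi_n^{-1}}\circ K^{\varphi_n}=1_\Delta$, but convolution is only \emph{right}-continuous for $d_\tau$; left-continuity fails by exactly the conformal factor. The bounded $h_n$ enter here (via \Cref{lem:conformal-reeb-commute} and \Cref{prop:banach-mazur}), not to control Maslov shifts: one proves $d_\tau(K^{\varphi_n^{-1}}\circ K^{\varphi_m}, K^{\varphi_n^{-1}}\circ K^{\varphi_{m'}}) \le e^{h_n}\, d_\tau(K^{\varphi_m},K^{\varphi_{m'}})$, lets $m\to\infty$ with $n$ fixed, and only then lets $n\to\infty$ (\Cref{thm:quantization-invertible}). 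Finally, the paper works throughout with sheaf kernels in $\Sh(M\times M)$ rather than object-by-object, so functoriality is automatic and no coherence on Hom-complexes is needed; the microstalk statement is then a consequence of the equivalence of sheaves of categories in \Cref{prop:quantization-micro}.
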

\begin{remark}
    By Ganatra--Pardon--Shende \cite{GPS3}, we know that there is an equivalence between sheaves on manifolds with singular supports and partially wrapped Fukaya categories of cotangent bundles with stops (over a discrete ring)
    \[
        \Sh_\Lambda(M) \simeq \Mod\mathcal{W}(T^*M, \Lambda)^{op}
    \]
    which sends the corepresentatives of microstalk functors to the linking disks. Therefore, as our equivalence preserves microstalks, we can conclude that there is a quasi-isomorphism between the self wrapped Floer cochains of linking disks $D_\Lambda$. Then, by the Legendrian surgery formula \cite{EkholmLekili,AsplundEkholm}, we know they are isomorphic to Legendrian contact homologies
    \[
        CW^*(D_\Lambda, D_\Lambda) \simeq \mathcal{A}_{C_{-*}(\Omega_*\Lambda)}(\Lambda).
    \]
    Hence there is a quasi-equivalence between the Legendrian contact homologies with coefficients in based loop spaces. Moreover, since microlocal rank corresponds to the rank of the representation of Legendrian contact homologies, we know that there is an equivalence between augmentations of Legendrian contact homologies (see also \cite{NRSSZ20,CNS19}). 
\end{remark}

\begin{remark}
    For contact 3-manifolds, it is shown by Dimitroglou Rizell--Sullivan \cite{DRG24C0Knot} that for a closed Legendrian $\Lambda$, when $\varphi_\infty(\Lambda)$ is also smooth, there exists an ambient contactomorphism that sends $\Lambda$ to $\varphi_\infty(\Lambda)$. In particular, this implies that they have quasi-isomorphic Legendrian contact homology when it is defined. However, no such result is known in higher dimensions.
\end{remark}

Consequently, we can strengthen the result of Dimitroglou Rizell--Sullivan \cite{DRS20Persistence} that the Legendrian contact homology of $\Lambda$ is nontrivial if and only if the Legendrian contact homology of $\varphi_\infty(\Lambda)$ is nontrivial (since $K^{\varphi_\infty}$ sends non-local systems to non-local systems). For a loose Legendrian $\Lambda \subseteq S^*M$ in dimension at least 5 \cite{MurphyLoose}, the category of sheaves is trivial \cite{STZ17}, and so is the Legendrian contact homology \cite{EES05Nonisotopic}. In particular, as we do not know any example of non-loose Legendrian with trivial Legendrian contact homology or sheaf category in $J^1\mathbb{R}^n$, our theorem excludes the possibility of any known example of non-loose Legendrian to be the $C^0$-limit of a loose Legendrian. We therefore conjecture the following:

\begin{conjecture}
    Let $\Lambda \subseteq S^*M$ be a proper loose Legendrian embedding. Consider contactomorphisms $\varphi_n \in \operatorname{Cont}_0(S^*M, \xi_{\mathrm{std}})$ such that $\varphi_n \to \varphi_\infty$ in the $C^0$-topology and $\varphi_\infty$ is a homeomorphism. When $\varphi_\infty(\Lambda)$ is also smooth, then $\varphi_\infty(\Lambda)$ is still a loose Legendrian.
\end{conjecture}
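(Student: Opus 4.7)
The plan is to reduce the conjecture to \Cref{thm:main-sheaf-invariance} together with a (presently open) converse characterization of loose Legendrians in terms of their sheaf-theoretic or Floer-theoretic invariants. Since $\Lambda \subseteq S^*M$ is a proper loose Legendrian (assuming $\dim M \geq 3$ so that Murphy's $h$-principle \cite{MurphyLoose} applies), the result of Sylvan--Treumann--Zaslow \cite{STZ17} gives $\Sh_\Lambda(M) \simeq 0$. The first step is then immediate from \Cref{thm:main-sheaf-invariance}: since $K^{\varphi_\infty} \colon \Sh_\Lambda(M) \to \Sh_{\varphi_\infty(\Lambda)}(M)$ is an equivalence, we deduce $\Sh_{\varphi_\infty(\Lambda)}(M) \simeq 0$ as well.

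Combining this with the Ganatra--Pardon--Shende equivalence $\Sh_\Lambda(M) \simeq \Mod \mathcal{W}(T^*M,\Lambda)^{op}$ \cite{GPS3} and the Legendrian surgery formula of Ekholm--Lekili and Asplund--Ekholm \cite{EkholmLekili,AsplundEkholm}, together with the fact that $K^{\varphi_\infty}$ preserves microstalks, and hence sends the linking disk corepresentative at a point of $\Lambda$ to a linking disk corepresentative at the image in $\varphi_\infty(\Lambda)$, one obtains a quasi-equivalence of the Chekanov--Eliashberg dgas with coefficients in based loop spaces, $\mathcal{A}_{C_{-*}(\Omega_*\Lambda)}(\Lambda) \simeq \mathcal{A}_{C_{-*}(\Omega_*\varphi_\infty(\Lambda))}(\varphi_\infty(\Lambda))$, and in particular the latter is itself trivial. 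This is already sufficient to rule out membership of $\varphi_\infty(\Lambda)$ in any currently known class of non-loose Legendrians in $J^1\bR^n$, since every such example has nontrivial sheaf category or nontrivial Legendrian contact homology.

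The essential obstacle, which prevents an unconditional proof, is the absence of a converse to Murphy's $h$-principle: one does not presently know that vanishing of $\Sh_{\varphi_\infty(\Lambda)}(M)$, or equivalently of the Chekanov--Eliashberg dga, forces $\varphi_\infty(\Lambda)$ to admit a loose chart. A purely geometric alternative would be to pass to some limit of loose charts for $\Lambda_n := \varphi_n(\Lambda)$ and produce a loose chart for $\varphi_\infty(\Lambda)$; however, $C^0$-convergence of the $\varphi_n$ yields no control on the first-jet data of the $\Lambda_n$, and the disk-like structure of a loose chart involves precise tangency conditions along its zig-zag, so no standard compactness or $h$-principle argument seems to extract such a chart directly from the sequence. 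The most tractable route therefore appears to be through refining the sheaf-theoretic side: settling the conjecture unconditionally seems to require substantive progress on characterizing looseness via microlocal or holomorphic invariants, which we would treat as a separate problem.
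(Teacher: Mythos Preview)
The statement is a \emph{conjecture}, and the paper does not prove it; the surrounding discussion in the paper explains exactly the heuristic and the obstruction that you have identified. Your analysis is therefore in full agreement with the paper's own treatment: from \Cref{thm:main-sheaf-invariance} one deduces that the sheaf category of $\varphi_\infty(\Lambda)$ is trivial whenever that of $\Lambda$ is, and the missing ingredient---a converse to Murphy's $h$-principle asserting that triviality of $\Sh_{\varphi_\infty(\Lambda)}(M)$ (equivalently, of the Legendrian contact homology) forces looseness---is precisely what the paper flags as open. Your remark that no known non-loose Legendrian in $J^1\bR^n$ has trivial invariants is also exactly the evidence the paper cites in support of the conjecture.

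Two small corrections. First, you write $\Sh_\Lambda(M) \simeq 0$, but local systems on $M$ always lie in $\Sh_\Lambda(M)$ regardless of $\Lambda$; the correct statement for a loose Legendrian is $\Sh_\Lambda(M) \simeq \Loc(M)$, i.e., there are no sheaves with nontrivial microsupport on $\Lambda$. This does not affect your argument, since $K^{\varphi_\infty}$ still yields $\Sh_{\varphi_\infty(\Lambda)}(M) \simeq \Loc(M)$, but the phrasing should be adjusted. Second, the reference \cite{STZ17} is Shende--Treumann--Zaslow, not Sylvan--Treumann--Zaslow.
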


Other than the $C^0$-topology on the group of contactomorphisms $\operatorname{Cont}(Y, \xi)$, there has also been a number of studies on the topology on the identity component of the comtactomorphism group ${\operatorname{Cont}}_0(Y, \xi)$ induced by the Hofer--Shelukhin norm \cite{Shelukhin17} with respect to a fixed contact form $\alpha$. For $\varphi \in {\operatorname{Cont}}_0(Y, \xi)$, consider a path $\varphi_t$ from $\id$ to $\varphi$. Then $\varphi_t$ is induced by a contact Hamiltonian $H \colon Y \times \bR \to \bR$. We then define the Hofer--Shelukhin norm to be
\[
    d_{\mathrm{HS},\alpha}(\varphi, \psi) = \inf_{\varphi = \varphi^H_1 \circ \psi} \|H\|_{\mathrm{HS},\alpha} = \inf_{\varphi = \varphi^H_1 \circ \psi} \int_0^1 \sup_{y \in Y}|H(y, t)|\, dt.
\]
Since its universal cover, under the $C^1$-topology, $\widetilde{\operatorname{Cont}}_0(Y, \xi)$ can be realized as the group of homotopy classes of contact isotopies, we also abuse notations and define ${d}_{\mathrm{HS},\alpha}(\varphi, \psi)$ by considering Hamiltonian isotopies in a fixed homotopy class of paths.


Similarly, for a Legendrian $\Lambda \subseteq Y$, on the space $\operatorname{Leg}_0(\Lambda)$ of Legendrian submanifolds isotopic to $\Lambda$, we can define the Chekanov--Hofer--Shelukhin distance to be
\[
d_{\mathrm{CHS},\alpha}(\Lambda_0, \Lambda_1) = \inf_{\Lambda_1 = \varphi^H_1(\Lambda_0)} \|H\|_{\mathrm{CHS},\Lambda,\alpha} = \inf_{\Lambda_1 = \varphi^H_1(\Lambda_0)} \int_0^1 \sup_{y \in \varphi_H^t(\Lambda)}|H(y, t)|\, dt.
\]
Then we can recover the following simple cases of non-degeneracy results of the (Chekanov--)Hofer--Shelukhin distance in cosphere bundles $(S^*M, \xi_{\mathrm{std}})$. The Hofer--Shelukhin distance shares a closer relation to the interleaving distance, as explained in \Cref{lem: HS-norm}. As a result, we do not need to assume a complete Riemannian metric for this part of the discussion (we only need to assume that the Reeb flow is well-defined).

\begin{theorem}\label{thm:main-Hofer-nondeg}
    Let $(S^*M, \xi_{\mathrm{std}})$ be the cosphere bundle equipped with the standard contact structure. Then for any contact form $\alpha$,
    \begin{enumerate}
        \item (Shelukhin \cite{Shelukhin17}) the Hofer--Shelukhin distance $d_{\mathrm{HS},\alpha}$ defines a non-degenerate metric on the space $\operatorname{Cont}_0(S^*M, \xi_{\mathrm{std}})$;
        \item (Dimitroglou Rizell--Sullivan \cite{DRG24Rabinowitz}) when $\Lambda \subseteq S^*M$ is a closed Legendrian such that $\Sh_\Lambda(M)$ is non-trivial, the Chekanov--Hofer--Shelukhin distance $d_{\mathrm{CHS},\Lambda,\alpha}$ defines a non-degenerate metric on the space $\operatorname{Leg}_0(\Lambda)$.
    \end{enumerate}
\end{theorem}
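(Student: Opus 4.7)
The strategy is to convert the non-degeneracy problem into a statement about the sheaf-theoretic interleaving distance, where it admits a direct proof. The key input is \Cref{lem: HS-norm}, which for a contact Hamiltonian $H$ generating an isotopy $\varphi_H^t$ bounds the interleaving distance $d_{\mathrm{int}}(K_{\varphi_H^1}(F), F)$ from above by $\|H\|_{\HS,\alpha}$ (respectively by $\|H\|_{\mathrm{CHS},\Lambda,\alpha}$ in the Legendrian setting), where $K_{\varphi_H^1}$ denotes the Guillermou--Kashiwara--Schapira sheaf quantization kernel. The second ingredient is the standard rigidity of reduced microsupport under interleaving: if $d_{\mathrm{int}}(F,G) \le \epsilon$ then $\dcSS(F)$ and $\dcSS(G)$ are at Hausdorff distance at most $\epsilon$ with respect to a natural scale depending on the contact form. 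Together these two facts will let us recover both parts.

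For part~(1), suppose $\varphi \in \operatorname{Cont}_0(S^*M,\xi_{\mathrm{std}})$ satisfies $d_{\HS,\alpha}(\varphi,\id)=0$, and choose contact Hamiltonians $H_n$ with $\varphi_{H_n}^1 = \varphi$ and $\|H_n\|_{\HS,\alpha}\to 0$. If $\varphi \neq \id$, pick $y_0 = (x_0,[\xi_0]) \in S^*M$ with $\varphi(y_0) \neq y_0$, and take $F = k_U$ where $U \subset M$ is a small open half-space through $x_0$ whose outward conormal at $x_0$ is $[\xi_0]$; then $y_0 \in \dcSS(F)$. The GKS formalism produces sheaves $K_{\varphi_{H_n}^1}(F)$ with $\varphi(y_0) \in \dcSS(K_{\varphi_{H_n}^1}(F))$, while \Cref{lem: HS-norm} gives $d_{\mathrm{int}}(F,K_{\varphi_{H_n}^1}(F)) \to 0$. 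Microsupport stability then forces $\varphi(y_0) = y_0$, a contradiction.

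For part~(2), suppose $d_{\mathrm{CHS},\alpha}(\Lambda_0,\Lambda_1) = 0$ for $\Lambda_0, \Lambda_1 \in \operatorname{Leg}_0(\Lambda)$. By non-triviality of $\Sh_\Lambda(M)$ combined with GKS invariance under contact isotopy, select a nonzero $F_0 \in \Sh_{\Lambda_0}(M)$ with $\dcSS(F_0) \subseteq \Lambda_0$ nonempty, say with nontrivial microstalk at some $p_0 \in \Lambda_0$. For Hamiltonians $H_n$ with $\|H_n\|_{\mathrm{CHS},\Lambda,\alpha} \to 0$ and $\varphi_{H_n}^1(\Lambda_0) = \Lambda_1$, the Legendrian analogue of \Cref{lem: HS-norm} yields $d_{\mathrm{int}}(F_0, K_{\varphi_{H_n}^1}(F_0)) \to 0$, with $K_{\varphi_{H_n}^1}(F_0) \in \Sh_{\Lambda_1}(M)$. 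Hausdorff convergence of reduced microsupports then forces $p_0 \in \Lambda_1$; letting $p_0$ range over all points of $\Lambda_0$ where some sheaf in $\Sh_{\Lambda_0}(M)$ has a nonvanishing microstalk (which is all of $\Lambda_0$ for a closed Legendrian with $\Sh_\Lambda(M)$ non-trivial, by propagation of microstalks under contact isotopies) yields $\Lambda_0 \subseteq \Lambda_1$, and a symmetric argument gives the reverse inclusion.

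The main obstacle is the sharp Hausdorff-type microsupport stability with respect to the interleaving distance on the relevant categories of sheaves on cosphere bundles, together with the bound provided by \Cref{lem: HS-norm}; once these are in hand the non-degeneracy statements become essentially formal, and the sheaf-theoretic interleaving distance plays the role of the Floer-theoretic spectral invariants used in the original arguments of \cite{Shelukhin17,DRG24Rabinowitz}.
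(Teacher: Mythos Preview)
Your proposal is essentially correct and follows the same strategy as the paper: bound the interleaving distance by the Hofer--Shelukhin norm (\Cref{lem: HS-norm}/\Cref{thm:hofer-norm}), then invoke microsupport rigidity under interleaving (\Cref{thm:ss-limit}/\Cref{cor: ss-is-same}) to force the geometric conclusion. Two minor differences are worth noting. For part~(1), the paper works directly with the sheaf kernels $1_\Delta$ and $K^{H_\infty}_1$ on $M\times M$ rather than a test sheaf on $M$: since $\cSS^\infty(1_\Delta)=\Gamma^{\id}$ and the limit kernel has singular support in $\Gamma^\varphi$, the inclusion $\Gamma^{\id}\subseteq\Gamma^\varphi$ gives $\varphi=\id$ in one stroke, whereas your choice of a single half-space $F=k_U$ requires more care (its conormal is non-compact and could accidentally contain $\varphi^{-1}(y_0)$). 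For part~(2), the paper makes explicit the step you elide as a ``Legendrian analogue'' of \Cref{lem: HS-norm}: one applies the isotopy extension theorem to produce ambient Hamiltonians $H_n$ with $\|H_n\|_{\HS,\tau}=\|H_n\|_{\mathrm{CHS},\Lambda,\tau}$, after which the ordinary \Cref{thm:hofer-norm} applies. Also, the paper uses only the one-sided containment $\cSS(F_\infty)\subseteq\limsup\cSS(F_n)$ and the $d_\tau=0\Rightarrow\cSS$ equal statement, not a quantitative Hausdorff bound; your stronger formulation is not needed (nor proved in the paper), but the argument goes through with the weaker versions.
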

\begin{remark}
    Rosen--Zhang \cite{RosenZhang20Dichotomy} showed that the Chekanov--Hofer--Shelukhin distance is either zero or non-degenerate. Therefore, we only need to show that the distance is not identically zero. Dimitroglou Rizell--Sullivan \cite{DRG24Rabinowitz} showed that the distance is non-degenerate on all closed contact manifolds. On the contrary, Cant \cite{Cant23Shelukhin} and Nakamura \cite{Nakamura23Legendrian} showed that there exist open contact manifolds where the metric is zero. Similar to Dimitroglou Rizell--Sullivan \cite{DRG24Rabinowitz}, we expect that there is also a sheaf-theoretic proof for the non-degeneracy of general closed Legendrians $\Lambda \subseteq S^*M$ by considering the doubling construction as in \cite{Guillermou23,AI23}. However, there are nontrivial technical difficulties one needs to overcome and we choose not to go into this direction in this paper.
\end{remark}

We now explain the main techniques in the proof of the main theorems. To study the contact topology under limits with respect to the $C^0$-distance or Hofer--Shelukhin distance, we need to construct limits of sheaves under such limits. On the category of sheaves, one can define an interleaving pseudo-distance \cite{KS18persistent}. Guillermou--Viterbo \cite{GV24} and the first two authors \cite{AI24} recently showed that the category of sheaves is complete with respect to the pseudo-metric. 

For contact Hamiltonian isotopies on $S^*M$, there exist canonical sheaves in the product, known as the sheaf quantizations of the contact isotopies by Guillermou--Kashiwara--Schapira \cite{GKS}. We can thus define a variant of the interleaving distance $d_\tau$ on the product, depending on a positive Hamiltonian $\tau$ on $S^*M$, which induces a distance on the contact isotopies that is right invariant under contactomorphisms in the identity component (the conformal factor of contactomorphisms accounts for the failure of the distance being bi-invariant).

Our first technical result is that the interleaving distance is continuous with respect to the Hofer--Shelukhin distance. This generalizes the result of the first two authors for symplectic Hamiltonians $T^*M$ or contact Hamiltonians in $T^*M \times \bR$ \cite{AI20}.

\begin{theorem}\label{thm:main-Hofer-distance}
    Let $(S^*M, \xi_{\mathrm{std}})$ be the cosphere bundle equipped with the standard contact structure and $\varphi \in \widetilde{\operatorname{Cont}}_0(S^*M, \xi_{\mathrm{std}})$ be a homotopy class of contact isotopy.
    For $K^\varphi \in \Sh(M \times M)$ the time-$1$ GKS sheaf quantization of the contact isotopy, we have 
    \[
        d_\tau(1_\Delta, K^\varphi) \leq 2\,{d}_{\mathrm{HS},\alpha}(\id, \varphi).
    \]  
\end{theorem}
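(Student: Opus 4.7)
The plan is to extend the strategy of the first two authors \cite{AI20}, who established analogous Hofer--Shelukhin bounds for sheaves in $T^*M$ and in the contactization $T^*M\times\bR$, to the cosphere bundle $S^*M$. Fix $\epsilon>0$ and choose a contact Hamiltonian $H\colon S^*M\times[0,1]\to\bR$ generating a representative of the isotopy class $\varphi$ with
\[
\int_0^1\sup_{y\in S^*M}|H(y,t)|\,dt\;\les\;d_{\mathrm{HS},\alpha}(\id,\varphi)+\epsilon.
\]
Writing $c(t):=\sup_y|H(y,t)|$ and $C:=\int_0^1 c(t)\,dt$, the goal is to produce an interleaving of $1_\Delta$ with $K^\varphi$ of magnitude at most $2C$ and then let $\epsilon\to 0$.

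The sheaf-theoretic heart of the matter is a monotonicity principle for GKS quantizations: if $H^{(0)}\les H^{(1)}$ are two time-dependent contact Hamiltonians, then there is a canonical morphism $K^{H^{(0)}}\to K^{H^{(1)}}$ in $\Sh(M\times M\times[0,1])$, functorial under concatenation of isotopies and reducing to the identity at $t=0$. This monotonicity can be extracted from the Cauchy problem used in \cite{GKS} to construct $K^H$, or alternatively deduced by passing to the symplectization $T^*M\setminus 0$ and invoking the standard comparison of conic sheaves. Given the pointwise bound $-c(t)\tau\les H(\cdot,t)\les c(t)\tau$ (after normalizing $\tau$ suitably on the region of interest), the monotonicity produces time-$1$ morphisms
\[
T_{-C}(1_\Delta)\;\lr\;K^\varphi\;\lr\;T_{C}(1_\Delta),
\]
where $T_s$ is the shift functor implementing the $\tau$-Reeb flow that defines $d_\tau$.

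By shifting, we obtain morphisms $1_\Delta\to T_C K^\varphi$ and $K^\varphi\to T_C (1_\Delta)$. To promote these into the data of an interleaving, one must check that the two compositions agree with the canonical double-shift morphisms $1_\Delta\to T_{2C}(1_\Delta)$ and $K^\varphi\to T_{2C}K^\varphi$; this compatibility follows from the functoriality of the monotonicity principle and the composition relation $K^H\circ K^{-H}\simeq 1_\Delta$ for GKS quantizations. The factor of $2$ in the theorem statement arises from symmetrizing the upper and lower bounds on $H$: the upper bound yields the interleaving morphism in one direction with shift $C$, the lower bound yields the morphism in the other direction with shift $C$, and the axioms of interleaving require tracking the composite through shift $2C$.

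The main obstacle is establishing the monotonicity principle in a form compatible with the interleaving structure: the GKS sheaf $K^H$ is uniquely characterized by a propagation Cauchy problem, and extracting a comparison morphism when Hamiltonians are ordered requires either directly manipulating this Cauchy problem or descending carefully from conic sheaves on $T^*M\setminus 0$. A secondary subtlety, absent from the $T^*M\times\bR$ analysis of \cite{AI20}, is that on $S^*M$ the positive Hamiltonian $\tau$ generating the shift has to be made compatible with both the interleaving structure and the pointwise bounds on $H$; handling this compatibility in the contact (rather than symplectic) setting, together with the need to track the conformal factor of contactomorphisms under the lift to the symplectization, is the place where the concrete constant $2$ is forced and where the technical bulk of the argument will be concentrated.
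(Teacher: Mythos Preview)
Your proposal is essentially correct and follows the same core strategy as the paper: bound the Hamiltonian pointwise by $\pm c(t)\tau$, use the monotonicity principle (continuation maps from ordered Hamiltonians) to obtain morphisms $K^\tau_{-C}\to K^\varphi\to K^\tau_C$, and then verify that the two compositions are the canonical continuation maps. The paper carries this out by a partition argument: a local estimate on small intervals $[s_0,s_1]$ using constant bounds $-a\tau\le H\le b\tau$ (Lemma~\ref{lem: HS-norm}), then the triangle inequality and a refinement of the partition to recover the integral (Theorem~\ref{thm:hofer-norm}). Your route via time-dependent bounds $c(t)\tau$ is slightly more direct (it uses that $K^{c(\cdot)\tau}_1=K^\tau_C$ by reparametrization of the flow) but lands in the same place.

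Two of your stated obstacles are not actually obstacles in the paper's framework. First, the monotonicity principle you invoke is precisely the continuation map induced from $H_\Phi\ge H_\Psi$, already established in the preliminaries (around \eqref{eq: order-gives-continuation-map-pre} and Proposition~\ref{prop:continuation}); there is no need to manipulate a Cauchy problem or pass through the symplectization. Second, the conformal factor plays no role in this estimate---it enters only for bi-invariance questions and the later $C^0$ discussion, not here. The genuine technical point is the compatibility check: that the composite of the two comparison morphisms equals the single $2C$-continuation map. The paper does this by exhibiting a linear-interpolation family of positive isotopies connecting the concatenated isotopy $(\overline{H}\#b\tau)\cdot(a\overline{\tau}\#H)$ to $(a+b)\tau$, and then invoking Remark~\ref{rem:continuation} that continuation maps depend only on the homotopy class of the positive isotopy. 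Your ``functoriality of monotonicity plus $K^H\circ K^{-H}\simeq 1_\Delta$'' gestures at this but would need to be made into exactly that homotopy argument.
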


Our next (and perhaps most) technical theorem is that the interleaving distance is continuous with respect to the $C^0$-distance. 
This turns out to be a very subtle theorem. 

Since it is unknown whether a $C^0$-small contactomorphism in the identity component is connected to the identity through a $C^0$-small isotopy, it is hard to work with the sheaf quantization of contact isotopies by Guillermou--Kashiwara--Schapira and still get good control on the distance \cite{GKS}. Therefore, we will prove a separate sheaf quantization theorem for $C^0$-small contactomorphisms based on Guillermou's theorem on nearby Lagrangians \cite{Guillermou23} and then deduce the distance estimation. Our argument is also very different from the symplectic analogue in \cite{BHS2021}.

\begin{theorem}\label{thm:main-C0-distance}   
    Let $M$ be a complete Riemannian manifold, $(S^*M, \xi_{\mathrm{std}})$ be the cosphere bundle equipped with the standard contact structure. Let $\alpha$ be a contact form whose Reeb flow is defined by a positive Hamiltonian with positive lower bound and finite $C^1$-norm. Then there exist $\epsilon > 0$ and $C_\alpha > 0$ such that for a contactomorphism $\varphi \in \operatorname{Cont}(S^*M, \xi_{\mathrm{std}})$ with $d_{C^0}(\id, \varphi) < \epsilon$, there exists a canonical sheaf quantization $K^\varphi \in \Sh(M \times M)$ of $\varphi$ such that
    \[
        d_\tau(1_\Delta, K^\varphi) \leq 2C_\alpha \,d_{C^0}(\id, \varphi).
    \]
\end{theorem}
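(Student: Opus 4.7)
The plan is to produce $K^\varphi$ directly from the graph of $\varphi$ via a microlocal nearby-Lagrangian construction, rather than by integrating a Hamiltonian isotopy as in \cite{GKS}. Let $\tilde\varphi$ denote the homogeneous symplectic lift of $\varphi$ to $\dT^*M$ and form the twisted graph
\[
\Lambda_\varphi = \{(x,\xi;\, y,-\eta) : (y,\eta) = \tilde\varphi(x,\xi)\} \subset \dT^*(M \times M),
\]
a conic Lagrangian that reduces to $\dT^*_\Delta(M\times M) = \dcSS(1_\Delta)$ when $\varphi = \id$. Fix a homogeneous Weinstein tubular neighborhood $\cU$ of $\dT^*_\Delta(M\times M)$, identified with a neighborhood of the zero section of $T^*(\dT^*M)$. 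Choosing $\epsilon$ small enough in terms of the injectivity radius of $M$ and the $C^1$-norm of the Hamiltonian $\tau$ of the Reeb flow ensures that $d_{C^0}(\id,\varphi) < \epsilon$ forces $\Lambda_\varphi$ into $\cU$ and, under the identification, realizes $\Lambda_\varphi$ as the graph of the differential of a function $f_\varphi \colon \dT^*M \to \bR$ with $\|f_\varphi\|_{C^0} \leq C_\alpha\, d_{C^0}(\id,\varphi)$. The positive lower bound and finite $C^1$-norm of $\tau$ are precisely what is needed to control this comparison uniformly on the non-compact manifold $M$.

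With $\Lambda_\varphi$ an exact conic Lagrangian section over the zero section of $T^*(\dT^*M)$, I would invoke the nearby-Lagrangian sheaf quantization from \cite{Guillermou23}: the category of sheaves on $M\times M$ with microsupport in $\Lambda_\varphi$ inside $\cU$ is equivalent to local systems on the contractible base, so selecting the trivial rank-one summand with primitive $f_\varphi$ singles out a unique $K^\varphi \in \Sh(M\times M)$ with $\dcSS(K^\varphi)|_\cU = \Lambda_\varphi$ and with $K^\varphi$ vanishing off the appropriate side at infinity. This construction depends only on $\varphi$ and the Weinstein identification, not on any contact isotopy from $\id$ to $\varphi$, which is precisely what allows it to work even when $\varphi$ lies outside the identity component.

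For the interleaving estimate, the Reeb shift functor $T_c$ with respect to $\tau$ corresponds in the $T^*(\dT^*M)$-coordinates to a translation of the zero section in the fibre direction by an amount proportional (via $C_\alpha$) to $c$. Since $|f_\varphi| \leq C_\alpha\, d_{C^0}(\id,\varphi)$, the Lagrangian $\Lambda_\varphi$ is sandwiched between the Reeb $\pm C_\alpha d_{C^0}(\id,\varphi)$-shifts of the diagonal conormal, and symmetrically. Translating this geometric sandwich through the microlocalization functor produces canonical morphisms
\[
1_\Delta \to T_{C_\alpha d_{C^0}(\id,\varphi)} K^\varphi, \qquad K^\varphi \to T_{C_\alpha d_{C^0}(\id,\varphi)} 1_\Delta,
\]
whose compositions are the canonical Reeb shift morphisms of $1_\Delta$ and $K^\varphi$ respectively. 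By the definition of $d_\tau$ this is a $2C_\alpha d_{C^0}(\id,\varphi)$-interleaving.

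The main obstacle is the canonicity of $K^\varphi$ in the non-identity-component case. Because no isotopy is available, one cannot directly flow the diagonal sheaf $1_\Delta$; instead one must verify that the rank-one summand with primitive $f_\varphi$ is well-defined independently of the choice of tubular neighborhood $\cU$, and that the resulting assignment $\varphi \mapsto K^\varphi$ is functorial in the precise sense needed (composition, inverses, and compatibility with Reeb shifts). A secondary but nontrivial point is the uniformity of the constant $C_\alpha$ on the non-compact manifold $M$: this is where the assumptions that $\tau$ has positive lower bound and finite $C^1$-norm enter, and one must check that they suffice to propagate the comparison $\|f_\varphi\|_{C^0} \leq C_\alpha d_{C^0}(\id,\varphi)$ globally rather than only on compact subsets.
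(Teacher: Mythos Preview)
Your overall strategy---quantize the graph $\Lambda_\varphi$ as a nearby Lagrangian via Guillermou's theorem rather than via an isotopy---matches the paper's. But there is a genuine gap at the point where you assert that $C^0$-smallness of $\varphi$ makes $\Lambda_\varphi$ the graph of $df_\varphi$ for some function $f_\varphi \colon \dT^*M \to \bR$ with $\|f_\varphi\|_{C^0}$ controlled by $d_{C^0}(\id,\varphi)$. Being a Lagrangian \emph{section} over the base in the Weinstein model requires the projection $\Lambda_\varphi \to \dT^*M$ to be a diffeomorphism, and that is a $C^1$ condition on $\varphi$, not a $C^0$ one. A $C^0$-small contactomorphism can have arbitrarily large derivative, so its graph can fold over the base many times while remaining inside any prescribed Weinstein tube. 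Consequently there is in general no generating function $f_\varphi$, and both your construction of $K^\varphi$ and your interleaving sandwich collapse. (Relatedly, the base $\dT^*M$ is not contractible, so ``local systems on the contractible base'' is also off.)

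The paper circumvents this as follows. It passes to the Reeb-shifted graph $\Gamma^{\varphi^\tau_\epsilon \circ \varphi}$, which lands in a jet-bundle region $S^*_{\zeta>0}(S_\epsilon \times (-\epsilon,\epsilon))$ inside $S^*(M\times M)$. The key geometric input is \emph{not} graphicality but a no-chord lemma (\Cref{lemma:no-chord}): because $\varphi$ is injective, any Reeb chord of its graph with respect to $\tau_2$ would force a closed Reeb orbit of $\tau$ on $S^*M$, so for $\epsilon$ below the minimal orbit length there are no chords of length $<\epsilon$. This, together with the existence of a separating Reeb flow, is precisely the hypothesis of the generalized Guillermou theorem (\Cref{thm:guillermou}), which produces a unique simple sheaf quantization without ever claiming the graph is a section. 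The interleaving bound then comes from \Cref{cor:guillermou-quant}, which compares the quantization of any chord-free Legendrian contained in $S^*_{\zeta>0}(N \times (-a,b))$ to that of the zero section with error $(C_\tau a, C_\tau b)$; only the front-projection containment in $(-a,b)$ is used, and that \emph{is} controlled by $d_{C^0}(\id,\varphi)$.
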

\begin{remark}
    Related results to \Cref{thm:main-Hofer-distance} on the continuity of spectral invariants with respect to the Hofer--Shelukhin norm have been proved \cite{DUZ23,Cant23Shelukhin}. One related result to \Cref{thm:main-C0-distance} is the local continuity of spectral invariants with respect to the $C^0$-distance in $W \times S^1$ \cite[Proposition 4.1]{SerrailleStojisavljevic}. We remark that we can similarly construct a discrete conjugation-invariant distance using sheaf-theoretic spectral invariants as Sandon \cite{Sandon} (note that all conjugation-invariant distance on $\operatorname{Cont}_0(S^*M, \xi_{\mathrm{std}})$ must be discrete \cite{BIP}). We expect that this distance is also lower semi-continuous with respect to the $C^0$-distance \cite[Proposition 1.10]{SerrailleStojisavljevic}.
\end{remark}

Considering sheaf quantizations of contactomorphisms in $S^*M$, the interleaving distance is right invariant but only left invariant up to scaling by the conformal factor of the contactomorphism. Therefore, we can construct sheaf quantizations for any contact homeomorphism in the identity component using \Cref{thm:main-C0-distance}, but they are not compatible with compositions (and thus not invertible) in general.

Finally, we mention that there have been other results in the sheaf theory literature that are related to $C^0$-contact topology as well. The full faithfulness theorem for nearby cycle functors of Nadler--Shende \cite[Theorem 5.1]{NadlerShende} also constructs sheaf quantizations for certain non-smooth objects in contact topology. However, one needs to start with a path of contactmorphisms to apply their theorem. In particular, when we only have a discrete sequence of contactomorphisms, one cannot directly apply their result, which is exactly the situation we need to deal with in the paper.

\subsection*{Acknowledgement}
We would like to thank Mohammed Abouzaid, Roger Casals, Sheng-Fu Chiu, Georgios Dimitroglou Rizell, St\'ephane Guillermou, Vincent Humili\`ere, R\'emi Leclercq, Emmy Murphy, Vivek Shende, Sobhan Seyfaddini, Michael Sullivan, Bingyu Zhang and Jun Zhang for helpful discussions. We would also like to thank Maksim Stoki\'c for helpful correspondence. 
T.~A. is partially supported by JSPS KAKENHI Grant Number JP24K16920. 
Y.~I. is partially supported by JSPS KAKENHI Grant Numbers JP22H05107 and JP25K17254.
T.~A. and Y.~I. are partially supported by JST, CREST Grant Number JPMJCR24Q1, Japan. W.~L. is partially supported by the AMS-Simons Travel Grant.
C.~K. is supported by Max Planck Institute for Mathematics in Bonn.

\section{Completeness of Sheaves}

\subsection{Sheaves and singular supports}
Fix a (small idempotent complete) rigid symmetric monoidal category $\SC_0$ and consider the compactly generated category\footnote{When $\SC$ is not compactly generated (even if it is dualizable presentable), the non-characteristic deformation lemma \cite[Proposition 2.7.2]{KS90} \cite{Robalo-Schapira} fails and the conditions of \cite[Proposition 5.1.1]{KS90} are no longer equivalent, as explained in \cite[Remark 4.24]{Efimov-K-theory}. However, \cite{Efimov-K-theory,Zhang25remark} suggest a definition of singular supports for sheaves with values in non-compactly generated coefficients using $\Omega$-lenses (defined below).} $\SC \coloneqq \Ind(\SC_0)$. For a smooth manifold $M$, we will use the notation $\Sh(M)$ to mean sheaves on $M$ with values in $\SC$. 

Microlocal sheaf theory developed in \cite{KS90} comes with the following ingredients: On a $C^\infty$ manifold $M$, one can assign, for any sheaf $F \in \Sh(M)$, its singular support or microsupport $\cSS(F) \subseteq T^* M$, which is a closed conic coisotropic subset, and, under some mild regularity condition, is Lagrangian if and only if $F$ is constructible.\footnote{We follow the convention in \cite{GPS3, Kuo23, KuoLi22} and do not require constructible sheaves to have perfect stalks, which is different from \cite{KS90} (where such sheaves are called weakly constructible sheaves).} 
For $F \in \Sh(M)$, we write $\cSS^\infty(F) \subset S^*M$ for the closed subset corresponding to $\cSS(F)$. Moreover, for a closed subset $\Lambda$, we write $\Sh_\Lambda(M)$ for the full subcategory of $\Sh(M)$ spanned by sheaves $F$ such that $\cSS^\infty(F) \subset \Lambda$. 

We will use the characterization of singular supports using $\Omega$-lenses \cite[Definition 3.1]{GV24} \cite[Section 2.7.2]{JT17}.\footnote{From the viewpoint of \cite{GPS3}, when $\Omega$ is a contractible neighborhood around a point, $\Omega$-lenses correspond to (sheaf-theoretic) linking disks around that point.}
We set $\dT^*M \coloneqq T^*M \setminus 0_M$, where $0_M$ denotes the zero section of $T^*M$. For a conic open set $\Omega \subseteq \dot T^*M$, an $\Omega$-lens is a locally closed subset $\Sigma \subset M$ with the following properties: $\overline{\Sigma}$ is compact and there exists an open neighborhood $U$ of $\overline{\Sigma}$ and a family of $C^1$-functions $f \colon U \times [0, 1] \to \bR$ such that 
\begin{enumerate}
    \item $df_t(x) \in \Omega$ for any $(x,t) \in U \times [0,1]$, where $f_t \coloneqq f|_{U \times \{t\}}$;
    \item $f_t^{-1}((-\infty,0)) \subseteq f_{t'}^{-1}((-\infty,0))$ for $t \le t'$;
    \item the hypersurfaces $f_t^{-1}(0)$ coincide on $U \setminus \overline{\Sigma}$;
    \item $\Sigma = f_1^{-1}((-\infty,0)) \setminus f_0^{-1}((-\infty,0))$.
\end{enumerate}

\begin{lemma}[{\cite[Lemma 3.2]{GV24}}]\label{lem: omega-lenses-test}
Let $F \in \Sh(M)$ and $\Omega \subseteq \dT^* M$ be a conic open set. Then $\cSS(F) \cap \Omega = \varnothing$ if and only if $\Hom(1_\Sigma, F) = 0$ for all $\Omega$-lenses $\Sigma$.
\end{lemma}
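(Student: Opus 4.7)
The plan is to reduce the lens test to a propagation statement for $F$ along the hypersurfaces $f_t^{-1}(0)$, and then to invoke the non-characteristic deformation lemma in one direction and the standard local definition of singular support in the other. Throughout, write $U_t \coloneqq f_t^{-1}((-\infty,0))$, so that $U_0 \subseteq U_1$ and $\Sigma = U_1 \setminus U_0$. The excision triangle
\[
1_{U_0} \to 1_{U_1} \to 1_{\Sigma} \xrightarrow{+1}
\]
gives, after applying $\Hom(-,F)$, a triangle
\[
\Hom(1_\Sigma, F) \to F(U_1) \to F(U_0) \xrightarrow{+1},
\]
so the vanishing $\Hom(1_\Sigma,F) = 0$ is equivalent to the restriction $F(U_1) \to F(U_0)$ being an isomorphism. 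Thus the lemma is really asking: the sheaf $F$ propagates across every $\Omega$-swept hypersurface family if and only if $\cSS(F) \cap \Omega = \varnothing$.

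For the \emph{only if} direction, assume $\cSS(F) \cap \Omega = \varnothing$ and let $\Sigma$ be an $\Omega$-lens with defining family $f_t$ on $U$. The hypotheses in the definition of $\Omega$-lens are exactly the conditions of the non-characteristic deformation lemma \cite[Proposition 2.7.2]{KS90}: the hypersurfaces $\{f_t = 0\}$ agree outside a compact set, the sublevel sets are monotonically increasing, and $df_t \in \Omega$ lies outside $\cSS(F)$. That lemma then yields that $F(U_t)$ is independent of $t$ up to canonical isomorphism; in particular $F(U_1) \to F(U_0)$ is an isomorphism, so $\Hom(1_\Sigma, F) = 0$.

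For the \emph{if} direction, argue by contradiction: suppose $(x_0, \xi_0) \in \cSS(F) \cap \Omega$. I will construct an $\Omega$-lens $\Sigma$ arbitrarily close to $x_0$ with covector close to $\xi_0$ for which $F(U_1) \to F(U_0)$ fails to be an isomorphism, contradicting the hypothesis. Choose a chart near $x_0$, identify $\xi_0$ with a linear form, and take $f(x) = \langle \xi_0, x - x_0 \rangle + \tfrac{\e}{2}|x - x_0|^2$. For small $c > 0$ set $f_t(x) = f(x) - tc$, restricted to a small ball, and smoothly glue to a fixed function outside a compact set so that condition (3) is met. For $\e, c$ small enough, the gradients $df_t$ remain in the conic open set $\Omega$ (since $df_t(x_0) \approx \xi_0$), so this is a bona fide $\Omega$-lens. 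By definition of singular support, since $(x_0, \xi_0) \in \cSS(F)$, the section space of $F$ over $U_1$ cannot map isomorphically to the section space over $U_0$ for arbitrarily small such lenses --- otherwise the sections $R\Gamma_{\{f \ge -tc\}}(F)$ would vanish near $x_0$, forcing $(x_0,\xi_0) \notin \cSS(F)$. Hence $\Hom(1_\Sigma, F) \neq 0$, a contradiction.

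The delicate step is the construction in the second direction: one must verify that the model lenses $\Sigma$ produced above satisfy \emph{all four} axioms of an $\Omega$-lens simultaneously, in particular the global compatibility condition (3) that the level hypersurfaces $f_t^{-1}(0)$ coincide outside $\overline{\Sigma}$. This is handled by cutting off $f_t - f_0 = -tc$ using a bump function supported in a slightly larger ball where $df_t$ still lies in $\Omega$; ensuring that the perturbation caused by the cutoff keeps $df_t \in \Omega$ is the only genuinely nontrivial point, and it succeeds because $\Omega$ is open and $\xi_0 \in \Omega$ gives uniform room at scales small enough. Once these lenses are in hand, the contradiction with the local definition of $\cSS(F)$ (equivalently, with the vanishing of the microlocalization in a neighborhood of $(x_0,\xi_0)$) is immediate.
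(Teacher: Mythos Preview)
The paper does not give its own proof of this lemma; it is simply quoted from \cite[Lemma~3.2]{GV24}. So there is nothing in the present paper to compare your argument against, and I will evaluate the proposal on its own merits.

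Your reduction via the excision triangle to the statement ``$F(U_1)\to F(U_0)$ is an isomorphism'' is correct, and the \emph{only if} direction is fine: the hypotheses on an $\Omega$-lens are precisely what is needed to feed into the non-characteristic deformation lemma \cite[Proposition~2.7.2]{KS90} (in the generality needed here, valid since the coefficient category is compactly generated).

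The \emph{if} direction, however, has a genuine gap. Your test family $f_t = f - tc$ with $f(x)=\langle\xi_0,x-x_0\rangle+\tfrac{\e}{2}|x-x_0|^2$ produces lenses $\Sigma=\{0\le f<c\}$ that are \emph{annular shells} between two balls centered at the minimum of $f$, with $x_0$ sitting on the inner boundary. You then assert that if $\Hom(1_\Sigma,F)=\Gamma_\Sigma(M;F)$ vanished for all such $\Sigma$, the stalk $(R\Gamma_{\{f\ge 0\}}F)_{x_0}$ would vanish and hence $(x_0,\xi_0)\notin\cSS(F)$. This implication is not established: the definition of $\cSS$ is a \emph{stalk} condition, and there is no argument explaining why global sections supported on these particular annuli compute or control that stalk. (Varying $c$ shrinks $\Sigma$ to the whole sphere $\{f=0\}$, not to a germ at $x_0$.) The cutoff discussion you add handles axiom~(3) of the lens but does not touch this issue. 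To close the gap one needs a different family of lenses---e.g.\ bump-shaped regions $\{0\le x_n<c\,\rho(x'/r)\}$ in adapted coordinates, localized near $x_0$---together with a cofinality/colimit argument identifying $\colim\,\Gamma_{\Sigma}(F)$ with the microlocal germ appearing in the definition of $\cSS(F)$, or else an appeal to one of the equivalent characterizations in \cite[Proposition~5.1.1]{KS90}.
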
 

For sheaves $F_{12} \in \Sh(M_1 \times M_2)$ and $F_{23} \in \Sh(M_2 \times M_3)$, we can consider their convolution 
$F_{23} \circ F_{12} \in \Sh(M_1 \times M_3)$ defined by
\[
F_{23} \circ F_{12} \coloneqq \pi_{13!}(\pi_{23}^*F_{23} \otimes \pi_{12}^*F_{12}),
\]
where $\pi_{ij} \colon M_1 \times M_2 \times M_3 \to M_i \times M_j$ is the projection map.\footnote{We follow the convention in \cite{Kuo23,Kuo-Li-duality} to write $F_{23} \circ F_{12}$, which is different from \cite{KS90,GKS} (where they use $F_{12} \circ F_{23}$). This also results in a difference between our definition of graphs with the one in \cite{GKS}.}
We also define the convolution of sets as follows. 
We let $p_{ij} \colon T^*(M_1 \times M_2 \times M_3) \to T^*(M_i \times M_j)$ denote the projection.  
For $\Lambda_{12} \subseteq T^*(M_1 \times M_2)$ and $\Lambda_{23} \subseteq T^*(M_2 \times M_3)$, we set  
\[
    \Lambda_{23} \circ \Lambda_{12} 
    \coloneqq 
    p_{13}(p_{23}^{-1}(\Lambda_{23})^{a_2} \cap p_{12}^{-1} \Lambda_{12}) \subseteq T^*(M_1 \times M_3)
\]
where $(-)^{a_2} \colon T^*(M_2 \times M_3) \rightarrow T^*(M_2 \times M_3)$ is the antipodal map on the $M_2$-component.
The convolution satisfies the following singular support estimation:

\begin{lemma}[{\cite[Equation (1.12)]{GKS}}] \label{lem: microsupport-convolution-composition}
    Let $F_{12} \in \Sh(M_1 \times M_2)$ and $F_{23} \in \Sh(M_2 \times M_3)$.
    Assume that $\pi_{13}$ is proper on $\pi_{23}^{-1} \supp(F_{23}) \cap \pi_{12}^{-1} \supp(F_{12})$ and 
    \[
        p_{23}^{-1}(\cSS(F_{23})^{a_2}) \cap p_{12}^{-1} \cSS(F_{12}) \cap (0_{M_1} \times T^*M_2 \times 0_{M_3}) \subseteq 0_{M_1 \times M_2 \times M_3}.
    \]
    Then, we have $\cSS(F_{23} \circ F_{12}) \subseteq \cSS(F_{23}) \circ \cSS(F_{12})$. 
\end{lemma}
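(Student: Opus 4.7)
The plan is to factor the convolution through an external product followed by a diagonal pullback, and then chain together three standard singular support estimates from \cite{KS90}: external product, non-characteristic inverse image, and proper direct image. Specifically, I write
\[
    F_{23} \circ F_{12} = \pi_{13!}\,\delta^{*}(F_{12} \boxtimes F_{23}),
\]
where $\delta \colon M_1 \times M_2 \times M_3 \hookrightarrow M_1 \times M_2 \times M_2 \times M_3$ is the closed embedding that doubles the $M_2$-factor; this identity holds because $\delta^{*}(F_{12} \boxtimes F_{23}) \simeq \pi_{12}^{*}F_{12} \otimes \pi_{23}^{*}F_{23}$.

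First, the external product estimate gives $\cSS(F_{12} \boxtimes F_{23}) \subseteq \cSS(F_{12}) \times \cSS(F_{23})$ inside $T^{*}(M_1 \times M_2 \times M_2 \times M_3)$. Next, I compute the conormal bundle of $\delta$: it consists of covectors of the form $(x_1, x_2, x_2, x_3;\, 0, \xi_2, -\xi_2, 0)$. The non-characteristic condition needed to apply the inverse image estimate to $\delta^{*}$ is precisely that this conormal bundle meets $\cSS(F_{12}) \times \cSS(F_{23})$ only along the zero section. Unpacking, this says: whenever $(x_1, x_2;\, 0, \xi_2) \in \cSS(F_{12})$ and $(x_2, x_3;\, -\xi_2, 0) \in \cSS(F_{23})$, we must have $\xi_2 = 0$. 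Flipping the sign on the $M_2$-covector of the second relation rewrites it as $(x_2, x_3;\, \xi_2, 0) \in \cSS(F_{23})^{a_2}$, showing this is exactly the hypothesis of the lemma. Under this condition, the non-characteristic pullback estimate bounds $\cSS(\delta^{*}(F_{12} \boxtimes F_{23}))$ by the image of $\cSS(F_{12}) \times \cSS(F_{23})$ under the natural summation on $M_2$-cotangent fibers induced by $\delta$.

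Finally, the properness hypothesis on $\pi_{13}$ restricted to $\pi_{23}^{-1}\supp(F_{23}) \cap \pi_{12}^{-1}\supp(F_{12})$ permits applying the proper direct image estimate: one intersects the bound from the previous step with $T^{*}M_1 \times 0_{M_2} \times T^{*}M_3$ and projects to $T^{*}(M_1 \times M_3)$. The vanishing of the $M_2$-component of the summed covector forces the two $M_2$-covectors contributed by $\cSS(F_{12})$ and $\cSS(F_{23})$ to be antipodal, which matches the $(-)^{a_2}$ in the definition of $\cSS(F_{23}) \circ \cSS(F_{12})$. The resulting inclusion is exactly the asserted bound.

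The main obstacle is bookkeeping: tracking the antipodal conventions (which appear on the $M_2$-factor both in the set-theoretic composition $\Lambda_{23} \circ \Lambda_{12}$ and in the conormal bundle of $\delta$), and verifying that the lemma's hypothesis is precisely the non-characteristic condition for $\delta^{*}$ rather than an extra assumption on the pushforward step. No new ingredients are required beyond the standard estimates in \cite{KS90}.
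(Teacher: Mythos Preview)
Your proof is correct and is precisely the standard argument. The paper does not supply a proof for this lemma at all: it is quoted directly from \cite{GKS} as a known microsupport estimate, so there is nothing in the paper to compare against. The decomposition $F_{23} \circ F_{12} = \pi_{13!}\,\delta^{*}(F_{12} \boxtimes F_{23})$ followed by the external product bound, the non-characteristic pullback bound, and the proper pushforward bound is exactly how this estimate is established in \cite{KS90} and in \cite[Section~1]{GKS}; your identification of the lemma's hypothesis with the non-characteristic condition for $\delta$ is correct.
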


Furthermore, the GKS sheaf quantization \cite{GKS} allows contact isotopies to act on the category of sheaves via convolutions:

\begin{notation}\label{not:graph}
    We denote a contactomorphism by $\varphi \colon S^*M \to S^*M$, and by abuse of notations, its associated homogeneous symplectomorphism by $\varphi \colon \dot T^*M \to \dot T^*M$, whose graph in $S^*(M \times M)$ or $\dot T^*(M \times M)$ is
    \[\Gamma^\varphi = \{((x, -\xi), \varphi(x, \xi)) \mid (x, \xi) \in \dot T^*M \}.\]
    We write $\Phi \colon S^*M \times U \to S^*M$ for a $U$-family of contact isotopies and $\varphi_t = \Phi|_{S^*M \times \{t\}}$ the time-$t$ map and $H \colon S^*M \times U \to \bR$ the associated Hamiltonian where $H_t \circ \varphi_t(x, \xi) = \alpha(\partial_t\varphi_t(x, \xi))$. We also abuse notations and write $\Phi$ and $H$ for the associated homogeneous Hamiltonian isotopy $\Phi \colon \dot T^*M \times U \to \dot T^*M$ and $H \colon \dot T^*M \times U \to \bR$. We denote the graph of the homogeneous Hamiltonian diffeomorphism at time-$t$ by $\Gamma^\Phi_t$ and sometimes $\Gamma^H_t$ in $S^*(M \times M)$ or $\dot T^*(M \times M)$ to be
    \[\Gamma^\Phi_t = \{((x, -\xi), \varphi_t(x, \xi)) \mid (x, \xi) \in \dot T^*M \}.\]
\end{notation}

\begin{theorem}[{\cite[Proposition 3.2, Theorem 3.7, \& Remark 3.9]{GKS}}] \label{thm: GKS}
    For any contact isotopy $\Phi=(\varphi_t)_{t \in U} \colon S^* M \times U \rightarrow S^* M$ where $U$ is a contractible manifold containing $0$, there exists a unique sheaf kernel $K^\Phi \in \Sh(M \times M \times U)$ such that for the inclusion $i_0 \colon M \times M \times \{0\} \hookrightarrow M \times M \times U$, $i_0^*K^{\Phi} = 1_\Delta$ and
    \begin{equation} \label{for: ms-isotopy}
    \cSS^\infty(K^{\Phi}) \subseteq \left\{( x, - \xi, \varphi_t(x,\xi), t, -H_t \circ \varphi_t(x,\xi)) \mid (x, \xi) \in S^* M, t \in U \right\},
    \end{equation} 
    where $H \colon S^*M \times U \to \bR$ is the associated contact Hamiltonian. 
\end{theorem}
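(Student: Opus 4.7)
The plan is to prove existence and uniqueness of $K^\Phi$ separately, using the microsupport bound \eqref{for: ms-isotopy} as the main input. The strategy is adapted from \cite{GKS}, with care taken to handle the non-compactly supported setting.

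For uniqueness, I would show that any $K \in \Sh(M \times M \times U)$ whose microsupport is contained in the Lagrangian $\Lambda_\Phi$ appearing in \eqref{for: ms-isotopy} is determined by its restriction $i_0^* K$. The key geometric input is that at any point of $\Lambda_\Phi$ the $T^*(M \times M)$-components are $(-\xi, \varphi_t(x,\xi))$ with $\xi \neq 0$, so no point of $\Lambda_\Phi$ is conormal to a slice $M \times M \times \{t_0\}$. Each such slice is therefore non-characteristic for $K$, and the Kashiwara--Schapira non-characteristic deformation lemma \cite[Proposition 2.7.2]{KS90} propagates an identification at $t = 0$ along all of $U$, using contractibility of $U$ to globalize.

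For existence, I would work locally in $t$. On a short interval the isotopy $\Phi$ is close to the identity, and one can construct $K^\Phi$ explicitly, either by the short-time model of \cite[Proposition 3.2]{GKS} or by writing it as a convolution of short-time pieces whose microsupport is controlled by iterating \Cref{lem: microsupport-convolution-composition}. The local kernels glue on overlaps by the uniqueness part, and contractibility of $U$ removes any monodromy. For general $U$ and non-compactly supported $\Phi$, one combines a compactly supported cutoff argument (building $K^\Phi$ for isotopies supported in a compact subset) with the observation that, outside the support of $\Phi$, the microsupport bound forces any candidate to coincide with $1_\Delta$; an exhaustion by cutoffs then converges to a well-defined global sheaf, as indicated in \cite[Remark 3.9]{GKS}.

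The main obstacle is the extension to non-compactly supported $\Phi$: compact support is used both to quantize direct image functors and to guarantee finiteness in the local existence argument, and once it is dropped one must argue uniqueness and gluing directly from the microsupport bound. A secondary bookkeeping subtlety is verifying the precise form of the $dt$-component $-H_t \circ \varphi_t$ in \eqref{for: ms-isotopy}: with the convention of \Cref{not:graph}, where the first $M$-factor is antipodalized, tracking the time derivative of a graph sheaf yields exactly $-H_t \circ \varphi_t$ rather than $H_t$ or $H_t \circ \varphi_t^{-1}$, and one has to check this sign and argument placement against the contact Hamiltonian conventions.
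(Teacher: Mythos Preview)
The paper does not give its own proof of this theorem: it is stated with a citation to \cite[Proposition 3.2, Theorem 3.7, \& Remark 3.9]{GKS} and used as a black box throughout. Your outline (uniqueness via non-characteristicity of the slices $M \times M \times \{t_0\}$ for $\Lambda_\Phi$ and propagation along $U$, existence by local-in-time construction and gluing, extension to the non-compactly supported case by exhaustion) is a faithful sketch of the argument in the cited reference, so there is nothing in the paper to compare it against beyond noting that the paper treats the result as known.
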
 

\begin{remark}\label{rem:GKS}
    (1)~Let $U = I$ be a closed interval containing $0$, we can construct a sheaf quantization for any 1-parametric contact isotopy such that $i_0^*K^{\Phi} = 1_\Delta$ and 
    \[
    \cSS^\infty(K^{\Phi}) \subseteq \left\{( x, - \xi, \varphi_t(x,\xi), t, -H_t \circ \varphi(x,\xi)) \mid (x, \xi) \in S^* M, t \in I \right\}.
    \] 
    (2)~Let $U = I \times J$ be a product of closed intervals containing $0$, we can construct a sheaf quantization for any 2-parametric contact isotopy. When there is a contactomorphism $\varphi = \varphi_{1,s}$ for any $s \in J$, we know that
    \[
    \cSS^\infty(i_{1\times J}^*K^{\Phi}) \subseteq \left\{( x, - \xi, \varphi(x,\xi), s, 0) \mid (x, \xi) \in S^* M, s \in J \right\}.
    \]
    Then by \cite[Corollary 1.6]{GKS}, there exists a well-defined sheaf kernel $K^\varphi = i_{1,s}^*K^{\Phi} \in \Sh(M \times M)$ such that 
    \[
    \cSS^\infty(K^\varphi) \subseteq \left\{( x, -\xi, \varphi(x,\xi)) \mid (x, \xi) \in S^* M \right\}.
    \]
    Therefore, we have a canonical sheaf quantization for any homotopy class of contact isotopies.
\end{remark}

\begin{notation}
For a contact isotopy $\Phi \colon S^*M \times I \to S^*M$ with $I$ being a closed interval, we simply write $K^{\Phi}_t \coloneqq i_t^*K^{\Phi}$ for $t \in I$. 
When considering the contact isotopy $\Phi$ induced by the contact Hamiltonian $H$, we also use the notations $K^H$ and $K^H_t$ to denote $K^\Phi$ and respectively $K^{\Phi}_t$.
\end{notation}

The assignment $\Phi \mapsto K^{\Phi}$ matches compositions of isotopies with convolutions of sheaves because of Equation~\eqref{for: ms-isotopy}. That is, if $\Phi$ and $\Psi$ are two contact isotopies, then $K^{\Phi} \circ|_U K^\Psi = K^{\Phi \circ \Psi}$. In particular, $K^{\Phi}$ is invertible with respect to $\circ|_U$, and it defines, a $U$-family of auto-equivalence on $\Sh(M)$. Explicitly, composing $F \in \Sh(M)$ with $K^{\Phi}$ gives a $U$-family sheaf $K^{\Phi} \circ F$ which satisfies the microsupport estimation
\begin{equation} \label{for: ms-isotopy-object}
\cSSif(K^{\Phi} \circ F) \subseteq \left\{\left( \varphi_t(x,\xi), t, -H_t \circ\varphi_t(x,\xi) \right) \mid (x, \xi) \in \cSSif(F), t \in U \right\}.
\end{equation}
Similarly, in the case of a sheaf kernel $L \in \Sh(M \times M)$, we have 
\begin{equation} \label{for: ms-isotopy-endofunctor}
\cSSif(K^{\Phi} \circ L) \subseteq \left\{\left( (x,\xi), \varphi_t(y,\eta), t, -H_t \circ \varphi_t(x,\xi) \right) \mid (x, \xi, y, \eta) \in \cSSif(L), t \in U \right\}.
\end{equation}

One structure which comes out from this machinery is the continuation map \cite[Section 3]{Kuo23}. Let $\sigma$ be the fiber coordinate for $T^*I$. The subcategory of $\Sh(M \times I)$ which consists of objects $F$ such that $\cSS(F) \subseteq \{\sigma \leq 0\}$. Being a subcategory closed under limits, it admits a surjective left adjoint and it can be given explicitly by
\[
1_{ \{ (t,s) \mid s > t\}}[1] \circ (-) \colon \Sh(M \times I) \rightarrow \Sh_{\sigma \leq 0}(M \times I),
\]
where $1_{ \{ (t,s) \mid s > t\}}$ convolves on the $I$-direction. Thus, for $F \in \Sh(M \times I)$, $F$ is in $\Sh_{\sigma \leq 0}(M \times I)$ if and only if the canonical map $F = 1_{ \{ s = t\} } \circ F \xrightarrow{\sim} 1_{ \{ (t,s) \mid s > t\}}[1] \circ F$ is an isomorphism. In this case, for $s \in I$ and the inclusion $i_s \colon M \times \{s\} \hookrightarrow M \times I$, we have $i_s^*F = 1_{(-\infty, s)} \circ F[1]$.

\begin{definition} \label{def: conti-map}
For $s \leq t$ in $I$ and $F \in \Sh_{\sigma \leq 0}(M \times I)$, the \emph{continuation map}
\[
c(H, s, t) \colon i_s^*F \rightarrow i_t^*F
\]
is the map induced by $1_{(-\infty, s)} \circ F[1] \xrightarrow{c_{s,t} \circ \id_F [1]} 1_{(-\infty, t)} \circ F[1]$, where 
$c_{s,t} \colon 1_{(-\infty, s)} \rightarrow 1_{(-\infty, t)}$ is the universal continuation map coming from the inclusion $(-\infty,s) \subseteq (-\infty, t)$.
\end{definition}

\begin{proposition}[{\cite[Proposition 3.9]{Kuo23}}]\label{prop:continuation} 

Let $F \in \Sh(M)$ be a sheaf and $\Phi \colon S^*M \times I \to S^*M$ be a contact isotopy that is positive on $\cSS^\infty(F)$. Then there exists a canonical continuation morphism that only depends on the homotopy class of the isotopy and the time-1 map
    \[
    c(\Phi, F) \colon F \to K^\Phi_1 \circ F.
    \]
\end{proposition}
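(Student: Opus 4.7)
The plan is to apply the abstract continuation morphism of \Cref{def: conti-map} to the family sheaf $K^\Phi \circ F \in \Sh(M \times I)$, and then to verify canonicity by running the same argument in a two-parametric family.

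First I would show that $K^\Phi \circ F$ lies in $\Sh_{\sigma \leq 0}(M \times I)$. By the singular support estimate \eqref{for: ms-isotopy-object},
\[
    \cSSif(K^\Phi \circ F) \subseteq \{(\varphi_t(x,\xi), t, -H_t \circ \varphi_t(x,\xi)) \mid (x,\xi) \in \cSSif(F), t \in I\},
\]
so the $\sigma$-component of any covector in the singular support equals $-H_t \circ \varphi_t(x,\xi)$ for some $(x,\xi) \in \cSSif(F)$. Since $\Phi$ is positive on $\cSS^\infty(F)$, this quantity is non-positive, giving the required inclusion $\cSS(K^\Phi \circ F) \subseteq \{\sigma \leq 0\}$.

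Next I would define $c(\Phi, F)$ as the composite
\[
    F \xrightarrow{\sim} i_0^*(K^\Phi \circ F) \xrightarrow{c(H, 0, 1)} i_1^*(K^\Phi \circ F) \xrightarrow{\sim} K^\Phi_1 \circ F,
\]
where the middle arrow is the continuation morphism of \Cref{def: conti-map} applied at $s = 0$, $t = 1$, and the outer isomorphisms use the initial condition $i_0^*K^\Phi = 1_\Delta$ of \Cref{thm: GKS} together with the compatibility of $i_t^*$ with convolution in the $M$-direction.

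For canonicity, given a homotopy $(\Phi^s)_{s \in J}$ (with $J = [0,1]$) between two such isotopies through contact isotopies positive on $\cSS^\infty(F)$ with fixed endpoints, \Cref{rem:GKS}(2) produces a two-parameter sheaf kernel $K^{\Phi^\bullet} \in \Sh(M \times M \times I \times J)$ restricting to $K^{\Phi^s}$ at each $s \in J$. Repeating the singular support calculation, I would verify that the $\sigma_t$-component remains non-positive, so applying the continuation construction in the $t$-direction yields a family of continuation morphisms in $\Sh(M \times J)$ interpolating between $c(\Phi^0, F)$ and $c(\Phi^1, F)$. Since the restriction at $\{t = 1\}$ depends only on the shared time-1 map by \Cref{rem:GKS}(2), the two continuation morphisms must agree. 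The main subtlety I anticipate is ensuring that the singular support bound for the two-parameter kernel convolved with $F$ really gives $\sigma_t \leq 0$ uniformly in $s$; this relies on positivity on $\cSS^\infty(F)$ holding throughout the homotopy, which should be built into the notion of homotopy class of positive isotopies relevant here.
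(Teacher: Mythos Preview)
Your proposal is correct and follows exactly the approach the paper outlines in \Cref{rem:continuation}: use the singular support estimate \eqref{for: ms-isotopy-object} to place $K^\Phi \circ F$ in $\Sh_{\sigma \leq 0}(M \times I)$, apply \Cref{def: conti-map} to extract the morphism, and then run the two-parameter version via \Cref{rem:GKS}(2) to obtain independence of the homotopy class. The only point worth flagging is cosmetic: positivity of $\Phi$ on $\cSS^\infty(F)$ should be read as positivity of $H_t$ along the trajectory $\varphi_t(\cSS^\infty(F))$, so that $-H_t\circ\varphi_t(x,\xi)\leq 0$ for $(x,\xi)\in\cSS^\infty(F)$, which is exactly what you use.
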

\begin{remark}\label{rem:continuation}
    For any positive contact isotopy $\Phi \colon S^*M \times I \to S^*M$, \cite[Proposition 3.33]{Kuo23} implies that there exists a continuation morphism
    \[
    c(\Phi, F) \colon F \to K^{\Phi}_1 \circ F.
    \]
    For a $J$-family of positive contact isotopies $\Phi \colon S^*M \times I \times J \to S^*M$, when $\varphi = \varphi_{1,s}$ for any $s \in J$, by \cite[Proposition 3.9]{Kuo23}, we know that $c(\Phi_s, F)$ for different $s \in J$ is canonically identified and thus does not depend on the parameter, so we have a canonical continuation morphism.
\end{remark}

Consider also the situation where we have two isotopy $\Phi, \Psi \colon S^* M \times I \rightarrow S^*M$. Denote by $H_\Phi$ and $H_\Psi$ their corresponding Hamiltonians. Then $\Phi^{-1}$ is generated by the Hamiltonian $\overline{H}_\Phi = -H_\Phi(\varphi_t(p), t)$. If $H_\Phi \geq H_\Psi$, then the composition $\Psi^{-1} \circ \Phi$ will be a positive isotopy since it is generated by $(\overline{H}_\Psi \sharp H_\Phi)(p,t) \coloneqq -H_\Psi(\psi_t(p),t) + H_{\Phi}(\psi_t(p),t)$ and there is a continuation map 
\begin{equation}\label{eq: order-gives-continuation-map-pre}
    1_\Delta \to K^{\Psi^{-1} \circ \Phi}_1 =  K^{\Psi^{-1}}_1 \circ K^{\Phi}_1.
\end{equation}

\begin{definition}
    We refer to the morphism $K^\Psi_1 \to K^{\Phi}_1$ that corresponds to \eqref{eq: order-gives-continuation-map-pre} under the equivalence $K^\Psi_1 \circ (-) \colon \Sh(M \times M) \xrightarrow{\sim} \Sh(M \times M)$ as the continuation map induced from $\Phi \geq \Psi$.
\end{definition}

Let $\tau \colon S^*M \to \bR$ be a smooth function and define $\Sh_{\tau \geq 0}(M)$ to be the full subcategory consisting of objects $F$ with $\cSSif(F) \subseteq \{ \tau \ge 0 \}$. By \eqref{for: ms-isotopy-object}, $\cSS(K^\tau \circ F) \subseteq \{\sigma \leq 0\}$ and thus has a notion of continuation map.

\begin{notation} \label{not: GKS-kernels}
For $F \in \Sh_{\tau \geq 0}(M)$, we denote by $c_t(F,\tau) \colon F \rightarrow K^{\tau}_t \circ F$ the continuation map associated to $F$ and $\tau$. When the exact sheaf $F$ and Hamiltonian $\tau$ are clear from the context or not crucial, we simply denote it by $c_t$ and $c \coloneqq c_1$ for time-1 continuation maps. Similarly, for a morphism $u \colon G \rightarrow F$, we use the notation $u_t$ to mean the image of $u$ under the equivalence
\[
    K^{\tau}_t \circ (-) \colon \Hom(G,F) \xrightarrow{\sim} \Hom(K^{\tau}_t \circ G, K^{\tau}_t \circ F).
\]
 \end{notation}

\begin{remark} \label{rmk: compatibility-morphisms}
The wrappings the morphisms are compatible with continuation maps. More precisely, with previous notation, we note that since the continuation maps between $1_{(-\infty, t)} \circ \cF[1]$ are induced from the universal continuation maps, as recalled in \Cref{def: conti-map}, we have $c_t(F) \circ u = u_t \circ c_t(G)$. That is, the diagram
\[
\begin{tikzpicture}
\node at (0,2) {$G$};
\node at (4,2) {$F$};
\node at (0,0) {$G_t$};
\node at (4,0) {$F_t$};

\draw [->, thick] (0.4,2) -- (3.7,2) node [midway, above] {$u$};
\draw [->, thick] (0.4,0) -- (3.7,0) node [midway, above] {$u_t$};

\draw [->, thick] (0,1.7) -- (0,0.3) node [midway, left] {$c_t(G)$}; 
\draw [->, thick] (4,1.7) -- (4,0.3) node [midway, right] {$c_t(F)$};
\node[scale=1.5] at (2,1) {$\circlearrowleft$};
\end{tikzpicture}
\] 
commutes canonically since the convolution operation $\circ$ is bilinear.
\end{remark}

One of the main benefits of having continuation maps is that one can import the perturbation trick from Floer theory to microlocal sheaf theory. First, without any constructibility assumption, continuation maps respect colimit and limit with respect to the time direction.

\begin{lemma}[{\cite[Corollaries 3.4 \& 3.8]{Kuo23}}]\label{lem: continuation-(co)lim}
    Let $F \in \Sh_{\tau \geq 0}(M)$. Then the canonical maps
    \[
       K^{\tau}_t  \circ F  \rightarrow \lmi{s \rightarrow t^+} \left( K^{\tau}_s  \circ F \right) \quad \text{and} \quad \clmi{s \rightarrow t^-} \, \left( K^{\tau}_s \circ F \right) \rightarrow K^{\tau}_t  \circ F 
    \]
    are equivalences.
\end{lemma}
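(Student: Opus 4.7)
The plan is to exploit the microsupport condition on $K^\tau \circ F$ to reduce both statements to properties of the restriction sheaves $i_s^*(K^\tau \circ F)$. By the estimate~\eqref{for: ms-isotopy-object} together with the hypothesis $\tau \ge 0$ on $\cSSif(F)$ (and the fact that the zero locus of $\tau$ is preserved by its own contact flow, so $\tau \circ \varphi_t \ge 0$ on $\cSSif(F)$), the sheaf $G \coloneqq K^\tau \circ F \in \Sh(M \times I)$ satisfies $\cSS(G) \subseteq \{\sigma \le 0\}$. By the discussion preceding \Cref{def: conti-map}, I can then identify $K^\tau_s \circ F \simeq i_s^* G \simeq 1_{(-\infty,s)}[1] \circ G$, with convolution in the $I$-variable.

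For the colimit statement, the argument is essentially formal: convolution preserves colimits in the kernel variable (being built from tensor product and proper pushforward), and the equivalence $\colim_{s \to t^-} 1_{(-\infty,s)} \simeq 1_{(-\infty,t)}$ follows from $(-\infty,t) = \bigcup_{s<t}(-\infty,s)$. Convolving with $G[1]$ then yields the desired equivalence on the colimit side.

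For the limit statement, convolution does not commute with limits in general, so microlocal input is required. The one-sided microsupport bound $\cSS(G) \subseteq \{\sigma \le 0\}$ is the essential ingredient: by the non-characteristic deformation lemma \cite[Proposition 2.7.2]{KS90}, each hypersurface $M \times \{t + \delta'\}$ is non-characteristic for $G$ (its conormal direction $+ds$ lies outside $\cSS(G)$), so for any neighborhood $V \ni m$ in $M$ and small $\epsilon, \delta' > 0$, the restriction
$$R\Gamma(V \times (t - \epsilon, t + \delta'); G) \xrightarrow{\sim} R\Gamma(V \times (t - \epsilon, t]; G)$$
is an equivalence. This makes the inverse system $\{i_s^* G\}_{s > t}$ pro-equivalent to the constant system at $i_t^* G$, and taking the derived limit produces the equivalence.

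The main obstacle is the limit statement, where the derived limit must be combined carefully with microsupport propagation. The essential input is the one-sidedness of $\cSS(G) \subseteq \{\sigma \le 0\}$, which makes the rightward hypersurfaces non-characteristic and forces the inverse system of restrictions to stabilize at $i_t^* G$; no analogous propagation holds on the left, which is precisely why the colimit direction can be handled through the formal commutation of convolution with colimits alone.
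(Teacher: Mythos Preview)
Your colimit argument is correct and coincides with the content of \cite[Corollary~3.4]{Kuo23}. The limit argument, however, has a genuine gap: the one-sided bound $\cSS(G) \subseteq \{\sigma \le 0\}$ is \emph{not} sufficient. Take $M$ a point and $G = 1_{(-\infty,0]}$ on $I = \bR$; then $\cSS(G) = 0_{(-\infty,0]} \cup \{0\}\times\bR_{\le 0} \subseteq \{\sigma \le 0\}$, yet the stalk $G_0$ is the unit while $G_s = 0$ for every $s > 0$, so $i_0^* G \not\simeq \lim_{s \to 0^+} i_s^* G$. Your propagation $R\Gamma(V \times (t-\epsilon, t+\delta'); G) \simeq R\Gamma(V \times (t-\epsilon, t]; G)$ is indeed valid under $\sigma \le 0$, but it does not control the slice restrictions $i_s^* G$; the assertion that ``$\{i_s^* G\}_{s > t}$ is pro-equivalent to the constant system at $i_t^* G$'' is exactly where the counterexample breaks your argument.

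The paper instead invokes \cite[Corollary~3.8]{Kuo23}, whose hypothesis is that $K^\tau \circ F$ be $I$-non-characteristic: $\cSS(K^\tau \circ F) \cap (0_M \times T^*_t I) \subseteq 0_{M \times I}$ for all $t \in I$. This rules out nonzero covectors that are purely in the $I$-direction, of \emph{either} sign, and is what actually forces the map to the limit to be an equivalence. It does not follow from $\sigma \le 0$; it follows from the estimate \eqref{for: ms-isotopy-object}, since every point of $\cSSif(K^\tau \circ F)$ has $M$-component $\varphi^\tau_t(x,\xi) \in S^*M$, hence nonzero. You should replace your appeal to the one-sided bound with this verification.
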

\begin{proof}
    The cited corollaries asserts that, as long as continuation map exists, in the general sense in \Cref{def: conti-map}, the map from the colimit is always an equivalence. To conclude the map to the limit is an equivalence, one has to check that the sheaf $K^{\tau} \circ F$ is $I$-non-characteristic, i.e., 
    \begin{align}\label{eq: non-characteristic}
        \cSS(K^{\tau} \circ F) \cap \left(0_M \times T_t^* I\right) \subseteq 0_{M \times I}, \quad \text{for} \ t \in I,
    \end{align}
    but this follows directly from \Cref{for: ms-isotopy-object}.
\end{proof}

Secondly, there is a deformation lemma which parallels the transverse intersection phenomenon from Floer theory \cite{Zhou-isotopy, Kuo23}. The usual setting is when $F \in \Sh(M)$ is a compactly supported sheaf with a Legendrian microsupport (at the infinity) $\Lambda$ such that $\Lambda$ is positively displaceable in the sense that there exists $\epsilon > 0$ such that $\varphi_t(\Lambda) \cap \Lambda = \varnothing$ for $0 < t < \epsilon$. For the purpose of this paper, we generalizes to the case of non-compactly supported sheaves. We will fix a complete Riemannian metric on the manifold $M$. This determines a complete Riemannian metric on $S^*M$ and a standard contact form on $S^*M$ which is bounded with respect to the metric.

\begin{definition}\label{def: bound-reeb}
Let $M$ be a complete Riemannian manifold and $\Phi \colon S^* M \times I \rightarrow S^* M$ be a contact isotopy. Then $\Phi$ is called a \emph{bounded contact isotopy} if the contact Hamiltonian $H_\Phi$ with respect to the standard contact form induced by the Riemannian metric is uniformly $C^0$-bounded below and above and $C^1$-bounded above by some positive constants.
\end{definition}

\begin{remark}
On the cosphere bundle $S^*M$, there always exist a bounded positive contact isotopy. For example, one can consider a complete Riemannian manifold with positive injectivity radius and take the geodesic flow associated to the metric. This is defined by the constant Hamiltonian, and the lengths of closed orbits have a positive lower bound by the injectivity radius.
\end{remark}

\begin{lemma}[{\cite[Lemma 2.10]{Zhou-isotopy}, \cite[Proposition 3.18]{Kuo23}}]\label{lem: pert} 
Let $F$ and $G \in \Sh_\Lambda(M)$.
Let $\tau \colon S^* M \rightarrow \bR$ be a bounded positive contact Hamiltonian such that
$\varphi_t^\tau(\Lambda) \cap \Lambda = \varnothing$ for $0 < t < \epsilon$.
Then for any $t > 0$, the continuation map $F \rightarrow K^\tau_t \circ F$ induces an isomorphism
\[
    \Hom(G,F) \xrightarrow{\sim} \Hom(G, K^\tau_t \circ F).
\]
\end{lemma}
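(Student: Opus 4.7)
My plan is to convert the pointwise statement into a microlocal statement about a single sheaf on the parameter interval $I$: I build a parametric Hom sheaf $\mathcal{A}$ over the GKS kernel family, compute its microsupport, and conclude using the $\Sh_{\sigma \le 0}(I)$ structure already developed in this section.

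First, fix a closed interval $I \supseteq [0,t]$ containing $0$ and set $\mathscr{H} \coloneqq K^\tau \circ F \in \Sh(M \times I)$. By \eqref{for: ms-isotopy-object} and positivity of $\tau$, one has $\mathscr{H} \in \Sh_{\sigma \le 0}(M \times I)$ with $i_0^* \mathscr{H} = F$ and $i_t^* \mathscr{H} = K^\tau_t \circ F$, and the map in the statement matches the intrinsic continuation map from \Cref{def: conti-map}. I then form the parametric Hom sheaf
\[
\mathcal{A} \coloneqq R(p_I)_* R\mathscr{H}om(p_M^{-1} G, \mathscr{H}) \in \Sh(I),
\]
where $p_M, p_I$ are the projections from $M \times I$. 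Base change (or a variant for non-proper $M$, using that the relevant singular supports intersect $0_M \times T^*I$ only in the zero section, again by \eqref{for: ms-isotopy-object}) gives $i_t^* \mathcal{A} \simeq \Hom(G, K^\tau_t \circ F)$, and the intrinsic continuation maps of $\mathcal{A}$ are identified with those of the statement via \Cref{rmk: compatibility-morphisms}.

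I next estimate $\cSS(\mathcal{A}) \cap T^*I$ using the microsupport bounds for internal hom and pushforward (\cite[Prop.~5.4.14]{KS90}). Since $\cSS(p_M^{-1} G) = \cSS(G) \times 0_I$, the only contributions to $\cSS(\mathcal{A}) \cap T^*I$ come from pairs in $\cSS(G) \cap \varphi_s^\tau(\cSS(F)) \subseteq \Lambda \cap \varphi_s^\tau(\Lambda)$, with $\sigma = -\tau(y, \eta)$. Hence $\mathcal{A} \in \Sh_{\sigma \le 0}(I)$, and by the displaceability hypothesis $\cSS(\mathcal{A})|_{(0, \epsilon)} \subseteq 0$, so $\mathcal{A}$ is locally constant on $(0, \epsilon)$.

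Combining these two properties with \Cref{lem: continuation-(co)lim} and a boundary analysis at $s = 0$ (e.g., extending $\tau$ smoothly by zero in a small negative interval so that $\mathcal{A}_s = \Hom(G,F)$ constantly for $s \le 0$), I deduce that the continuation map $\mathcal{A}_0 \to \mathcal{A}_t$ is an isomorphism for $t \in (0, \epsilon)$. For general $t > 0$, I iterate using the flow-invariance of the hypothesis: for any $s_0 > 0$, $\Lambda' \coloneqq \varphi_{s_0}^\tau(\Lambda)$ also satisfies $\varphi_s^\tau(\Lambda') \cap \Lambda' = \varnothing$ for $0 < s < \epsilon$, so applying the argument to the pair $(G, K^\tau_{s_0} \circ F)$, together with naturality of GKS convolution, lets me extend the isomorphism step-by-step. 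The main obstacle is precisely this last step: past the displacement window $(0, \epsilon)$, the sheaf $\mathcal{A}$ may acquire microsupport from ``Reeb-chord-like'' re-intersections $\Lambda \cap \varphi_s^\tau(\Lambda) \ne \varnothing$, so one must carefully propagate the isomorphism across these jumps, exploiting that the class of Legendrians satisfying the displaceability hypothesis is preserved under $\varphi^\tau$ and that the GKS kernels compose multiplicatively.
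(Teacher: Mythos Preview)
Your approach via the parametric Hom sheaf $\mathcal{A}$ and a microsupport estimate on $I$ is essentially the standard argument from the cited references \cite{Zhou-isotopy,Kuo23}, and it is correct when $\supp F \cap \supp G$ is compact. The paper, however, deliberately takes a different route precisely to remove that compactness hypothesis (as it says, ``This relaxes the compact support assumption in the cited references''). Rather than estimating $\cSS(R(p_I)_*(-))$, the paper shows directly that the system $\{\Hom(G, K^\tau_s \circ F)\}_{0<s<\epsilon}$ is constant: for $0<t<t'<\epsilon$ one builds a cut-off Hamiltonian $\tilde\tau = \rho\tau$ with $\rho\equiv 0$ near $\Lambda$ and $\rho\equiv 1$ on $\bigcup_{t<s<t'}\varphi^\tau_s(\Lambda)$ (the boundedness of $\tau$ guarantees such a $\rho$ exists with bounded $C^1$-norm and that the resulting flow is complete), so that $K^{\tilde\tau}_{t'-t}$ fixes $G$ while carrying $K^\tau_t\circ F$ to $K^\tau_{t'}\circ F$. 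The continuation map then becomes a GKS auto-equivalence, and one concludes via $\Hom(G,F)=\lim_{s\to 0^+}\Hom(G,K^\tau_s\circ F)$ from \Cref{lem: continuation-(co)lim}.

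The genuine gap in your argument is the pushforward step: both the microsupport bound for $R(p_I)_*$ and the base-change identification $i_t^*\mathcal A \simeq \Hom(G, K^\tau_t\circ F)$ use properness of $p_I$ on the relevant support, which fails for non-compact $M$; the $I$-non-characteristic condition you cite does not by itself supply either. Separately, your iteration for $t\geq\epsilon$ is not correct as written: after replacing $F$ by $K^\tau_{s_0}\circ F$, the intersection governing $\cSS(\mathcal A)$ becomes $\cSS(G)\cap\varphi^\tau_{s_0+s}(\cSS(F))\subseteq\Lambda\cap\varphi^\tau_{s_0+s}(\Lambda)$, which is \emph{not} controlled by the flow-invariant condition $\Lambda'\cap\varphi^\tau_s(\Lambda')=\varnothing$ you invoke (the pair $G,\,K^\tau_{s_0}\circ F$ no longer has singular support in a common Legendrian). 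That said, the paper's own argument also only establishes the range $0<t<\epsilon$, which is the only range ever used in the applications.
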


\begin{proof} 
We follow the proof of \cite[Section 4.1]{LiEstimate}. We know by \Cref{lem: continuation-(co)lim} that
\[
    \Hom(G, F) = \lim_{t \to 0^+} \Hom(G, K^\tau_t \circ F),
\]
and we will argue that the right hand side is a constant diagram. The key observation is that the assumption in the lemma ensures the existence of a Hamiltonian isotopy, which realizes the diagram by convolutions by GKS kernels and shows that it is constant. This relaxes the compact support assumption in the cited references \cite{Zhou-isotopy,Kuo23}.

Since the $C^1$-norm of the contact Hamiltonian $\tau$ is uniformly bounded below, we know that the contact vector field of the isotopy $\Phi^\tau$ is bounded below by a positive constant. Therefore, for any $0 < t < t' < \epsilon$, there exists a neighborhood of $\Lambda$ of positive radius that is disjoint from $\varphi_s^\tau(\Lambda)$ for any $t < s < t'$. Then consider a cut-off function $\rho$ such that $\rho|_{\Lambda} = 0$, $\rho|_{\varphi_s^\tau(\Lambda)} = 1$ for any $t < s < t'$, and $\rho$ has bounded $C^1$-norm. Since the $C^1$-norm of $\tau$ is bounded, we know that the $C^1$-norm of the contact Hamiltonian $\tilde{\tau} = \rho \tau$ is also bounded, and hence the associated contact flow is complete. Moreover, we know that the contact flow generated by $\rho\tau$ sends $\Lambda \cup \varphi_t^\tau(\Lambda)$ to $\Lambda \cup \varphi_{t'}^\tau(\Lambda)$. Therefore, the isotopy $\Phi^{\tilde{\tau}}$ generated by $\tilde{\tau} = \rho \tau$ implies that
\[
    \Hom(G, K^\tau_t \circ F) \simeq \Hom(K^{\tilde{\tau}}_{t' -t} \circ G, K^{\tilde{\tau}}_{t'-t} \circ (K^\tau_t \circ F_t)) \simeq \Hom(G, K^\tau_{t'} \circ F).
\]
This shows that we have a constant diagram and finishes the proof.
\end{proof}

\subsection{Microlocalization and microsheaves}
We recall the notion of microlocalization following \cite[Chapters 4 \& 6]{KS90} and \cite{Guillermou23, nadler2016wrapped, NadlerShende}.

Let $X \subseteq S^*M$ be a closed subset. We know by \cite[Proposition 3.4]{GV24} that the inclusion functor $\iota_{X*} \colon \Sh_X(M) \hookrightarrow \Sh(M)$ is limit and colimit preserving, and consequently has a left and right adjoint. Since $\Sh(M)$ is a dualizable category, we can show that the left adjoint is represented by a sheaf kernel \cite[Proposition 3.1 and 3.2]{KSZ23}, which we call the microlocal kernel (initiated in the works of \cite[Section 7.1]{Tamarkin15}, \cite{Chiu}, \cite[Section 3.5]{Guillermou23}, and \cite{Zhang24capacity}):

\begin{proposition}[{\cite[Lemma 4.6]{Kuo-Li-duality}, \cite[Proposition 3.5]{KSZ23}}]
Let $X \subseteq S^*M$ be a closed subset. Then the left adjoint $\iota_X^* \colon \Sh(M) \to \Sh_X(M)$ of the tautological inclusion $\Sh_X(M) \hookrightarrow \Sh(M)$ is represented by the convolution with the unique sheaf kernel $P_X = \iota_{T^*M \times X}^*1_\Delta \in \Sh_{T^*M \times X}(M \times M)$.
\end{proposition}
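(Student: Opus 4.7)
The plan is to leverage the Morita-type correspondence between sheaf kernels on $M \times M$ and colimit-preserving endofunctors of $\Sh(M)$, pinning down the kernel representing the reflector $\iota_{X*} \circ \iota_X^*$ by a universal property argument. Since $\Sh(M) = \Ind(\Sh^c(M))$ is compactly generated, it is dualizable as a presentable $\SC$-linear $\infty$-category, so standard presentable $\infty$-category theory gives an equivalence $\Sh(M \times M) \simeq \operatorname{Fun}^L(\Sh(M), \Sh(M))$ sending $K \mapsto K \circ (-)$, under which $1_\Delta$ corresponds to $\id_{\Sh(M)}$. This is the content invoked from \cite[Proposition 3.1 and 3.2]{KSZ23}.

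I would next identify the reflective subcategory $\Sh_{T^*M \times X}(M \times M) \hookrightarrow \Sh(M \times M)$ with, on the functor side of the Morita equivalence, those colimit-preserving endofunctors $\Sh(M) \to \Sh(M)$ that factor through $\Sh_X(M) \hookrightarrow \Sh(M)$. The easy direction follows from the convolution singular support estimate (\Cref{lem: microsupport-convolution-composition}): if $\cSS^\infty(K) \subseteq T^*M \times X$, then $\cSS^\infty(K \circ F) \subseteq X$ for every $F \in \Sh(M)$. The converse direction uses the $\Omega$-lens test (\Cref{lem: omega-lenses-test}) applied on the second factor: for any conic open $\Omega \subseteq \dT^*M$ disjoint from $X$ with an $\Omega$-lens $\Sigma$, one tests $K$ against suitable sheaves $F$ so that $\Hom(1_\Sigma, K \circ F)$ witnesses the microsupport of $K$ on the second factor, forcing $\cSS^\infty(K) \cap (T^*M \times \Omega) = \varnothing$.

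With this subcategory identification, the proposition follows by a universal-property argument. Both $\iota_X^*$ and $\iota_{X*}$ are colimit-preserving by \cite[Proposition 3.4]{GV24}, so the composite $\iota_{X*} \iota_X^* \colon \Sh(M) \to \Sh(M)$ factors through $\Sh_X(M)$ and corresponds under Morita to a unique kernel $P \in \Sh_{T^*M \times X}(M \times M)$, with the unit $\id \to \iota_{X*} \iota_X^*$ corresponding to a morphism $1_\Delta \to P$. For any $K \in \Sh_{T^*M \times X}(M \times M)$, the functor $K \circ (-)$ factors through $\Sh_X(M)$, so the universal property of the reflector gives $\Hom(\iota_{X*} \iota_X^*, K \circ (-)) \simeq \Hom(\id_{\Sh(M)}, K \circ (-))$ in $\operatorname{Fun}^L(\Sh(M), \Sh(M))$, which translates under Morita into $\Hom(P, K) \simeq \Hom(1_\Delta, K)$ by precomposition with $1_\Delta \to P$. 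This is exactly the universal property defining $\iota_{T^*M \times X}^* 1_\Delta$, so $P = \iota_{T^*M \times X}^* 1_\Delta = P_X$.

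I expect the main obstacle to be the converse direction in the subcategory identification --- that a kernel whose convolution action on $\Sh(M)$ always lands in $\Sh_X(M)$ must have second-factor microsupport contained in $X$. This is the genuine microlocal content of the proposition; once it is in hand, the remainder is clean categorical formalism built on the adjunctions from \cite[Proposition 3.4]{GV24}.
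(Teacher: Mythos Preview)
The paper does not supply its own proof of this proposition: it is stated with attributions to \cite[Lemma 4.6]{Kuo-Li-duality} and \cite[Proposition 3.5]{KSZ23}, and the preceding sentence only records that dualizability of $\Sh(M)$ guarantees the left adjoint is represented by a kernel. So there is no in-paper argument to compare against; your proposal is a reconstruction of the cited results.

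Your strategy---Morita equivalence $\Sh(M\times M)\simeq\operatorname{Fun}^L(\Sh(M),\Sh(M))$, identification of $\Sh_{T^*M\times X}(M\times M)$ with functors landing in $\Sh_X(M)$, then a universal-property match---is the expected one and is essentially how the cited references proceed. Two remarks. First, the forward direction is not quite as automatic as you suggest: \Cref{lem: microsupport-convolution-composition} carries a properness hypothesis on supports that is not satisfied for an arbitrary $K\in\Sh_{T^*M\times X}(M\times M)$ convolved with arbitrary $F$, so one must either restrict to a generating class of $F$ or argue via the $\Omega$-lens test directly. Second, the converse direction you flag as the obstacle can be bypassed. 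Rather than proving the kernel $P$ representing $\iota_{X*}\iota_X^*$ lies in $\Sh_{T^*M\times X}$, verify directly that $Q\coloneqq\iota_{T^*M\times X}^*1_\Delta$ represents the reflector: the right adjoint to $(-)\circ F$ is $G\mapsto \bD F\boxtimes G$, and the box-product estimate (no properness needed) gives $\bD F\boxtimes G\in\Sh_{T^*M\times X}(M\times M)$ for $G\in\Sh_X(M)$, whence $\Hom(Q\circ F,G)=\Hom(Q,\bD F\boxtimes G)=\Hom(1_\Delta,\bD F\boxtimes G)=\Hom(F,G)$. Combined with $Q\circ F\in\Sh_X(M)$ (the forward direction for $Q$ alone), this yields the adjunction without ever needing the converse inclusion of subcategories.
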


\begin{notation} \label{not: proj}
We denote the projector from $\Sh(M)$ to $\Sh_{\tau\geq 0}(M)$ by $P_\tau$. This is, $P_\tau$ the unique sheaf kernel realizing the left adjoint of the inclusion $\Sh_{\tau\geq 0}(M) \subseteq \Sh(M)$ by convolution. 
\end{notation}

\begin{definition}
    Let $\Lambda \subseteq S^*M$ be a closed subset. Define the presheaf of stable $\infty$-categories $\msh^{\mathrm{pre}}_\Lambda$ on $S^*M$ by
    \[
        \msh^{\mathrm{pre}}_\Lambda \colon \Omega \mapsto \msh^{\mathrm{pre}}_\Lambda(\Omega) = \Sh_{\Lambda \cup \Omega^c}(M) / \Sh_{\Omega^c}(M),
    \]
    where the quotient is taken in $\PrLst$. 
    Define the sheafification of $\msh_\Lambda^{\mathrm{pre}}$ in the category of all stable $\infty$-categories to be $\msh_\Lambda$.\footnote{The choice of the ambient categories (of categories) ensure \cite[Theorem 6.1.2]{KS90} applies, which implies that the sheaf-Hom in $\msh_\Lambda$ is computed by the celebrated $\mu hom$ defined in \cite[Definition 4.4.1]{KS90}.}
\end{definition}

The sheaf of categories $\msh_\Lambda$ is a sheaf in $S^*M$ that is supported in $\Lambda$. By abuse of notations, we will use $\msh_\Lambda$ to denote the restriction of the sheaf on $\Lambda$. When $\Lambda$ is a smooth Legendrian, the sheaf of categories $\msh_\Lambda$ is a locally constant sheaf on $\Lambda$, whose obstruction theory is studied by Guillermou \cite{Gu12,Guillermou23} and Jin \cite{Jin20Jhomomorphism}:

\begin{theorem}[{\cite[Section 10.3 \& 10.6]{Guillermou23}, \cite[Theorem 1.1]{Jin20Jhomomorphism}}]\label{thm:musheaf}
    Let $\Lambda \subseteq S^*M$ be a closed Legendrian. Then the sheaf of categories $\msh_\Lambda$ is a locally constant sheaf of categories whose stalk is $\cC$ that is classified by the composition of the Lagrangian Gauss map and the delooping of the $J$-homomorphism 
    \[
        \Lambda \to U/O \to B\mathrm{Pic}(\mathbb{S}) \to B\mathrm{Pic}(\cC).
    \]
    For $\cC = \Mod(\bZ/2\bZ)_{/[1]}$ the (pre)triangulated orbit category of $\Mod(\bZ/2\bZ)$, the classifying map is always trivial, and for $\cC = \Mod(\bZ)$, the classifying map is trivial if and only if the Maslov class $\mu(\Lambda) = 0$ and the relative second Stiefel--Whitney class $rw_2(\Lambda) = 0$, and $\mu(\Lambda) = 0$ if and only if there exist objects with stalks being bounded complexes.
    \footnote{In fact, in order to exhibit the existence of $F$, the main result of \cite[Section 11.6]{NadlerShende}, which states that the obstruction is classified by a map to $B \mathrm{Pic}(\SC)$, is enough. This is because, when $\SC = \Mod(\bZ/2 \bZ)_{/[1]}$, the latter is a point $\{*\}$ so the obstruction vanishes trivially. As explained in \cite[Remark 11.21]{NadlerShende}, one of Jin's main contributions in \cite{Jin20Jhomomorphism} is to identify the obstruction in the universal case $\SC = \Mod( \bS)$ with the well-known invariant, the $J$-homomorphism.} 
\end{theorem}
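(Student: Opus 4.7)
The plan is to reduce the global statement to two pieces: (i) show that $\msh_\Lambda$ is a locally constant sheaf of categories on the smooth Legendrian $\Lambda$, and (ii) identify the monodromy of this local system with the promised composition $\Lambda \to U/O \to B\mathrm{Pic}(\mathbb{S}) \to B\mathrm{Pic}(\cC)$.

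For (i), I would argue that around any point $p \in \Lambda$, one can produce a contactomorphism of a neighborhood of $p$ in $S^*M$ sending a neighborhood of $p$ in $\Lambda$ to a neighborhood of a point of the zero section in $J^1(\bR^{n-1}) \cong T^*\bR^{n-1} \times \bR \subseteq S^*(\bR^{n-1} \times \bR)$ (viewed as the cosphere bundle at infinity). By \Cref{thm: GKS} and the microsupport estimate \eqref{for: ms-isotopy-object}, such a contactomorphism is implemented at the sheaf level by convolution with the GKS kernel $K^\Phi$, and this convolution descends to an equivalence between the corresponding microsheaf categories. Thus, locally near $p$, the sheaf $\msh_\Lambda$ is equivalent to the pullback of $\msh_{0_{\bR^{n-1}}}$ along $\Lambda \cap U \simeq$ open ball, which is the constant sheaf of categories with stalk $\cC$ (using the basic computation that microsheaves near a point of the zero section recover $\cC$ itself, via the skyscraper/constant-sheaf generators). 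This produces a cover of $\Lambda$ on which $\msh_\Lambda$ is constant with stalk $\cC$, proving local constancy.

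For (ii), the classifying map of a locally constant sheaf of categories with stalk $\cC$ is a map $\Lambda \to B\Aut(\cC)$, which factors through $B\mathrm{Pic}(\cC)$ on the component containing $\cC$ itself. The content is to identify this map with the composition through $U/O$ and the $J$-homomorphism. This is exactly the main theorem of Jin \cite{Jin20Jhomomorphism} (building on \cite[Section~10.3 \& 10.6]{Guillermou23}), whose construction I would import directly: the Lagrangian Gauss map records the tangential data of $\Lambda$ as a section of the Lagrangian Grassmannian bundle $U/O \to \Lambda$, and the identification of the microsheaf monodromy with the $J$-homomorphism proceeds by comparing the monodromy along a loop with the Maslov flow obtained by rotating the Lagrangian tangent plane, where each elementary rotation contributes a suspension functor, matching the generator of $\pi_1(U/O) \to \pi_0 \mathrm{Pic}(\mathbb{S})$. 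The hardest step, which I would defer to the cited work, is precisely this identification of the sheaf-theoretic monodromy with the topological $J$-homomorphism in families.

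Finally, for the specific coefficient rings: when $\cC = \Mod(\bZ/2\bZ)_{/[1]}$, the target space $B\mathrm{Pic}(\cC)$ collapses (the $[1]$-quotient kills the shift, and mod $2$ removes signs), so the classifying map is automatically null and $\msh_\Lambda$ is unobstructedly constant. When $\cC = \Mod(\bZ)$, we use $\pi_0 \mathrm{Pic}(\bZ) = \bZ$ (detecting shifts) and $\pi_1 \mathrm{Pic}(\bZ) = \bZ/2$ (detecting signs), so the composition $\Lambda \to U/O \to B\mathrm{Pic}(\bZ)$ becomes null precisely when the induced class in $H^1(\Lambda;\bZ)$ (the Maslov class $\mu(\Lambda)$, pulled back from $\pi_1(U/O) = \bZ$) and the induced class in $H^2(\Lambda;\bZ/2)$ (the relative Stiefel--Whitney class $rw_2(\Lambda)$, pulled back from $\pi_2(U/O) = \bZ/2$) both vanish. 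Moreover, existence of an object with bounded-complex stalks is equivalent to being able to lift along the $[1]$-quotient, i.e.\ to the vanishing of the degree-$1$ obstruction alone, which is exactly $\mu(\Lambda) = 0$.
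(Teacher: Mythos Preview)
The paper does not give its own proof of this theorem: it is stated as a result imported from \cite[Section~10.3 \& 10.6]{Guillermou23} and \cite[Theorem~1.1]{Jin20Jhomomorphism}, with the footnote further crediting \cite{NadlerShende} for the abstract obstruction-theoretic form. Your proposal is therefore not being compared against an argument in the paper but is instead a sketch of the cited results; as such it is accurate in outline---local constancy via GKS quantization of local contact charts, identification of the classifying map with the $J$-homomorphism via Jin, and the Postnikov-tower analysis of $B\mathrm{Pic}(\cC)$ for the specific coefficients---and correctly flags that the substantive step (matching the microsheaf monodromy with the $J$-homomorphism) is exactly what you defer to \cite{Jin20Jhomomorphism}.
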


\begin{notation}\label{not:legendrian_movie}
Given a contact Hamiltonian $\tau \colon S^*M \to \bR$ that is positive on a subset $\Lambda \subseteq S^*M$, we denote by $\Lambda^\tau_t \subseteq S^*M$ the time-$t$ push-off of $\Lambda$ under the contact Hamiltonian flow of $\tau$. We also denote by $\Lambda^{\tau} \subseteq S^*(M \times I)$ the Legendrian movie of $\Lambda$ under the contact Hamiltonian flow of $\tau$. 
\end{notation}

Following the suggestion of Viterbo, one can understand microsheaves using sheaves via the following doubling construction. The following theorem is a generalization of the result of Guillermou \cite[Proposition 11.3.5 \& Theorem 12.1.1]{Guillermou23} to arbitrary positive Hamiltonian flows.

\begin{theorem}[{\cite[Theorem 7.18]{NadlerShende}, \cite[Theorem 4.1]{KuoLi22}}]\label{thm:doubling}
    Let $M$ be a complete Riemannian manifold and $\Lambda \subseteq S^*M$ be a properly embedded Legendrian with a tubular neighborhood of positive radius. Let $\tau \colon S^*M \to \bR$ be a contact Hamiltonian that is positive and bounded on $\Lambda$. Then for sufficiently small $\epsilon > 0$, there exists a fully faithful embedding
    \[
    w_\Lambda \colon \msh_\Lambda(\Lambda) \hookrightarrow \Sh_{\Lambda^{\id} \cup \Lambda^\tau}(M \times [0, \epsilon]) \hookrightarrow \Sh_{\Lambda \cup \Lambda_\epsilon^\tau}(M).
    \]
\end{theorem}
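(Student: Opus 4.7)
The plan is to generalize the Guillermou doubling construction to arbitrary bounded positive contact Hamiltonians on (possibly non-compact) properly embedded Legendrians with positive-radius tubular neighborhoods. Given the GKS sheaf quantization $K^\tau \in \Sh(M \times M \times [0,\epsilon])$ from \Cref{thm: GKS} and the canonical $[0,\epsilon]$-family of continuation maps $c \colon 1_\Delta \boxtimes 1_{[0,\epsilon]} \to K^\tau$ (from \Cref{prop:continuation}, \Cref{rem:continuation}, plus the positivity of $\tau$), define the doubling kernel
$$K^\tau_w \coloneqq \operatorname{cone}\!\left(1_\Delta \boxtimes 1_{[0,\epsilon]} \xrightarrow{c} K^\tau\right) \in \Sh(M \times M \times [0,\epsilon]),$$
and set $w_\Lambda(\widetilde{F}) \coloneqq K^\tau_w \circ \widetilde{F}$ for a representative $\widetilde{F} \in \Sh_\Lambda(M)$ of a class in $\msh_\Lambda(\Lambda)$.

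I would first verify the singular support estimate: by \Cref{lem: microsupport-convolution-composition} and \eqref{for: ms-isotopy-object}, $\cSS^\infty(K^\tau \circ \widetilde{F}) \subseteq \Lambda^\tau$ while $\cSS^\infty(\widetilde{F} \boxtimes 1_{[0,\epsilon]}) \subseteq \Lambda^{\id}$, so $\cSS^\infty(w_\Lambda(\widetilde{F})) \subseteq \Lambda^{\id} \cup \Lambda^\tau$. Next, the construction should be shown to descend to the quotient $\msh_\Lambda(\Lambda) \cong \Sh_{\Lambda \cup \Omega^c}(M)/\Sh_{\Omega^c}(M)$: if $\widetilde{F} \in \Sh_{\Omega^c}(M)$ for a conic open $\Omega \supseteq \Lambda$, then \Cref{lem: pert}, applied with $\cSS(\widetilde{F}) \subseteq \Omega^c$ disjoint from $\Lambda$, forces the continuation map $\widetilde{F} \to K^\tau_t \circ \widetilde{F}$ to be an isomorphism, so $w_\Lambda(\widetilde{F}) \simeq 0$. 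Combined with the sheaf property of $\msh_\Lambda$ tested on $\Omega$-lenses via \Cref{lem: omega-lenses-test}, this gives a well-defined functor $w_\Lambda$ on $\msh_\Lambda(\Lambda)$.

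For full faithfulness, given $\widetilde{F}, \widetilde{G}$, I would compute $\Hom(w_\Lambda(\widetilde{F}), w_\Lambda(\widetilde{G}))$ using the defining cone triangle and \Cref{lem: pert}. The positive-radius tubular neighborhood hypothesis on $\Lambda$, together with the boundedness of $\tau$ in the sense of \Cref{def: bound-reeb}, provides a uniform $\epsilon > 0$ with $\varphi^\tau_t(\Lambda) \cap \Lambda = \varnothing$ for $0 < t < \epsilon$. The perturbation lemma then collapses the contribution of the $1_\Delta \boxtimes 1_{[0,\epsilon]}$ term of the triangle, and the surviving piece is identified with $\Hom_{\msh_\Lambda(\Lambda)}(F,G)$ using \Cref{lem: continuation-(co)lim} and \Cref{rmk: compatibility-morphisms}. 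The second fully faithful inclusion $\Sh_{\Lambda^{\id} \cup \Lambda^\tau}(M \times [0,\epsilon]) \hookrightarrow \Sh_{\Lambda \cup \Lambda^\tau_\epsilon}(M)$ is realized by restriction $i_\epsilon^*$ to the time-$\epsilon$ slice; non-characteristicity at $t = \epsilon$ in the sense of \eqref{eq: non-characteristic} ensures $i_\epsilon^*$ preserves Homs on the image of $w_\Lambda$ and lands in the claimed subcategory.

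The main obstacle I anticipate is precisely the non-compact setting: upgrading pointwise displacement $\varphi^\tau_t(\Lambda) \cap \Lambda = \varnothing$ into uniform displacement inside a collar of definite width, and justifying that continuation maps and their colimits remain well-behaved without any compact-support hypothesis on $\widetilde F$ or $\Lambda$. The tubular neighborhood condition together with the $C^0$- and $C^1$-boundedness of $\tau$ are what make this uniform control possible, feeding into the non-compact generalization of the perturbation statement in \Cref{lem: pert} relative to the compactly-supported versions of \cite{Zhou-isotopy, Kuo23, Guillermou23}.
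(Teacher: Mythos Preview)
The paper does not give its own proof of this statement; it is imported from the cited references with only the remark that the non-compact extension follows the argument of \cite[Theorem 3.7]{Li23Cobordism2} once $\tau$ is $C^1$-bounded and $\Lambda$ has a tubular neighborhood of positive radius. Your sketch correctly identifies the doubling kernel as the cone of the family continuation map and the analytic input (uniform displacement of $\Lambda$ from $\Lambda^\tau_t$ coming from the positive-radius neighborhood and the bounds on $\tau$), and this is indeed the shape of the argument in \cite{KuoLi22}.

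There is, however, a structural gap. You write ``for a representative $\widetilde F \in \Sh_\Lambda(M)$ of a class in $\msh_\Lambda(\Lambda)$'', but objects of $\msh_\Lambda(\Lambda)$ do not in general admit global representatives in $\Sh_\Lambda(M)$: the category $\msh_\Lambda$ is the \emph{sheafification} of the quotient presheaf, so a global section is only locally representable with compatibility on overlaps. Producing an honest sheaf from such data is exactly what the doubling theorem accomplishes, so assuming global representatives is circular. The cited constructions either work locally on a Darboux cover and prove the outputs glue (Guillermou), or produce $w_\Lambda$ abstractly as the left adjoint to the microlocalization $\Sh_{\Lambda \cup \Lambda^\tau_\epsilon}(M) \to \msh_\Lambda(\Lambda)$ and then identify it with the cone kernel (Nadler--Shende, Kuo--Li). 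Your one-line allusion to ``the sheaf property of $\msh_\Lambda$'' does not supply this step.

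There is also a misapplication of \Cref{lem: pert} in your descent argument. That lemma requires $F, G \in \Sh_\Lambda(M)$ and concludes only that $\Hom(G,F) \to \Hom(G, K^\tau_t \circ F)$ is an isomorphism; it says nothing about $\widetilde F$ with $\cSS^\infty(\widetilde F) \subseteq \Omega^c$, and even in the correct regime it does not assert that the continuation map $\widetilde F \to K^\tau_t \circ \widetilde F$ is itself an isomorphism. Since the hypothesis on $\tau$ is only that it is positive and bounded \emph{on $\Lambda$}, for $\widetilde F \in \Sh_{\Omega^c}(M)$ the cone $K^\tau_w \circ \widetilde F$ need not vanish, so the functor does not descend to the quotient in the way you describe.
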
 
\begin{remark}
    While \cite[Theorem 4.1]{KuoLi22} is proved for compact Legendrians $\Lambda \subseteq S^*M$, it can be generalized to properly embedded Legendrians with tubular neighborhoods of positive radius by exactly the same argument in \cite[Theorem 3.7]{Li23Cobordism2} whenever the contact Hamiltonian $\tau$ is $C^1$-bounded, so that the push-off $\Lambda_\epsilon^\tau$ is still contained in the tubular neighborhood of $\Lambda$.
\end{remark}

\subsection{Interleaving distance and completeness of sheaves}  
Given a smooth function $\tau \colon S^*M \to \bR$, Guillermou--Viterbo \cite{GV24} and Asano--Ike \cite{AI24} considered the interleaving distance $d_\tau$ on certain categories of sheaves. We adapt the setting of Petit--Schapira \cite{petit2023thickening} and Guillermou--Viterbo \cite{GV24}. 

\begin{definition} \label{def: interleaving-distance} 
Let $\tau$ be a smooth Hamiltonian on $S^*M$. For $F, G \in \Sh_{\tau\geq 0}(M)$ and $a, b \geq 0$, we say that $(F,G)$ is \emph{$(a, b)$-interleaved} if there are morphisms
    \begin{equation}\label{eq: inv-up-to-c}
    u \colon F \to K^{\tau}_a \circ G, \ v \colon G \to K^{\tau}_b \circ F
    \end{equation}
    such that the compositions 
    \[
        K^{\tau}_{-a} \circ F \to G \to K^{\tau}_{b} \circ F, \quad K^{\tau}_{-b} \circ G \to F \to K^{\tau}_{a} \circ G
    \]
    are isomorphic to the natural continuation maps $c_{-a,b} \colon K^{\tau}_{-a} \to K^{\tau}_{b}$ and $c_{-b,a} \colon K^{\tau}_{-b} \to K^{\tau}_{a}$. The \emph{interleaving distance} between $F$ and $G$ are defined by
    \[
    d_\tau(F, G) = \inf\{a + b \mid (F, G) \text{ is }(a, b)\text{-interleaved}\}.
    \]
\end{definition}
\begin{remark}
    Here, we view the $K^{\tau}_c$'s as auto-equivalences on $\Sh_{\tau\geq 0}(M)$, and the existence of continuation maps are implied by and \Cref{def: conti-map} and \Cref{prop:continuation}. 
    A pair $(F,G)$ is $(a, b)$-interleaved if $F$ and $G$ are isomorphic to each other up to continuation maps with a size $a + b$.  
\end{remark}

Following the heuristic of viewing sheaves as singular Lagrangians, the notion of interleaving distance shares a close relation with its underlying geometry. In fact, microsupports respect the metric topology induced from $d_\tau$.

\begin{proposition}[{\cite[Proposition 6.26]{GV24}}]\label{thm:ss-limit}
    Let $F_n \in \Sh_{\tau\geq 0}(M)$ be a sequence of sheaves such that $\lim_{n\to \infty} d_\tau(F_n,F_\infty) = 0$, i.e., the sequence $F_n$ converges to $F_\infty$ under the interleaving distance. Then
    \[
        \cSS(F_\infty) \subseteq \bigcap\nolimits_{n\geq 1}\overline{\bigcup\nolimits_{k \geq n}\cSS(F_n)}.
    \]
\end{proposition}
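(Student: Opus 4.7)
The plan is to proceed by contraposition using the $\Omega$-lens characterization (\Cref{lem: omega-lenses-test}). Suppose $(x_0, \xi_0) \in \dT^*M$ does not belong to $\bigcap_{n\geq 1}\overline{\bigcup_{k \geq n}\cSS(F_k)}$; then there exist $n_0 \in \bN$ and a conic open neighborhood $\Omega$ of $(x_0,\xi_0)$ with $\Omega \cap \cSS(F_k) = \varnothing$ for all $k \geq n_0$. Using continuity and homogeneity of the contact Hamiltonian flow $\varphi^\tau_s$, one finds a smaller conic open neighborhood $\Omega' \subseteq \Omega$ of $(x_0, \xi_0)$ and some $\epsilon > 0$ such that $\varphi^\tau_s(\Omega') \subseteq \Omega$ for all $|s| \leq \epsilon$. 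The goal is to show $\cSS(F_\infty) \cap \Omega' = \varnothing$, which by \Cref{lem: omega-lenses-test} reduces to proving $\Hom(1_\Sigma, F_\infty) = 0$ for every $\Omega'$-lens $\Sigma$.

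Fix such a $\Sigma$. For every sufficiently large $n$, we have $n \geq n_0$ and $a_n + b_n \leq \epsilon$, in which case the microsupport estimate \eqref{for: ms-isotopy-object} yields $\cSS(K^\tau_{b_n} \circ F_n) \subseteq \varphi^\tau_{b_n}(\cSS(F_n))$. Since $\Omega' \subseteq \varphi^\tau_{b_n}(\Omega)$ and $\cSS(F_n) \cap \Omega = \varnothing$, the set $\cSS(K^\tau_{b_n} \circ F_n)$ is disjoint from $\Omega'$, so \Cref{lem: omega-lenses-test} gives $\Hom(1_\Sigma, K^\tau_{b_n} \circ F_n) = 0$.

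The interleaving assumption provides morphisms $u_n\colon F_n \to K^\tau_{a_n} \circ F_\infty$ and $v_n\colon F_\infty \to K^\tau_{b_n} \circ F_n$ whose compositions realize the continuation maps as in \Cref{def: interleaving-distance}. Applying the auto-equivalence $K^\tau_{b_n}\circ(-)$ to the identity $u_n \circ (v_n)_{-b_n} = c_{-b_n,a_n}(F_\infty)$ yields $(u_n)_{b_n} \circ v_n = c_{0,a_n+b_n}(F_\infty)$, exhibiting the continuation $c_{0,a_n+b_n}(F_\infty) \colon F_\infty \to K^\tau_{a_n+b_n}\circ F_\infty$ as factoring through $K^\tau_{b_n} \circ F_n$. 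Applying $\Hom(1_\Sigma, -)$ and combining with the vanishing from the previous paragraph, the induced map $c_{0,a_n+b_n}^* \colon \Hom(1_\Sigma, F_\infty) \to \Hom(1_\Sigma, K^\tau_{a_n+b_n} \circ F_\infty)$ is zero. From the composition identity $c_{0,s} = c_{a_n+b_n,s} \circ c_{0,a_n+b_n}$ for $s \geq a_n + b_n$, and because $a_n + b_n \to 0$, one concludes $c_{0,s}^* = 0$ for every $s > 0$.

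Finally, \Cref{lem: continuation-(co)lim} asserts that the natural map $F_\infty \to \lim_{s \to 0^+} K^\tau_s \circ F_\infty$ is an equivalence. Applying $\Hom(1_\Sigma, -)$, which commutes with limits in the second variable, produces an equivalence $\Hom(1_\Sigma, F_\infty) \xrightarrow{\sim} \lim_{s \to 0^+} \Hom(1_\Sigma, K^\tau_s \circ F_\infty)$ whose projection to each term $\Hom(1_\Sigma, K^\tau_s \circ F_\infty)$ is exactly $c_{0,s}^*$, hence zero. Consequently this equivalence is the zero map, forcing $\Hom(1_\Sigma, F_\infty) = 0$ and completing the proof. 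The main subtle point is to recognize that the interleaving identity packages exactly into the factorization of $c_{0,a_n+b_n}(F_\infty)$ through the vanishing object $K^\tau_{b_n} \circ F_n$; the continuity statement of \Cref{lem: continuation-(co)lim} then upgrades the pointwise vanishing along times $a_n+b_n \to 0$ to the vanishing of $\Hom(1_\Sigma, F_\infty)$ itself.
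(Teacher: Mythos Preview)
Your proof is correct and mirrors precisely the $\Omega$-lens technique that the paper itself employs. The paper does not give its own proof of this proposition (it is cited from \cite{GV24}), but your argument is the natural sequential generalization of the paper's direct proof of \Cref{cor: ss-is-same} and \Cref{prop: ss-is-same-kernel}: choose an $\Omega$-lens $\Sigma$, use the interleaving to factor the continuation map $c_{0,a_n+b_n}(F_\infty)$ through $K^\tau_{b_n}\circ F_n$, observe that $\Hom(1_\Sigma, K^\tau_{b_n}\circ F_n)=0$ once $b_n$ is small, and conclude via \Cref{lem: continuation-(co)lim}. One small remark: as written you treat only points $(x_0,\xi_0)\in \dT^*M$, so strictly speaking the zero-section part of $\cSS$ is not addressed; this is harmless here (the paper's own proofs of the related statements make the same restriction, and all applications in the paper use only $\cSSif$), and the zero-section case follows easily from the conicity of $\cSS$ together with a support argument.
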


\begin{remark} 
    A strict inequality in \Cref{thm:ss-limit} is sometimes unavoidable. 
    One might wonder: is it possible to perturb $F_n$ by some small $K^\tau_{\epsilon_n}$'s and obtain $F_n'$ so that it converges to the same limit $F_\infty$ while making the inequality an equality? 
    The answer is that this is in general not possible. 
    Per \Cref{thm:complete}, if $\lim_{n\rightarrow \infty} d(F_n,0) = 0$ and $F_n$ are constructible, $F_\infty = 0$ must not have any microsupport. However, for the family of sheaves $F_n = 1_{[0,{1}/{n})}$ on $\bR$, even up to perturbation to some $F_n' = 1_{[\epsilon_n, {1}/{n} + \epsilon_n)}$ for some sequence $\epsilon_n \to 0$, the intersection
    \[\bigcap\nolimits_{n\geq 1}\overline{\bigcup\nolimits_{k \geq n}\cSS(F_n')}\] 
    is always non-empty and hence the equality cannot hold.
\end{remark} 

The following result is an immediate corollary of \Cref{thm:ss-limit}. However, one can also directly prove it using $\Omega$-lenses.

\begin{proposition}\label{cor: ss-is-same} 
    Let $F, G \in \Sh_{\tau\geq 0}(M)$ such that $d_\tau(F, G) = 0$. Then $\cSS(F) = \cSS(G)$.  
\end{proposition}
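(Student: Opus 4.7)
The proof plan is to read this off directly from the singular support estimate for interleaving limits. I would apply \Cref{thm:ss-limit} to the constant sequence $F_n \coloneqq G$; the hypothesis $d_\tau(F, G) = 0$ gives $\lim_{n \to \infty} d_\tau(F_n, F) = 0$ trivially, so with $F_\infty = F$ one obtains
\[
\cSS(F) \;\subseteq\; \bigcap_{n \geq 1} \overline{\bigcup_{k \geq n} \cSS(F_k)} \;=\; \overline{\cSS(G)} \;=\; \cSS(G),
\]
where the last equality uses that singular supports are closed by construction. Interchanging the roles of $F$ and $G$, which is legitimate because the $(a, b)$-interleaving condition of \Cref{def: interleaving-distance} is manifestly symmetric under swapping $F \leftrightarrow G$ and $a \leftrightarrow b$, yields the reverse inclusion, whence $\cSS(F) = \cSS(G)$.

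For the alternative argument via $\Omega$-lenses advertised in the statement, the plan is to verify directly through \Cref{lem: omega-lenses-test} that $\cSS(F) \cap \Omega = \varnothing$ iff $\cSS(G) \cap \Omega = \varnothing$ for every conic open $\Omega \subseteq \dT^*M$. Given such an $\Omega$ and a slightly smaller conic open $\Omega'$ whose closure lies in $\Omega$, one would exploit the $(a, b)$-interleaving morphisms $u \colon F \to K^\tau_a \circ G$ and $v \colon G \to K^\tau_b \circ F$ with $a + b$ arbitrarily small, the microsupport estimate \eqref{for: ms-isotopy-object} applied to $K^\tau_\bullet \circ F$, and a perturbation argument in the spirit of \Cref{lem: pert}, in order to show that $\Hom(1_\Sigma, F)$ and $\Hom(1_{\Sigma'}, G)$ vanish in tandem for appropriate lenses. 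The key geometric input is that, for sufficiently small $a + b$, the forward $\tau$-flow carries $\Omega'$ back inside $\Omega$, so $\cSS(F) \cap \Omega = \varnothing$ forces $\cSS(K^\tau_b \circ F) \cap \Omega' = \varnothing$, and the interleaving map $v$ then transfers this vanishing to $G$.

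The only technical point, present in either approach, is ensuring that the $\tau$-flow preserves the relevant conic open subset over a sufficiently short time interval. This is absorbed into the closedness clause of \Cref{thm:ss-limit} in the first approach, and into the choice of nested conic neighborhoods $\Omega' \subseteq \Omega$ in the second. Neither step is a genuine obstacle; the first approach is the more efficient of the two and is the one I would use.
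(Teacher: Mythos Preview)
Your first approach is correct and is exactly what the paper does: it explicitly remarks that the statement follows from \Cref{thm:ss-limit} by taking $F_n = G$ for all $n$.

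For the direct $\Omega$-lens argument, your sketch is close in spirit but the paper's execution differs in one key respect. Rather than invoking \Cref{lem: pert} or nested neighborhoods $\Omega' \subseteq \Omega$, the paper works with a conic open $\Omega$ satisfying $\overline{\Omega} \cap \cSS(F) = \varnothing$ and uses \Cref{lem: continuation-(co)lim} instead: for small $\epsilon$ one has $\Hom(1_\Sigma, K^\tau_\epsilon \circ F) = 0$, so the interleaving factors the continuation map $\Hom(1_\Sigma, G) \to \Hom(1_\Sigma, K^\tau_{2\epsilon} \circ G)$ through zero for all small $\epsilon$; since $\Hom(1_\Sigma, G) \xrightarrow{\sim} \lim_{\epsilon \to 0^+}\Hom(1_\Sigma, K^\tau_{2\epsilon} \circ G)$ by \Cref{lem: continuation-(co)lim}, this forces $\Hom(1_\Sigma, G) = 0$. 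This avoids the geometric flow-displacement reasoning you proposed and replaces it with a purely categorical limit argument.
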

\begin{proof}
    We remark that the statement follows from \Cref{thm:ss-limit} by setting $F_n = G$ for all $n$. 
    We present here a straightforward argument using only the fact that $\Omega$-lenses detect microsupport.

    By symmetry, it is sufficient to show $\cSS(G) \subseteq \cSS(F)$. Given any conic open subset $\Omega \subseteq T^*M$ such that $\overline{\Omega} \cap \cSS(F) = \varnothing$, consider any $\Omega$-lens $\Sigma$. When $\epsilon > 0$ is sufficiently small, we have
    \[
        \Hom(1_\Sigma, K^{\tau}_\epsilon  \circ F) = 0.
    \]
    Since $d_\tau(F, G) = 0$, for any $\epsilon > 0$ sufficiently small, there exist morphisms such that the composition
    \[
        \Hom(1_\Sigma, G) \to \Hom(1_\Sigma, K^{\tau}_\epsilon \circ F) \to \Hom(1_\Sigma, K^{\tau}_{2\epsilon} \circ G)
    \]
    is the natural continuation morphism. Thus, for any $\epsilon > 0$ sufficiently small, the natural continuation morphism factors through zero. However, we know that by \Cref{lem: continuation-(co)lim}
    \[
        \Hom(1_\Sigma, G) \xrightarrow{\sim} \lim\nolimits_{\epsilon \to 0}\Hom(1_\Sigma, K^{\tau}_{2\epsilon} \circ G).
    \]
    This implies that $\Hom(1_\Sigma, G) = 0$ so $\Omega \cap \cSS(G) = \varnothing$ by \Cref{lem: omega-lenses-test}. Thus, $\cSS(G) \subseteq \cSS(F)$.
\end{proof}

The converse of the previous \Cref{cor: ss-is-same} is however untrue. The reason is that the distance on the category comes from both the  underlying geometry as well as the algebraic coefficient:

\begin{example} \label{eg: loc-is-discrete}
    Local systems $\Loc(M) \subseteq (\Sh_{\tau \geq 0}(M), d_\tau)$ consists of discrete points. 
    Let $F \in \Sh_{\tau \geq 0}(M)$ and $L \in \Loc(M)$. Since $\Hom(K^{\tau}_t \circ F, L) = \Hom(F, L)$ for any $t \in \R$, $(F,L)$ being $(a,b)$-interleaving is equivalent to having $u \colon F \rightarrow L$ and $v \colon L \rightarrow F$ such that $u \circ v$ and $v \circ u$ are both identity. That is, $d_\tau(F,L) < \infty$ if and only if $F = L$.
\end{example}

As a consequence of the example above, we see that $(\Sh_{\tau \geq 0}(M), d_\tau)$ in general can have infinitely many disconnected components. For example,  when $M = S^1$, $\Sh_{\tau \geq 0}(M)$ contains at least $\Aut(1_{\SC})$-many components; in the case when $\SC = \Mod(\Bbbk)$, $\Aut(1_{\SC}) = \Bbbk^\times$. 

Now, we give a mild generalization to the fact that the interleaving distance induces a complete metric space on a suitable subcategory of $\Sh_{\tau\geq 0}(M)$. This generalizes the foundation theorem established by Guillermou--Viterbo and the first two authors is that the interleaving distance is a complete pseudo-metric on the corresponding category of sheaves:

\begin{theorem}[{\cite[Proposition 6.22]{GV24}, \cite[Corollary 4.5]{AI24}}]\label{thm:complete}
    The interleaving distance $d_\tau$ is a complete pseudo-metric on $\Sh_{\tau\geq 0}(M)$. 
\end{theorem}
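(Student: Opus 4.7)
The plan is to verify the pseudo-metric axioms directly from the definition, then establish completeness via a telescopic colimit construction in the spirit of \cite{GV24, AI24}, with the compact-support hypothesis absorbed by the general form of \Cref{lem: continuation-(co)lim}.

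First I would check the pseudo-metric axioms. Reflexivity $d_\tau(F,F) = 0$ is witnessed by the $(0,0)$-interleaving via identity maps, since the universal continuation map $c_{0,0}$ is the identity. Symmetry is immediate under the swap $(u,a) \leftrightarrow (v,b)$. For the triangle inequality, given an $(a,b)$-interleaving $(u_1, v_1)$ between $F$ and $G$ and a $(c,d)$-interleaving $(u_2, v_2)$ between $G$ and $H$, I would set
\[
u \coloneqq (u_2)_a \circ u_1 \colon F \to K^\tau_{a+c} \circ H, \qquad v \coloneqq (v_1)_d \circ v_2 \colon H \to K^\tau_{b+d} \circ F,
\]
with wrappings as in \Cref{not: GKS-kernels}. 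The required identity $v \circ K^\tau_{-a-c} \circ u = c_{-a-c, b+d}$ follows by splicing in $v_2 \circ K^\tau_{-c} \circ u_2 = c_{-c,d}$, using \Cref{rmk: compatibility-morphisms} to commute continuation maps past wrappings, and invoking the additivity $c_{s, s'} = c_{t,s'} \circ c_{s,t}$.

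For completeness, given a Cauchy sequence $(F_n)$, I would pass to a subsequence with $d_\tau(F_n, F_{n+1}) < 2^{-n}$, realized by interleavings with parameters $(a_n, b_n)$ satisfying $a_n + b_n < 2^{-n}$. Setting $s_n \coloneqq \sum_{k < n} a_k$, which converges to some $s_\infty < \infty$, the wrapping of $u_n \colon F_n \to K^\tau_{a_n} \circ F_{n+1}$ by $s_n$ produces arrows $K^\tau_{s_n} \circ F_n \to K^\tau_{s_{n+1}} \circ F_{n+1}$ assembling into a sequential diagram in $\Sh_{\tau \ge 0}(M)$; the latter subcategory is closed under colimits in $\Sh(M)$ because $\{\tau \ge 0\} \subseteq S^*M$ is closed. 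I would then define
\[
F_\infty \coloneqq K^\tau_{-s_\infty} \circ \colim_n K^\tau_{s_n} \circ F_n,
\]
which lies in $\Sh_{\tau \ge 0}(M)$ since $K^\tau_t$ preserves this subcategory.

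It remains to show $d_\tau(F_n, F_\infty) \to 0$. The forward interleaving morphism comes from wrapping the $n$-th cocone leg by $-s_\infty$, giving $F_n \to K^\tau_{s_\infty - s_n} \circ F_\infty$. The backward morphism arises via the universal property of the colimit applied to wrappings of the $v_k$'s for $k \ge n$, whose compatibility with the telescope is supplied by \Cref{rmk: compatibility-morphisms}. I expect the main obstacle to be verifying that the two composites reproduce the natural continuation maps $c_{-a,b}$ on the nose: this amounts to a homotopy-coherent diagram chase showing that the universal cocone, combined with the identities $v_k \circ K^\tau_{-a_k} \circ u_k = c_{-a_k, b_k}$, assembles into the required interleaving square. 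The relaxation beyond \cite{GV24, AI24} to non-compactly supported $F$ is permitted because \Cref{lem: continuation-(co)lim} holds without compactness, its $I$-non-characteristic hypothesis being automatic from \eqref{for: ms-isotopy-object}.
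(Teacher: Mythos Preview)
The paper does not supply its own proof of this theorem; it is stated as a citation to \cite[Proposition~6.22]{GV24} and \cite[Corollary~4.5]{AI24} with no accompanying argument. Your sketch is precisely the telescope-colimit construction used in those references, so there is nothing to compare against here beyond noting that you have correctly reconstructed the cited argument.

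One small remark on the sketch itself: the backward interleaving morphism $F_\infty \to K^\tau_{b'_n} \circ F_n$ requires building a cocone out of the entire telescope, and for indices $m < n$ you should use the transition maps of the diagram itself (the wrapped $u_k$'s) composed with the $m = n$ leg, rather than the $v_k$'s; you mention only the $v_k$'s for $k \ge n$, which handles $m \ge n$ but leaves the earlier terms implicit. The coherence verification you flag as ``the main obstacle'' is indeed where the work lies, and in the cited references it is discharged by reducing to the cocycle identities $c_{s,t} \circ c_{r,s} = c_{r,t}$ for the universal continuation maps together with the bilinearity of convolution as in \Cref{rmk: compatibility-morphisms}. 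Your closing remark about non-compact support is well-taken: the present paper's \Cref{lem: continuation-(co)lim} is stated without any compactness hypothesis, so the argument transports verbatim.
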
 

The non-Hausdorff feature of this metric comes from the fact that, for sheaves with arbitrary size, there can be a small portion which is undetectable:

\begin{remark}[{\cite[Example 6.5]{GV24}}] \label{rmk: degenecy}
    The interleaving distance $d_\tau$ is in general a degenerate pseudo-metric on $\Sh_{\tau\geq 0}(M)$. For example, let $M = \bR_t$, $F = \bigoplus_{n=1}^\infty 1_{[1/n, +\infty)}$ and $G = (\bigoplus_{n=1}^\infty 1_{[1/n, +\infty)} )\oplus 1_{[0,+\infty)}$. Then $d_\tau(F, G) = 0$ but $F \neq G$.
\end{remark}
 
However, on the category of constructible sheaves up to infinity with perfect stalks, Petit--Schapira--Waas showed that the distance is non-degenerate \cite[Theorem 3.4]{petit2025property}, and on the category of limits of constructible sheaves with perfect stalks over a field $\Sh_{lc}^b(M; \Bbbk)$, Guillermou--Viterbo showed that the distance is non-degenerate \cite[Proposition~B.8]{GV24}.

We can give a proof of non-degeneracy of the interleaving distance for all constructible sheaves whose singular supports are positively displaceable by the positive isotopy $\tau$.
 
\begin{theorem}\label{thm:non-deg-0}
    Let $\tau \colon S^*M \to \bR$ define a bounded contact isotopy on $S^*M$. Then the interleaving distance $d_\tau$ is non-degenerate on the category $\Sh_{c,\tau\geq 0}(M)$ of constructible sheaves $F \in \Sh_{\tau\geq 0}(M)$ such that $\varphi^\tau_t(\cSS^\infty(F)) \cap \cSS^\infty(F) = \varnothing$ for some $0 < t < \epsilon$.
\end{theorem}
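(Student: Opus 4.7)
The plan is to upgrade the approximate interleaving morphisms to a genuine isomorphism using the perturbation lemma \Cref{lem: pert}. Suppose $F, G \in \Sh_{c,\tau\geq 0}(M)$ with $d_\tau(F,G) = 0$. By \Cref{cor: ss-is-same}, $\cSS^\infty(F) = \cSS^\infty(G) =: \Lambda$, so in particular $F, G \in \Sh_\Lambda(M)$, and by hypothesis $\Lambda$ is positively displaceable by $\tau$ in the sense required by \Cref{lem: pert}.

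Since $d_\tau(F,G) = 0$, for $\delta > 0$ small enough we can fix an $(a,b)$-interleaving with $a + b < \delta$, given by morphisms $u \colon F \to K^\tau_a \circ G$ and $v \colon G \to K^\tau_b \circ F$ whose compositions recover the natural continuation maps $c(F)_{a+b}$ and $c(G)_{a+b}$. Applying \Cref{lem: pert}, post-composition with the continuation maps $c(G)_a$ and $c(F)_b$ induces bijections
\[
    \Hom(F,G) \xrightarrow{\sim} \Hom(F, K^\tau_a \circ G), \qquad \Hom(G,F) \xrightarrow{\sim} \Hom(G, K^\tau_b \circ F).
\]
Pulling $u$ and $v$ back along these bijections produces unique morphisms $\tilde u \colon F \to G$ and $\tilde v \colon G \to F$ with $u = c(G)_a \circ \tilde u$ and $v = c(F)_b \circ \tilde v$.

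Using the naturality of continuation maps (\Cref{rmk: compatibility-morphisms}) and the semigroup identity $K^\tau_a(c(F)_b) \circ c(F)_a = c(F)_{a+b}$, a direct computation gives
\[
    c(F)_{a+b} = K^\tau_a(v) \circ u = K^\tau_a(c(F)_b) \circ K^\tau_a(\tilde v) \circ c(G)_a \circ \tilde u = c(F)_{a+b} \circ (\tilde v \circ \tilde u).
\]
A second appeal to \Cref{lem: pert} shows that post-composition with $c(F)_{a+b}$ is a bijection on $\Hom(F,F)$, whence $\tilde v \circ \tilde u = \id_F$. The symmetric argument, applied to the other interleaving identity $K^\tau_b(u) \circ v = c(G)_{a+b}$, yields $\tilde u \circ \tilde v = \id_G$, and therefore $F \simeq G$ as desired.

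The main technical point I expect is the careful bookkeeping of the continuation maps under convolution, in particular the identity $K^\tau_a(c(F)_b) \circ c(F)_a = c(F)_{a+b}$, which follows from the construction of $c$ as the convolution of the universal continuation map in $\Sh(M \times I)$ with $F$, combined with the semigroup property $K^\tau_a \circ K^\tau_b = K^\tau_{a+b}$. I note that constructibility of $F$ and $G$ is not invoked explicitly in the above argument; the essential hypothesis is the positive displaceability of $\cSS^\infty(F)$ by $\tau$, which is built into the definition of $\Sh_{c,\tau\geq 0}(M)$.
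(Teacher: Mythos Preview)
Your proposal is correct and follows essentially the same approach as the paper: use \Cref{cor: ss-is-same} to identify the singular supports, invoke \Cref{lem: pert} to lift the interleaving morphisms $u,v$ to genuine maps $\tilde u, \tilde v$, and then use naturality of continuation maps together with the semigroup identity to cancel and obtain $\tilde v \circ \tilde u = \id_F$ and $\tilde u \circ \tilde v = \id_G$. The only cosmetic differences are that the paper symmetrizes to an $(\epsilon,\epsilon)$-interleaving rather than keeping the asymmetric $(a,b)$, and your write-up is a bit more explicit about the identity $K^\tau_a(c(F)_b)\circ c(F)_a = c(F)_{a+b}$; your closing remark that constructibility is not actually invoked is also accurate.
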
  
\begin{proof}
    Suppose $d_\tau(F, G) = 0$. \Cref{cor: ss-is-same} implies that $\cSS^\infty(F) = \cSS^\infty(G)$. Since $\cSS^\infty(F) = \cSS^\infty(G)$ is positively displaceable from itself when $0 < t < \epsilon_0$ for some $\epsilon_0 > 0$. Pick $\epsilon > 0$ such that $2 \epsilon < \epsilon_0$ and we can apply \Cref{lem: pert}  and obtain that the continuation maps induces equivalences 
    \begin{equation} \label{eq: ac-proof-purt}
        \Hom(F, G) \xrightarrow{\sim} \Hom(F, K^{\tau}_\epsilon \circ G) \ \text{and} \ \Hom(G, F) \xrightarrow{\sim} \Hom(G, K^{\tau}_\epsilon \circ F).
    \end{equation}
    Shrink $\epsilon$ if needed, since $d_\tau(F, G) = 0$, there exist $u \colon F \rightarrow K^{\tau}_\epsilon \circ G$ and $v \colon G \rightarrow K^{\tau}_\epsilon \circ F$ such that the compositions 
    \[
        F \to K^{\tau}_\epsilon \circ G \to K^{\tau}_{2\epsilon} \circ F, \quad  G \to K^{\tau}_\epsilon \circ F \to K^{\tau}_{2\epsilon} \circ G
    \]
    are equivalent to the continuation maps $c_{2 \epsilon}(F)$ and $c_{2 \epsilon}(G)$. The equivalences in \Cref{eq: ac-proof-purt} implies that there exist unique $a \colon F \rightarrow G$ and $b \colon G \rightarrow F$ factorizes them to $u = c_\epsilon(G) \circ a$ and $v = c_\epsilon(F) \circ b$. We thus have the following commuting diagram:
    \[
    \begin{tikzpicture}
    \node at (-3,2) {$F$};
    \node at (0,2) {$K^{\tau}_\epsilon \circ G$};
    \node at (3,2) {$K^{\tau}_{2 \epsilon} \circ F$};
    \node at (0,0) {$G$};
    \node at (3,0) {$K^{\tau}_\epsilon \circ F$};
    \node at (3,-2) {$F$};
    
    \draw [->, thick] (-2.7,2) -- (-0.8,2) node [midway, above] {$u$};
    \draw [->, thick] (0.9,2) -- (2.2,2) node [midway, above] {$v_\epsilon$};
    \draw [->, thick] (0.4,0) -- (2.2,0) node [midway, above] {$v$};
    
    \draw [<-, thick] (0,1.7) -- (0,0.3) node [midway, right] {$c_\epsilon(G)$}; 
    \draw [<-, thick] (3,1.7) -- (3,0.3) node [midway, right] {$c_\epsilon(F)$};
    \draw [<-, thick] (3,-0.3) -- (3,-1.7) node [midway, right] {$c_\epsilon(F)$};
    
    \draw [->, thick] (-2.7,1.8) -- (-0.3,0.1) node [midway, above] {$a$};
    \draw [->, thick] (0.3,-0.2) -- (2.7,-1.9) node [midway, above] {$b$};
    
    \node[scale=1.5] at (2.1,1) {$\circlearrowleft$};
    \node[scale=1.5] at (-0.7,1.2) {$\circlearrowleft$};
    \node[scale=1.5] at (2.3,-0.8) {$\circlearrowleft$};
    \end{tikzpicture}
    \]
    Note that the upper right square commutes by \Cref{rmk: compatibility-morphisms}. Since post-composing with $c_\epsilon(F) \circ(-)$ induces equivalences on the relevant Hom-spaces, the equality
    \[
        c_\epsilon(F) \circ c_\epsilon(F) \circ b \circ a = u \circ v_\epsilon = c_{2\epsilon}(F) = c_\epsilon(F) \circ c_\epsilon(F)
    \]
    implies that $b \circ a = \id_F$ and one can deduce $a \circ b = \id_G$ similarly.
\end{proof}

Recall that a sheaf $L \in \Sh(M \times M)$ is often referred as a kernel since the category $\Sh(M \times M)$ classifies colimit-preserving endomorphisms of $\Sh(M)$, 
and the convolution product corresponds to the composition of functors. 
For the main applications, we will consider interleaving distance between sheaf kernels from the GKS sheaf quantization \Cref{thm: GKS}. For such operators, we consider a variant of $\Sh_{\tau \geq 0}(M)$. 

Consider the cosphere bundle of the product $S^*(M \times M)$. We parametrize points in $S^*(M \times M)$ by $(x, \xi, y, \eta)$ where $|\xi|^2 + |\eta|^2 = 1$. Let 
\[
    \tau_2 \colon S^*(M \times M) \to \bR, \quad (x, \xi, y, \eta) \mapsto \tau(y, \eta/|\eta|)|\eta|
\]
and consider the subcategory of sheaves  
\[
    \Sh_{\tau_2 \geq 0}(M \times M) \coloneqq \{ L \in \Sh(M \times M) \mid \cSSif(L) \subseteq \tau_2^{-1}([0,+\infty)) \}.
\]
For $L \in \Sh_{\tau_2 \geq 0}(M \times M)$, applying \Cref{lem: microsupport-convolution-composition} to the situation where $M_1 = M \times M$ and $M_2 = M_3 = M \times M \times I$, we see that $K^\tau \circ L$ admits continuation maps. Namely, there is a canonical map
\[
    K^{\tau}_s \circ L \rightarrow K^{\tau}_t \circ L
\]
for $s \leq t$, and we define the interleaving distance for objects in $\Sh_{\tau_2 \geq 0}(M \times M)$ in a the same way as \Cref{def: interleaving-distance}, and completeness of the interleaving distance follows in the same way as before. 

\begin{definition} \label{def: distance-kernel}
    For $F, G \in \Sh_{\tau_2 \geq 0}(M \times M)$ and $a, b \geq 0$, we say that $(F,G)$ is \emph{$(a, b)$-interleaved} if there are morphisms $u \colon F \to K^{\tau}_a \circ G$ and $v \colon G \to K^{\tau}_b \circ F$ such that the compositions
    \[
        K^{\tau}_{-a} \circ F \to G \to K^{\tau}_b \circ F, \quad K^{\tau}_{-b} \circ G \to F \to K^{\tau}_a \circ G
    \]
    are isomorphic to the natural continuation morphisms $c_{-a,b} \colon K^{\tau}_{-a} \to K^{\tau}_{b}$ and $c_{-b,a} \colon K^{\tau}_{-b} \to K^{\tau}_{a}$. The \emph{interleaving distance} between $F$ and $G$ are defined by
    \[
    d_\tau(F, G) = \inf\{a + b \mid (F, G) \text{ is }(a, b)\text{-interleaved}\}.
    \]
\end{definition}

\begin{theorem}\label{thm:complete-kernel}
    The interleaving distance $d_\tau$ is a complete pseudo-metric on $\Sh_{\tau_2\geq 0}(M \times M)$.
\end{theorem}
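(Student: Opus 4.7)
The plan is to reduce the statement to \Cref{thm:complete} by checking that all structural properties used there continue to hold in the present setting of sheaf kernels. First I would verify that $d_\tau$ is a pseudo-metric on $\Sh_{\tau_2 \geq 0}(M \times M)$: symmetry is built into \Cref{def: distance-kernel}; reflexivity follows from the $(0,0)$-interleaving realized by the identity and $c_0$; and the triangle inequality follows by composing two interleavings, using that $K^\tau_s \circ K^\tau_t \simeq K^\tau_{s+t}$ so that a composition of $(a_1,b_1)$- and $(a_2,b_2)$-interleavings yields an $(a_1 + a_2, b_1 + b_2)$-interleaving.

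Next I would record the two ingredients needed to transport the proof of \Cref{thm:complete}. The first is that $\Sh_{\tau_2\geq 0}(M \times M)$ is closed under arbitrary limits and colimits in $\Sh(M \times M)$, which follows from the general fact that microsupport commutes with colimits and is controlled under limits, the closed conic condition on $\tau_2^{-1}([0,\infty))$ being preserved in both cases. The second is that the convolution $K^\tau \circ (-)$ defines a well-behaved flow on this subcategory with continuation maps; this is precisely the content of the paragraph preceding the theorem statement, where \Cref{lem: microsupport-convolution-composition} supplies the microsupport composition estimate, and the analog of \Cref{lem: continuation-(co)lim} for kernels follows by the same argument since the microsupport estimate \eqref{for: ms-isotopy-endofunctor} ensures $I$-non-characteristicity of $K^{\tau} \circ L$.

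With these in hand, I would run the standard telescoping argument of \cite[Proposition~6.22]{GV24} and \cite[Corollary~4.5]{AI24}: given a Cauchy sequence $(L_n)$ in $\Sh_{\tau_2\geq 0}(M \times M)$, extract a subsequence with $d_\tau(L_n, L_{n+1}) < 2^{-n}$ realized by interleaving data $(u_n, v_n)$ with parameters $(a_n, b_n)$; use the continuation flow to shift each $L_n$ by $K^\tau_{s_n}$ with cumulative parameters $s_n = \sum_{k \geq n} b_k$ so that the interleavings become honest morphisms assembling into a sequential diagram indexed by $\mathbb{N}$; and define $L_\infty$ as the colimit of this diagram. Closure of $\Sh_{\tau_2\geq 0}(M \times M)$ under colimits ensures $L_\infty$ lies in the subcategory, and the telescope structure directly exhibits $(a_n + \varepsilon, b_n + \varepsilon)$-interleavings between $L_n$ and $L_\infty$ for every $\varepsilon > 0$, yielding $d_\tau(L_n, L_\infty) \to 0$.

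The main technical obstacle, as in the original references, is the $\infty$-categorical coherence of the interleaving data: the morphisms $(u_n, v_n)$ together with the homotopies witnessing the compatibility conditions in \Cref{def: distance-kernel} must be lifted to a coherent diagram whose colimit is well-defined. This is handled formally using only the auto-equivalence property of $K^\tau_s \circ (-)$ on $\Sh_{\tau_2 \geq 0}(M \times M)$ and the functoriality of continuation maps with respect to morphisms recorded in \Cref{rmk: compatibility-morphisms}. Since both inputs are available in our setting, the argument from \cite{GV24,AI24} transfers without modification.
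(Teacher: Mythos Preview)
Your proposal is correct and matches the paper's approach exactly: the paper does not give a separate proof of this theorem but simply remarks, just before stating it, that ``completeness of the interleaving distance follows in the same way as before,'' i.e., by the same argument as \Cref{thm:complete}. Your write-up supplies the details the paper omits---closure of $\Sh_{\tau_2\geq 0}(M\times M)$ under (co)limits, existence of continuation maps via \Cref{lem: microsupport-convolution-composition} and \eqref{for: ms-isotopy-endofunctor}, and the telescoping construction of \cite{GV24,AI24}---all of which are indeed the ingredients needed for the transfer.
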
 

While the function $\tau_2 \colon S^*(M \times M) \to \bR$ is not smooth, one can check that it is true that for any $L \in \Sh_{\tau_2\geq 0}(M \times M)$, we have $L \simeq \lim_{s \to 0^+}K^{\tau}_s \circ L$. Therefore, \Cref{cor: ss-is-same} still hold.

\begin{proposition}\label{prop: ss-is-same-kernel}
    Let $K, L \in \Sh_{\tau_2\geq 0}(M \times M)$ such that $d_\tau(K, L) = 0$. Then $\cSS(K) = \cSS(L)$.
\end{proposition}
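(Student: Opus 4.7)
The plan is to mirror the proof of \Cref{cor: ss-is-same} essentially verbatim, testing singular supports via $\Omega$-lenses (\Cref{lem: omega-lenses-test}) and using the $(a,b)$-interleaving data. Concretely, by symmetry it suffices to prove $\cSS(L) \subseteq \cSS(K)$. Pick a conic open $\Omega \subseteq \dot T^*(M \times M)$ with $\overline{\Omega} \cap \cSS(K) = \varnothing$ and an $\Omega$-lens $\Sigma$ with $\overline{\Sigma}$ compact. For $\epsilon > 0$ small enough, the microsupport estimate \eqref{for: ms-isotopy-endofunctor} (applied to the second $M$-factor, where $K^{\tau}$ acts) together with compactness of $\overline{\Sigma}$ shows that $\cSS(K^{\tau}_\epsilon \circ K)$ remains disjoint from $\overline{\Omega}$ on a neighborhood of $\overline{\Sigma}$, hence $\Hom(1_\Sigma, K^{\tau}_\epsilon \circ K) = 0$.

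The hypothesis $d_\tau(K,L) = 0$ provides, for every small $\epsilon > 0$, morphisms $u \colon K \to K^{\tau}_\epsilon \circ L$ and $v \colon L \to K^{\tau}_\epsilon \circ K$ whose composition
\[
    \Hom(1_\Sigma, L) \longrightarrow \Hom(1_\Sigma, K^{\tau}_\epsilon \circ K) \longrightarrow \Hom(1_\Sigma, K^{\tau}_{2\epsilon} \circ L)
\]
is the natural continuation map $c_{2\epsilon}$. Since the middle term vanishes, each $c_{2\epsilon}$ factors through zero. To conclude $\Hom(1_\Sigma, L) = 0$, one then invokes the limit identification $L \simeq \lim_{s \to 0^+} K^{\tau}_s \circ L$, which upgrades the display to an equivalence $\Hom(1_\Sigma, L) \xrightarrow{\sim} \lim_{\epsilon \to 0^+} \Hom(1_\Sigma, K^{\tau}_{2\epsilon} \circ L)$, forcing $\Hom(1_\Sigma, L) = 0$. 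Applying \Cref{lem: omega-lenses-test} gives $\Omega \cap \cSS(L) = \varnothing$, and symmetry finishes the proof.

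The main obstacle, which is the only substantive departure from \Cref{cor: ss-is-same}, is verifying the colimit/limit continuation identity $L \simeq \lim_{s \to 0^+} K^{\tau}_s \circ L$ in $\Sh_{\tau_2 \geq 0}(M \times M)$ despite $\tau_2(x,\xi,y,\eta) = \tau(y,\eta/|\eta|)\,|\eta|$ failing to be smooth along $\{\eta = 0\}$. The strategy is to bypass this by working with the smooth GKS kernel $K^{\tau}$ itself rather than with a Hamiltonian flow of $\tau_2$. More precisely, \eqref{for: ms-isotopy-endofunctor} gives an explicit description of $\cSS(K^{\tau} \circ L)$ on $M \times M \times I$; a covector $((x,\xi),(y',\eta'),t,\sigma)$ in this set with $\xi = 0$ and $\eta' = 0$ must come from a covector $(x,\xi,y,\eta) \in \cSS(L)$ with $\eta = 0$, which is already in the zero section, forcing $\sigma = 0$. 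This establishes the $I$-non-characteristicity condition \eqref{eq: non-characteristic} required to run the argument of \Cref{lem: continuation-(co)lim}, and the limit formula follows formally. Once this technical step is verified, the rest of the argument is a line-by-line adaptation of the proof of \Cref{cor: ss-is-same}.
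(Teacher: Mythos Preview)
Your proposal is correct and follows essentially the same approach as the paper's proof: both reduce to the $\Omega$-lens test, use the interleaving data to factor the continuation map through $\Hom(1_\Sigma, K^\tau_\epsilon \circ K) = 0$, and then isolate the one genuine new difficulty---verifying that $K^\tau \circ L$ is $I$-non-characteristic despite $\tau_2$ not being smooth along $\{\eta = 0\}$---by a direct inspection of \eqref{for: ms-isotopy-endofunctor}. Your explicit appeal to compactness of $\overline{\Sigma}$ is a slight sharpening of the paper's phrasing, and your non-characteristicity check (pulling $\eta' = 0$ back to $\eta = 0$) is a minor rewording of the paper's observation that $\xi_3 = 0$ forces $\tau(x_3,\xi_3) = 0$ by homogeneity.
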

\begin{proof}
    We know that by Equation \eqref{for: ms-isotopy-endofunctor}, $\cSS(K^\tau \circ L)$ consists of points $(x_1, \xi_1, x_3, \xi_3, t, -\tau(x_3, \xi_3))$ where there exist $(x_2, -\xi_2, x_3, \xi_3) \in \cSS(K^\tau_t)$ and $(x_1, \xi_1, x_2, \xi_2) \in \cSS(L)$.
    When $\epsilon > 0$ is sufficiently small, $\cSS(K^\tau_\epsilon)$ is sufficiently close to $T^*_\Delta(M \times M)$. Thus, we also know that for any open subset $\Omega \subseteq T^*(M \times M)$ such that $\ol{\Omega} \cap \cSS(L) = \varnothing$, when $\epsilon > 0$ is small, $\ol{\Omega} \cap \cSS(K_\epsilon^\tau \circ L) = \varnothing$. Thus, for any $\Omega$-lens $\Sigma$, when $\epsilon > 0$ is sufficiently small,
    \[\Hom(1_\Sigma, K^\tau_\epsilon \circ L) = 0.\]
    Since $d_\tau(K, L) = 0$, for $\epsilon > 0$ sufficiently small, there exist morphisms such that the composition
    \[\Hom(1_\Sigma, L) \to \Hom(1_\Sigma, K^\tau_\epsilon \circ K) \to \Hom(1_\Sigma, K^\tau_{2\epsilon} \circ L)\]
    is the natural continuation morphism. Hence when $\epsilon > 0$ is sufficiently small, the natural continuation morphism factors through zero.
    
    On the other hand, for the points $(x_1, \xi_1, x_3, \xi_3, t, -\tau(x_3, \xi_3))$ where there exist $(x_2, -\xi_2, x_3, \xi_3) \in \cSS(K^\tau_t)$ and $(x_1, \xi_1, x_2, \xi_2) \in \cSS(L)$, if $\xi_1 = \xi_3 = 0$ then $\tau(x_3, \xi_3) = 0$, and thus $K^\tau \circ L$ is still non-characteristic along $I$. Therefore,
    \[
    \Hom(1_\Sigma, L) \simeq \lim_{\epsilon \to 0^+}\Hom(1_\Sigma, K^\tau_{2\epsilon} \circ L).
    \]
    This implies that $\Hom(1_\Sigma, L) = 0$ so $\Omega \cap \cSS(L) = \varnothing$ by \Cref{lem: omega-lenses-test}. Thus, $\cSS(L) \subseteq \cSS(K)$ and this completes the proof.
\end{proof}

However, the non-degeneracy \Cref{thm:non-deg-0} becomes more tricky as it requires \Cref{lem: pert}. We need to consider a smoothing of $\tau_2$ away from $S^*M \times 0_M$ where the contact flow is well defined.

\begin{lemma}\label{lem:disjoint-graph}
    Let $M$ be a complete Riemannian manifold and suppose $\tau \colon S^*M \to \bR$ defines a bounded Reeb flow on $S^*M$. Then there exists a cut-off function $\rho\colon S^*(M \times M) \to \bR$ supported on a tubular neighborhood of $S^*M \times 0_M$ such that the Hamiltonian $(1-\rho) \tau_2 \colon S^*(M \times M) \to \bR$ defines a complete contact flow.
\end{lemma}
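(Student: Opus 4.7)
The strategy is to take $\rho$ to depend only on the magnitude $|\eta|$ of the second fiber coordinate, cutting off in the region where $\tau_2$ fails to be smooth, and then to verify that $(1-\rho)\tau_2$ is globally smooth with uniformly bounded $C^1$-norm. Completeness of the contact flow then follows from completeness of $S^*(M \times M)$ itself.

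Fix a smooth cutoff $\chi \colon \bR \to [0,1]$ with $\chi \equiv 1$ on $(-\infty, 1/3]$ and $\chi \equiv 0$ on $[2/3, +\infty)$, and define $\rho(x, \xi, y, \eta) \coloneqq \chi(|\eta|)$. Because $\chi$ is constant in a neighborhood of $0$, the composition $\chi(|\eta|)$ is smooth on all of $S^*(M \times M)$ despite the non-smoothness of $|\eta|$ at $\eta = 0$, and $\supp(\rho) \subseteq \{|\eta| \leq 2/3\}$ is a tubular neighborhood of $S^*M \times 0_M$. The product $(1-\rho)\tau_2$ vanishes identically on $\{|\eta| \leq 1/3\}$ and is manifestly smooth on $\{|\eta| > 1/3\}$, where $\eta/|\eta|$ is smooth; smoothness across the interface $\{|\eta| = 1/3\}$ holds because $\chi$ is flat to the left of $1/3$, so all derivatives of $1 - \chi(|\eta|)$ vanish at the interface from both sides and match the identically zero function on the closed side.

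For the $C^1$-bound, the hypothesis that $\tau$ generates a bounded Reeb flow (in the sense of \Cref{def: bound-reeb}) gives $\|\tau\|_{C^0} + \|\tau\|_{C^1} < +\infty$. A direct computation on $\{|\eta| \geq 1/3\}$, where $|\eta|$ and $\eta/|\eta|$ have derivatives bounded in terms of the lower bound $1/3$, yields an estimate
\[
\|\tau_2\|_{C^1(\{|\eta|\geq 1/3\})} \leq C\bigl(\|\tau\|_{C^0} + \|\tau\|_{C^1}\bigr)
\]
for a universal constant $C$, and $\|1-\rho\|_{C^1} \leq 1 + \|\chi'\|_\infty$, so $\|(1-\rho)\tau_2\|_{C^1(S^*(M\times M))}$ is finite.

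Finally, because $M$ is a complete Riemannian manifold of bounded geometry (as stipulated in the paper), so is $S^*(M \times M)$ in its induced metric. The contact Hamiltonian vector field generated by a smooth function is pointwise controlled in norm by the $C^1$-norm of the Hamiltonian, so $(1-\rho)\tau_2$ generates a uniformly bounded vector field, whose integral curves cannot escape to infinity in finite time. Hence the flow is defined for all $t \in \bR$. The main technical point is the smoothness verification across $\{|\eta| = 1/3\}$, handled by the choice of $\chi$ being constant in a neighborhood of $0$; the remaining estimates are routine from the bounded $C^1$-norm of $\tau$.
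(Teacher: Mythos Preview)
Your proposal is correct and follows essentially the same approach as the paper: choose a cutoff $\rho$ supported near $S^*M \times 0_M$ with bounded derivative, verify that $(1-\rho)\tau_2$ is smooth with uniformly bounded $C^1$-norm, and conclude that the resulting contact vector field is bounded on the complete manifold $S^*(M\times M)$, hence generates a complete flow. The paper's proof is terser (it simply asserts existence of such a $\rho$ and writes the product-rule estimate $|d((1-\rho)\tau_2)| \leq |\tau_2|\,|d\rho| + |d\tau_2|$), whereas you give the explicit choice $\rho = \chi(|\eta|)$; your ``interface'' discussion at $|\eta|=1/3$ is unnecessary (both factors are smooth on all of $\{\eta\neq 0\}$, so there is no matching to check), and the appeal to bounded geometry is not needed since completeness alone suffices for a bounded vector field to have a complete flow, but neither point affects correctness.
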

\begin{proof}
    Consider a complete Riemannian metric on $M$, which induces a complete Riemannian metric on $M \times M$ and $S^*(M \times M)$. Then there exists a tubular neighborhood of $S^*M \times 0_M$ of positive radius. We can choose a cut-off function $\rho\colon S^*(M \times M) \to \bR$ supported in the tubular neighborhood such that $|d\rho|$ is uniformly bounded. Then since $|(1-\rho)\tau_2| \leq |\tau_2|$ and $|d((1-\rho)\tau_2)| \leq |\tau_2||d\rho| + |d\tau_2|$ are both uniformly bounded, the norm of the contact vector field $X_{(1-\rho)\tau_2}$ is uniformly bounded and therefore defines a complete contact flow.
\end{proof}

\begin{theorem}\label{thm:non-deg_kernel}
    Let $\tau \colon S^*M \to \bR$ define a bounded positive contact Hamiltonian and $\Omega' \subseteq \Omega$ is a pair of tubular neighborhoods of $S^*M \times 0_M$ with positive radii $r' < r$. Let $\rho \colon S^*(M \times M) \to \bR$ be a bounded cut-off function supported in $\Omega'$. Then the interleaving distance $d_\tau$ is non-degenerate on the category $\Sh_{c,\Omega^c}(M \times M)$ of constructible sheaves $K \in \Sh_{\Omega^c}(M \times M)$ such that $\varphi^{(1-\rho)\tau_2}_t(\cSS^\infty(K)) \cap \cSS^\infty(K) = \varnothing$ for some $0 < t < \epsilon$.
\end{theorem}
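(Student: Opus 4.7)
The plan is to mimic the argument of \Cref{thm:non-deg-0} verbatim, with the single substantive modification being that the perturbation \Cref{lem: pert} cannot be invoked with the Hamiltonian $\tau_2$ itself (since $\tau_2$ fails to be smooth on $S^*M \times 0_M$ and the associated flow is not complete), and so must be applied with the smoothed Hamiltonian $(1-\rho)\tau_2$ provided by \Cref{lem:disjoint-graph}.

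First, suppose $d_\tau(K, L) = 0$ for $K, L \in \Sh_{c,\Omega^c}(M \times M)$. By \Cref{prop: ss-is-same-kernel} I get $\cSS^\infty(K) = \cSS^\infty(L)$, and by hypothesis this common singular support is contained in $\Omega^c$ and is positively displaced by the $(1-\rho)\tau_2$-flow for $0 < t < \epsilon_0$. Next, since $\rho$ is supported in $\Omega'$ while $\cSS^\infty(K) = \cSS^\infty(L)$ lies in $\Omega^c$, the Hamiltonians $\tau_2$ and $(1-\rho)\tau_2$ agree on an open neighborhood of the common microsupport. By the microsupport estimate \eqref{for: ms-isotopy-endofunctor} together with the uniqueness clause in \Cref{thm: GKS}, the convolutions $K^\tau_t \circ K$ and $K^{(1-\rho)\tau_2}_t \circ K$ are canonically equivalent, and similarly for $L$; moreover, the continuation maps induced by the two Hamiltonians are identified under this equivalence.

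Pick $\epsilon > 0$ with $2\epsilon < \epsilon_0$. The Hamiltonian $(1-\rho)\tau_2$ is bounded and defines a complete contact flow on $S^*(M\times M)$ by \Cref{lem:disjoint-graph}, so \Cref{lem: pert}, applied to the sheaves $K, L \in \Sh(M \times M)$ viewed on the product manifold, yields equivalences
\[
\Hom(K, L) \xrightarrow{\sim} \Hom(K, K^{(1-\rho)\tau_2}_\epsilon \circ L) \simeq \Hom(K, K^\tau_\epsilon \circ L),
\]
and symmetrically for $\Hom(L, K)$. Then the interleaving $d_\tau(K, L) = 0$ supplies, for arbitrarily small $\epsilon$, morphisms $u \colon K \to K^\tau_\epsilon \circ L$ and $v \colon L \to K^\tau_\epsilon \circ K$ whose composites are the continuation maps $c_{2\epsilon}(K)$ and $c_{2\epsilon}(L)$. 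Using the equivalences above, I factor $u$ and $v$ uniquely through honest morphisms $a \colon K \to L$ and $b \colon L \to K$, and then the same commutative diagram as in \Cref{thm:non-deg-0}, combined with the invertibility of post-composition by $c_\epsilon(K)$ on the relevant $\Hom$-spaces, forces $b \circ a = \id_K$ and $a \circ b = \id_L$.

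The main obstacle is the second step, namely confirming that perturbation by the smoothed flow $(1-\rho)\tau_2$ is interchangeable with perturbation by $\tau_2$ itself for the purposes of the diagram chase. This amounts to verifying that (i) the GKS kernels of the two Hamiltonians agree up to canonical equivalence when convolved with kernels microsupported in $\Omega^c$, and (ii) the continuation maps for the two Hamiltonians are identified by this equivalence. Part (i) follows from a standard interpolation and non-characteristic deformation argument using \eqref{for: ms-isotopy-endofunctor}, and part (ii) is automatic from the construction of continuation maps via the universal maps $c_{s,t} \colon 1_{(-\infty,s)} \to 1_{(-\infty,t)}$. Once this compatibility is established, the rest of the argument is a faithful transcription of the diagram chase in \Cref{thm:non-deg-0}.
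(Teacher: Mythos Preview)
Your proposal is correct and follows essentially the same route as the paper: invoke \Cref{prop: ss-is-same-kernel} to equate microsupports, apply \Cref{lem: pert} with the smoothed Hamiltonian $(1-\rho)\tau_2$ from \Cref{lem:disjoint-graph} (treating $K,L$ as objects on the product $M\times M$), identify the resulting perturbations with the $K^\tau_\epsilon$-perturbations by observing that the two flows agree on $\cSS^\infty(K)$ for small time, and then run the diagram chase of \Cref{thm:non-deg-0}. The only point the paper makes slightly more explicit is shrinking $\epsilon_0$ so that $\varphi^{(1-\rho)\tau_2}_t(\cSS^\infty(K))$ remains inside $\Omega^c$, which is what makes the identification $\widetilde{K}^{(1-\rho)\tau_2}_\epsilon \circ K \simeq K^\tau_\epsilon \circ K$ go through; this is implicit in your interpolation step but worth stating.
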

\begin{proof}
    Suppose $d_\tau(K, L) = 0$. \Cref{cor: ss-is-same} implies that $\cSS^\infty(K) = \cSS^\infty(L)$. By \Cref{lem:disjoint-graph}, $(1 - \rho)\tau_2$ defines a bounded contact vector field with a complete contact flow. Since $\cSS^\infty(K) \subseteq \Omega^c$, we can pick $\epsilon_0 > 0$ such that $\varphi^{(1-\rho)\tau_2}_t(\cSS^\infty(K)) \subseteq \Omega^c$ and
    \[\varphi^{(1-\rho)\tau_2}_t(\cSS^\infty(K)) \cap \cSS^\infty(K) = \varnothing, \quad 0 < t < \epsilon_0.\]
    Pick $\epsilon > 0$ such that $2 \epsilon < \epsilon_0$. Let $\widetilde{K}^{(1-\rho)\tau_2}$ be the sheaf quantization of the contact Hamiltonian $(1-\rho)\tau_2$; note that $\widetilde{K}^{(1-\rho)\tau_2}$ lives on $M^2 \times M^2 \times I$ so it treats $L$ as an object. We can apply \Cref{lem: pert} and obtain that the continuation maps induces equivalences 
    \begin{equation}
        \Hom(K, L) \xrightarrow{\sim} \Hom(K, \widetilde{K}^{(1-\rho)\tau_2}_\epsilon \circ L) \ \text{and} \ \Hom(L, K) \xrightarrow{\sim} \Hom(L, \widetilde{K}^{(1-\rho)\tau_2}_\epsilon \circ K).
    \end{equation}
    Since $\varphi^{(1-\rho)\tau_2}_t(\cSS^\infty(K)) \subseteq \Omega^c$, we know that $\varphi^{(1-\rho)\tau_2}_t(\cSS^\infty(K)) = \Gamma^{\tau_2}_t \circ \cSS^\infty(K)$, by \Cref{lem: microsupport-convolution-composition} $\cSS^\infty(\widetilde{K}^{(1-\rho)\tau_2} \circ K) = \cSS(K^\tau \circ K)$. Then we can conclude that $\widetilde{K}^{(1-\rho)\tau_2}_\epsilon \circ K = K^\tau_\epsilon \circ K$, i.e., post-composition by $K^\tau_\epsilon$ is equivalent to being acts as an object by $\widetilde{K}^{(1-\rho)\tau_2}_\epsilon$. Thus the following continuation maps induces equivalences 
    \begin{equation} \label{eq: ac-proof-purt-kernel}
        \Hom(K, L) \xrightarrow{\sim} \Hom(K, K^{\tau}_\epsilon \circ L) \ \text{and} \ \Hom(L, K) \xrightarrow{\sim} \Hom(L, K^{\tau}_\epsilon \circ K).
    \end{equation}
    Shrink $\epsilon$ if needed, since $d_\tau(K, L) = 0$, there exist $u \colon K \rightarrow K^{\tau}_\epsilon \circ L$ and $v \colon L \rightarrow K^{\tau}_\epsilon \circ K$ such that the compositions 
    \[
        K \to K^{\tau}_\epsilon \circ L \to K^{\tau}_{2\epsilon} \circ K, \quad  L \to K^{\tau}_\epsilon \circ K \to K^{\tau}_{2\epsilon} \circ L
    \]
    are equivalent to the continuation maps $c_{2 \epsilon}(K)$ and $c_{2 \epsilon}(L)$. The equivalences in \Cref{eq: ac-proof-purt-kernel} imply that there exist unique $a \colon K \rightarrow L$ and $b \colon L \rightarrow K$ factorizes them to $u = c_\epsilon(L) \circ a$ and $v = c_\epsilon(K) \circ b$. We thus have $a \circ b = \id_L$ and $b \circ a = \id_K$. This completes the proof.
\end{proof}

\begin{remark} \label{rmk: genera-functors}
    In fact, \Cref{prop: ss-is-same-kernel} and \Cref{thm:non-deg_kernel} hold more generally on $N \times M$ where $N$ is another smooth manifold. Since kernels induce functors, this means that there is a non-degenerate metric on a suitable class of functors from $\Sh(N)$ to $\Sh(M)$, which in the case of $N =\{*\}$ recovers the case of objects \Cref{thm:non-deg-0}. Furthermore, there is a $\tau_1$ version corresponding to pre-composition instead of the post-composition case presented here. We do not go into details of these generalizations since the focus of this paper requires only endofunctors.
\end{remark}

We can also define another variant of the interleaving distance, which will be used later on. Given a smooth function $\tau \colon S^*M \to \bR$, we define a log interleaving distance $d_{\ln\tau}$ on the derived category $\Sh_{\tau \geq 0}(M)$ as follows:

\begin{definition}
    Let $\tau$ be a smooth Hamiltonian on $S^*M$. For $F, G \in \Sh_{\tau\geq 0}(M)$ and $a, b \geq 0$, we say that $(F,G)$ is \emph{log $(a, b)$-interleaved} if there are morphisms $u \colon F \to K^{\tau}_{e^a-1} \circ G$ and $v \colon G \to K^{\tau}_{e^b-1} \circ F$ such that the compositions
    \[
        K^{\tau}_{1-e^a} \circ F \to G \to K^{\tau}_{e^b-1} \circ F, \quad K^{\tau}_{1-e^b} \circ G \to F \to K^{\tau}_{e^a-1} \circ G
    \]
    are isomorphic to the continuation morphisms. The \emph{log interleaving distance} of $F$ and $G$ are defined by
    \[
    d_{\ln, \tau}(F, G) = \inf\{a + b \mid (F, G) \text{ is log }(a, b)\text{-interleaved}\}.
    \]
\end{definition}

Since the function $f(x) = e^x - 1$ is continuous and $f(0) = 0$, it follows immediately from \Cref{thm:complete} that the log interleaving distance is also a complete pseudo-metric.

\section{Hofer--Shelukhin Distances and Sheaves}

In this section, we discuss the relation between the interleaving distance of sheaves and the (Chekanov--)Hofer--Shelukhin distance of Legendrians. We will prove \Cref{thm:main-Hofer-distance} (\Cref{thm:hofer-norm}) and use that to deduce \Cref{thm:main-Hofer-nondeg} (\Cref{thm:Hofer-nondeg,thm:chekanov-hofer-shelukhin}).

\subsection{Hofer--Shelukhin norm and interleaving distance} \label{sec: HS-norm}
Our goal in the section is to deduce \Cref{thm:hofer-norm} (\Cref{thm:main-Hofer-distance}), which generalizes the result of the first two authors \cite{AI20} to any cosphere bundles and any Reeb flows.

For the contact manifold $(S^*M, \xi_{\mathrm{std}})$, we will consider a smooth function $\tau \colon S^*M \to \bR$ and consider the open contact submanifold $Y = \tau^{-1}(\bR_{>0}) \subseteq S^*M$. The Hamiltonian $\tau \colon Y \to \bR_{>0}$ defines a positive contact flow, which is the Reeb flow with respect to some contact form $\alpha$ on $Y$. 
We will consider contact Hamiltonians $H \colon Y \times I \to \bR$ that extend smoothly to $S^*M \times I$ by zero, or equivalently, $H \colon S^*M \times I \to \bR$ such that $\supp(H) \subseteq \tau^{-1}(\bR_{>0})$.

We recall that in \cite{Shelukhin17}, the Hofer--Shelukhin norm of a contact Hamiltonian $H \colon Y \times I \to \bR$ with respect to the contact form $\alpha$ is defined as \footnote{While $\|H\|_{\mathrm{HS},\alpha}$ does not depend on $\alpha$,  we choose this notation to keep consistency with the later $\|H\|_{\mathrm{HS},\tau}$.}
\[
    \|H\|_{\mathrm{HS},\alpha} \coloneqq \int_0^1 \sup_{(x, \xi) \in Y} |H_s(x, \xi)| \, ds. 
\] 
The Hofer--Shelukhin distance of $\varphi, \psi \in \mathrm{Cont}_0(S^*M, \xi_{\mathrm{std}})$ is defined as 
\[
    d_{\mathrm{HS},\alpha}(\varphi, \psi) \coloneqq \inf_{H: \varphi = \varphi^H_1 \circ \psi}\|H\|_{\mathrm{HS},\alpha} = \inf_{H: \varphi = \varphi^H_1 \circ \psi}\int_0^1 \sup_{(x, \xi) \in Y} |H_s(x, \xi)| \, ds.
\]
On the universal cover $\widetilde{\mathrm{Cont}}_0(Y, \xi_{\mathrm{std}})$, consisting of homotopy classes of contact isotopies, abusing notations, the Hofer--Shelukhin distance is defined as
\[
    {d}_{\mathrm{HS},\alpha}(\varphi, \psi) \coloneqq \inf_{H:\varphi = \varphi^H \circ \psi}\|H\|_{\mathrm{HS},\alpha} = \inf_{H: \varphi = \varphi^H \circ \psi}\int_0^1 \sup_{(x, \xi) \in Y} |H_s(x, \xi)| \, ds.
\]
Note that the Hofer--Shelukhin distance depends on the choice of the contact form. 
Consider the positive Hamiltonian $\tau \colon Y \to \bR_{>0}$.
Under the contact form $\alpha_{\mathrm{std}}$, $\tau$ defines the Reeb flow for a different contact form $\alpha = \tau\alpha_{\mathrm{std}}$. 
Then, for any Hamiltonian $H$ under the given contact form $\alpha_{\mathrm{std}}$, we will abuse the notation and write 
\[
    \|H\|_{\mathrm{HS},\tau} \coloneqq \int_0^1 \sup_{(x, \xi) \in Y} |H_s(x, \xi)/\tau(x, \xi)| \, ds.
\]
On $\widetilde{\mathrm{Cont}}_0(Y, \xi_{\mathrm{std}})$, 
we also write
\[
    d_{\mathrm{HS},\tau}(\varphi, \psi) \coloneqq \inf_{H: \varphi = \varphi^H \circ \psi}\|H\|_{\mathrm{HS},\tau} = \inf_{H: \varphi = \varphi^H \circ \psi}\int_0^1 \sup_{(x, \xi) \in Y} |H_s(x, \xi)/\tau(x,\xi)| \, ds.
\]

The main result of the section is the following, which shows that the interleaving distance of sheaves is bounded by the Hofer--Shelukhin distance, generalizing the result of the first two authors \cite{AI20} and \cite[Theorem 5.2]{AI24}.  We remark that the only technical point to generalize is a lemma equivalent to the following one. The rest is a similar argument refining partitions of $[0,1]$.

\begin{lemma} \label{lem: HS-norm}
    Let $\tau \colon S^*M \to \bR$ be a contact Hamiltonian that defines a complete contact flow. 
    Let $H \colon S^*M \times I \to \bR$ be a compactly supported contact Hamiltonian such that $\varphi = \varphi^H_1$ such that $\supp(H) \subseteq \tau^{-1}(\bR_{>0})$. 
    Then, for $s_0,s_1 \in I$ with $s_0 \leq s_1$, we have 
    \[
        d_{\tau}(K^H_{s_0} \circ P_\tau, K^H_{s_1} \circ P_\tau) \leq 2 \max_{(x,\xi,s) \in S^* M \times [s_0,s_1]}  | H_s(x,\xi)/\tau(x, \xi) | |s_1-s_0|.
    \]
    Here $P_\tau$ is the projector defined in \Cref{not: proj}.
\end{lemma}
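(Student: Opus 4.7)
The plan is to produce an explicit $(c\Delta s, c\Delta s)$-interleaving, where $c \coloneqq \max_{S^*M \times [s_0, s_1]} |H_s/\tau|$ and $\Delta s \coloneqq s_1 - s_0$. By the symmetry obtained from swapping the roles of $s_0$ and $s_1$, it is enough to produce a morphism
\[
    u \colon K^H_{s_0} \circ P_\tau \to K^\tau_{c \Delta s} \circ K^H_{s_1} \circ P_\tau,
\]
and show that the analogous dual construction yields a $v$ whose compositions with $u$ recover the natural continuation maps prescribed by \Cref{def: distance-kernel}.

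The central observation is that the family of contactomorphisms
\[
    \Psi_s \coloneqq \varphi^\tau_{c(s - s_0)} \circ \varphi^H_s, \qquad s \in [s_0, s_1],
\]
interpolating from $\varphi^H_{s_0}$ to $\varphi^\tau_{c \Delta s} \circ \varphi^H_{s_1}$, is generated by a contact Hamiltonian that is non-negative on the region that $P_\tau$-projected singular supports inhabit. Indeed, using that the Reeb flow $\varphi^\tau_t$ of $\alpha = \alpha_{\mathrm{std}}/\tau$ has conformal factor $(\varphi^\tau_t)^* \alpha_{\mathrm{std}} = \tfrac{\tau \circ \varphi^\tau_t}{\tau} \cdot \alpha_{\mathrm{std}}$, a direct pushforward computation shows that the generating contact Hamiltonian $G_s$ of $\Psi_s$ with respect to $\alpha_{\mathrm{std}}$ factors as
\[
    G_s(q) = \tau(q) \cdot \left[ c + \frac{H_s(\varphi^H_s(p))}{\tau(\varphi^H_s(p))} \right]
\]
at $q = \Psi_s(p)$. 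The support hypothesis $\supp H \subseteq \tau^{-1}(\bR_{>0})$ combined with $|H_s/\tau| \leq c$ on this support forces the bracket to be non-negative, so $G_s(q) \geq 0$ whenever $\tau(q) \geq 0$.

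Reparametrizing $\Psi$ to start at the identity via $\tilde\Psi_t = \Psi_{s_0 + t} \circ \Psi_{s_0}^{-1}$ (which is generated by the same Hamiltonian $G_{s_0 + t}$), applying \Cref{thm: GKS} yields a kernel $K^{\tilde\Psi}$ whose convolution against $P_\tau$ from the right has singular support in $\{\sigma \leq 0\}$ in the time direction, by the microsupport estimate \eqref{for: ms-isotopy-endofunctor}. The associated continuation morphism $K^H_{s_0} \circ P_\tau \to K^{\tilde\Psi}_{\Delta s} \circ K^H_{s_0} \circ P_\tau$, combined with the identification $K^{\tilde\Psi}_{\Delta s} \circ K^H_{s_0} \simeq K^\tau_{c\Delta s} \circ K^H_{s_1}$ arising from \Cref{rem:GKS}(2) applied to the contactomorphism $\varphi^\tau_{c \Delta s} \circ \varphi^H_{s_1}$, produces the desired $u$. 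The morphism $v$ is built identically from the family $\Psi'_u = \varphi^\tau_{c u} \circ \varphi^H_{s_1 - u}$ for $u \in [0, \Delta s]$, whose Hamiltonian evaluates to $\tau(q) \cdot [c - H_{s_1 - u}/\tau] \geq 0$ by the same estimate.

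The main obstacle I expect is verifying that the compositions $(K^\tau_{c\Delta s} \circ v) \circ u$ and $(K^\tau_{c\Delta s} \circ u) \circ v$ agree with the canonical continuation maps $c_{2c \Delta s}$ on $K^H_{s_0} \circ P_\tau$ and $K^H_{s_1} \circ P_\tau$ respectively, rather than merely being some morphisms of the correct type. Heuristically, the concatenation of $\Psi$ with an appropriate reparametrization of $\Psi'$ yields a positive isotopy from $\varphi^H_{s_0}$ to $\varphi^\tau_{2c \Delta s} \circ \varphi^H_{s_0}$, and the induced continuation must match the straight $\tau$-flow continuation by naturality of the wrapping construction (\Cref{rmk: compatibility-morphisms}) together with uniqueness of the GKS quantization within the class of $P_\tau$-projected kernels. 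Once this compatibility is in place, the bound $d_\tau(K^H_{s_0} \circ P_\tau, K^H_{s_1} \circ P_\tau) \leq 2 c \Delta s$ is immediate from the definition of the interleaving distance for kernels.
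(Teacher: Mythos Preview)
Your approach is essentially the same as the paper's: both exploit the inequality $|H_s/\tau| \leq c$ on $[s_0,s_1]$ to produce positive isotopies from $\varphi^H_{s_0}$ to $\varphi^\tau_{c\Delta s}\circ\varphi^H_{s_1}$ (and the reverse), and hence interleaving morphisms on $K^H_{s_i}\circ P_\tau$. The paper phrases this via the ordering machinery $-a\tau\leq H\leq b\tau$ with $a=-\min(H/\tau)$, $b=\max(H/\tau)$ and invokes the general ``$H_\Phi\geq H_\Psi$ gives a continuation map $K^\Psi\to K^\Phi$'' principle (\eqref{eq: order-gives-continuation-map-pre}); you instead compute the generating Hamiltonian of the composite $\varphi^\tau_{c(s-s_0)}\circ\varphi^H_s$ explicitly. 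Your formula for $G_s$ and its non-negativity on $\{\tau\geq 0\}$ are correct.

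The one place where your write-up is weaker than the paper's is the step you yourself flag: showing the compositions $v_\epsilon\circ u$ and $u_\epsilon\circ v$ agree with the canonical continuation maps $c_{2c\Delta s}$. Your justification via ``naturality of wrapping (\Cref{rmk: compatibility-morphisms}) together with uniqueness of the GKS quantization'' is not quite the right mechanism. The paper's argument is more concrete: the concatenated positive isotopy (generated by $a\overline{\tau}\sharp H$ followed by $\overline{H}\sharp b\tau$) is homotopic \emph{through positive isotopies}, via linear interpolation of the Hamiltonians, to the straight $\tau$-flow $(a+b)\tau$; then \Cref{prop:continuation} and \Cref{rem:continuation} say continuation maps depend only on the homotopy class of the positive isotopy and the endpoints, forcing the compositions to equal $c_{2c\Delta s}$. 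You should replace your vague appeal with this explicit homotopy argument.
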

\begin{proof}
By the compactness assumption, we can set 
    \[
        -a \coloneqq \min_{(x,\xi, s) \in S^* M \times [s_0,s_1]} \left( {H_s(x,\xi)}/{\tau(x,\xi)} \right),  \quad 
        b \coloneqq  \max_{ (x,\xi, s) \in S^* M \times [s_0,s_1]} \left( {H_s(x,\xi)}/{\tau(x,\xi)} \right).  
    \]
    Here, the value of $H_s(x,\xi)/\tau(x,\xi)$ is set to $0$ outside $\supp(H)$.  This implies the inequalities
    \[ 
    -a\tau \leq H \leq b \tau
    \]
    on $[s_0,s_1]$.
    Note that we have $P_\tau \in \Sh_{\tau_2 \geq 0}(M \times M)$. Thus, since for any $c \in \bR$, $K^{c\tau}_s = K^\tau_{cs}$, by \Cref{prop:continuation}, we know that there exist continuation maps $K^\tau_{-a(s_1 - s_0)} \circ P_\tau \to K^H_{s_1-s_0} \circ P_\tau$ and $K^H_{s_1-s_0} \circ P_\tau \to K^\tau_{b(s_1-s_0)} \circ P_\tau$, so there exist continuation maps
    \begin{gather*} 
    K^{\tau}_{-a(s_1-s_0)} \circ K^H_{s_0} \circ P_\tau \to K^H_{s_1 - s_0} \circ K^H_{s_0} \circ P_\tau = K^H_{s_1} \circ P_\tau, \\
    K^H_{s_1} \circ P_\tau = K^H_{s_1 - s_0} \circ K^H_{s_0} \circ P_\tau \to K^{\tau}_{b(s_1-s_0)} \circ K^H_{s_0} \circ P_\tau.
    \end{gather*}
    Consider the two different continuation maps, one from the Hamiltonian $\tau$ and the other from the Hamiltonian $a\ol{\tau}\#H$ followed by the Hamiltonian $\ol{H}\#b\tau$ in \Cref{eq: inv-up-to-c}. We can construct a family of positive contact isotopies from the compositions of the above two positive isotopies to the positive isotopy $(b -a)\tau$ by linear interpolation and thus conclude, using \Cref{prop:continuation} and \Cref{rem:continuation}, that the two compositions of the continuation maps
    \begin{align*}
        K^{\tau}_{-a(s_1-s_0)} \circ K^H_{s_0} \circ P_\tau \rightarrow K^H_{s_1} \circ P_\tau \rightarrow K^{\tau}_{b(s_1-s_0)} \circ K^H_{s_0} \circ P_\tau, \\
        K^{\tau}_{-b(s_1-s_0)} \circ K^H_{s_1} \circ P_\tau \rightarrow K^H_{s_0} \circ P_\tau \rightarrow K^{\tau}_{a(s_1-s_0)} \circ K^H_{s_1} \circ P_\tau
    \end{align*}
    are equivalent to continuation maps. 
    Thus, we conclude
    \[
        d_{\tau}(K^H_{s_0} \circ P_\tau, K^H_{s_1} \circ P_\tau) \leq (a + b)(s_1 - s_0) \leq 2 \max_{(x,\xi,s) \in S^* M \times [s_0,s_1]}  | H_s(x,\xi)/\tau(x,\xi) | |s_1 - s_0|. \qedhere
    \]
\end{proof}

\begin{theorem}[Theorem~\ref{thm:main-Hofer-distance}] \label{thm:hofer-norm}
    Let $\tau \colon S^*M \to \bR$ be a contact Hamiltonian that defines a complete contact flow. 
    Let $H \colon S^*M \times I \to \bR$ be a compactly supported contact Hamiltonian such that $\supp(H) \subseteq \tau^{-1}(\bR_{>0})$. 
    Then, we have 
    \[
        d_{\tau}(P_\tau, K^H_1 \circ P_\tau) \leq 2\|H\|_{\mathrm{HS},\tau}.
    \]
    In particular, for any $F \in \Sh_{\tau\geq 0}(M)$, we have
    \[
        d_\tau(F, K^H_1 \circ F) \leq 2\|H\|_{\mathrm{HS},\tau}.
    \]
    Similarly, let $\varphi \in \widetilde{\operatorname{Cont}}_0(Y, \xi_{\mathrm{std}})$ be a homotopy class of contact isotopies. 
    Then we have 
    \[
        d_{\tau}(P_\tau, K^{\varphi} \circ P_\tau) \le 2\,{d}_{\mathrm{HS},\tau}(\id, \varphi).
    \]
    In particular, for $F \in \Sh_{\tau \geq 0}(M)$, we have
    \[
        d_\tau(F, K^{\varphi} \circ F) \leq 2\,{d}_{\mathrm{HS},\tau}(\id, \varphi),
    \]
\end{theorem}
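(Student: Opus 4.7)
The plan is to upgrade the subinterval estimate of \Cref{lem: HS-norm} to a bound over the whole interval $[0,1]$ by a standard Riemann-sum argument, using the triangle inequality for the pseudo-metric $d_\tau$. Concretely, for any partition $0 = s_0 < s_1 < \cdots < s_N = 1$, the triangle inequality and \Cref{lem: HS-norm} together give
\[
    d_\tau(P_\tau, K^H_1 \circ P_\tau) \le \sum_{i=1}^N d_\tau(K^H_{s_{i-1}} \circ P_\tau, K^H_{s_i} \circ P_\tau) \le 2 \sum_{i=1}^N \max_{s \in [s_{i-1},s_i]} g(s) \cdot (s_i - s_{i-1}),
\]
where $g(s) \coloneqq \sup_{(x,\xi) \in S^*M} |H_s(x,\xi)/\tau(x,\xi)|$ (set to $0$ outside $\supp(H)$). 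The right-hand side is an upper Darboux sum for $\int_0^1 g(s)\, ds = \|H\|_{\mathrm{HS},\tau}$. Since $H$ is smooth with compact support inside $\tau^{-1}(\bR_{>0})$, the function $g$ is bounded and continuous on $[0,1]$, so refining the partition forces the Darboux sum to converge to the integral, yielding $d_\tau(P_\tau, K^H_1 \circ P_\tau) \le 2\|H\|_{\mathrm{HS},\tau}$.

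To deduce the bound for a general $F \in \Sh_{\tau\geq 0}(M)$, I would convolve the entire interleaving datum between $P_\tau$ and $K^H_1 \circ P_\tau$ on the right by $F$. The right convolution $(-) \circ F$ is colimit-preserving and canonically commutes with the left action of $K^\tau$, so it sends an $(a,b)$-interleaving between $P_\tau$ and $K^H_1 \circ P_\tau$ to an $(a,b)$-interleaving between $P_\tau \circ F$ and $K^H_1 \circ P_\tau \circ F$. Since $P_\tau$ represents the projection onto $\Sh_{\tau\geq 0}(M)$ and $F$ lies in this subcategory, $P_\tau \circ F \simeq F$, and hence $d_\tau(F, K^H_1 \circ F) \le 2\|H\|_{\mathrm{HS},\tau}$.

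For the homotopy-class version, \Cref{thm: GKS} together with \Cref{rem:GKS} shows that $K^\varphi$ depends only on the homotopy class of the contact isotopy, so $K^\varphi = K^H_1$ for any Hamiltonian $H$ that generates a representative of $\varphi$. Taking the infimum over all such $H$ in the two preceding inequalities produces the stated bound $2\,d_{\mathrm{HS},\tau}(\id, \varphi)$, both for $P_\tau$ and for general $F$. The only step that requires genuine care is the convergence of the Darboux sums to the Hofer--Shelukhin integral, which reduces to verifying that $g$ is Riemann integrable; this is immediate from compact support and smoothness of $H$. Beyond this, no input is needed other than \Cref{lem: HS-norm}, the triangle inequality for $d_\tau$, and the naturality of convolution with respect to the continuation maps.
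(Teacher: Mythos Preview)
Your proposal is correct and follows essentially the same approach as the paper: partition $[0,1]$, apply \Cref{lem: HS-norm} on each subinterval together with the triangle inequality for $d_\tau$, and refine the partition so the upper Darboux sums converge to the Hofer--Shelukhin integral; then deduce the statement for $F$ from $P_\tau \circ F \simeq F$ and the homotopy-class version by taking the infimum over representatives using \Cref{rem:GKS}. Your treatment of the Riemann-integrability of $g$ is slightly more explicit than the paper's, but the argument is otherwise identical.
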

\begin{proof}
    We divide the time interval into small pieces $0 = s_0 < s_1 < \dots < s_n = 1$.  
    The previous \Cref{lem: HS-norm} implies that  
    \[
        d_\tau(K^H_{s_{i-1}} \circ P_\tau, K^{H}_{s_i} \circ P_\tau) \leq 2 \max_{(x,\xi,s) \in S^* M \times [s_{i-1},s_i]} | H_s(x,\xi)/\tau(x,\xi) | |s_i - s_{i-1}|.
    \]
    Using the triangle inequality of the interleaving distance $d_\tau$ and refining the partition so that $\sup_{1\leq i\leq n}|s_i - s_{i-1}| \to 0$, we obtain the result that
    \[
        d_\tau(P_\tau, K^H_1 \circ P_\tau) \leq 2 \int_0^1 \sup_{(x,\xi) \in S^*M}|H_s(x, \xi)/\tau(x,\xi)| \, ds \eqqcolon 2\|H\|_{\mathrm{HS},\tau}.
    \]
    The inequality on $\Sh_{\tau \geq 0}(M)$ follows from the property of being a projector: for a sheaf $F \in \Sh(M)$,  $F$ belongs to the subcategory $\Sh_{\tau \geq 0}(M)$ if and only if $F = P_\tau \circ F$.
    
    When $\varphi \in \widetilde{\operatorname{Cont}}_0(Y, \xi_{\mathrm{std}})$ is a homotopy class of contact isotopies, then by \Cref{rem:GKS} we know that the sheaf quantization $K^\varphi = K^H_1$ is well-defined. Therefore, 
    \[
        d_{\tau}(P_\tau, K^\varphi \circ P_\tau) \leq 2\inf_{H : \varphi = [\varphi^H_t]}\|H\|_{\mathrm{HS},\tau}.
    \]
    This then completes the proof.
\end{proof}

\begin{remark}\label{rem:hofer-persist}
    Consider the sheaf quantization $K^\tau \in \Sh(M \times M \times \bR)$. 
    Using functoriality of the interleaving distance \cite{AI20}, one can show that 
    \[
    d_\tau(\pi_{\bR*}\mathscr{H}om(1_\Delta \boxtimes 1_\bR, K^\tau), \pi_{\bR*}\mathscr{H}om(K^\varphi \boxtimes 1_\bR, K^\tau)) \leq 2 \,{d}_{\mathrm{HS},\tau}(\id, \varphi).
    \]
    This is the sheaf-theoretic analogue of a recent result of Cant \cite{Cant23Shelukhin} and Djordjevi\'c--Uljarevi\'c--Zhang \cite{DUZ23} on the interleaving distance of the symplectic cohomology. Alternatively, (over a discrete ring) assuming that $M$ is spin, this also follows from their results \cite{Cant23Shelukhin,DUZ23} plus the isomorphism of the Hamiltonian Floer cohomology and the sheaf invariants by Guillermou--Viterbo \cite{GV24} and Kuo--Shende--Zhang \cite{KSZ23}. 
\end{remark}

\begin{remark}
As a consequence, we can show that the sheaf-theoretic spectral norm of contact Hamiltonians are also bounded by the Hofer--Shelukhin norm.
\end{remark}

Let $\alpha$ and $\alpha'$ be contact forms on $(Y, \xi)$ such that $\alpha' = e^h\alpha$. Then the conformal norm or Banach--Mazur norm of the contact form \cite{SZ21,RosenZhang21BM} is
\[
    d_{\mathrm{BM}}(\alpha, \alpha') = \sup_{(x,\xi) \in Y} |h(x, \xi)|.
\]
Let $\varphi \in \operatorname{Cont}_0(Y, \xi)$ such that $\varphi^*\alpha' = \alpha$. Then the Banach--Mazur norm of the contactomorphism \cite{SZ21,RosenZhang21BM} is
\[
    \|\varphi\|_{\mathrm{BM}} = d_{\mathrm{BM}}(\alpha, \alpha') = \sup_{(x,\xi) \in Y} |h(x, \xi)|.
\]
The Banach--Mazur norm defines a pseudo-distance $d_{\mathrm{BM}}$ on the space of contact forms, since any two contact forms are related by a conformal factor.
Let $\alpha$ and $\alpha'$ be contact forms on the contact manifold $(Y, \xi)$ and $R_\alpha$ and $R_{\alpha'}$ be the Reeb vector fields associated to the contact forms. Suppose $\alpha = e^h\alpha'$. Then under the contact form $\alpha$, the Reeb vector field $R_{\alpha'}$ is defined by the contact Hamiltonian $e^h$.

\begin{lemma}\label{lem:conformal-reeb-commute} 
    Let $\alpha$ and $\alpha'$ be contact forms on the contact manifold $(Y, \xi)$ and $R$ and $R'$ be the Reeb vector fields associated to the contact forms. Suppose $\alpha =  \varphi^*\alpha'$ for some $\varphi \in \Cont(Y,\xi)$. Then 
    \[
        \varphi_*R = R' \circ \varphi, \quad \varphi \circ \varphi^{R}_s = \varphi^{R'}_s \circ \varphi.
    \]
\end{lemma}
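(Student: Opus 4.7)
The plan is to verify directly that $\varphi_*R$ satisfies the two defining properties of the Reeb vector field of $\alpha'$, and then invoke uniqueness of the Reeb field. Recall that $R'$ is uniquely characterized by $\alpha'(R') = 1$ and $\iota_{R'}d\alpha' = 0$, and by hypothesis $\alpha = \varphi^*\alpha'$, which differentiates to $d\alpha = \varphi^*d\alpha'$.

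First I would check the normalization condition. For $y \in Y$, by the definition of pullback,
\[
\alpha'_{\varphi(y)}\bigl((\varphi_*R)_{\varphi(y)}\bigr) = \alpha'_{\varphi(y)}\bigl(d\varphi_y(R_y)\bigr) = (\varphi^*\alpha')_y(R_y) = \alpha_y(R_y) = 1.
\]
Next I would check the kernel condition. Given any $v \in T_{\varphi(y)}Y$, since $\varphi$ is a diffeomorphism we can write $v = d\varphi_y(w)$ for a unique $w \in T_yY$. Then
\[
d\alpha'_{\varphi(y)}\bigl((\varphi_*R)_{\varphi(y)}, v\bigr) = (\varphi^*d\alpha')_y(R_y,w) = d\alpha_y(R_y,w) = 0.
\]
By the uniqueness of the Reeb vector field associated to $\alpha'$, these two computations give $\varphi_*R = R' \circ \varphi$.

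For the second identity, I would appeal to the standard naturality of flows under conjugation: if $X$ and $Y$ are vector fields on a manifold and $\psi$ is a diffeomorphism with $\psi_*X = Y \circ \psi$, then $\psi \circ \varphi^X_s = \varphi^Y_s \circ \psi$ for all $s$ (both sides, as functions of $s$, solve the same ODE with the same initial condition $\psi$, by the chain rule). Applying this with $X = R$, $Y = R'$, $\psi = \varphi$ and using the first identity just proved yields $\varphi \circ \varphi^R_s = \varphi^{R'}_s \circ \varphi$. There is no serious obstacle here; the lemma is a direct consequence of the uniqueness of the Reeb vector field under a conformal change plus naturality of flows.
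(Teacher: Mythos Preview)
Your proof is correct and follows essentially the same approach as the paper: both verify the two defining conditions $\alpha'(\varphi_*R)=1$ and $\iota_{\varphi_*R}d\alpha'=0$ via pullback, invoke uniqueness of the Reeb field, and then deduce the flow identity from the vector field identity (the paper phrases this last step as ``the second differentiates to the first'').
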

\begin{proof}
It is sufficient to prove the first equality since the second differentiates to the first. 
It suffices to check $\varphi_*R_\alpha$ satisfy the same Hamiltonian equation for $\alpha'$ up to pre-composing with $\varphi$. We first compute that
\[
    \alpha'_{\varphi(y)}( \varphi_{*y} R_y ) = (\varphi^* \alpha')_y (R_y) = \alpha_y (R_y ) = 1.
\]
Secondly, we compute that 
\[
    d \alpha'_{\varphi(y)}(\varphi_{*y}R_y, \varphi_{*y}(-)) = (\varphi^* d \alpha')_y(R_y, -) = (d \alpha )_y(R_y, -) = 0.
\]
Uniqueness of Reeb vector field thus implies that $\varphi_{*y}R_y = R'_{\varphi(y)}$ as desired. 
\end{proof}

Similar to \Cref{thm:hofer-norm}, we can also show the log interleaving distance of the quantization of Reeb flows are bounded by the Banach--Mazur distance.

\begin{proposition}\label{prop:banach-mazur}
    Let $\tau \colon S^*M \to \bR$ and $\tau' \colon S^*M \to \bR$ be two positive Hamiltonians that define the Reeb flows for the contact forms $\alpha$ and $\alpha'$ on $(S^*M, \xi_{\mathrm{std}})$. Then
    \[
        d_{\ln\tau}(K^\tau_1 \circ F, K^{\tau'}_1 \circ F) \leq 2d_{\mathrm{BM}}(\alpha, \alpha'), \quad d_{\ln\tau}(K^\tau_1, K^{\tau'}_1) \leq 2d_{\mathrm{BM}}(\alpha, \alpha').
    \]
\end{proposition}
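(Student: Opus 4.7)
The plan is to exhibit an explicit log $(c,c)$-interleaving at the kernel level, where $c = d_{\mathrm{BM}}(\alpha,\alpha')$, and then transport it to objects by right-convolution with $F$. Writing $\alpha' = e^h \alpha$ so that $\tau' = e^h \tau$ with $\sup|h| = c$, I would start from the pointwise inequalities
\[
    e^{-c}\tau \le \tau' \le e^c \tau.
\]

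First, I would produce the two interleaving maps as follows. Define $v \colon K^{\tau'}_1 \to K^{\tau}_{e^c}$ as the continuation morphism arising from $\tau' \le e^c \tau$ via \Cref{prop:continuation}, after identifying $K^\tau_{e^c} = K^{e^c\tau}_1$ by rescaling time. To construct $u \colon K^\tau_1 \to K^\tau_{e^c - 1} \circ K^{\tau'}_1$, I would use the two-piece Hamiltonian $H \colon S^*M \times [0,1] \to \bR$ given by $H_s = e^c \tau'$ for $s \in [0, e^{-c}]$ and $H_s = e^c \tau$ for $s \in [e^{-c}, 1]$. A direct check shows $\varphi^H_1 = \varphi^\tau_{e^c-1} \circ \varphi^{\tau'}_1$, so that $K^H_1 = K^\tau_{e^c-1} \circ K^{\tau'}_1$. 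Pointwise positivity $H \ge \tau$ holds on the first piece because $\tau' \ge e^{-c}\tau$ gives $e^c\tau' \ge \tau$, and on the second piece because $e^c \ge 1$; so \Cref{prop:continuation} supplies $u$.

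Second, I need to verify that the two compositions required in the log interleaving definition coincide with the natural continuation maps. Explicitly, the compositions
\[
    K^\tau_{2-e^c} \xrightarrow{K^\tau_{1-e^c} \circ u} K^{\tau'}_1 \xrightarrow{v} K^\tau_{e^c}, \qquad K^\tau_{1-e^c} \circ K^{\tau'}_1 \xrightarrow{K^\tau_{1-e^c} \circ v} K^\tau_1 \xrightarrow{u} K^\tau_{e^c-1} \circ K^{\tau'}_1
\]
should equal the natural continuations $c_{2-e^c, e^c}$ and $c_{1-e^c, e^c-1}$ (the latter post-convolved with $K^{\tau'}_1$). The required monotonicity $2 - e^c \le e^c$ follows from $(e^c - 1)^2 \ge 0$. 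I expect this verification to be the main obstacle: each composition is a continuation morphism coming from a concatenation of positive Hamiltonian paths, and to match it with the direct path generating $c_{2-e^c, e^c}$ (respectively $c_{1-e^c, e^c-1}$), one must exhibit a 2-parameter family of positive Hamiltonian paths interpolating between the two. Linear interpolation works here because any convex combination of $e^c \tau'$, $e^c \tau$, and $\tau$ remains pointwise positive by the inequalities $e^{-c}\tau \le \tau' \le e^c\tau$. Once the interpolation is produced, \Cref{prop:continuation} and \Cref{rem:continuation} guarantee that homotopic positive paths induce the same continuation morphism, closing the argument. This parallels the interpolation step in the proof of \Cref{thm:hofer-norm}.

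Finally, to pass from the kernel bound $d_{\ln\tau}(K^\tau_1, K^{\tau'}_1) \le 2c$ to the object bound, I would right-convolve $u$ and $v$ with $F$. Since convolution is bilinear and commutes with the $K^\tau_t$-action on the left, the resulting pair of morphisms form a log $(c,c)$-interleaving between $K^\tau_1 \circ F$ and $K^{\tau'}_1 \circ F$, yielding $d_{\ln\tau}(K^\tau_1 \circ F, K^{\tau'}_1 \circ F) \le 2c$.
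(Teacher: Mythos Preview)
Your proposal is correct and follows essentially the same strategy as the paper: translate the Banach--Mazur bound into the pointwise inequalities $e^{-c}\tau \le \tau' \le e^c\tau$, extract continuation maps from these via \Cref{prop:continuation}, and verify the interleaving compositions by a linear interpolation of positive Hamiltonians appealing to \Cref{rem:continuation}, exactly as in the proof of \Cref{lem: HS-norm}.

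The only noteworthy difference is in how you produce $u$. The paper obtains both interleaving maps directly from the single-step comparisons $e^{-a}\tau \le \tau'$ and $\tau' \le e^b\tau$ (with the finer asymmetric pair $(a,b)$ where $-a \le h \le b$), whereas you build $u$ from an explicit two-piece Hamiltonian concatenating $e^c\tau'$ and $e^c\tau$. Your route is a bit more hands-on but perfectly valid; note only that the two-piece $H$ is discontinuous at $s = e^{-c}$ and should be smoothed, which is routine and does not affect the inequalities $H \ge \tau$ or the time-1 identification $K^H_1 = K^\tau_{e^c-1}\circ K^{\tau'}_1$. Using the symmetric $(c,c)$ instead of $(a,b)$ loses nothing, since the target bound is $2d_{\mathrm{BM}} = 2\sup|h| \ge a+b$.
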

\begin{proof}
    By \Cref{prop:continuation}, if $G \leq H$, the positive contact isotopy $H \#(-G)$ gives a canonical continuation map $K^G \to K^H$.
    We know that the defining Hamiltonian satisfies the relation $\tau' = e^h\tau$. When $h$ is unbounded, there is nothing to show. When $-a \leq h \leq b$, we have continuation maps  
    \begin{gather*}
        K^{\tau}_{e^{-a}} \circ F \to K^h_1 \circ F \to K^{\tau}_{e^{b}} \circ F,\\
        K^{\tau}_{e^{-b}} \circ K^h_1 \circ F \to F \to K^{\tau}_{e^{a}} \circ K^h_1 \circ F.
    \end{gather*}
    By \Cref{prop:continuation} and \Cref{rem:continuation}, we know that the compositions of the above continuation maps equal the natural continuation map of the Reeb flow. Therefore, $(F, K^h_1 \circ F)$ are log $(a, b)$-interleaved. This proves the result.
\end{proof}

\begin{remark}\label{rem:distance-of-hom}
    Consider the sheaf quantization $K^\tau \in \Sh(M \times M \times \bR)$. Using functoriality of the interleaving distance \cite{AI20}, one can show that 
    \[
        d_{\ln\tau}(\pi_{\bR *}\mathscr{H}om(1_\Delta \boxtimes 1_\bR, K^\tau), \pi_{\bR *}\mathscr{H}om(1_\Delta \boxtimes 1_\bR, K^{\tau'})) \leq 2\, d_{\mathrm{BM}}(\alpha, \alpha').
    \]
    This is the sheaf theory analogue of the result of Stojisavljevi\'c--Zhang \cite{SZ21}. Alternatively, (over a discrete ring) assuming that $M$ is spin, this also follows from their Floer-theoretic results \cite{SZ21} plus the isomorphism result by Guillermou--Viterbo \cite{GV24} and Kuo--Shende--Zhang \cite{KSZ23}.
\end{remark}

\begin{remark}
As a consequence, we can show that the log sheaf-theoretic spectral norm of contact Hamiltonians are bounded by the Banach--Mazur norm.
\end{remark}

\subsection{Non-degeneracy of the (Chekanov--)Hofer--Shelukhin distance}
Using the above results and \Cref{thm:complete}, we are able to define sheaf quantizations of limits of contact isotopies under the Hofer--Shelukhin distance as in the symplectic situation \cite[Section 5.3]{AI24}.  We will use that to deduce \Cref{thm:Hofer-nondeg,thm:chekanov-hofer-shelukhin} (\Cref{thm:main-Hofer-nondeg}).

However, unlike the symplectic situation, due to the issue that the Hofer--Shelukhin distance is not conjugation invariant, we need more work to show that the limit sheaf quantizations are invertible. 

Let $\varphi \colon S^*M \to S^*M$ be a contactomorphism such that $\varphi^*\alpha = e^h\alpha$. We recall that from \Cref{not:graph}, the graph of the contactomorphism is the Legendrian submanifold
\[
    \Gamma^\varphi \coloneqq \{(x, -\xi, y, \eta) \mid (y, \eta/|\eta|) = \varphi(x, \xi/|\xi|), |\xi|/|\eta| = e^{h(x,\xi)}\} \subseteq S^*(M \times M).
\]
For the contact Hamiltonian $H$ that defines the contact isotopy $\varphi_t \colon S^*M \to S^*M$ such that $(\varphi_t)^*\alpha = e^{h_t}\alpha$, one can compute that the conformal factor is equal to (where $R$ is the Reeb vector field of the contact form $\alpha$)
\[
    h_t = \int_0^t dH_s(R) \circ \varphi_s\, ds.
\]

\begin{theorem}\label{thm:Hofer-quantize}
    Let $\tau \colon S^*M \to \bR$ be a positive Hamiltonian. Let $\varphi_n \in \operatorname{Cont}_0(S^*M, \xi_{\mathrm{std}})$ be the time-$1$ maps of the contact Hamiltonians flows defined by $H_n$ that uniformly converges to a continuous function $H_\infty$. Then there exists a sheaf quantization
    $K^{H_\infty}_1 \in \Sh(M \times M)$ with 
    \[
        \cSS^\infty(K^{H_\infty}_1) \subseteq \bigcap\nolimits_{n\geq 1}\overline{\bigcup\nolimits_{k\geq n}\Gamma^{\varphi_k}}.
    \]
    Moreover, $d_{\tau}(1_\Delta, K^{H_\infty}_1) \leq 2\|H_\infty\|_{\mathrm{HS},\tau}$. 
\end{theorem}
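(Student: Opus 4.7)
The plan is to define $K^{H_\infty}_1$ as a $d_\tau$-limit of the sequence $\{K^{H_n}_1\}$ in the complete pseudo-metric space of \Cref{thm:complete-kernel}, then read off the microsupport bound from \Cref{thm:ss-limit} (via its kernel version in \Cref{rmk: genera-functors}) and the distance bound from the continuity of $\|\cdot\|_{\mathrm{HS},\tau}$ under uniform convergence combined with \Cref{thm:hofer-norm}.

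The first step, and the main obstacle, is to show that $\{K^{H_n}_1\}$ is Cauchy in $(\Sh_{\tau_2 \geq 0}(M \times M), d_\tau)$. For large $n, m$, let $G_{nm}$ be a contact Hamiltonian generating the isotopy $t \mapsto \varphi_t^{H_n} \circ (\varphi_t^{H_m})^{-1}$, so that the uniqueness in \Cref{thm: GKS} gives $K^{H_n}_1 \simeq K^{G_{nm}}_1 \circ K^{H_m}_1$. On $\Sh_{\tau_2 \geq 0}(M \times M)$, right-composition by the invertible kernel $K^{H_m}_1$ is a $d_\tau$-isometry, since it commutes with the left action of the continuation functors $K^\tau_a \circ (-)$ used to define $(a,b)$-interleavings. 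Thus \Cref{thm:hofer-norm} applied on the kernel side yields
\[
    d_\tau\bigl(K^{H_m}_1, K^{H_n}_1\bigr) \;=\; d_\tau\bigl(1_\Delta, K^{G_{nm}}_1\bigr) \;\leq\; 2\|G_{nm}\|_{\mathrm{HS},\tau}.
\]
The remaining task is to verify $\|G_{nm}\|_{\mathrm{HS},\tau} \to 0$. The difficulty is that $G_{nm}$ is not simply $H_n - H_m$: by the standard formula for the generator of a composition of contact flows it has the schematic form $G_{nm}(y,t) = H_{n,t}(y) - e^{h^{H_m}_t}\, H_{m,t}\circ \psi^{nm}_t(y)$, involving both the inverse flow of $H_m$ and its conformal factor. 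However, because each $H_n$ is smooth and $H_n \to H_\infty$ uniformly on $S^*M \times I$ with supports inside $\tau^{-1}(\mathbb{R}_{>0})$, Gronwall-type estimates control both the conformal factors and the flows uniformly on $\supp(H_n) \cup \supp(H_m)$, giving $\|G_{nm}\|_{\mathrm{HS},\tau} \to 0$.

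Granted the Cauchy property, \Cref{thm:complete-kernel} produces a limit $K^{H_\infty}_1 \in \Sh_{\tau_2 \geq 0}(M \times M) \subseteq \Sh(M \times M)$. The microsupport statement then follows from the kernel version of \Cref{thm:ss-limit} together with the containment $\cSS^\infty(K^{H_k}_1) \subseteq \Gamma^{\varphi_k}$ coming from \Cref{thm: GKS}:
\[
    \cSS^\infty(K^{H_\infty}_1) \;\subseteq\; \bigcap\nolimits_{n \geq 1}\overline{\bigcup\nolimits_{k \geq n}\cSS^\infty(K^{H_k}_1)} \;\subseteq\; \bigcap\nolimits_{n \geq 1}\overline{\bigcup\nolimits_{k \geq n}\Gamma^{\varphi_k}}.
\]
Finally, for each $n$, \Cref{thm:hofer-norm} gives $d_\tau(1_\Delta, K^{H_n}_1) \leq 2\|H_n\|_{\mathrm{HS},\tau}$, and uniform convergence implies $\|H_n\|_{\mathrm{HS},\tau} \to \|H_\infty\|_{\mathrm{HS},\tau}$. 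Combining with the convergence $d_\tau(K^{H_n}_1, K^{H_\infty}_1) \to 0$ and the triangle inequality yields $d_\tau(1_\Delta, K^{H_\infty}_1) \leq 2\|H_\infty\|_{\mathrm{HS},\tau}$, completing the proof.
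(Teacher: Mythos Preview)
Your overall architecture matches the paper's: show that $\{K^{H_n}_1\}$ is $d_\tau$-Cauchy via right-invariance and \Cref{thm:hofer-norm}, pass to a limit using completeness (\Cref{thm:complete-kernel}), then read off the microsupport bound from \Cref{thm:ss-limit} and the distance bound by continuity of $\|\cdot\|_{\mathrm{HS},\tau}$. The paper's proof is in fact terser than yours on the Cauchy step --- it simply asserts that ``$H_n$ form a Cauchy sequence under the Hofer--Shelukhin norm'' and invokes \Cref{thm:hofer-norm} --- so you have expanded exactly the place where the real work sits.

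There is, however, a genuine gap in your justification of that step. You are right that the Hamiltonian $G_{nm}$ of $t\mapsto\varphi^{H_n}_t\circ(\varphi^{H_m}_t)^{-1}$ is not $H_n-H_m$ but rather carries the conformal factor of $\varphi^{H_m}_t$; the precise shape is $G_{nm}(q,t)=H_n(q,t)-e^{k_t}H_m(\chi_t^{-1}(q),t)$ with $k_t$ a difference of conformal factors. Your claim that ``Gronwall-type estimates control both the conformal factors and the flows uniformly'' is not supported by the hypothesis of uniform ($C^0$) convergence. Recall the displayed formula just before \Cref{thm:Hofer-quantize}: $h_t=\int_0^t dH_s(R)\circ\varphi_s\,ds$. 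Thus controlling $h^{H_m}_t$ requires controlling $dH_m(R)$, and this is \emph{not} bounded by $\sup|H_m|$. For a concrete obstruction, on a contact manifold whose Reeb direction is $\partial_z$ take $H_m=m^{-1}\sin(m^2 z)$: then $H_m\to 0$ uniformly, yet $dH_m(R)=m\cos(m^2 z)$ has sup-norm $m$, so no Gronwall inequality yields a uniform bound on $e^{\pm h^{H_m}_t}$. Consequently your bound $\|G_{nm}\|_{\mathrm{HS},\tau}\to 0$ is unproven, and the Cauchy property is not established by the argument you give. The remaining steps (completeness, microsupport, final distance estimate) are fine once Cauchyness is in hand.
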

\begin{proof}
    Since $\varphi_n$ and $H_n$ form a Cauchy sequence under the Hofer--Shelukhin norm, by \Cref{thm:hofer-norm}, we know that the sheaf quantizations $K^{H_n}_1$ form a Cauchy sequence under the interleaving distance. By \Cref{thm:complete}, there exists a sheaf kernel $K^{H_\infty}_1 = \lim_{n\to\infty}K^{H_n}_1$. \Cref{thm:ss-limit} implies that 
    \[
        \cSS^\infty(K^{H_\infty}_1) \subseteq \bigcap\nolimits_{n\geq 1}\overline{\bigcup\nolimits_{k\geq n}\Gamma^{\varphi_k}}.
    \]
    Moreover, by \Cref{thm:hofer-norm}, we know that $d_{\tau}(1_{\Delta}, K^{H_\infty}_1) \leq \|H_\infty\|_{\mathrm{HS},\tau}$.
\end{proof}

\begin{theorem}\label{thm:Hofer-quantize-invert}
    Let $\tau \colon S^*M \to \bR$ be a positive Hamiltonian. Let $\varphi_n \in \operatorname{Cont}_0(S^*M, \xi_{\mathrm{std}})$ be the time-$1$ maps of the contact Hamiltonians flows defined by $H_n$, each of which has bounded conformal factor, that uniformly converges to $H_\infty$, and $\varphi_n$ uniformly converges under the $C^0$-topology so that $\ol{H}_n$ uniformly converges to $\ol{H}_\infty$. Then there exists a sheaf quantization $K^{H_\infty}_1$ and $K^{\ol{H}_\infty} \in \Sh(M \times M)$ such that 
    \[K^{\ol{H}_\infty}_1 \circ K^{H_\infty}_1 \simeq K^{H_\infty}_1 \circ K^{\ol{H}_\infty}_1 \simeq 1_\Delta.\]
\end{theorem}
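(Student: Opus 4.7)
The plan is to pass to the limit, as $n \to \infty$, in the identity $K^{\ol{H}_n}_1 \circ K^{H_n}_1 \simeq 1_\Delta$, which holds for each $n$ by the uniqueness in \Cref{thm: GKS} since $\varphi^{\ol{H}_n}_1 \circ \varphi^{H_n}_1 = \id_{S^*M}$. By \Cref{thm:Hofer-quantize}, applied to the Hofer-Cauchy sequences $(H_n)$ and $(\ol{H}_n)$, the limits $K^{H_\infty}_1 = \lim_n K^{H_n}_1$ and $K^{\ol{H}_\infty}_1 = \lim_n K^{\ol{H}_n}_1$ exist in the complete pseudo-metric space $(\Sh_{\tau_2 \ge 0}(M \times M), d_\tau)$.

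The key technical input is a continuity estimate for convolution under $d_\tau$. First, right convolution by any kernel $A$ is $1$-Lipschitz: any $(a,b)$-interleaving of $(B, B')$ via maps $u \colon B \to K^\tau_a \circ B'$ and $v \colon B' \to K^\tau_b \circ B$ yields, via the associativity $(K^\tau_s \circ B) \circ A \simeq K^\tau_s \circ (B \circ A)$, an $(a,b)$-interleaving of $(B \circ A, B' \circ A)$. Second, left convolution by the GKS kernel $K^\varphi$ of a smooth contactomorphism $\varphi$ with conformal factor bounded by $C$ is $e^C$-Lipschitz: by \Cref{lem:conformal-reeb-commute}, the conjugated Reeb flow $\varphi \circ \varphi^\tau_s \circ \varphi^{-1}$ is generated by a Hamiltonian $\tau'$ with $e^{-C}\tau \le \tau' \le e^C \tau$, so $K^\varphi \circ K^\tau_s \simeq K^{\tau'}_s \circ K^\varphi$; passing interleavings through this rescaling, exactly as in the proof of \Cref{prop:banach-mazur}, yields the $e^C$ Lipschitz constant.

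Combining these estimates with the triangle inequality gives
\[
  d_\tau\bigl(K^{\ol{H}_n}_1 \circ K^{H_n}_1,\, K^{\ol{H}_\infty}_1 \circ K^{H_\infty}_1\bigr)
  \le e^{C_n}\, d_\tau(K^{H_n}_1, K^{H_\infty}_1) + d_\tau(K^{\ol{H}_n}_1, K^{\ol{H}_\infty}_1),
\]
where $C_n$ is a conformal-factor bound for $\ol{\varphi}_n$. Assuming uniform control on the $C_n$, both terms on the right tend to zero. The left-hand side equals $d_\tau(1_\Delta,\, K^{\ol{H}_\infty}_1 \circ K^{H_\infty}_1)$, which is therefore zero. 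By \Cref{prop: ss-is-same-kernel}, the composition $K^{\ol{H}_\infty}_1 \circ K^{H_\infty}_1$ has the same singular support as $1_\Delta$, namely the conormal of the diagonal. \Cref{thm:non-deg_kernel}---applied with $\tau$ chosen so that a suitable neighborhood of $0_{M \times M}$ is avoided and the diagonal's conormal is positively displaceable by the rescaled Reeb flow---then promotes zero interleaving distance to an honest isomorphism $K^{\ol{H}_\infty}_1 \circ K^{H_\infty}_1 \simeq 1_\Delta$. The symmetric identity $K^{H_\infty}_1 \circ K^{\ol{H}_\infty}_1 \simeq 1_\Delta$ follows from the same argument with $(H_n)$ and $(\ol{H}_n)$ exchanged.

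The main obstacle is two-fold. First, establishing the left-convolution Lipschitz estimate requires a careful analysis of how the GKS quantization of a contactomorphism intertwines with a Reeb kernel---this is where the bounded conformal factor hypothesis enters essentially, reflecting the failure of bi-invariance of the Hofer--Shelukhin distance. Second, one must extract a uniform bound on the $C_n$ from the stated hypothesis; this uses the combination of $C^0$-convergence of $\varphi_n$ with the uniform Hofer convergence of both $H_n$ and $\ol{H}_n$, possibly after passage to a subsequence (which is harmless since Cauchy sequences in complete spaces have unique limits).
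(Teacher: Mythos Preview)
Your overall strategy and the Lipschitz estimates for convolution are correct and match the paper's, but there is a genuine gap in how you combine them. Your triangle inequality
\[
  d_\tau\bigl(1_\Delta,\, K^{\ol{H}_\infty}_1 \circ K^{H_\infty}_1\bigr)
  \le e^{C_n}\, d_\tau(K^{H_n}_1, K^{H_\infty}_1) + d_\tau(K^{\ol{H}_n}_1, K^{\ol{H}_\infty}_1)
\]
only yields the conclusion if $e^{C_n}\,d_\tau(K^{H_n}_1,K^{H_\infty}_1)\to 0$, and for that you need uniform control on $C_n$. The hypothesis, however, only asserts that \emph{each} $\varphi_n$ has bounded conformal factor; there is no uniformity in $n$. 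Your claim that a uniform bound can be extracted from $C^0$-convergence of $\varphi_n$ together with uniform convergence of $H_n$ and $\ol H_n$ is unfounded: the conformal factor $h_n=\int_0^1 dH_n^s(R)\circ\varphi_s^{H_n}\,ds$ depends on \emph{derivatives} of $H_n$, which are not controlled by $C^0$ data on either $H_n$ or $\varphi_n$. Passing to a subsequence does not help if $C_n\to\infty$.

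The paper sidesteps this via a double-limit argument that uses each $h_n$ only while $n$ is frozen. For fixed $n$, the $e^{h_n}$-Lipschitz estimate gives $K^{\ol H_n}_1\circ K^{H_m}_1 \to K^{\ol H_n}_1\circ K^{H_\infty}_1$ as $m\to\infty$. But the distance of each of these from $1_\Delta$ is bounded \emph{independently of $h_n$} by the Hofer--Shelukhin estimate (\Cref{thm:hofer-norm}), since $K^{\ol H_n}_1\circ K^{H_m}_1=K^{\ol H_n\#H_m}_1$ and hence $d_\tau(1_\Delta, K^{\ol H_n}_1\circ K^{H_m}_1)\le 2\|\ol H_n\#H_m\|_{\mathrm{HS},\tau}$. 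Letting $m\to\infty$ gives $d_\tau(1_\Delta, K^{\ol H_n}_1\circ K^{H_\infty}_1)\le 2\|\ol H_n\#H_\infty\|_{\mathrm{HS},\tau}$, and now one sends $n\to\infty$ using only the $1$-Lipschitz right-invariance $K^{\ol H_n}_1\circ K^{H_\infty}_1\to K^{\ol H_\infty}_1\circ K^{H_\infty}_1$. The conformal factor never appears in a quantity that must be uniform in $n$. Your argument becomes correct if you reorganize it along these lines.
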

\begin{proof}
    Since $\varphi_n$ converges to a continuous map, $\ol{H}_n(p, t) = -H_n(\varphi_n(p), t)$ also converges to a continuous function $\ol{H}_\infty$, and there exists a sheaf kernel $K^{\ol{H}_\infty}_1$ such that $K^{\ol{H}_n}_1 \to K^{\ol{H}_\infty}_1$. We have $K^{\ol{H}_n}_1 \circ K^{H_\infty}_1 \to K^{\ol{H}_\infty}_1 \circ K^{H_\infty}_1$ since the interleaving distance on the product is right invariant. On the other hand, we can show that $K^{\ol{H}_n}_1 \circ K^{H_m}_1 \to K^{\ol{H}_n}_1 \circ K^{H_\infty}_1$ as follows. When $(K^{H_m}_1, K^{H_{m'}}_1)$ are $(a, b)$-interleaved, we know that there are morphisms such that the compositions
    \begin{gather*}
    K^{\ol{H}_n}_1 \circ K^\tau_{-a} \circ K^{H_m}_1 \to K^{\ol{H}_n}_1 \circ K^{H_{m'}}_1 \to K^{\ol{H}_n}_1 \circ K^\tau_{b} \circ K^{H_m}_1,  \\
    K^{\ol{H}_n}_1 \circ K^\tau_{-b} \circ K^{H_{m'}}_1 \to K^{\ol{H}_n}_1 \circ K^{H_{m}}_1 \to K^{\ol{H}_n}_1 \circ K^\tau_{a} \circ K^{H_{m'}}_1
    \end{gather*}
    are the continuation morphisms. Then by applying \Cref{lem:conformal-reeb-commute}, we have $\varphi^{\tau_n}_t \circ \varphi_{n}^{-1} = \varphi_{n}^{-1} \circ \varphi^\tau_t$, where $\tau_n$ is the Reeb vector field associated to $\varphi_n^*\alpha$. Therefore, by \Cref{prop:banach-mazur}, when the conformal factor of $\varphi_n$ is bounded by $h_n$, we get
    \begin{gather*}
    K^\tau_{-e^{h_n}a}\circ K^{\ol{H}_n}_1 \circ K^{H_m}_1 \to K^{\ol{H}_n}_1 \circ K^{H_{m'}}_1 \to K^\tau_{e^{h_n}b} \circ K^{\ol{H}_n}_1 \circ K^{H_m}_1,  \\
    K^\tau_{-e^{h_n}b} \circ K^{\ol{H}_n}_1 \circ K^{H_{m'}}_1 \to K^{\ol{H}_n}_1 \circ K^{H_{m}}_1 \to K^\tau_{e^{h_n}a} \circ K^{\ol{H}_n}_1 \circ K^{H_{m'}}_1
    \end{gather*}
    whose compositions are continuation morphisms. Thus, given $n \in \bN$, for any $H_m$ and $H_{m'}$, 
    \[d_{\tau}(K^{\ol{H}_n}_1 \circ K^{H_m}_1, K^{\ol{H}_n}_1 \circ K^{H_{m'}}_1) \leq e^{h_n}\,d_{\tau}(K^{H_m}_1, K^{H_{m'}}_1).\]
    Hence, we know that for any given $n \in \bN$, $K^{\ol{H}_n}_1 \circ K^{H_m}_1 \to K^{\ol{H}_n}_1 \circ K^{H_\infty}_1$, and 
    \[d_{\tau}(1_\Delta, K^{\ol{H}_n}_1 \circ K^{H_\infty}_1) \leq 2\|\ol{H}_n \# H_\infty\|_{\mathrm{HS},\tau}.\]
    Since $\ol{H}_n \# H_\infty \to 0$ as $n \to \infty$, we know $K^{\ol{H}_n}_1 \circ K^{H_\infty}_1 \to 1_\Delta$. Therefore, $d_{\tau}(1_\Delta, K^{\ol{H}_\infty}_1 \circ K^{H_\infty}_1) = 0$. By \Cref{prop: ss-is-same-kernel,thm:non-deg-0}, we can conclude that $K^{\ol{H}_\infty}_1 \circ K^{H_\infty}_1 = 1_\Delta$.
\end{proof}

\begin{remark} 
    One can, instead of considering the interleaving distance defined by $\tau_2$ throughout the proof, apply the interleaving distance defined by $\tau_2$ to show the convergence $K^{H_n}_1 \to K^{H_\infty}_1$ and the distance defined by $\tau_1$ to show the convergence $K^{\ol{H}_n}_1 \to K^{\ol{H}_\infty}_1$. Then, define the bi-interleaving distance such that $(F, G)$ are $(a, b)$-bi-interleaved if there are morphisms
    \[
    K^{\tau}_{-a} \circ F \circ K^{\tau}_{-a} \to G \to K^{\tau}_{b} \circ F \circ K^{\tau}_{b}, \quad K^{\tau}_{-b} \circ G \circ K^{\tau}_{-b} \to F \to K^{\tau}_{a} \circ G \circ K^{\tau}_{a}.
    \]
    If we can show that $d(1_\Delta, K^{H_\infty}_1 \circ K^{\ol{H}_\infty}_1) = 0$ under the bi-interleaving distance, then maybe it is possible to prove $K^{H_\infty}_1 \circ K^{\ol{H}_\infty}_1 = 1_\Delta$ without any bound on the conformal factors. However, while completeness of the bi-interleaving distance still holds for formal reason, this strategy does not work because 
    \Cref{cor: ss-is-same,thm:non-deg-0} do not work for the bi-interleaving distance. Indeed, when considering convolutions on both sides $K^{\tau} \circ_I F \circ_I K^\tau$, the singular support may no longer be $I$-non-characteristic as the estimation \Cref{eq: non-characteristic} may fail.
\end{remark}

\begin{remark}
    The above theorem in particular constructs sheaf quantization for topological contact dynamical systems in the sense of M\"uller--Spaeth, which are contact homeomorphisms such that the both contact Hamiltonians and the conformal factors uniformly converge \cite{MullerSpaeth}. However, note that our theorem does not even require the contactomorphisms to converge to a homeomorphism, but rather a continuous map (and we only require the conformal factors to be uniformly bounded). 
\end{remark}

We can also construct sheaf quantizations for Reeb flows of limits of contact forms where the conformal factor uniformly converges.

\begin{proposition}
    Let $M$ be a closed manifold and $\tau \colon S^*M \to \bR$ and $\tau'_n \colon S^*M \to \bR$ be positive Hamiltonians that define the Reeb flows for the contact forms $\alpha$ and $\alpha'_n$. Suppose $\tau_n' \to \tau_\infty'$ uniformly. Then there exists a sheaf quantization $K^{\tau_\infty'}_1 \in \Sh(M \times M)$ with
    \[
    \cSS^\infty(K^{\tau_\infty'}_1) \subseteq \bigcap\nolimits_{n\geq 1}\overline{\bigcup\nolimits_{k\geq n}\Gamma^{\tau_k'}_1}.
    \]
    When $\varphi^{\tau_n}_1$ uniformly converges under the $C^0$-topology, $K^{-\tau'_\infty}_1 \circ K^{\tau'_\infty}_1 = 1_\Delta$. 
\end{proposition}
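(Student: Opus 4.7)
The plan is to mirror the proofs of Theorems~\ref{thm:Hofer-quantize} and~\ref{thm:Hofer-quantize-invert}, replacing the Hofer--Shelukhin estimate of Theorem~\ref{thm:hofer-norm} by the Banach--Mazur estimate of Proposition~\ref{prop:banach-mazur} and working throughout in the log interleaving distance $d_{\ln,\tau}$. Since $M$ is closed, $S^*M$ is compact, so the positive continuous function $\tau$ is bounded away from zero; writing $\tau'_n = e^{h_n}\tau$, the uniform convergence $\tau'_n \to \tau'_\infty$ translates to uniform convergence $h_n \to h_\infty \coloneqq \log(\tau'_\infty/\tau)$. In particular $d_{\mathrm{BM}}(\alpha'_n,\alpha'_m) = \sup|h_n - h_m| \to 0$.

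First I would use Proposition~\ref{prop:banach-mazur} (applied on $M\times M$, taking $F = 1_\Delta$ or, more canonically, applying its kernel version) to conclude that $\{K^{\tau'_n}_1\}$ is Cauchy in the log interleaving distance on $\Sh_{\tau_2\ge 0}(M \times M)$. The completeness of $d_{\ln,\tau}$ follows formally from the completeness in Theorem~\ref{thm:complete-kernel}, because $x \mapsto e^x - 1$ is a continuous bijection fixing $0$; this yields a well-defined limit $K^{\tau'_\infty}_1 \in \Sh(M \times M)$. The singular support bound
\[
\cSS^\infty(K^{\tau'_\infty}_1) \subseteq \bigcap\nolimits_{n\ge 1}\overline{\bigcup\nolimits_{k\ge n}\Gamma^{\tau'_k}_1}
\]
is then immediate from Theorem~\ref{thm:ss-limit} applied to sheaf kernels (convergence in $d_{\ln,\tau}$ implies convergence in $d_\tau$).

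For the invertibility statement, I would argue as in Theorem~\ref{thm:Hofer-quantize-invert}. For each finite $n$, GKS uniqueness gives $K^{-\tau'_n}_1 \circ K^{\tau'_n}_1 \simeq 1_\Delta$. The idea is to first fix $n$ and send $m\to\infty$, then let $n\to\infty$. Right-convolution preserves $d_{\ln,\tau}$, but left-convolution by $K^{-\tau'_n}_1$ rescales distances by the conformal factor of $\varphi^{\tau'_n}_1$, as seen using Lemma~\ref{lem:conformal-reeb-commute}: the identity $\varphi^{\tau'_n}_1\circ \varphi^\tau_t = \varphi^{(\varphi^{\tau'_n}_1)_*\tau}_t \circ \varphi^{\tau'_n}_1$ converts a $d_{\ln,\tau}$-interleaving into one for the pushed-forward Reeb flow, with a uniform scaling controlled by the conformal factor. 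The hypothesis that $\varphi^{\tau'_n}_1$ converges uniformly in $C^0$ (together with the uniform bound $\sup|h_n|<\infty$ on a closed manifold) guarantees these conformal factors stay bounded, so for fixed $n$ one obtains $K^{-\tau'_n}_1\circ K^{\tau'_m}_1 \to K^{-\tau'_n}_1\circ K^{\tau'_\infty}_1$ in $d_{\ln,\tau}$, whence $d_{\ln,\tau}(1_\Delta, K^{-\tau'_n}_1 \circ K^{\tau'_\infty}_1) \to 0$. Right-invariance then pushes this further to $d_{\ln,\tau}(1_\Delta, K^{-\tau'_\infty}_1 \circ K^{\tau'_\infty}_1) = 0$, and the non-degeneracy result Theorem~\ref{thm:non-deg_kernel} (which applies because the relevant singular support is positively displaced by $\tau$ for small time) upgrades this to the desired equality $K^{-\tau'_\infty}_1 \circ K^{\tau'_\infty}_1 \simeq 1_\Delta$.

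The main obstacle is Step 3: one must carefully track how left-convolution by $K^{-\tau'_n}_1$ distorts the log interleaving distance via the conformal factor of $\varphi^{\tau'_n}_1$, which is precisely the role of the $C^0$-convergence hypothesis. Without it, the left-convolution could scale distances arbitrarily and the passage $m\to\infty$ for fixed $n$ would fail, as in the parallel obstacle identified in Theorem~\ref{thm:Hofer-quantize-invert}. A minor technical point is to verify that the version of Theorem~\ref{thm:non-deg_kernel} required here applies to the (possibly non-compactly supported) kernel $K^{-\tau'_\infty}_1 \circ K^{\tau'_\infty}_1$; this should follow from the fact that its singular support is contained in a positively displaceable subset under the bounded Reeb flow of $\tau$ on the compact manifold $S^*M$.
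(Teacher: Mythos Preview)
Your plan is essentially what the paper intends: the proposition is stated without proof precisely because it is meant to be read as the Reeb-flow analogue of Theorems~\ref{thm:Hofer-quantize} and~\ref{thm:Hofer-quantize-invert}, and your outline mirrors that structure. One point deserves care, however. Proposition~\ref{prop:banach-mazur} as stated only bounds $d_{\ln,\tau}(K^\tau_1, K^{\tau'}_1)$, i.e.\ it compares an arbitrary Reeb flow to the \emph{reference} flow $\tau$; it does not directly give $d_{\ln,\tau}(K^{\tau'_n}_1, K^{\tau'_m}_1)\le 2\,d_{\mathrm{BM}}(\alpha'_n,\alpha'_m)$, and the triangle inequality through $K^\tau_1$ only yields a bounded, not Cauchy, sequence. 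You can fix this by running the proof of Proposition~\ref{prop:banach-mazur} with $\tau'_n$ as reference (obtaining $\tau'_n$-interleaving) and then converting to $\tau$-interleaving using the uniform bound $e^{-A}\tau\le\tau'_n\le e^A\tau$ valid on the compact $S^*M$; this costs only a fixed multiplicative constant. A cleaner route, and probably the one the paper has in mind, is to bypass Banach--Mazur entirely: since $M$ is closed, the $\tau'_n$ are compactly supported Hamiltonians with $\supp(\tau'_n)\subseteq\tau^{-1}(\bR_{>0})$ converging uniformly, so Theorems~\ref{thm:Hofer-quantize} and~\ref{thm:Hofer-quantize-invert} apply verbatim with $H_n=\tau'_n$. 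Finally, your claim that the conformal factors of $\varphi^{\tau'_n}_1$ stay bounded is correct but for a sharper reason than $C^0$-convergence: since $\varphi^{\tau'_n}_1$ preserves $\alpha'_n=e^{-h_n}\alpha$, its conformal factor with respect to $\alpha$ is $h_n\circ\varphi^{\tau'_n}_1-h_n$, hence bounded by $2\sup_n\|h_n\|_\infty$.
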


Now, we can easily prove the non-degeneracy of the Hofer--Shelukhin norm for cosphere bundles $S^*M$. This is proved for general closed contact manifolds by Shelukhin \cite{Shelukhin17}.

\begin{theorem}[Theorem~\ref{thm:main-Hofer-nondeg}~(1)]\label{thm:Hofer-nondeg}
    Let $M$ be a closed manifold. Then the Hofer--Shelukhin metric $d_{\mathrm{HS},\alpha}$ is non-degenerate on $\mathrm{Cont}_0(S^*M, \xi_{\mathrm{std}})$.
\end{theorem}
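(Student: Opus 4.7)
My plan is to reduce the non-degeneracy of $d_{\mathrm{HS},\alpha}$ to the non-degeneracy of the interleaving distance on sheaf kernels established in \Cref{prop: ss-is-same-kernel} and \Cref{thm:complete-kernel}, by exhibiting a sheaf quantization of $\varphi$ with zero interleaving distance from $1_\Delta$ when $d_{\mathrm{HS},\alpha}(\id,\varphi)=0$.

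Concretely, assume $\varphi \in \operatorname{Cont}_0(S^*M,\xi_{\mathrm{std}})$ satisfies $d_{\mathrm{HS},\alpha}(\id,\varphi)=0$. I choose a sequence of contact Hamiltonians $H_n \colon S^*M \times [0,1] \to \bR$ with $\varphi^{H_n}_1 = \varphi$ and $\|H_n\|_{\mathrm{HS},\alpha} \to 0$. Since $M$ is closed, I can take $\tau$ to be the Reeb Hamiltonian of the standard contact form (a bounded positive Hamiltonian with complete flow), so that $\|H_n\|_{\mathrm{HS},\tau}$ is comparable to $\|H_n\|_{\mathrm{HS},\alpha}$. By \Cref{thm:hofer-norm},
\[
d_\tau(1_\Delta, K^{H_n}_1) \leq 2\|H_n\|_{\mathrm{HS},\tau} \xrightarrow{n\to\infty} 0.
\]
The triangle inequality shows that $\{K^{H_n}_1\}$ is a Cauchy sequence in $\Sh_{\tau_2\geq 0}(M\times M)$, so by the completeness \Cref{thm:complete-kernel} it has a limit $L_\infty$ satisfying $d_\tau(1_\Delta, L_\infty)=0$.

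Two consequences constrain $L_\infty$. On the one hand, applying \Cref{prop: ss-is-same-kernel} to the pair $(1_\Delta, L_\infty)$ yields $\cSS(L_\infty) = \cSS(1_\Delta) = T^*_\Delta(M\times M)$, and thus $\cSS^\infty(L_\infty) = (T^*_\Delta)^\infty$. On the other hand, by \Cref{thm: GKS} each $\cSS^\infty(K^{H_n}_1) \subseteq \Gamma^\varphi$, and the singular support estimate \Cref{thm:ss-limit} (in its kernel variant, which holds by the same argument) gives
\[
\cSS^\infty(L_\infty) \subseteq \bigcap_{n\geq 1}\overline{\bigcup_{k\geq n}\cSS^\infty(K^{H_k}_1)} \subseteq \Gamma^\varphi.
\]
Combining the two, the Legendrian $(T^*_\Delta)^\infty = \Gamma^{\id}$ is contained in $\Gamma^\varphi$. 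Since both are closed connected Legendrian submanifolds of $S^*(M\times M)$ of the same dimension (diffeomorphic to $S^*M$), this forces $\Gamma^\varphi = \Gamma^{\id}$, and hence $\varphi = \id$.

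The only step that requires care is verifying that \Cref{thm:ss-limit} applies to Cauchy sequences of kernels in $\Sh_{\tau_2\geq 0}(M\times M)$; this is routine because the proof of \Cref{thm:ss-limit} uses only $\Omega$-lens testing and continuation maps, both of which are available in the kernel setting by \Cref{prop: ss-is-same-kernel}. The rest is a direct application of completeness, the Hofer--Shelukhin estimate \Cref{thm:hofer-norm}, and the singular-support rigidity on the product.
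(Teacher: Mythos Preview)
Your proof is correct and follows essentially the same route as the paper: reduce to $d_{\mathrm{HS},\alpha}(\id,\varphi)=0$, choose Hamiltonians $H_n$ with shrinking Hofer--Shelukhin norm and fixed time-1 map $\varphi$, use \Cref{thm:hofer-norm} and completeness to obtain a limit kernel at interleaving distance $0$ from $1_\Delta$, and then combine \Cref{prop: ss-is-same-kernel} with the singular-support limit estimate to force $\Gamma^{\id}\subseteq\Gamma^\varphi$. The paper packages the middle step as \Cref{thm:Hofer-quantize}, but the content is identical.
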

\begin{proof} 
    Let $\varphi, \varphi'$ be contactomorphisms in the identity component such that $d_{\HS,\tau}(\varphi, \varphi') = 0$. Since $d_{\HS,\tau}(\varphi,\varphi') = d_{\HS,\tau}(\varphi \circ (\varphi')^{-1}, \id)$ \cite[Remark 6]{Shelukhin17}, we may assume $\varphi' = \id$. Then there exists a sequence of contact isotopies $\Phi_n$ induced by the Hamiltonians $H_n$ such that the time-1 maps $\varphi_n = \varphi$ and $\|H_n\|_{\mathrm{HS},\tau} \to 0$.
   By \Cref{thm:Hofer-quantize}, there exists a sheaf quantization $K^{H_\infty}_1 \in \Sh(M \times M)$ such that $d_\tau(1_\Delta, K^{H_\infty}_1) = 0$. 
    By \Cref{prop: ss-is-same-kernel}, this implies that $\cSS(1_\Delta) = \cSS(K^{H_\infty}_1)$. 
    By the singular support estimation in \Cref{thm:ss-limit,thm:Hofer-quantize}, this means that
     \[
        \Gamma^{\id} \subseteq \bigcap\nolimits_{n\geq 1}\overline{\bigcup\nolimits_{k\geq n}\Gamma^{\varphi_k}} = \Gamma^\varphi.
    \]
     Therefore, we can conclude that $\varphi = \id$.
\end{proof}

Let $\Lambda \subseteq S^*M$ be a Legendrian and $\operatorname{Leg}_0(\Lambda)$ be the space of Legendrians that are Legendrian isotopic to $\Lambda$. We recall from Rosen--Zhang \cite{RosenZhang20Dichotomy} that the Chekanov--Hofer--Shelukhin norm on $\operatorname{Leg}_0(\Lambda)$ is
\[
    d_{\mathrm{CHS},\alpha}(\Lambda_0, \Lambda_1) = \inf_{H: \varphi^H_1(\Lambda_0) = \Lambda_1}\|H\|_{\mathrm{CHS},\Lambda,\tau} = \inf_{H: \varphi^H_1(\Lambda_0) = \Lambda_1} \int_0^1 \sup_{(x, \xi) \in \varphi_s^H(\Lambda)}|H_s(x, \xi)/\tau(x, \xi)| \, ds.
\] 
They proved that the distance is either non-degenerate or zero. Later, Cant \cite{Cant23Legendrian} and Nakamura \cite{Nakamura23Legendrian} gave examples of Legendrians in certain open contact manifolds (coming from the product with the symplectization of overtwisted contact manifolds) where the metric is zero. Usher \cite{Usher21Conformal} proved non-degeneracy of the distance for hypertight Legendrians in hypertight contact manifolds, Hedicke \cite{Hedicke24Lorentzian} proved non-degeneracy of the distance for Legendrians that do not admit positive loops, and finally Dimitroglou Rizell--Sullivan \cite{DRG24Rabinowitz} proved the case for Legendrians in closed contact manifolds. 

Here, we can prove the non-degeneracy of the Chekanov--Hofer--Shelukhin norm for a special class of closed Legendrians $\Lambda \subseteq S^*M$ which admits nontrivial sheaves.

\begin{theorem}\label{thm:chekanov-hofer-shelukhin}
    Let $\Lambda \subseteq S^*M$ be a closed connected Legendrian and suppose that $\Sh_\Lambda(M) \supsetneq \Loc(M)$ contains sheaves that are not locally constant. Then the Chekanov--Hofer--Shlukhin metric $d_{\mathrm{CHS},\alpha}$ is non-degenerate on $\mathrm{Leg}_0(\Lambda)$.  
\end{theorem}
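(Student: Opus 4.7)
My plan is to combine the Rosen--Zhang dichotomy with a cutoff trick that converts the Chekanov--Hofer--Shelukhin norm into the Hofer--Shelukhin norm of a compactly supported Hamiltonian, so that \Cref{thm:hofer-norm} applies. Rosen--Zhang's dichotomy says $d_{\mathrm{CHS},\alpha}$ is either non-degenerate or identically zero on $\operatorname{Leg}_0(\Lambda)$, so assume toward contradiction that it is identically zero. Since $\Lambda$ is closed and embedded, for all sufficiently small $\epsilon > 0$ the time-$\epsilon$ push-off $\Lambda_1 \coloneqq \varphi^{\tau}_\epsilon(\Lambda)$ along the Reeb flow of $\tau\alpha_{\mathrm{std}}$ is smooth, disjoint from $\Lambda$, and lies in $\operatorname{Leg}_0(\Lambda)$.

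By the degeneracy assumption there exist contact Hamiltonians $H_n \colon S^*M \times [0,1] \to \bR$ with $\varphi^{H_n}_1(\Lambda) = \Lambda_1$ and $\|H_n\|_{\mathrm{CHS},\Lambda,\tau} \to 0$. For the key cutoff step, I multiply each $H_n$ by a bump function $\chi_n$ on $S^*M \times [0,1]$ that equals $1$ on a tubular neighborhood of the trajectory $\{(\varphi^{H_n}_t(p),t) : p \in \Lambda, t \in [0,1]\}$ and is supported in a slightly larger neighborhood. Since $\chi_n \equiv 1$ near the trajectory, the cutoff Hamiltonian $\widetilde{H}_n \coloneqq \chi_n H_n$ has the same flow as $H_n$ on points of $\Lambda$, so $\varphi^{\widetilde{H}_n}_1(\Lambda) = \Lambda_1$. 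Using uniform continuity of each $H_n$ on compact sets and Lebesgue's dominated convergence in the $t$-integral, I can shrink the tubes diagonally so that
\[
\|\widetilde{H}_n\|_{\mathrm{HS},\tau} \leq \|H_n\|_{\mathrm{CHS},\Lambda,\tau} + 2^{-n},
\]
so $\|\widetilde{H}_n\|_{\mathrm{HS},\tau} \to 0$. Moreover $\widetilde{H}_n$ is compactly supported, and taking $\tau$ everywhere positive (e.g., the standard Reeb Hamiltonian) makes the support condition $\supp(\widetilde{H}_n) \subseteq \tau^{-1}(\bR_{>0})$ automatic.

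By hypothesis choose $F \in \Sh_\Lambda(M) \setminus \Loc(M)$, so $\cSS^\infty(F) \subseteq \Lambda$ is nonempty. The singular support estimate \eqref{for: ms-isotopy-object} gives $K^{\widetilde{H}_n}_1 \circ F \in \Sh_{\Lambda_1}(M)$, while \Cref{thm:hofer-norm} yields $d_\tau(F, K^{\widetilde{H}_n}_1 \circ F) \leq 2\|\widetilde{H}_n\|_{\mathrm{HS},\tau} \to 0$. Applying \Cref{thm:ss-limit} to the convergent sequence,
\[
\cSS^\infty(F) \subseteq \bigcap\nolimits_{n\geq 1} \overline{\bigcup\nolimits_{k\geq n} \cSS^\infty(K^{\widetilde{H}_k}_1 \circ F)} \subseteq \Lambda_1.
\]
Together with $\cSS^\infty(F) \subseteq \Lambda$ and $\Lambda \cap \Lambda_1 = \varnothing$, this forces $\cSS^\infty(F) = \varnothing$, contradicting $F \notin \Loc(M)$.

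The main obstacle is the cutoff estimate $\|\widetilde{H}_n\|_{\mathrm{HS},\tau} \leq \|H_n\|_{\mathrm{CHS},\Lambda,\tau} + 2^{-n}$: since the modulus of continuity of $H_n$ is not uniform in $n$, one must proceed diagonally, fixing $n$ and shrinking the tube enough to absorb the gap into an $n$-dependent error $2^{-n}$, and then verifying that this $\widetilde{H}_n$ still lies in the hypotheses of \Cref{thm:hofer-norm}.
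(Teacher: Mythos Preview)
Your proposal is correct and takes essentially the same approach as the paper: reduce the Chekanov--Hofer--Shelukhin norm to the Hofer--Shelukhin norm by a cutoff near the trajectory, apply \Cref{thm:hofer-norm} to bound the interleaving distance, and then use the singular support limit estimate to trap $\cSS^\infty(F)$ in the target Legendrian. The differences are cosmetic: the paper invokes the Legendrian isotopy extension theorem (Geiges, Proposition 2.41) to achieve $\|H_n\|_{\mathrm{HS},\tau} = \|H_n\|_{\mathrm{CHS},\Lambda,\tau}$ in one stroke rather than your explicit bump-function construction with a $2^{-n}$ error, and it proves directly that $d_{\mathrm{CHS}}(\Lambda,\Lambda') = 0 \Rightarrow \Lambda = \Lambda'$ for arbitrary $\Lambda'$ (using that $\cSS^\infty(F) = \Lambda$ since $\Lambda$ is connected) instead of invoking the Rosen--Zhang dichotomy and a specific Reeb push-off $\Lambda_1$. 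Both routes arrive at the same contradiction via the same two ingredients, \Cref{thm:hofer-norm} and \Cref{thm:ss-limit}.
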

\begin{proof}
    Let $\Lambda'$ be a Legendrian that is isotopic to $\Lambda$ such that $d(\Lambda,\Lambda') = 0$. Then there is a sequence of contact isotopies $\Phi_n$ induced by the contact Hamiltonian $H_n \colon S^*M \times I \to \bR$ such that $\|H_n\|_{\mathrm{CHS},\tau} \to 0$ and the time-$1$ maps satisfy $\varphi_n(\Lambda) = \Lambda'$. 
    Then by the isotopy extension theorem \cite[Proposition 2.41]{Geiges-intro-contact}, we can replace the contact Hamiltonian function by $H_n \colon S^*M \times [0, 1] \to \bR$ such that
    \[
        \varphi_n(\Lambda) = \Lambda', \quad \|H_n\|_{\mathrm{HS},\tau} = \|H_n\|_{\mathrm{CHS},\Lambda,\tau} \to 0.
    \]
    Consider $F \in \Sh_\Lambda(M)$ that is not local constant. Since $\Lambda$ is a connected Legendrian, by \cite[Theorem 7.2.1]{KS90} the microstalk is locally constant, and thus $\cSS^\infty(F) \subseteq \Lambda$ implies that $\cSS^\infty(F) = \Lambda$. Since $H_n \rightarrow H_\infty = 0$, by \Cref{thm:Hofer-quantize}, there exists a sheaf quantization $K^{H_\infty}_1 \in \Sh(M \times M)$ such that $d_\tau(F, K^{H_\infty}_1 \circ F) = 0$. 
    However, by \Cref{cor: ss-is-same}, this implies that $\cSS(F) = \cSS(K^{H_\infty}_1 \circ F)$. By the singular support estimation in \Cref{thm:ss-limit,thm:Hofer-quantize}, this means that
    \[
    \Lambda \subseteq \bigcap\nolimits_{n\geq 1}\overline{\bigcup\nolimits_{k\geq n}\varphi_{k}(\Lambda)} = \Lambda'.
    \]
    Therefore, we can conclude that $\Lambda = \Lambda'$.
\end{proof}

\begin{remark}
    We expect that one could show the non-degeneracy of the Chekanov--Hofer--Shelukhin for general closed Legendrians by considering the doubling construction \Cref{thm:doubling}, whose endomorphism should be analogous to the subcomplex within small action windows in \cite{DRG24Rabinowitz}. However, since it is hard to control the effect of the contact isotopy on the Reeb push-off $\Lambda^\tau_\epsilon$, some additional work needs to be done, and thus in this paper we do not proceed in this direction.
\end{remark}

\section{\texorpdfstring{$C^0$}{C0}-Distances and Sheaves}\label{sec:C0distance}

In this section, we will discuss the relation between the interleaving distance of sheaves and the $C^0$-topology of the contactomorphism group. We will prove a new sheaf quantization theorem for any $C^0$-small contactomorphism \Cref{thm:main-C0-distance} (\Cref{thm:quantization-C0-small}) and use it to deduce \Cref{thm:main-rigidity,thm:main-Maslov,thm:main-sheaf-invariance} (\Cref{thm:sheaf-invariance-summary,thm:rigidity,thm:maslov-J,thm:coiso-rigidity}). 

\subsection{Sheaf quantization of contact homeomorphisms in jet bundles}
In this section, we first prove a simple case of the result on sheaf quantizations for the $C^0$-limits of compactly supported contactomorphisms in the 1-jet bundles. We hope to provide the reader with some intuition on the sheaf quantization results before going into more complicated constructions. The argument is independent of the rest of the subsections.

We use the standard embedding $J^1M \hookrightarrow S^*(M \times \bR)$ by $(x, \xi, z) \mapsto (x, z; \xi, 1)$ to translate the problem into a problem about certain contactomorphisms in $S^*(M \times \bR)$. Moreover, in this subsection, we will only consider the standard Reeb flow defined by $T_t \colon (x, \xi, z) \mapsto (x, \xi, z+t)$.

Our key observation is that, in 1-jet bundles, after suitable homotopies, the $C^0$-norm of the contact isotopy in the $z$-direction can be controlled by the $C^0$-norm of the time-1 map. Similar observation is also used in for example \cite[Section 4.1]{BHS2022}.

\begin{lemma}\label{lem:contact_rescale_2}
    Let $M$ be a complete Riemannian manifold and $\bR$ be the standard Riemannian manifold with the Euclidean metric. For any compactly supported contact isotopy $\varphi_t \in \operatorname{Cont}_{0,c}(J^1M, \xi_\mathrm{std})$, the contact isotopy is homotopic to $\varphi_t' \in \operatorname{Cont}_{0,c}(J^1M, \xi_\mathrm{std})$ such that $\operatorname{supp}(\varphi_t') \subseteq T^*M \times (-r, r)$ and $d_{C^0,z}(\id, \varphi'_t) \leq d_{C^0,z}(\id, \varphi)$.
\end{lemma}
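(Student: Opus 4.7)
The strategy is to conjugate by a global contact rescaling of $J^1M$. For $\lambda > 0$, consider the diffeomorphism $\psi_\lambda \colon J^1M \to J^1M$, $(x,\xi,z) \mapsto (x, \xi/\lambda, z/\lambda)$; the pullback identity $\psi_\lambda^*(dz - \xi \cdot dx) = \lambda^{-1}(dz - \xi \cdot dx)$ shows that $\psi_\lambda$ is a global contactomorphism which compresses both the cotangent fibers and the $z$-coordinate by $\lambda$. The candidate will be the conjugated isotopy $\varphi'_t \coloneqq \psi_\lambda \circ \varphi_t \circ \psi_\lambda^{-1}$; this is again a compactly supported contact isotopy starting at the identity, since $\psi_\lambda$ is a global diffeomorphism and $\varphi_t$ has compact support.

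For the support condition, pick $R > 0$ with $\supp(\varphi_t) \subseteq T^*M \times (-R, R)$ uniformly in $t \in [0,1]$; then $\supp(\varphi'_t) = \psi_\lambda(\supp(\varphi_t)) \subseteq T^*M \times (-R/\lambda, R/\lambda)$, which sits inside $T^*M \times (-r, r)$ once $\lambda \ge R/r$. For the $z$-displacement bound, writing $C_t(x,\xi,z)$ for the $z$-component of $\varphi_t(x,\xi,z)$, a short computation shows that the $z$-component of $\varphi'_t(x,\xi,z)$ equals $\lambda^{-1} C_t(x, \lambda\xi, \lambda z)$, hence the pointwise $z$-displacement satisfies $|z(\varphi'_t(p)) - z(p)| \le \lambda^{-1} d_{C^0,z}(\id, \varphi_t)$. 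Since $t \mapsto d_{C^0,z}(\id,\varphi_t)$ is bounded on $[0,1]$, choosing $\lambda$ large enough (at least $\max\{R/r,\ \sup_t d_{C^0,z}(\id,\varphi_t)/d_{C^0,z}(\id,\varphi)\}$) delivers both inequalities simultaneously.

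The homotopy from $\varphi_t$ to $\varphi'_t$ is produced by interpolating the scale parameter: the two-parameter family $(s,t) \mapsto \psi_{\lambda(s)} \circ \varphi_t \circ \psi_{\lambda(s)}^{-1}$, for any smooth monotone path $\lambda(s) \colon [0,1] \to [1, \lambda]$, is a continuous family of compactly supported contact isotopies from $\varphi_t$ (at $s=0$) to $\varphi'_t$ (at $s=1$).

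The main subtlety is the precise meaning of ``homotopic''. The family above does not fix the time-$1$ endpoint, as it moves along the orbit $\psi_{\lambda(s)} \circ \varphi \circ \psi_{\lambda(s)}^{-1}$. If the lemma were to demand homotopy rel endpoints in the universal cover of $\operatorname{Cont}_{0,c}(J^1M, \xi_{\mathrm{std}})$, one would need to concatenate with an additional correction path, and this is the delicate point of the argument. For the intended application to sheaf quantization of $C^0$-small contactomorphisms, however, it suffices that $\varphi_t$ and $\varphi'_t$ lie in the same path-component of the space of compactly supported contact isotopies, which is exactly what the rescaling family provides.
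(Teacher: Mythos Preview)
Your proof has a genuine gap, and it is precisely the one you flag but then try to dismiss. The intended application (the interleaving-distance bound for $K^\varphi$) requires the new isotopy $\varphi'_t$ to have the \emph{same} time-$1$ map $\varphi$: in the paper's Proposition immediately following this lemma, the conclusion is $d_\tau(1_\Delta, K^\varphi) \leq 2\, d_{C^0}(\id,\varphi)$ for a \emph{given} $\varphi$, and the argument proceeds by replacing the isotopy by one whose $z$-displacement at every time is controlled by $d_{C^0,z}(\id,\varphi)$. With your constant conjugation $\varphi'_t = \psi_\lambda \varphi_t \psi_\lambda^{-1}$, the endpoint becomes $\psi_\lambda \varphi \psi_\lambda^{-1} \neq \varphi$, so the GKS kernel at time $1$ is $K^{\psi_\lambda \varphi \psi_\lambda^{-1}}$, not $K^\varphi$. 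The claim that ``it suffices that $\varphi_t$ and $\varphi'_t$ lie in the same path-component'' is incorrect here: homotopy of isotopies with \emph{different} endpoints does not identify the time-$1$ kernels, since those have different microsupports $\Gamma^\varphi \neq \Gamma^{\psi_\lambda \varphi \psi_\lambda^{-1}}$.

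The paper closes this gap by making the conjugating contactomorphism depend on $t$. Choose a smooth function $R_t$ with $d_{C^0,z}(\id,\varphi_t) \leq R_t$ and $R_1 = r$ (so $r$ may be taken to be $d_{C^0,z}(\id,\varphi)$), and set $\varphi'_t \coloneqq \rho_{r,R_t} \circ \varphi_t \circ \rho_{r,R_t}^{-1}$ where $\rho_{r,R}(x,\xi,z) = (x, r\xi/R, rz/R)$ is your $\psi_{R/r}$. The same pointwise computation you did gives $d_{C^0,z}(\id,\varphi'_t) \leq (r/R_t)\cdot R_t = r$ for every $t$, and the crucial feature is that at $t=1$ the rescaling is $\rho_{r,r} = \id$, so $\varphi'_1 = \varphi$. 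The two-parameter family $\rho_{sR_t + (1-s)r,\,R_t} \circ \varphi_t \circ \rho_{sR_t + (1-s)r,\,R_t}^{-1}$ then furnishes a homotopy of isotopies \emph{rel endpoints}. In short: your idea of conjugating by a dilation is exactly right, but the dilation factor must vary with $t$ and equal $1$ at $t=1$ in order to pin down the endpoint.
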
 
\begin{proof}
    Since the contact isotopy is compactly supported, we may assume that $d_{C^0,z}(\id, \varphi_t) \leq R_t$ where $R_t$ is a smooth function and $R_1 = r$. Consider the contactomorphism $\rho_{r,R} \colon T^*M \times \bR \to T^*M \times \bR, (x, \xi, z) \mapsto (x, r\xi/R, rz/R)$. We can define the 2-parametric family of contact isotopies
    \[
    \varphi_{s,t} = \rho_{sR_t +(1-s)r,R_t} \circ \varphi_t \circ \rho_{sR_t+(1-s)r,R_t}^{-1}.
    \]
    Then since $d_{C^0,z}(\id, \varphi_{0,t}) \leq R_t$, we can conclude that $d_{C^0,z}(\id, \varphi_{1,t}) \leq r$. In fact, $\varphi_t(x, R_t\xi/r, R_tz/r) \in T^*M \times (R_tz/r -R_t, R_tz/r + R_t)$, so therefore
    \[
    \rho_{r,R_t} \circ \varphi_t \circ \rho_{r,R_t}^{-1}(x, \xi, z) \in T^*M \times (z - r, z + r).
    \]
    This then completes the proof.
\end{proof}

Using the above lemma, we can prove the $C^0$-continuity of the interleaving distance between sheaf kernels of the contactomorphisms in the identity component. 

For a compactly supported contact isotopy $\Phi \colon J^1M \times I \to J^1M$, it extends to a contact isotopy $\Phi \colon S^*(M \times \bR) \times I \to S^*(M \times \bR)$, and thus there exists a canonical sheaf quantization by \Cref{thm: GKS}. The Reeb flow $T_t \colon J^1M \times I \to J^1M$ is not compactly supported and does not extend to $S^*(M \times \bR)$. Nevertheless, one can set the sheaf quantization to be
\[
T_t \coloneqq 1_{\Delta_M \times \{(z_1, z_2) \mid 0 < z_2 - z_1 < t\}}[1].
\]
For any $F \in \Sh_{\zeta > 0}(M \times \bR_z)$, we know that $T_t \circ \cSS^\infty(F) = T_t(\cSS^\infty(F))$.\footnote{The sheaf kernels are not uniquely determined by the above condition, but they are unique in the localization. For instance, by composing with the microlocal kernel \Cref{not: proj} $P_\zeta \colon \Sh(M \times \bR_z) \to \Sh_{\zeta \geq 0}(M \times \bR_z)$, the sheaf kernel $1_{\Delta_M \times \{(z_1, z_2) \mid 0 < z_2 - z_1 < t\}}$ can be identified with the sheaf kernel $1_{\Delta_M \times \{(z_1, z_2) \mid z_2 - z_1 < t\}}$.}

\begin{proposition}\label{prop:continuity-kernel}
    Let $M$ be a complete Riemannian manifold and $\bR$ be the standard Riemannian manifold with the Euclidean metric. Then for any compactly supported $\varphi \in \operatorname{Cont}_{0,c}(J^1M, \xi_\mathrm{std})$, the sheaf quantization $K^\varphi$ satisfies the relation that $d_\tau(1_\Delta, K^\varphi) \leq 2\,d_{C^0}(\id, \varphi)$.
\end{proposition}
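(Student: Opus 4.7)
Set $r \coloneqq d_{C^0}(\id,\varphi)$ and fix a compactly supported contact isotopy $\{\varphi_t\}$ realizing $\varphi$. Since $d_{C^0,z}(\id,\varphi) \leq d_{C^0}(\id,\varphi) = r$, \Cref{lem:contact_rescale_2} produces a homotopic isotopy $\{\varphi'_t\}$ (rel endpoints at $\id$ and $\varphi$) with $\operatorname{supp}(\varphi'_t) \subseteq T^*M \times (-r,r)$ and $d_{C^0,z}(\id,\varphi'_t) \leq r$ for every $t$. Because the GKS sheaf quantization is uniquely determined by the homotopy class of a contact isotopy with fixed endpoints (\Cref{rem:GKS}), the kernel $K^\varphi$ produced from $\{\varphi'_t\}$ agrees with the one produced from $\{\varphi_t\}$, and we may replace $\varphi_t$ by $\varphi'_t$.

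\textbf{Second step: positive isotopies into the Reeb sheaves.} The plan is to construct two positive contact isotopies
\[
    \Psi^+ \colon \id \rightsquigarrow T_r \circ \varphi, \qquad \Psi^- \colon \id \rightsquigarrow T_r \circ \varphi^{-1},
\]
by combining the Reeb translation with $\varphi'_t$ (respectively its inverse), using the $z$-support and $z$-displacement controls to ensure positivity on the relevant microsupport of $1_\Delta$ inside $S^*((M \times \bR)^2)$. Applying \Cref{prop:continuation} and \Cref{rem:continuation} then yields continuation maps
\[
    u \colon 1_\Delta \to K^{T_r \circ \varphi} \simeq T_r \circ K^\varphi, \qquad w \colon 1_\Delta \to K^{T_r \circ \varphi^{-1}} \simeq T_r \circ (K^\varphi)^{-1};
\]
convolving $w$ with $K^\varphi$ produces $v \colon K^\varphi \to T_r$.

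\textbf{Third step: verification of the interleaving conditions.} To conclude $(1_\Delta, K^\varphi)$ is $(r,r)$-interleaved, one checks that the compositions
\[
    T_{-r} \circ 1_\Delta \to K^\varphi \to T_r \quad \text{and} \quad T_{-r} \circ K^\varphi \to 1_\Delta \to T_r \circ K^\varphi
\]
induced by $u$ and $v$ coincide with the natural continuation maps $T_{-r} \to T_r$ of \Cref{def: conti-map}. This should follow from homotopy invariance of continuation maps (\Cref{rmk: compatibility-morphisms}) together with the observation that the appropriate concatenation of $\Psi^+$ with a time reversal of $\Psi^-$ is homotopic, as a positive isotopy on the conormal of the diagonal, to the $2r$-Reeb flow.

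\textbf{Main obstacle.} The heart of the argument is the second step. The contact Hamiltonian of $\varphi'_t$ is compactly supported but can take arbitrary signs and arbitrarily large absolute values, so one cannot naively add a constant Reeb term to enforce global non-negativity. The key is that we only need positivity on $\cSS^\infty(1_\Delta)$, and the $z$-support localization from Step 1 together with the $z$-displacement bound forces every point whose Reeb-shifted trajectory under $\varphi'_t$ matters to remain in a controlled slab. Making this precise—carefully designing $\Psi^\pm$ as a concatenation of Reeb translation and a reparameterization of $\varphi'_t$, and tracking the resulting Hamiltonian on the conormal to the diagonal inside $S^*((M \times \bR)^2)$ to verify it is non-negative there—is the technical core of the proof.
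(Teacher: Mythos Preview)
Your first step (rescaling via \Cref{lem:contact_rescale_2}) matches the paper. The gap is in Step~2, and your own diagnosis of the obstacle is correct while the proposed workaround is not.

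You want to invoke \Cref{prop:continuation} with $F = 1_\Delta$ and an isotopy on $S^*((M\times\bR)^2)$ that is positive only on $\cSS^\infty(1_\Delta)$, the conormal to the diagonal. But for the resulting continuation map to land in $K^{T_r\circ\varphi}$, the isotopy must be of the form $\id\times\Psi$ with $\Psi_1 = T_r\circ\varphi$ on $S^*(M\times\bR)$; otherwise $K^{\Theta}_1\circ 1_\Delta$ has no reason to equal the GKS kernel of $T_r\circ\varphi$, even if its singular support is correct. Now the homogeneous Hamiltonian of $\id\times\Psi$ at a point $(x,-\xi,y,\eta)$ of the conormal is exactly $H_\Psi(y,\eta)$, and as $(x,-\xi,y,\eta)$ runs over the conormal, $(y,\eta)$ runs over all of $\dot T^*(M\times\bR)$. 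Positivity on the conormal is therefore \emph{equivalent} to global positivity of $\Psi$. The $z$-support and $z$-displacement bounds on $\varphi'_t$ do not prevent its Hamiltonian from being arbitrarily negative on its support, so the ``controlled slab'' heuristic buys nothing here and Step~2 is not carried out.

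The paper avoids positive isotopies altogether. After the rescaling, the bound $d_{C^0,z}(\id,\varphi'_t)\le\epsilon$ forces $\Gamma^{\varphi'_t}\cap\Gamma^{T_{\pm\epsilon}}=\varnothing$ for every $t$, while the part of $\cSS^\infty(K^{\Phi'}_t)$ in $\{\zeta\le 0\}$ is constant since $\varphi'_t$ is supported in $J^1M$. Thus GKS invariance (\Cref{thm: GKS}) along the family $t\mapsto K^{\Phi'}_t$ identifies
\[
\Hom(K^\varphi,T_\epsilon)\simeq\Hom(1_\Delta,T_\epsilon)=\Gamma(M,1_M),\qquad
\Hom(T_{-\epsilon},K^\varphi)\simeq\Hom(T_{-\epsilon},1_\Delta)=\Gamma(M,1_M).
\]
The images of the canonical elements furnish the interleaving morphisms, and the same identification at $t=0$ shows their compositions are the continuation maps $T_{-\epsilon}\to T_\epsilon$. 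No positivity is needed---only disjointness of singular supports throughout the isotopy.
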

\begin{proof}
    Consider a contactomorphism $\varphi$ such that $d_{C^0}(\id, \varphi) \leq \epsilon$. We show that there are canonical morphisms whose compositions
    \[
    T_{-\epsilon} \to K^\varphi \to T_\epsilon, \quad T_{-\epsilon} \circ K^\varphi \to 1_\Delta \to T_\epsilon \circ K^\varphi
    \]
    are equivalent to the continuation morphisms. In fact, by \Cref{lem:contact_rescale_2}, we may assume that $\varphi$ is induced by a contact isotopy $\varphi_t$ such that $d_{C^0,z}(\id, \varphi) \leq \epsilon$. Then we know that $\Gamma^{\Phi}_t \cap \Gamma^T_{\pm \epsilon} = \varnothing$, in other words $\cSS^\infty(K^{\Phi}_t) \cap \cSS^\infty(T_{\pm \epsilon}) \cap S^*_{\zeta>0}(M \times \bR) = \varnothing$, and $\cSS^\infty(K^{\Phi}_t) \cap S^*_{\zeta \leq 0}(M \times \bR)$ is a constant family. Thus, $\cSS^\infty(K^{\Phi}_t) \cap \cSS^\infty(T_{\pm \epsilon}) $ defines a compactly supported Legendrian isotopy. It follows from \Cref{thm: GKS} that
    \begin{gather*}
    \Hom(K^\varphi, T_\epsilon) = \Hom(1_\Delta, T_\epsilon) = \Gamma(M, 1_M),\\
    \Hom(T_{-\epsilon}, K^\varphi) = \Hom(T_{-\epsilon}, 1_\Delta) = \Gamma(M, 1_M).
    \end{gather*}
    This ensures that there exist morphisms such that the compositions are equivalent to continuation morphisms, and thus $d_\tau(1_\Delta, K^\varphi) \leq 2 \epsilon$.
\end{proof}

\begin{proposition}
    Let $\varphi_n \in \operatorname{Cont}_{0,c}(J^1M, \xi_\mathrm{std})$ be compactly supported contactomorphisms such that $\varphi_n \to \varphi_\infty$ in the $C^0$-topology where $\varphi_\infty$ is a homeomorphism. Then there is a functor 
    \[K^{\varphi_\infty} \colon \Sh_{\Lambda}(M \times \bR) \to \Sh_{\varphi_\infty(\Lambda)}(M \times \bR)\]
    such that $K^{\varphi_n}$ converges to $K^{\varphi_\infty}$.
\end{proposition}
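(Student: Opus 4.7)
The plan is to identify $\{K^{\varphi_n}\}$ as a Cauchy sequence under the interleaving distance $d_\tau$ and apply completeness, \Cref{thm:complete-kernel}. The key input is right-invariance of $d_\tau$ under convolution: since the GKS kernel $K^\tau_a$ always convolves on the left, any $(a,b)$-interleaving data $F \to K^\tau_a \circ G$, $G \to K^\tau_b \circ F$ yields an $(a,b)$-interleaving of $F \circ L$ with $G \circ L$ via $(-)\circ \mathrm{id}_L$, so that $d_\tau(F \circ L, G \circ L) \le d_\tau(F, G)$. Combining this with the composition law $K^{\varphi_n} \simeq K^{\varphi_n \circ \varphi_m^{-1}} \circ K^{\varphi_m}$ for GKS kernels (\Cref{thm: GKS}) and the estimate \Cref{prop:continuity-kernel} applied to the compactly supported $\varphi_n \circ \varphi_m^{-1}$, I would obtain
\[
d_\tau(K^{\varphi_n}, K^{\varphi_m}) \leq d_\tau(1_\Delta, K^{\varphi_n \circ \varphi_m^{-1}}) \leq 2\, d_{C^0}(\mathrm{id}, \varphi_n \circ \varphi_m^{-1}) = 2\, d_{C^0}(\varphi_n, \varphi_m).
\]
Since $\varphi_n$ converges in the $C^0$-topology, the right-hand side tends to zero, and \Cref{thm:complete-kernel} then produces a limit $K^{\varphi_\infty} \in \Sh((M \times \bR)^2)$ with $K^{\varphi_n} \to K^{\varphi_\infty}$ under $d_\tau$.

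To show that convolution with $K^{\varphi_\infty}$ gives the desired functor $\Sh_\Lambda(M \times \bR) \to \Sh_{\varphi_\infty(\Lambda)}(M \times \bR)$, I would apply \Cref{thm:ss-limit} to obtain
\[
\cSS^\infty(K^{\varphi_\infty}) \subseteq \bigcap\nolimits_{n \geq 1} \overline{\bigcup\nolimits_{k \geq n} \Gamma^{\varphi_k}},
\]
and identify the right-hand side with the (topological) graph $\Gamma^{\varphi_\infty}$ of the contact homeomorphism $\varphi_\infty$ using uniform $C^0$-convergence together with the compact support of the $\varphi_n$. The functor property then follows from the microsupport estimate for convolutions (\Cref{lem: microsupport-convolution-composition}), whose non-characteristic hypothesis holds because $\varphi_\infty$ is a homeomorphism, so its graph does not contain covectors of the form $(x, 0, y, 0)$ with any nontrivial component in the second factor.

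The main obstacle is controlling the Kuratowski-type limit of the graphs $\Gamma^{\varphi_n}$: in principle the right-hand side of the singular support bound could be strictly larger than $\Gamma^{\varphi_\infty}$, and one must rule out the accumulation of ``vertical'' covectors arising from the $\bR_z$-direction. This is where the rescaling trick of \Cref{lem:contact_rescale_2}, which bounds the $z$-displacement by the full $C^0$-distance, plays a crucial role, together with the compact support hypothesis that prevents singular support from leaking to infinity.
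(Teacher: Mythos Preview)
Your Cauchy sequence argument is essentially identical to the paper's: both use right-invariance of $d_\tau$ under convolution together with \Cref{prop:continuity-kernel} applied to $\varphi_n \circ \varphi_m^{-1}$ (or, in the paper, to consecutive pairs) to obtain $d_\tau(K^{\varphi_n}, K^{\varphi_m}) \leq 2\, d_{C^0}(\varphi_n, \varphi_m)$, then invoke completeness.

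The difference is in the final step, and here your route creates an unnecessary difficulty. You try to bound $\cSS^\infty(K^{\varphi_\infty})$ itself by identifying $\bigcap_n \overline{\bigcup_{k\geq n} \Gamma^{\varphi_k}}$ with $\Gamma^{\varphi_\infty}$, and then feed this into \Cref{lem: microsupport-convolution-composition}. But that identification is not automatic: the graphs $\Gamma^{\varphi_k} \subseteq S^*((M\times\bR)^2)$ encode the conformal factors, and since you have no \emph{uniform} bound on these (each $\varphi_k$ is compactly supported, but the supports and conformal factors may grow with $k$), the Kuratowski limit can genuinely be larger than $\Gamma^{\varphi_\infty}$. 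This is exactly the phenomenon that forces the pseudo-graph in \Cref{thm:quantization-C0}. Your appeal to \Cref{lem:contact_rescale_2} does not help here: that lemma controls the $z$-displacement of an isotopy, not the conformal factors of the time-$1$ maps, and it is already fully consumed in the proof of \Cref{prop:continuity-kernel}.

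The paper sidesteps the issue entirely. Instead of bounding $\cSS^\infty(K^{\varphi_\infty})$, it observes that for any $F \in \Sh_\Lambda(M\times\bR)$ the same right-invariance gives $K^{\varphi_n}\circ F \to K^{\varphi_\infty}\circ F$ in $d_\tau$, and then applies \Cref{thm:ss-limit} directly to this sequence:
\[
\cSS^\infty(K^{\varphi_\infty}\circ F) \subseteq \bigcap\nolimits_{n}\overline{\bigcup\nolimits_{k\geq n}\cSS^\infty(K^{\varphi_k}\circ F)} = \bigcap\nolimits_{n}\overline{\bigcup\nolimits_{k\geq n}\varphi_k(\Lambda)} = \varphi_\infty(\Lambda),
\]
where the last equality is elementary point-set topology using uniform convergence and that $\varphi_\infty$ is a homeomorphism. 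No graphs, no conformal factors, no non-characteristic check. I recommend replacing your second paragraph with this direct argument.
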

\begin{proof}
    Without loss of generality, we may assume that $d_{C^0}(\varphi_n, \varphi_{n+1}) \leq \epsilon_n$ and $\sum_{n=1}^\infty \epsilon_n < +\infty$. Since $\varphi_n$ is a diffeomorphism, we know $d_{C^0}(\varphi_n \circ \varphi_{n+1}^{-1}, \id) = d_{C^0}(\varphi_n, \varphi_{n+1}) \leq \epsilon_n$. By \Cref{prop:continuity-kernel}, we know that there exist morphisms
    \begin{gather*}
    K^\tau_{-\epsilon_n} \circ K^{\varphi_{n}^{-1} \circ \varphi_{n+1}} \to 1_\Delta \to K^\tau_{\epsilon_n} \circ K^{\varphi_{n}^{-1} \circ \varphi_{n+1}},\quad
    K^\tau_{-\epsilon_n} \to K^{\varphi_{n}^{-1} \circ \varphi_{n+1}} \to K^\tau_{\epsilon_n}
    \end{gather*} 
    whose compositions are the natural continuation morphisms. Then, we know that there exist morphisms
    \begin{gather*}
    K^{\tau_n}_{-\epsilon_n} \circ K^{\varphi_{n+1}} \to K^{\varphi_n} \to K^{\tau_n}_{\epsilon_n} \circ K^{\varphi_{n+1}},\\
    K^{\tau_n}_{-\epsilon_n} \circ K^{\varphi_n} \to K^{\varphi_{n+1}} \to K^{\tau_n}_{\epsilon_n} \circ K^{\varphi_n}
    \end{gather*}
    whose compositions are the natural continuation morphisms. Then, we know that $K^{\varphi_n}$ form a Cauchy sequence and by \Cref{thm:complete}, $K^{\varphi_n}$ converges to a sheaf $K^{\varphi_\infty}$ under the interleaving distance. Finally, by \Cref{thm:ss-limit}, we have
    \[
    \cSS^\infty(K^{\varphi_\infty} \circ F) \subseteq \bigcap\nolimits_{n\geq 0}\overline{\bigcup\nolimits_{k\geq n} \cSS^\infty(K^{\varphi_k} \circ F)} = \bigcap\nolimits_{n\geq 0}\overline{\bigcup\nolimits_{k\geq n} \varphi_k(\cSS^\infty(F))} = \varphi_\infty(\cSS^\infty(F)).
    \]
    This completes the proof.
\end{proof}

\subsection{Sheaf quantization of nearby Legendrians with no chords}

We generalize the sheaf quantization result of Guillermou \cite{Guillermou23} for closed Legendrians in the 1-jet bundle with no Reeb chords (equivalently, closed exact Lagrangians in the cotangent bundles) in \Cref{thm:guillermou-pre,thm:guillermou}. In this section, we follow \Cref{not:legendrian_movie}.

We will also need a generalization to certain non-compact Legendrians, following \cite{Li23Cobordism2,AsanoIkeLi25}. Therefore, we will consider a complete Riemannian metric on the manifold $M$, which induces a complete Riemannian metric on $S^*M$. We will take the standard contact form $\alpha_{\mathrm{std}}$ on $S^*M$ induced by the Riemannian metric and all the contact Hamiltonian functions are the Hamiltonian with respect to this standard contact form $\alpha_{\mathrm{std}}$.

\begin{definition}
    Let $M$ be a complete Riemannian manifold and $\Lambda \subseteq S^*_{\zeta>0}(M \times \bR)$ be a properly embedded Legendrian. Then for a bounded positive contact Hamiltonian $\tau$ in \Cref{def: bound-reeb}, the associated contact flow on $S^*_{\zeta>0}(M \times \bR)$ is called a \emph{separating Reeb flow} for $\Lambda$ if there is $s > 0$ such that $\Lambda$ and $\Lambda^\tau_s$ are separated by a hypersurface $T^*M \times a$, and $\bigcup_{\epsilon<t<s}\Lambda^\tau_t$ is disjoint from some tubular neighborhood of $\Lambda$ with positive radius, for some $\epsilon > 0$.
\end{definition}

\begin{lemma}\label{lem:separate-reeb-flow}
    Let $M$ be a complete Riemannian manifold, $\Lambda \subseteq S^*_{\zeta>0}(M \times \bR)$ be a properly embedded Legendrian and $\tau$ be a $C^1$-bounded positive Hamiltonian defining the separating Reeb flow for $\Lambda$. Then there exists a cut-off function $\rho$ on a tubular neighborhood of $\Lambda$ such that $(1-\rho)\tau$ defines a complete contact flow and sends $\Lambda \cup \Lambda^\tau_\epsilon$ to $\Lambda \cup \Lambda^\tau_s$.
\end{lemma}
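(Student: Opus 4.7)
This is a direct generalization to the non-compact setting of the cut-off construction used in the proof of \Cref{lem: pert}. By the hypothesis that $\tau$ defines a separating Reeb flow, there exist $\epsilon_0 > 0$ and a tubular neighborhood $V$ of $\Lambda$ of positive radius such that $\Lambda^\tau_t \cap V = \varnothing$ for every $t \in (\epsilon_0, s)$. I would fix some $\epsilon \in (\epsilon_0, s)$ and then choose an open tubular neighborhood $U$ of $\Lambda$ with $\overline{U} \subsetneq V$ and $\Lambda^\tau_t \cap \overline{U} = \varnothing$ for all $t \in [\epsilon, s]$, which is possible by compactness of $[\epsilon, s]$ and shrinking $V$ slightly if needed. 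I would then construct a smooth cut-off $\rho \colon S^*M \to [0,1]$ supported in $\overline{U}$ with $\rho|_\Lambda \equiv 1$ and $\sup_{S^*M}(|\rho| + |d\rho|) < \infty$; such a $\rho$ exists because the bounded geometry assumption on the metric provides a uniform positive lower bound on the radius of the tubular neighborhood and permits uniform control on the derivatives of bump functions built from the distance to $\Lambda$.

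Next I would verify that $\widetilde{\tau} \coloneqq (1-\rho)\tau$ generates a complete contact flow. Since $|\widetilde{\tau}| \leq |\tau|$ and $|d\widetilde{\tau}| \leq |d\tau| + |\tau|\,|d\rho|$ are both uniformly bounded by the $C^1$-boundedness of $\tau$ and the construction of $\rho$, the contact vector field $X_{\widetilde{\tau}}$ has uniformly bounded $C^0$-norm on $S^*M$. Completeness of the flow on the complete Riemannian manifold $S^*M$ then follows from the standard ODE argument, exactly as in the proof of \Cref{lem: pert}.

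Finally, I would check the pushforward property at time $T = s - \epsilon$. Since $\widetilde{\tau} \equiv 0$ on $\Lambda$, the flow fixes $\Lambda$ pointwise. On the complement of $\overline{U}$ we have $\widetilde{\tau} = \tau$, hence $X_{\widetilde{\tau}} = X_\tau$ there; by the choice of $U$, the Reeb trajectory $\{\Lambda^\tau_t\}_{t \in [\epsilon, s]}$ stays outside $\overline{U}$ entirely, so this trajectory is simultaneously an integral trajectory of $X_{\widetilde{\tau}}$. Consequently $\varphi^{\widetilde{\tau}}_{s-\epsilon}(\Lambda^\tau_\epsilon) = \Lambda^\tau_s$, and combined with $\varphi^{\widetilde{\tau}}_t(\Lambda) = \Lambda$ this gives the desired conclusion. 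The main (and essentially only) technical subtlety is the construction of the cut-off $\rho$ with bounded $C^1$-norm in the non-compact setting, which is made possible by the bounded geometry assumption on $M$.
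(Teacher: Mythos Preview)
Your proposal is correct and follows essentially the same approach as the paper: pick the tubular neighborhood of positive radius disjoint from $\bigcup_{\epsilon<t<s}\Lambda^\tau_t$, build a cut-off $\rho$ with bounded $|d\rho|$, and deduce completeness from the bounds $|(1-\rho)\tau|\le|\tau|$ and $|d((1-\rho)\tau)|\le|\tau|\,|d\rho|+|d\tau|$. Your write-up is in fact more detailed than the paper's, which omits the explicit verification that the resulting flow fixes $\Lambda$ and carries $\Lambda^\tau_\epsilon$ to $\Lambda^\tau_s$; your argument for this last point (the trajectory $\{\Lambda^\tau_t\}_{t\in[\epsilon,s]}$ stays in the region where $\widetilde{\tau}=\tau$) is exactly what is intended.
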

\begin{proof}
    Consider the tubular neighborhood of $\Lambda$ of positive radius with respect to the Riemannian metric that is disjoint from $\bigcup_{\epsilon<t<s}\Lambda^\tau_t$. We can choose a cut-off function $\rho$ supported in the tubular neighborhood such that $|d\rho|$ is uniformly bounded. Then since $|(1-\rho)\tau| \leq |\tau|$ and $|d((1-\rho)\tau)| \leq |\tau| |d\rho| + |d\tau|$ are both uniformly bounded, the norm of the contact vector field $X_{(1-\rho)\tau}$ is uniformly bounded and therefore defines a complete contact flow.
\end{proof}

We can now state the following theorem, which generalizes of Guillermou's results to the non-compact setting when there exists a separating Reeb flow.

\begin{theorem}[Guillermou {\cite[Corollaries~12.3.2 \& 12.3.3, Theorems~12.4.3 \& 12.4.4]{Guillermou23}}]\label{thm:guillermou-pre}
    Let $M$ be a complete Riemannian manifold and $\Lambda \subseteq S^*_{\zeta>0}(M \times (-a, b))$ be a properly embedded Legendrian with no Reeb chords with respect to some separating Reeb flow. Then there exists a fully faithful embedding where the second functor is give by restriction at $+\infty$
    \[
        \Psi_{\Lambda,\infty} \colon \msh_\Lambda(\Lambda) \xhookrightarrow{\Psi_\Lambda} \operatorname{Sh}_\Lambda(M \times \bR)_0 \xhookrightarrow{i_\infty^*} \operatorname{Loc}(M),
    \]
    where $\operatorname{Sh}_\Lambda(M \times \bR)_0$ is the subcategory of sheaves with trivial stalks at $-\infty$.
\end{theorem}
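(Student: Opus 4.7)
The plan is to adapt Guillermou's construction for closed Legendrians \cite[Chapter~12]{Guillermou23} to the non-compact, properly embedded setting, using the separating Reeb flow hypothesis as a replacement for the compactness of $\Lambda$. Throughout, I fix the bounded separating Hamiltonian $\tau$, the parameter $s > 0$ for which $\Lambda$ and $\Lambda^\tau_s$ are separated by the hypersurface $M \times \{a\}$, and use \Cref{lem:separate-reeb-flow} to replace $\tau$ by the cut-off $(1-\rho)\tau$ generating a complete contact flow that agrees with that of $\tau$ on a neighborhood of $\Lambda$; I continue to write $\tau$ for this modified Hamiltonian.

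To construct $\Psi_\Lambda$, apply the doubling construction from \Cref{thm:doubling} with the ambient manifold $M \times \bR$ in place of $M$ to obtain, for any $F \in \msh_\Lambda(\Lambda)$, a sheaf $w_\Lambda(F) \in \Sh_{\Lambda \cup \Lambda^\tau_s}(M \times \bR)$. By the separation condition, the two Legendrians lie on opposite sides of $\{z = a\}$, so tensoring with $1_{M \times (-\infty, a)}$ kills the microsupport along $\Lambda^\tau_s$ while leaving the component on $\Lambda$ intact; this defines $\Psi_\Lambda(F) \in \Sh_\Lambda(M \times \bR)$. Since $\Lambda \subseteq S^*_{\zeta>0}(M \times (-a, b))$ and $\cSS(\Psi_\Lambda(F))$ is positive in the $\zeta$-direction, the stalks vanish for $z$ sufficiently negative, placing $\Psi_\Lambda(F)$ in $\Sh_\Lambda(M \times \bR)_0$. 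Fully faithfulness of $\Psi_\Lambda$ follows from that of \Cref{thm:doubling}, together with the fact that the separation makes the truncation at $\{z = a\}$ an equivalence on the image (one recovers $w_\Lambda(F)$ from $\Psi_\Lambda(F)$ by re-inserting the complementary piece via the contact isotopy generated by $\tau$).

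For fully faithfulness of the composite $\Psi_{\Lambda,\infty} = i_\infty^* \circ \Psi_\Lambda$, note first that $\Psi_\Lambda(F)$ is locally constant on $M \times (b, +\infty)$, so $i_\infty^* \Psi_\Lambda(F) \in \Loc(M)$. Then apply the perturbation \Cref{lem: pert} to the Reeb translation flow $T_t$ in the $z$-direction: since $\Lambda$ has no chords with respect to $\tau$, we have $T_t(\Lambda) \cap \Lambda = \varnothing$ for $0 < t < \epsilon$, and \Cref{lem: pert} yields an equivalence
\[
\Hom(\Psi_\Lambda(F), \Psi_\Lambda(G)) \xrightarrow{\sim} \Hom(\Psi_\Lambda(F), T_t \Psi_\Lambda(G))
\]
for every $t > 0$. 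Letting $t \to +\infty$, the stalks of $T_t \Psi_\Lambda(G)$ stabilize (at any fixed $(x, z) \in M \times \bR$) to those of the pullback of $i_\infty^* \Psi_\Lambda(G)$ from the boundary, because $\cSS(\Psi_\Lambda(G)) \subseteq \Lambda$ is compactly supported in the $z$-direction. Thus the right-hand side recovers $\Hom_{\Loc(M)}(i_\infty^* \Psi_\Lambda(F), i_\infty^* \Psi_\Lambda(G))$, proving fully faithfulness of $\Psi_{\Lambda, \infty}$.

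The main obstacle is that both \Cref{thm:doubling} and \Cref{lem: pert}, as originally stated, impose hypotheses (positive-radius tubular neighborhood, bounded contact Hamiltonian, $C^1$-boundedness, and so on) that must be carefully verified here. The separating Reeb flow assumption together with \Cref{lem:separate-reeb-flow} is tailored precisely to produce the bounded complete contact flow required to apply both tools. A secondary technical point is checking that the truncation at $\{z = a\}$ interacts cleanly with the singular-support estimates of \Cref{lem: microsupport-convolution-composition}, and that the $t \to +\infty$ limit of $T_t \Psi_\Lambda(G)$ genuinely computes the restriction at $+\infty$ — both of which hinge on the hypothesis that $\Lambda$ is compactly supported in the $z$-direction inside $(-a, b)$.
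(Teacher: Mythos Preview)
Your overall strategy matches the paper's --- doubling, then using the separating flow to spatially separate the two copies, truncating to extract $\Psi_\Lambda$, and finally perturbing by the Reeb flow to compute $\Hom$ --- but several steps have genuine gaps.

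The most concrete problem is in the full-faithfulness argument for $i_\infty^*$. You apply \Cref{lem: pert} with the translation $T_t$, justifying $T_t(\Lambda)\cap\Lambda=\varnothing$ by ``$\Lambda$ has no chords with respect to $\tau$''; but the no-chord hypothesis is for the given separating flow $\tau$, not for $T_t$, and these need not agree. The paper uses $\varphi^\tau$ itself. Moreover, ``letting $t\to+\infty$'' and appealing to pointwise stalk stabilization does not compute a global $\Hom$; the paper instead fixes a single finite $s$ with $\Lambda^\tau_s$ above $M\times\{a\}$, applies \Cref{lem: pert} once, and then invokes the microlocal Morse lemma to restrict both sheaves to $M\times(a,+\infty)$, where they are $z$-locally constant, thereby reducing to $\Hom$ in $\Loc(M)$. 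A parallel microlocal-Morse argument (two passes, using the cut-off flow $(1-\rho)\tau$ to push $w_{\Lambda,\epsilon}G$ to $w_{\Lambda,s}G$ and then restricting to $M\times(-\infty,a)$) is what actually establishes full faithfulness of the truncation step $\Sh_{\Lambda\cup\Lambda^\tau_s}(M\times\bR)_0\to\Sh_\Lambda(M\times\bR)_0$; your ``re-inserting the complementary piece'' sketch does not substitute for this. Two smaller points: your description of $(1-\rho)\tau$ is backwards (by \Cref{lem:separate-reeb-flow} it \emph{vanishes} near $\Lambda$ so as to fix it, and equals $\tau$ away from it), and \Cref{thm:doubling} only yields the doubling at a small parameter $\epsilon$ --- the paper then transports to parameter $s$ via the GKS kernel of the complete flow $(1-\rho)\tau$, a step you omit.
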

\begin{proof}
    First, by \Cref{thm:doubling}, we know that for sufficiently small $\epsilon > 0$, there exists a fully faithful embedding
    \[
        w_\Lambda \colon \msh_\Lambda(\Lambda) \hookrightarrow \Sh_{\Lambda \cup \Lambda^\tau_\epsilon}(M \times \bR)_0.
    \]
    Consider the separating Reeb flow for $\Lambda$ defined by $\tau$. Using \Cref{lem:separate-reeb-flow}, we know there is a cut-off function $\rho$ supported in the tubular neighborhood and $(1-\rho)\tau$ defines a complete contact flow that sends $\Lambda \cup \Lambda^\tau_\epsilon$ to $\Lambda \cup \Lambda^\tau_s$. By the sheaf quantization of Guillermou--Kashiwara--Schapira \Cref{thm: GKS}, we have
    \[
        w_{\Lambda,s} \colon \msh_\Lambda(\Lambda) \hookrightarrow \Sh_{\Lambda \cup \Lambda^\tau_\epsilon}(M \times \bR)_0 \xrightarrow{\sim} \Sh_{\Lambda \cup \Lambda^\tau_s}(M \times \bR)_0.
    \]
    Since $\Lambda$ and $\Lambda^\tau_s$ are separated by a hypersurface $T^*M \times a$, we can restrict the sheaf to $M \times (-\infty, a)$, push-forward to $M \times (-\infty, +\infty)$ via a diffeomorphism and then restrict to $M \times \infty$.  
    This gives the functor
    \[
        \Sh_{\Lambda \cup \Lambda^\tau_s}(M \times \bR)_0 \rightarrow \operatorname{Sh}_\Lambda(M \times \bR)_0 \rightarrow \operatorname{Loc}(M).
    \]
    
    Full faithfulness of the functor will follow from perturbation of $\Lambda$ by the separating Reeb flow and microlocal Morse lemma. First, consider sheaves $w_{\Lambda,\epsilon}F, w_{\Lambda,\epsilon}G \in \Sh_{\Lambda \cup \Lambda^\tau_\epsilon}(M \times \bR)_0$. Denote their images in $\Sh_\Lambda(M \times \bR)_0$ by $F$ and $G$. We can use the Reeb flow $\varphi_{(1-\rho)\tau}^s$ to perturb $w_{\Lambda,\epsilon}G$ by \Cref{lem: pert} and apply microlocal Morse lemma. This implies
    \begin{align*}
        \Hom(w_{\Lambda,\epsilon}F, w_{\Lambda,\epsilon}G) &= \Hom(w_{\Lambda,\epsilon}F, w_{\Lambda,s}G) \simeq \Hom(w_{\Lambda,\epsilon}F|_{(-\infty,a)}, w_{\Lambda,s}G|_{(-\infty,a)}) \\
        &= \Hom(w_{\Lambda,\epsilon}F|_{(-\infty,a)}, \Psi_{\Lambda}G|_{(-\infty,a)}) = \Hom(w_{\Lambda,\epsilon}F, \Psi_\Lambda G).
    \end{align*}
    Then we can apply the Reeb flow $\varphi_{(1-\rho)\tau}^s$ to perturb $w_{\Lambda,\epsilon}F$ and $G$ by \Cref{thm: GKS} and apply microlocal Morse lemma again. This implies
    \begin{align*}
        \Hom(w_{\Lambda,\epsilon}F, \Psi_\Lambda G) &= \Hom(w_{\Lambda,s}F, \Psi_\Lambda G) \simeq \Hom(w_{\Lambda,s}F|_{(-\infty,a)}, \Psi_\Lambda G|_{(-\infty,a)}) \\
        &= \Hom(\Psi_{\Lambda}F|_{(-\infty,a)}, \Psi_\Lambda G|_{(-\infty,a)}) = \Hom(\Psi_\Lambda F, \Psi_\Lambda G).
    \end{align*}
    This implies full faithfulness of $\Sh_{\Lambda \cup \Lambda^\tau_s}(M \times \bR)_0 \rightarrow \operatorname{Sh}_\Lambda(M \times \bR)_0$. Then, we consider the Reeb flow $\varphi^\tau_s$ by \Cref{lem: pert} and apply the microlocal Morse lemma to $\Psi_\Lambda F, \Psi_\Lambda G \in \Sh_\Lambda(M \times \bR)_0$. Denote their images in $\Loc(M)$ by $\Psi_{\Lambda,\infty}F$ and $\Psi_{\Lambda,\infty}G$. This implies
    \begin{align*}
        \Hom(\Psi_\Lambda F, \Psi_\Lambda G) &= \Hom(\Psi_\Lambda F, K^{\tau}_{s} \circ \Psi_\Lambda G) \simeq \Hom(\Psi_\Lambda F|_{(a,+\infty)}, K^{\tau}_{s} \circ \Psi_\Lambda G|_{(a,+\infty)}) \\
        &= \Hom(\Psi_{\Lambda,\infty}F \boxtimes 1_\bR, \Psi_\Lambda G) \simeq \Hom(\Psi_{\Lambda,\infty}F \boxtimes 1_\bR|_{(a,+\infty)}, \Psi_\Lambda G|_{(a,+\infty)}) \\
        &= \Hom(\Psi_{\Lambda,\infty}F, \Psi_{\Lambda,\infty}G).
    \end{align*}
    This implies the full faithfulness of $\operatorname{Sh}_\Lambda(M \times \bR)_0 \rightarrow \operatorname{Loc}(M)$. Finally, we show that it preserves microlstalks using \cite[Theorem 13.5.1]{Guillermou23} and \cite[Corollary 1.3]{Jin20Jhomomorphism}.
\end{proof}

    Given the above theorem, the following corollary immediately follows from exactly the same argument as in Guillermou \cite{Guillermou23} and Jin \cite{Jin20Jhomomorphism}:

\begin{theorem}[Guillermou {\cite[Theorem 13.5.1, Proposition 13.5.2 \& Corollary 13.5.3]{Guillermou23}, Jin \cite[Corollary 1.3]{Jin20Jhomomorphism}}]\label{thm:guillermou}
    Let $M$ be a complete Riemannian manifold and $\Lambda \subseteq S^*_{\zeta>0}(M \times (-a, b))$ be a properly embedded Legendrian with no Reeb chords for some separating Reeb flow. Then the Maslov data of $\Lambda$ is trivial, and there is an equivalence that preserves stalks
    \[
        \Psi_{\Lambda,\infty} \colon \operatorname{Loc}(\Lambda) \xrightarrow{\sim} \operatorname{Sh}_\Lambda(M \times \bR)_0 \xrightarrow{\sim} \operatorname{Loc}(M).
    \]
    In particular, the projection $\Lambda \to M$ is a homotopy equivalence and there exists a unique sheaf $F \in \operatorname{Sh}_\Lambda(M \times \bR)$ such that
    \begin{enumerate}
        \item $\supp(F) \subseteq M \times (-a,+\infty)$, $\cSS^\infty(F) = \Lambda$;
        \item $m_\Lambda(F) = 1_\Lambda$ and $i_{M\times b}^*F = 1_M$ for $b \gg 0$.\footnote{We fix the Maslov data on $\Lambda$ that is the pull-back of the Maslov data on the diagonal $M$ via the natural projection map (which is known to be a homotopy equivalence).}
    \end{enumerate}
\end{theorem}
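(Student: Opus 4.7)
The plan is to bootstrap \Cref{thm:guillermou-pre} to the full equivalence of the theorem, following the arguments of Guillermou \cite{Guillermou23} and Jin \cite{Jin20Jhomomorphism} in the closed case. I would proceed in three essentially independent steps: triviality of the Maslov data, essential surjectivity of $\Psi_{\Lambda,\infty}$, and identification of the distinguished sheaf $F$.

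\textbf{Step 1: Triviality of Maslov data.} By \Cref{thm:musheaf}, $\msh_\Lambda$ is a locally constant sheaf of categories on $\Lambda$ classified by the composite $\Lambda \to U/O \to B\mathrm{Pic}(\SC)$, and its triviality is equivalent to the existence of a global section with rank-one microstalks. I would produce such a section by applying the doubling construction $w_\Lambda$ to a locally defined unit section of $\msh_\Lambda$ and then patching globally using the microstalk-preserving fully faithful embedding $\Psi_{\Lambda,\infty} \colon \msh_\Lambda(\Lambda) \hookrightarrow \operatorname{Loc}(M)$ of \Cref{thm:guillermou-pre}; the patching is unobstructed because the target $\operatorname{Loc}(M)$ carries a canonical unit. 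This exhibits a global unit in $\msh_\Lambda(\Lambda)$ and hence the Maslov data vanishes and $\msh_\Lambda(\Lambda) \simeq \operatorname{Loc}(\Lambda;\SC)$ canonically.

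\textbf{Step 2: Equivalence and homotopy equivalence.} Step 1 turns $\Psi_{\Lambda,\infty}$ into a colimit-preserving, $\SC$-linear, fully faithful functor $\operatorname{Loc}(\Lambda) \hookrightarrow \operatorname{Loc}(M)$ that sends $1_\Lambda$ to $1_M$. Since the objects $\{1_M \otimes X : X \in \SC\}$ generate $\operatorname{Loc}(M)$ under colimits and all of these lie in the essential image by $\SC$-linearity, essential surjectivity is automatic, giving the desired equivalence. Taking $\SC = \Mod(\bS)$, the resulting equivalence of $\bS$-linear local systems forces the front projection $\Lambda \to M$ to be a homotopy equivalence.

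\textbf{Step 3: The sheaf $F$.} I would define $F \coloneqq \Psi_\Lambda(1_\Lambda) \in \operatorname{Sh}_\Lambda(M \times \bR)_0$. The support $\supp(F) \subseteq M \times (-a,+\infty)$, the singular support $\cSSif(F) = \Lambda$, the microstalk normalization $m_\Lambda(F) = 1_\Lambda$, and $i_{M\times b}^*F = 1_M$ for $b \gg 0$ all follow directly from the explicit doubling-and-restriction recipe used to construct $\Psi_\Lambda$ in the proof of \Cref{thm:guillermou-pre}; uniqueness is immediate from full faithfulness applied to morphisms between any two candidates. The main obstacle is Step 1: extracting a global unit in $\msh_\Lambda(\Lambda)$ from the abstract fully faithful embedding. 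In the non-compact setting this works only because the separating Reeb flow is available, allowing us to mimic within a localized region the compactly-supported perturbation arguments Guillermou and Jin use in the closed case.
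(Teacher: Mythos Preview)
Your proposal has two genuine gaps that the paper's bootstrapping argument is designed to handle.

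\textbf{Step 1 does not go through.} The functor $\Psi_{\Lambda,\infty}$ supplied by \Cref{thm:guillermou-pre} is a functor on \emph{global} sections $\msh_\Lambda(\Lambda) \hookrightarrow \Loc(M)$; it is not a morphism of sheaves of categories on $\Lambda$. Consequently it cannot be applied to local unit sections of $\msh_\Lambda$, and there is no mechanism by which ``the target carries a canonical unit'' trivializes the gluing cocycle on $\Lambda$. Concretely, the obstruction lives in $\mathrm{Map}(\Lambda, B\mathrm{Pic}(\SC))$, and nothing in your argument touches this class. This is exactly why the paper (following Guillermou) does not attempt a direct patching: it first proves $\pi_1$-injectivity of $\Lambda \to M$ \cite[Proposition~13.1.1]{Guillermou23}, then vanishing of the Maslov class \cite[Theorem~13.2.1]{Guillermou23}, then simple-to-simple over a field and hence a cohomology isomorphism \cite[Proposition~13.3.1]{Guillermou23}, then vanishing of the relative $w_2$ \cite[Proposition~13.4.2]{Guillermou23}. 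Only after $\Lambda \to M$ is known to be a homotopy equivalence can one invoke Jin \cite[Corollary~1.3]{Jin20Jhomomorphism} to kill the Maslov data over arbitrary $\SC$.

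\textbf{Step 2 is false as stated.} The objects $\{1_M \otimes X : X \in \SC\}$ do \emph{not} generate $\Loc(M)$ under colimits when $\pi_1(M) \neq 0$. For instance, on $M = S^1$ with $\SC = \Mod(\Bbbk)$, a rank-one local system with monodromy a unit $u \neq 1$ has $\Gamma(S^1;L) = 0$, so $1_{S^1}$ is not a generator. Thus even granting Step~1 and the (unjustified) claim that $\Psi_{\Lambda,\infty}(1_\Lambda) = 1_M$, essential surjectivity does not follow. The paper's route avoids this by first proving $\Lambda \to M$ is a homotopy equivalence over a field, whence $\Loc(\Lambda;\Bbbk) \simeq \Loc(M;\Bbbk)$ is automatic, and then bootstrapping to general $\SC$ via Jin. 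Your Step~3 is fine once Steps~1--2 are repaired.
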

\begin{remark}
    We remind the reader how the argument works. First, we show that $\Lambda \to M$ is $\pi_1$-injective by \cite[Proposition 13.1.1]{Guillermou23}. Second, we show that the Maslov class is zero by \cite[Theorem 13.2.1]{Guillermou23}. Third, we show that for $\cC = \Mod(\Bbbk)$, the functor $\msh_\Lambda(\Lambda) \to \Loc(M)$ sends simple local systems to simple local systems and thus $\pi\colon \Lambda \to M$ induces isomorphisms on cohomology by \cite[Proposition 13.3.1]{Guillermou23}. Next, we can show that the second Stiefel--Whitney class is zero \cite[Proposition 13.4.2]{Guillermou23}. This implies that when $\cC = \Mod(\Bbbk)$, the functor $\Loc(\Lambda) \to \Loc(M)$ is an equivalence \cite[Proposition 13.5.2]{Guillermou23}, and thus $\Lambda \to M$ is a homotopy equivalence. Then, since $\Lambda \to M$ is a homotopy equivalence, we can apply the argument in \cite[Corollary 1.3]{Jin20Jhomomorphism} and conclude that the Maslov data is trivial and $\Loc(\Lambda) \to \Loc(M)$ is an equivalence over any coefficients.
\end{remark}

\begin{remark}\label{rem: noncompact guillermou}
    Unlike the case when $M$ is compact, it is not true that any embedded Legendrian $L \subseteq S^*_{\zeta>0}(M \times \bR)$ with no Reeb chords admits a separating Reeb flow. For instance, most non-compact embedded Legendrians considered in \cite[Section 2 \& 3]{Li23Cobordism2} with no Reeb chords have no separating Reeb flows. (While one can construct a separating Legendrian isotopy for those Legendrians with no Reeb chords, the Legendrian isotopy may not extend to a contact isotopy due to non-compactness, and even when it extends to a contact isotopy, it is in general not separating \cite[Section 2 \& 3]{Li23Cobordism2}.) 
\end{remark}

The following corollary of Guillermou's result is also well known to experts. Here, we use the isomorphism $\pi_* \colon \operatorname{Loc}(\Lambda) \to \operatorname{Loc}(M)$ given by the natural projection $\pi \colon \Lambda \to M$, which is a homotopy equivalence by Abouzaid--Kragh \cite{Kragh13Parametrized} and Guillermou \cite{Guillermou23} for compact $M$, and is extended in the above result for non-compact $M$ when there exists a separating Reeb flow.

\begin{corollary}\label{cor:guillermou-quant}
    Let $\Lambda \subseteq S^*_{\zeta>0}(M \times (-a, b))$ be a properly embedded Legendrian with no Reeb chords for some separating Reeb flow defined by $\tau$. Then there exists $C_\tau$ such that for any local system $L \in \operatorname{Loc}(\Lambda) \simeq \operatorname{Loc}(M)$, the sheaf quantizations $(\Psi_\Lambda L, \Psi_{0_M}L)$ are $(C_\tau a, C_\tau b)$-interleaved, and thus
    \[
        d_\tau(\Psi_\Lambda L, \Psi_{0_M}L) \leq C_\tau(a + b).
    \]
    Moreover, we can choose the continuation morphisms which under $i_\infty^*$ restrict to the identity morphism $L \to L$.
\end{corollary}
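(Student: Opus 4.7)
The plan is to obtain the two interleaving morphisms by a microlocal Morse argument in the spirit of \Cref{thm:guillermou-pre}, and then verify that the required compositions equal the natural continuation maps by a uniqueness argument at $+\infty$. Set $F := \Psi_\Lambda L$ and $G := \Psi_{0_M} L$. By \Cref{thm:guillermou} both lie in $\Sh(M \times \bR)_0$ with $\cSS^\infty(F) \subseteq \Lambda \subseteq S^*_{\zeta>0}(M \times (-a, b))$, and, via the homotopy equivalence $\Lambda \simeq M$, they satisfy $i_\infty^* F \simeq i_\infty^* G \simeq L$.

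I first choose a constant $C_\tau > 0$ so that the Reeb flow $\varphi^\tau_t$ translates the $z$-coordinate by at least $t/C_\tau$ in the relevant region; such a constant exists because $\tau$ is $C^0$-bounded below and above on the $z$-slab containing everything of interest. Then $K^\tau_{C_\tau b} F$ has microsupport pushed above $M \times \{b\}$ and hence is disjoint from $\cSS^\infty(G)$, while $K^\tau_{C_\tau a} G$ has microsupport pushed below $M \times \{-a\}$ and hence is disjoint from $\Lambda$.

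Next, I apply the perturbation and microlocal Morse argument of \Cref{thm:guillermou-pre}, using the separating Reeb flow together with \Cref{lem: pert} to handle the non-compactness of $M$. This yields canonical isomorphisms
\begin{align*}
\Hom(F, K^\tau_{C_\tau a} G) & \simeq \Hom_{\Loc(M)}(L, L), \\
\Hom(G, K^\tau_{C_\tau b} F) & \simeq \Hom_{\Loc(M)}(L, L),
\end{align*}
and I define $u \colon F \to K^\tau_{C_\tau a} G$ and $v \colon G \to K^\tau_{C_\tau b} F$ to be the morphisms corresponding to $\id_L$. By construction $i_\infty^* u = i_\infty^* v = \id_L$, which is already the \emph{moreover} clause.

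Finally, to check that $v \circ u_{-C_\tau a} \colon K^\tau_{-C_\tau a} F \to G \to K^\tau_{C_\tau b} F$ equals the natural continuation map $c_{-C_\tau a, C_\tau b}(F)$, the same kind of Hom-computation gives $\Hom(K^\tau_{-C_\tau a} F, K^\tau_{C_\tau b} F) \simeq \Hom_{\Loc(M)}(L, L)$, and both morphisms restrict to $\id_L$ at $+\infty$ (the continuation map does so because $K^\tau_t$ does not change stalks at $+\infty$), so they must agree; the other composition is identical. The main obstacle is establishing the Hom-computation uniformly for non-compact $M$ with a general bounded $\tau$, which is precisely what the separating Reeb flow hypothesis is designed to handle, as in the proof of \Cref{thm:guillermou-pre}.
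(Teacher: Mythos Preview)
Your overall strategy matches the paper's: compute the relevant $\Hom$-spaces as $\Hom_{\Loc(M)}(L,L)$, pick the morphisms corresponding to $\id_L$, and verify that the interleaving compositions coincide with the continuation maps by checking that both restrict to $\id_L$ under $i_\infty^*$. The one substantive difference is that the paper first establishes the interleaving for the vertical translation $T_t$ and only at the end passes to $K^\tau$ via the continuation maps $T_s\to K^\tau_{C_\tau s}$ and $K^\tau_{-C_\tau s}\to T_{-s}$ coming from the lower bound of $\tau$. This ordering makes the Hom computations immediate, since $T_t$ is a literal $z$-translation and one can read them off from the explicit descriptions $\Psi_\Lambda L|_{M\times[b,+\infty)}=L\boxtimes 1_{[b,+\infty)}$ and $\Psi_\Lambda L|_{M\times(-\infty,-a]}=0$ supplied by \Cref{thm:guillermou}, together with the full faithfulness of $\Psi_{\Lambda,\infty}$. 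Your direct-$K^\tau$ route is less clean: $K^\tau_t$ is not a simple translation, so your microsupport claims are not correct as stated (for instance, pushing $\Lambda\subset S^*_{\zeta>0}(M\times(-a,b))$ by a flow with $z$-speed at least $1/C_\tau$ for time $C_\tau b$ only guarantees the image lies above $z=-a+b$, not above $z=b$), and the roles of $a$ and $b$ in your $u$ and $v$ are swapped relative to what the geometry actually delivers.
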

\begin{proof}
    First, using \Cref{thm:guillermou} (1) \& (2), we know $\Psi_\Lambda L|_{M \times [b,+\infty)} = L \boxtimes 1_{[b,+\infty)}$, $\Psi_{\Lambda} L|_{M \times (-\infty,-a]} = 0$. Let $T_t\colon S^*_{\zeta>0}(M \times \bR) \to S^*_{\zeta>0}(M \times \bR)$ be the Reeb flow given by the vertical push-off. We will first estimate the distance in terms of the Reeb flow $T_t$ and then deduce the estimation in terms of our separating Reeb flow $\varphi^\tau_t$. Since $\Lambda$ and $0_M \subseteq S^*_{\zeta>0}(M \times \mathbb{R})$, we can apply microlocal Morse lemma and show that 
    \[
        \Hom(T_{-a} \circ \Psi_{0_M}L, \Psi_\Lambda L) = \Hom(\Psi_\Lambda L, T_{b} \circ \Psi_{0_M} L) = \Hom(L, L).
    \]
    Since the full faithfulness in \Cref{thm:guillermou} implies that $\Hom(\Psi_\Lambda L, \Psi_\Lambda L) = \Hom(L, L)$, we can conclude that there exist morphisms
    \[
        T_{-a} \circ \Psi_{0_M}L \to \Psi_\Lambda L \to T_{b} \circ \Psi_{0_M}L, \quad T_{-b} \circ \Psi_\Lambda L \to \Psi_{0_M} L \to T_{a} \circ\Psi_\Lambda L
    \]
    whose compositions are the natural continuation maps, and the restrictions under the functor $i_\infty^*$ are the identity morphisms $L \to L$.
    
    Let $C_\tau > 0$ be the lower bound of the bounded contact Hamiltonian $\tau$ as in \Cref{def: bound-reeb}. Then we have natural continuation morphisms $K^{\tau}_{-C_\tau s} \to T_{-s}$ and $T_s \to K^{\tau}_{C_\tau s}$ for any $s > 0$. Hence there exist morphisms
    \[
        K^{\tau}_{-C_\tau a} \circ \Psi_{0_M}L \to \Psi_\Lambda L \to K^{\tau}_{C_\tau b} \circ \Psi_{0_M}L, \quad K^{\tau}_{-C_\tau b} \circ \Psi_\Lambda L \to \Psi_{0_M} L \to K^{\tau}_{C_\tau a} \circ\Psi_\Lambda L
    \]
    whose compositions are the continuation morphisms, and the restrictions under the functor $i_\infty^*$ are the identity morphisms $L \to L$. This completes the proof.
\end{proof}

\subsection{Sheaf quantization of \texorpdfstring{$C^0$}{C0}-small contactomorphisms}
Using the sheaf quantization theorem on nearby Legendrians \Cref{thm:guillermou-pre,thm:guillermou}, we prove the sheaf quantization theorem for any $C^0$-small contactomorphisms and deduce the interleaving distance estimation in \Cref{thm:quantization-C0} (\Cref{thm:main-C0-distance}).

Let $\varphi \colon S^*M \to S^*M$ be a contactomorphism such that $\varphi^*\alpha = e^h\alpha$. Recall it follows from \Cref{not:graph} that the graph of the contactomorphism to be the Legendrian submanifold
\[
\Gamma^\varphi = \{(x, \xi, y, \eta) \mid (y, \eta/|\eta|) = \varphi(x, \xi/|\xi|), |\xi|/|\eta| = e^{h(x,\xi/|\xi|)}\} \subseteq S^*(M \times M).
\]
The natural projection $\pi \colon S^*(M \times M) \to M \times M$ sends $(x, \xi, y, \eta)$ to $(x, y)$, so it factors through the projection $S^*M \times S^*M \to M \times M$. In particular, when the $C^0$-distance between two contactomorphisms are small, the $C^0$-distance of the projections of their graphs are also small.

Following \Cref{def: bound-reeb}, we will fix a bounded Reeb flow defined by the positive Hamiltonian $\tau \colon S^*M \to \bR$. Given \Cref{lem:disjoint-graph}, we will fix a bounded Reeb flow defined by the contact Hamiltonian $\tau$ and the cut-off function $\rho \colon S^*(M \times M) \to \bR$ so that the Hamiltonian $(1-\rho)\tau_2 \colon S^*(M \times M) \to \bR$ defines a complete contact flow.

\begin{lemma}\label{lemma:no-chord} 
    Let $M$ be a complete Riemannian manifold, $\alpha$ be a contact form on $S^*M$ and $\tau \colon S^*M \to \bR$ be a positive Hamiltonian that defines the bounded Reeb flow for $\alpha$ such that there are no closed Reeb orbits of length less than $\epsilon$. 
    Let $\varphi \colon S^*M \to S^*M$ be any injective map. Then the graph $\Gamma^\varphi \subseteq S^*(M \times M)$ has no chords of length less than $\epsilon$ with respect to the contact flow of the non-negative Hamiltonian $\tau_2$.
\end{lemma}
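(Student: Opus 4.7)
The plan is to reduce the existence of chords of $\Gamma^\varphi$ under the flow of $\tau_2$ to the existence of short closed Reeb orbits of $\tau$ on $S^*M$. The key observation is that $\tau_2(x, \xi, y, \eta) = \tau(y, \eta/|\eta|)|\eta|$ is pulled back from the second $S^*M$-factor and does not depend on the coordinates $(x, \xi)$. As a consequence, the contact vector field of $\tau_2$ on $S^*(M \times M)$ has no component along the first factor, and the contact flow $\varphi^{\tau_2}_t$ preserves pointwise the projection $((x, -\xi), (y, \eta)) \mapsto (x, \xi/|\xi|)$ onto the first $S^*M$-factor (defined where $\xi \neq 0$, which contains $\Gamma^\varphi$).

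Suppose for contradiction that $p_0 = ((x, -\xi_0), (y_0, \eta_0)) \in \Gamma^\varphi$ admits $p_1 \coloneqq \varphi^{\tau_2}_t(p_0) \in \Gamma^\varphi$ with $0 < t < \epsilon$, and write $p_1 = ((x, -\xi_1), (y_1, \eta_1))$. By the observation above $\xi_1/|\xi_1| = \xi_0/|\xi_0|$, and by the definition of $\Gamma^\varphi$, the second-factor ray at each point is determined by the first via $\varphi$, so that
\[
(y_i, \eta_i/|\eta_i|) = \varphi(x, \xi_i/|\xi_i|), \quad i = 0, 1,
\]
which forces $(y_1, \eta_1/|\eta_1|) = (y_0, \eta_0/|\eta_0|)$ in $S^*M$.

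To upgrade this equality to a closed Reeb orbit, I would invoke the standard fact that the degree-$1$ homogeneous lift $\tilde\tau(y,\eta) = \tau(y, \eta/|\eta|)|\eta|$ generates on $\dot T^*M$ a Hamiltonian flow whose descent to $S^*M$ is precisely the Reeb flow of $\tau$, with time parameters preserved. Since the flow of $\tau_2$ on $\dot T^*(M \times M)$ acts trivially on the first factor and through this homogeneous lift on the second, the equality above produces a closed Reeb orbit of $\tau$ through $(y_0, \eta_0/|\eta_0|)$ of period $t < \epsilon$, contradicting the hypothesis on $\tau$.

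The only point requiring care is the matching of time parameters between the contact flow of $\tau_2$ on $S^*(M \times M)$ and the Reeb flow of $\tau$ on $S^*M$; this reduces to the standard compatibility between a degree-$1$ homogeneous Hamiltonian flow on $\dot T^*M$ and the contact flow of the corresponding Hamiltonian on $S^*M$. Note that injectivity of $\varphi$ is not essentially used in the argument; it only guarantees that $\Gamma^\varphi$ has a sensible graph-like structure as the image of a single-valued assignment.
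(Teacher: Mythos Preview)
Your proposal is correct and follows essentially the same approach as the paper: both argue that the flow of $\tau_2$ fixes the first-factor ray $(x,\xi/|\xi|)$, then use that $\varphi$ is single-valued to force the second-factor rays to coincide, producing a closed Reeb orbit of $\tau$ of period $t<\epsilon$. Your remark that injectivity is not really used (only well-definedness of $\varphi$ as a map) is accurate and matches what the paper's proof actually invokes.
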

\begin{proof}
    We know that $\Gamma^\varphi$ is disjoint from $0_M \times S^*M$. Consider the chords on $\Gamma^\varphi$ with respect to the contact Hamiltonian $\tau_2 = (1-\rho)\tau_2$ on the complement of the tubular neighborhood of $S^*M \times 0_M$. They are determined by the equations
    \[
        (y_i, \eta_i/|\eta_i|) = \varphi(x_i, \xi_i/|\xi_i|), \quad (x_2, \xi_2/|\xi_2|) = (x_1, \xi_1/|\xi_1|), \quad (y_2, \eta_2/|\eta_2|) = \varphi^\tau_s(y_1, \eta_1/|\eta_1|), . 
    \]
    Then we get the equality $(y_1, \eta_1/|\eta_1|) = \varphi^\tau_{s}(y_1, \eta_1/|\eta_1|)$. We know that $\varphi^\tau_s$ has no Reeb orbits of length less than $\epsilon$ by assumption, so $\Gamma^\varphi$ has no chords of length less than $\epsilon$ with respect to $\varphi^{\tau_2}_s$, defined as a contact flow on $S^*(M \times M) \setminus 0_M \times S^*M$. 
\end{proof}

Given the above lemma that the graph of a contactomorphism have no short Reeb chords, we can prove the following theorem, which is the main technical result in this paper.

\begin{theorem}[\Cref{thm:main-C0-distance}]\label{thm:quantization-C0-small}
    Let $M$ be a complete Riemannian manifold, $\alpha$ be a contact form on $S^*M$ and $\tau \colon S^*M \to \bR$ be a positive Hamiltonian that defines a bounded Reeb flow for $\alpha$. Then for any $\epsilon > 0$ sufficiently small and any $\varphi \in \mathrm{Cont}(S^*M, \xi_{\mathrm{std}})$ with $d_{C^0}(\id, \varphi) \leq \epsilon$, there exists a unique sheaf quantization $K^\varphi$ of $\varphi$ such that for some tubular neighborhood $U_{2\epsilon}$ of $\Delta$ with radius $2\epsilon$,
    \begin{enumerate}
        \item $\supp(K^\varphi) \subseteq U_{2\epsilon}$, $\cSS^\infty(K^\varphi) \subseteq \Gamma^\varphi$;
        \item $m_{\Gamma_\varphi}(K^\varphi) = 1_{\Gamma^\varphi} \in \operatorname{Loc}(\Gamma^\varphi)$.
    \end{enumerate}
    Moreover, there is some constant $C_\tau$ such that $d_\tau(1_\Delta, K^\varphi) \leq 2 C_\tau\, d_{C^0}(\id, \varphi)$.
\end{theorem}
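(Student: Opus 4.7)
The strategy is to realize the graph Legendrian $\Gamma^\varphi \subseteq S^*(M\times M)$ as a Legendrian in a 1-jet bundle setting where Guillermou's sheaf quantization theorem \Cref{thm:guillermou} applies, and then derive the interleaving bound from \Cref{cor:guillermou-quant}.

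I would first invoke the contact Weinstein tubular neighborhood theorem to identify a tubular neighborhood of the conormal Legendrian $\Gamma^{\id}\cong S^*M$ inside $S^*(M\times M)$ contactomorphically with a neighborhood of the zero section of $J^1(S^*M)\hookrightarrow S^*_{\zeta>0}(S^*M\times\bR_z)$. For $\epsilon>0$ small, the graph $\Gamma^\varphi$ lies in this tubular neighborhood and becomes a properly embedded Legendrian $\widetilde{\Gamma}^\varphi$ close to the zero section, with $z$-displacement bounded by a constant multiple of $\epsilon$. Pulling back the cutoff Hamiltonian $(1-\rho)\tau_2$ from \Cref{lem:disjoint-graph} produces a complete, $C^1$-bounded positive Reeb flow on this local model.

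Next I would verify the hypotheses of \Cref{thm:guillermou}. By \Cref{lemma:no-chord}, for $\epsilon$ less than half the minimal length of closed Reeb orbits of $\tau$, the Legendrian $\widetilde{\Gamma}^\varphi$ has no Reeb chord of length $\leq 2\epsilon$ with respect to the pulled-back flow. Since $\tau$ has a positive lower bound $C_\tau^{-1}$, there exists $s>0$ for which the family $\bigcup_{\epsilon'<t<s}(\widetilde{\Gamma}^\varphi)^\tau_t$ is disjoint from a tubular neighborhood of $\widetilde{\Gamma}^\varphi$ and $(\widetilde{\Gamma}^\varphi)^\tau_s$ is separated from $\widetilde{\Gamma}^\varphi$ by a level hypersurface $\{z=a_0\}$; thus the flow is separating. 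Then \Cref{thm:guillermou} produces a unique sheaf $\widetilde{K}^\varphi$ on $S^*M\times\bR$ with $\cSS^\infty(\widetilde{K}^\varphi)=\widetilde{\Gamma}^\varphi$, microstalk $1_{\widetilde{\Gamma}^\varphi}$, and support normalized at infinity. Transferring $\widetilde{K}^\varphi$ through the Weinstein identification and extending by zero yields $K^\varphi\in\Sh(M\times M)$ satisfying $\supp(K^\varphi)\subseteq U_{2\epsilon}$, $\cSS^\infty(K^\varphi)\subseteq\Gamma^\varphi$, and $m_{\Gamma^\varphi}(K^\varphi)=1_{\Gamma^\varphi}$; uniqueness is inherited from Guillermou's uniqueness statement.

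For the distance bound, both $\Gamma^{\id}$ (corresponding to $1_\Delta$) and $\Gamma^\varphi$ (corresponding to $K^\varphi$) sit in the same 1-jet bundle model within a slab of $z$-height $\leq 2\epsilon$. Applying \Cref{cor:guillermou-quant} to this pair produces continuation morphisms $K^\tau_{-C_\tau\epsilon}\circ 1_\Delta \to K^\varphi \to K^\tau_{C_\tau\epsilon}\circ 1_\Delta$ and the reverse, whose compositions are the canonical continuation maps, yielding $d_\tau(1_\Delta,K^\varphi)\leq 2C_\tau\, d_{C^0}(\id,\varphi)$. The principal obstacle is verifying the separating Reeb flow hypothesis uniformly over all $\varphi$ with $d_{C^0}(\id,\varphi)\leq\epsilon$ after transferring through the Weinstein identification: this requires a careful interplay between the $C^1$-bounds on $\tau$, the placement of the cutoff $\rho$, and the bounded-geometry assumption on $M$ to guarantee both completeness of the cutoff flow and the topological separation by the level hypersurface.
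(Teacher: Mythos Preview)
There is a genuine gap at the transfer step. A contactomorphism between an open subset of $S^*(M\times M)$ and an open subset of $J^1(S^*M)\subset S^*_{\zeta>0}(S^*M\times\bR)$ does not induce any functor between $\Sh(M\times M)$ and $\Sh(S^*M\times\bR)$: sheaves live on the base manifolds, and the bases here are different. At best the Weinstein identification gives an equivalence of \emph{microsheaf} categories over the open sets, but promoting the resulting microsheaf on $\Gamma^\varphi$ back to an honest object of $\Sh(M\times M)$ with the stated support and singular support is exactly the content of the theorem. The phrase ``transferring $\widetilde{K}^\varphi$ through the Weinstein identification and extending by zero'' therefore hides the entire difficulty. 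A related issue affects the distance estimate: even granting a transfer, you would still have to show that the interleaving maps produced by \Cref{cor:guillermou-quant} on $S^*M\times\bR$ correspond, after transfer, to interleaving by $K^\tau\circ(-)$ on $\Sh(M\times M)$.

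The paper avoids this by never leaving $M\times M$. It builds the tubular neighborhood in the base via the Reeb push-off $\Phi_{\tau_2}\colon T^*_{\Delta,<2\epsilon}(M\times M)\hookrightarrow M\times M$, $(x,-\xi,x,\xi)\mapsto (x,\pi\circ\varphi^\tau_1(x,\xi))$, so that the punctured neighborhood $U_{2\epsilon}^\circ$ is diffeomorphic to $S_\epsilon\times(-\epsilon,\epsilon)$ for a hypersurface $S_\epsilon\subset M\times M$. The key trick is to replace $\varphi$ by $\varphi^\tau_\epsilon\circ\varphi$: its graph $\Gamma^{\varphi^\tau_\epsilon\circ\varphi}$ then lies in $S^*_{\zeta>0}(S_\epsilon\times(-\epsilon,\epsilon))$, where \Cref{lemma:no-chord} and \Cref{thm:guillermou} produce a sheaf directly on an open subset of $M\times M$. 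This sheaf is extended across $\Delta$ by a local-system argument and finally corrected via $K^\varphi\coloneqq K^\tau_{-\epsilon}\circ K^{\varphi^\tau_\epsilon\circ\varphi}$. Because the $z$-coordinate on $U_{2\epsilon}^\circ$ was built from the flow of $\tau_2$, the interleaving maps coming from \Cref{cor:guillermou-quant} are automatically the $K^\tau$-continuation maps on $\Sh(M\times M)$, which yields the bound $d_\tau(1_\Delta,K^\varphi)\leq 2C_\tau\,d_{C^0}(\id,\varphi)$.
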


\begin{figure}
    \centering
    \includegraphics[width=0.8\linewidth]{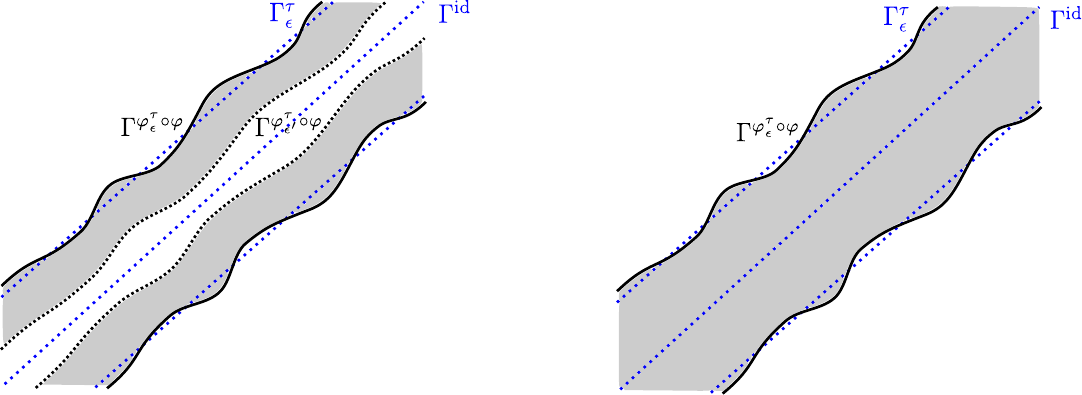}
    \caption{Construction of the sheaf quantization for a small contactomorphism $\varphi$ in \Cref{thm:quantization-C0-small} when $M = \bR$. Here, the blue curves are the projections of the graphs $\Gamma^\id$ and $\Gamma^\tau_\epsilon$ while the black curves is the projection of the graph $\Gamma^{\varphi^\tau_\epsilon \circ \varphi}$.}
\end{figure}

\begin{proof}
    Let $T^*_\Delta(M \times M)$ be the conormal bundle of the diagonal and $T^*_{\Delta,<r}(M \times M)$ be the subset of the conormal consisting of conormal vectors of norm less than $r$. Since the Reeb flow is bounded, without loss of generality, we can assume that for sufficiently small $\epsilon > 0$, the Reeb push-off of the conormal of the diagonal 
    \[
    \Phi_{\tau_2} \colon T^*_{\Delta,<2\epsilon}(M \times M) \hookrightarrow M \times M, \quad (x, -\xi, x, \xi) \mapsto (x, \pi \circ \varphi^{\tau}_{1}(x, \xi))
    \]
    defines an embedding. Write $U_{2\epsilon} = \Phi_{\tau_2}(T^*_{\Delta,<2\epsilon}(M \times M))$, $U_{2\epsilon}^\circ = \Phi_{\tau_2}(T^*_{\Delta,<2\epsilon}(M \times M) \setminus \Delta)$ and $S_\epsilon = \Phi_{\tau_2}(T^*_{\Delta,\epsilon}(M \times M))$. We have a diffeomorphism $U_{2\epsilon}^\circ \cong S_\epsilon \times (-\epsilon, \epsilon)$. 
 
    Let $C_1 > 0$ be the lower bound on the norm of the vector field $\partial_z$ along the coordinate $z\in (-\epsilon, \epsilon)$. Consider $\delta > 0$ such that $\delta/\epsilon = C_1/2$. Let $\varphi$ be a contactomorphism such that $d_{C^0}(\id, \varphi) \leq \delta$. Consider the contactomorphism $\varphi_{\epsilon}^\tau \circ \varphi$. We have $d_{C^0}(\varphi_{\epsilon}^\tau, \varphi_{\epsilon}^\tau \circ \varphi) \leq C_1 \epsilon$. Since the projections $\Gamma^{\varphi_{\epsilon}^\tau} \to M \times M$ and $\Gamma^{\varphi_{\epsilon}^\tau \circ \varphi} \to M \times M$ factor through $S^*M \times S^*M$, the distance of the projections of the graphs in $M \times M$ is bounded by their distance in $S^*M \times S^*M$ which is at most $C_1 \epsilon$. Therefore, we can assume that
    \[
    \Gamma^{\varphi_{\epsilon}^\tau \circ \varphi} \subseteq S^*(S_\epsilon \times (-\epsilon, \epsilon)) \cong S^*U_{2\epsilon}^\circ.
    \]
    Since $d_{C^0}(\varphi_{\epsilon}^\tau, \varphi_{\epsilon}^\tau \circ \varphi) < \epsilon$, and for any $(x, \xi, y, \eta) \in \Gamma_{\epsilon}^\tau$, $\left<\eta, \partial_z\right>_y > 0$, we may assume that for any $(x, \xi, y, \eta) \in \Gamma^{\varphi_{\epsilon}^\tau \circ \varphi}$, $\left< (\xi, \eta), \partial_z\right>_{(x,y)} = \left<\eta, \partial_z\right>_y > 0$. Thus,
    \[
    \Gamma^{\varphi_{\epsilon}^\tau \circ \varphi} \subseteq S^*_{\zeta>0}(S_\epsilon \times (-\epsilon, \epsilon)_z) \cong S^*_{\zeta>0}U_{2\epsilon}^\circ.
    \]
    By \Cref{lemma:no-chord}, we know that $\Gamma^{\varphi_{\epsilon}^\tau \circ \varphi}$ has no chords of length less than $2\epsilon$ with respect to the contact flow $\varphi^{\tau_2}_t$. For $0< \epsilon' < \epsilon$, since the projection of $\Gamma^{\varphi_{\epsilon}^\tau \circ \varphi}$ and $\Gamma^{\varphi_{\epsilon'}^\tau \circ \varphi}$ are contained in small neighborhoods of $S_\epsilon$ and $S_{\epsilon'}$, we can assume that the graphs $\Gamma^{\varphi_{\epsilon}^\tau \circ \varphi}$ and $\Gamma^{\varphi_{\epsilon'}^\tau \circ \varphi}$ are separated by a hypersurface $S_{\epsilon''}$. Then \Cref{thm:guillermou} implies that there exists a unique sheaf 
    $K^{\varphi_{\epsilon}^\tau \circ \varphi}_{U_{2\epsilon}^\circ} \in \operatorname{Sh}(U_{2\epsilon}^\circ)$ such that for some sufficiently small $\epsilon' < \epsilon$,
    \begin{enumerate}
        \item $\supp(K^{\varphi_{\epsilon}^\tau \circ \varphi}_{U_{2\epsilon}^\circ}) \subseteq U_{2\epsilon-\epsilon'}^\circ$, $\cSS^\infty(K^{\varphi_{\epsilon}^\tau \circ \varphi}_{U_{2\epsilon}^\circ}) \subseteq \Gamma^{\varphi_{\epsilon}^\tau \circ \varphi}$;
        \item $m_{\Gamma_{\varphi_{\epsilon}^\tau \circ \varphi}}(K^{\varphi_{\epsilon}^\tau \circ \varphi}_{U_{2\epsilon}^\circ}) = 1_{\Gamma_{\varphi_{\epsilon}^\tau \circ \varphi}}$ and $i_{S_{\epsilon'}}^*K^{\varphi_{\epsilon}^\tau \circ \varphi}_{U_{2\epsilon}^\circ} = 1_{S_{\epsilon'}}$.
    \end{enumerate} 
    Then since the restriction functor $\Loc(U_{\epsilon'}) \to \Loc(S_{\epsilon'})$ is conservative, there exists a unique extension $K^{\varphi_{\epsilon}^\tau \circ \varphi} \in \operatorname{Sh}(M \times M)$ such that 
    \[
    i_{U_{2\epsilon}^\circ}^*K^{\varphi_{\epsilon}^\tau \circ \varphi} = K^{\varphi^{\epsilon}_\tau \circ \varphi}_{U_{2\epsilon}^\circ}, \quad i_{U_{\epsilon'}}^*K^{\varphi_{\epsilon}^\tau \circ \varphi} = 1_{U_{\epsilon'}}.
    \]
    We define $K^\varphi \coloneqq K^{\tau}_{-\epsilon} \circ K^{\varphi_{\epsilon}^\tau \circ \varphi}$. We can conclude by \Cref{lem: microsupport-convolution-composition} that this is the unique sheaf in $\Sh(M \times M)$ such that 
    \[
    \supp(K^\varphi) \subseteq U_{2\epsilon}, \quad \cSS^\infty(K^\varphi) \subseteq \Gamma^\varphi, \quad m_{\Gamma^\varphi}(K^\varphi) = 1_{\Gamma^\varphi} \in \operatorname{Loc}(\Gamma^\varphi).
    \]
    
    Finally, we estimate the interleaving distance. We know that $\Phi_{\tau_2} \colon T^*_{\Delta,<2\epsilon}(M \times M) \hookrightarrow M \times M$ defines an embedding, so the Reeb vector field defined by the Riemannian metric is bounded by the Reeb vector field $\partial_z$ defined by the contact form on the 1-jet bundle $S^*_{\zeta > 0}U_{2\epsilon}^\circ$. Then the Reeb flow defined by $\tau$ is bounded with respect to the standard contact form on the 1-jet bundle $S^*_{\zeta > 0}U_{2\epsilon}^\circ$. Let $C_2 > 0$ be the lower bound of the bounded contact Hamiltonian $\tau$ with respect to the standard contact form on $S^*_{\zeta > 0}U_{2\epsilon}^\circ$. First, by \Cref{cor:guillermou-quant}, we know that there exists $C_2 > 0$ 
    such that the sheaf quantizations $(1_{U_\epsilon^\circ}, K^{\varphi_{\epsilon}^\tau \circ \varphi}_{U_{2\epsilon}^\circ})$ are $(C_2a, C_2b)$-interleaved for some $a, b < \epsilon$. Moreover, by \Cref{cor:guillermou-quant}, the restrictions of the continuation morphisms to $S_{\epsilon-C_2b}$ are the identity. 
    Then, by the construction of the sheaf quantization $K^{\varphi_{\epsilon}^\tau \circ \varphi}$, there exist morphisms
    \[
        K^\tau_{\epsilon - C_2a} \to K^{\varphi_{\epsilon}^\tau \circ \varphi} \to K^{\tau}_{\epsilon+C_2b}, \quad K^{\tau}_{-\epsilon-C_2b} \circ K^{\varphi_{\epsilon}^\tau \circ \varphi} \to K^{\tau}_{-\epsilon} \to K^\tau_{-\epsilon + C_2a} \circ K^{\varphi_{\epsilon}^\tau \circ \varphi}
    \]
    whose compositions are the continuation morphisms. 
    Define $K^\varphi \coloneqq K^{\tau}_{-\epsilon} \circ K^{\varphi^\tau_\epsilon \circ \varphi}$, then we have morphisms
    \[
        K^{\tau}_{-C_2a} \to K^\varphi \to K^{\tau}_{C_2b}, \quad K^{\tau}_{-C_2b} \circ K^\varphi \to 1_\Delta \to K^{\tau}_{C_2a} \circ K^\varphi
    \]
    whose the compositions are the continuation morphisms. 
    Therefore, since $a, b < \epsilon$ and $\delta/\epsilon = C_1/2$, we can set $C_\tau = 2C_2/C_1$ and thus
    $d_{\tau}(1_\Delta, K^\varphi) \leq 2C_\tau\,d_{C^0}(\id, \varphi)$.
    This concludes the proof. 
\end{proof}

Now, we will show that such sheaf quantizations for $C^0$-small contactomorphisms are functorial with respect to compositions:

\begin{proposition}\label{prop:quantization-functorial}
    Let $M$ be a complete Riemannian manifold and $\alpha$ be a contact form on $S^*M$. Then for the $\epsilon > 0$ specified in \Cref{thm:quantization-C0-small} and any $\varphi, \varphi' \in \mathrm{Cont}(S^*M, \xi_{\mathrm{std}})$, each of which has bounded conformal factor, such that $d_{C^0}(\id, \varphi) + d_{C^0}(\id, \varphi') \leq \epsilon$, the sheaf quantizations in \Cref{thm:quantization-C0-small} satisfy the relation
    \[
        K^\varphi \circ K^{\varphi'} = K^{\varphi \circ \varphi'}.
    \]
    In particular, $K^\varphi$ is an invertible sheaf kernel with inverse $K^{\varphi^{-1}}$ if $d_{C^0}(\id,\varphi) \leq {\epsilon}/{2}$.
\end{proposition}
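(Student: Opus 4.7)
The plan is to invoke the uniqueness clause of \Cref{thm:quantization-C0-small}: I would verify that $K^\varphi \circ K^{\varphi'}$ satisfies the three characterizing conditions for $\varphi \circ \varphi'$, namely support inside $U_{2\epsilon}$, singular support inside $\Gamma^{\varphi \circ \varphi'}$, and microstalk equal to $1_{\Gamma^{\varphi\circ\varphi'}}$. Once these are checked, the uniqueness part of \Cref{thm:quantization-C0-small} forces $K^\varphi \circ K^{\varphi'} = K^{\varphi\circ\varphi'}$, and the rest of the proposition follows formally.

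The support bound is the easiest: since $\supp(K^\bullet) \subseteq U_{2 d_{C^0}(\id,\bullet)}$, the triangle inequality on $M \times M$ gives $\supp(K^\varphi \circ K^{\varphi'}) \subseteq U_{2(d_{C^0}(\id,\varphi)+d_{C^0}(\id,\varphi'))} \subseteq U_{2\epsilon}$. The singular support estimate follows from \Cref{lem: microsupport-convolution-composition}, which yields $\cSSif(K^\varphi \circ K^{\varphi'}) \subseteq \Gamma^\varphi \circ \Gamma^{\varphi'} = \Gamma^{\varphi\circ\varphi'}$. To apply the lemma I need to verify (i) the non-characteristic condition, which holds because $\Gamma^\bullet$ is the graph of a homogeneous symplectomorphism of $\dot T^*M$, so a zero covector in one factor of a point of $\Gamma^\bullet$ forces a zero covector in the other, ruling out any intersection with $0_M \times \dot T^*M \times 0_M$; and (ii) properness of $\pi_{13}$ on the relevant supports, which follows from the confinement of $\supp(K^\varphi), \supp(K^{\varphi'})$ in tubular neighborhoods of $\Delta$ of bounded radius, using completeness of $M$.

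For the microstalk, the key fact is that the microstalk functor is compatible with convolution of kernels at points where the composition of Legendrians is transverse and smooth. More precisely, under the identification $\Gamma^\varphi \times_{S^*M} \Gamma^{\varphi'} \xrightarrow{\sim} \Gamma^{\varphi\circ\varphi'}$, one has $m_{\Gamma^{\varphi\circ\varphi'}}(K^\varphi \circ K^{\varphi'}) = m_{\Gamma^\varphi}(K^\varphi) \otimes m_{\Gamma^{\varphi'}}(K^{\varphi'})$ as local systems on $\Gamma^{\varphi\circ\varphi'}$, which equals $1_{\Gamma^{\varphi\circ\varphi'}}$ since both factors are trivial. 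This identification can be extracted either from the composition formulas for $\mu hom$ in \cite{KS90}, or, more hands-on, by computing $\Hom(1_\Sigma, K^\varphi \circ K^{\varphi'})$ against an $\Omega$-lens $\Sigma$ at a point of $\Gamma^{\varphi\circ\varphi'}$ and reducing to the transverse composition of two graphs where the outcome is well understood.

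With $K^\varphi \circ K^{\varphi'} = K^{\varphi\circ\varphi'}$ in hand, the invertibility claim is immediate. The substitution $y = \varphi(z)$ in the defining supremum gives $d_{C^0}(\id, \varphi^{-1}) = d_{C^0}(\id, \varphi)$ for any bijection, so if $d_{C^0}(\id, \varphi) \leq \epsilon/2$ then the sum $d_{C^0}(\id, \varphi)+d_{C^0}(\id, \varphi^{-1}) \leq \epsilon$ is in the regime of the first part. Applying it both ways yields $K^\varphi \circ K^{\varphi^{-1}} = K^{\varphi^{-1}} \circ K^\varphi = K^{\id}$, and $K^{\id} = 1_\Delta$ since $1_\Delta$ manifestly satisfies the uniqueness conditions of \Cref{thm:quantization-C0-small} for the identity. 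The main obstacle I anticipate is the microstalk identification: while the support and singular support estimates are essentially formal, verifying $m_{\Gamma^{\varphi\circ\varphi'}}(K^\varphi \circ K^{\varphi'}) = 1_{\Gamma^{\varphi\circ\varphi'}}$ requires careful tracking of Maslov-type trivializations under composition, and is where the bounded conformal factor hypothesis most likely enters, ensuring that the composed graph stays in the regime where \Cref{thm:quantization-C0-small} applies and the trivializations match.
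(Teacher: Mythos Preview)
Your overall strategy—verify the three characterizing conditions of \Cref{thm:quantization-C0-small} and invoke uniqueness—is exactly the paper's, and your support and singular support arguments (including the check of the hypotheses of \Cref{lem: microsupport-convolution-composition}) are essentially identical to what the paper does.

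The divergence is at the microstalk step, where you have correctly anticipated the obstacle but not the paper's resolution. Your proposed tensor product formula $m_{\Gamma^{\varphi\circ\varphi'}}(K^\varphi \circ K^{\varphi'}) = m_{\Gamma^\varphi}(K^\varphi) \otimes m_{\Gamma^{\varphi'}}(K^{\varphi'})$ holds only up to a Maslov-type twist, and verifying that this twist is trivial relative to the specific Maslov data implicit in \Cref{thm:quantization-C0-small} is precisely the careful tracking you flag as problematic. The paper sidesteps this entirely: \cite[Theorem~7.5.11]{KS90} is invoked only to conclude that the microstalk is a \emph{rank~$1$} local system, and the triviality is then pinned down by a different mechanism. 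Namely, right-invariance plus the distance estimate of \Cref{thm:quantization-C0-small} give
\[
d_\tau(1_\Delta, K^\varphi \circ K^{\varphi'}) \leq d_\tau(1_\Delta,K^{\varphi'}) + d_\tau(1_\Delta,K^\varphi) \leq 2C_\tau\epsilon,
\]
whence $\Hom(K^\tau_{\epsilon'}, K^\tau_\epsilon \circ K^\varphi \circ K^{\varphi'}) \neq 0$ for small $\epsilon'$; combined with the rank~$1$ information and \Cref{cor:guillermou-quant}, this forces the restriction $K^\tau_\epsilon \circ K^\varphi \circ K^{\varphi'}|_{U_{\epsilon'}}$ to be the constant sheaf, and hence (again via \Cref{cor:guillermou-quant}) the microstalk is $1_{\Gamma^{\varphi\circ\varphi'}}$. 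The paper thus trades the Maslov bookkeeping you anticipate for the interleaving-distance machinery it has already built; your direct route is in principle viable but would require establishing the composition formula for microstalks with full control of the twist, which the paper does not develop and which is genuinely more work in this setting.
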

\begin{proof}
    First, by the singular support estimation \Cref{lem: microsupport-convolution-composition}, we know that
    \[
    \supp(K^\varphi \circ K^{\varphi'}) \subseteq U_{2\epsilon}, \quad \cSS^\infty(K^\varphi \circ K^{\varphi'}) \subseteq \Gamma^{\varphi \circ \varphi'}.
    \]
    By the invariance of microstalks under contact transformation \cite[Theorem 7.5.11]{KS90}, we can compute that $m_{\Gamma^{\varphi \circ \varphi'}}(K^\varphi \circ K^{\varphi'})$ is a rank $1$ local system. Then, by \Cref{cor:guillermou-quant}, for sufficiently small $\epsilon' > 0$, the restriction 
    $K^\tau_\epsilon \circ K^\varphi \circ K^{\varphi'}|_{S_{\epsilon'}}$
    is also a rank $1$ local system. However, since the assumption $d_{C^0}(\id, \varphi) + d_{C^0}(\id, \varphi') \leq \epsilon$, it follows from \Cref{thm:quantization-C0-small} that
    \[
    d_\tau(1_\Delta, K^\varphi \circ K^{\varphi'}) \leq d_\tau(1_\Delta, K^{\varphi'}) + d_\tau(K^{\varphi'}, K^\varphi \circ K^{\varphi'}) = d_\tau(1_\Delta, K^{\varphi'}) + d_\tau(1_\Delta, K^\varphi) \leq 2C_\tau \epsilon.
    \]
    In particular, for sufficiently small $\epsilon' > 0$, $\Hom(K^\tau_{\epsilon'}, K^\tau_\epsilon \circ K^\varphi \circ K^{\varphi'}) \neq 0$. This forces the restriction $K^\tau_\epsilon \circ K^\varphi \circ K^{\varphi'}|_{U_{\epsilon'}}$ to be the constant local system $1_{U_{\epsilon'}}$. Thus, by \Cref{cor:guillermou-quant}, it follows that
    \[
    m_{\Gamma^{\varphi \circ \varphi'}}(K^\varphi \circ K^{\varphi'}) = 1_{\Gamma^{\varphi \circ \varphi'}}.
    \]
    Then, the uniqueness statement in \Cref{thm:quantization-C0-small} implies that $K^{\varphi \circ \varphi'} = K^\varphi \circ K^{\varphi'}$.
\end{proof}

For a given contactomorphism $\varphi \in \mathrm{\Cont}_0(S^*M,\xi_{\mathrm{std}})$ in the identity component, we can take a contact isotopy $\varphi_t \in \mathrm{\Cont}_0(S^*M,\xi_{\mathrm{std}})$ such that $\varphi_0 = \id$ and $\varphi_1 = \varphi$ and then apply \cref{thm: GKS} to get a sheaf quantization $K^\varphi$ of $\varphi$.
We remark that given a contactomorphism $\varphi \in \mathrm{Cont}_0(S^*M, \xi_{\mathrm{std}})$, it is not true that there is a unique contact isotopy $\varphi_t \in \mathrm{Cont}_0(S^*M, \xi_{\mathrm{std}})$ connecting $\id$ and $\varphi$. Therefore, the sheaf quantization of contact isotopies by Guillermou--Kashiwara--Schapira does not give a unique sheaf quantization of the contactomorphism $\varphi \in \mathrm{Cont}_0(S^*M, \xi_{\mathrm{std}})$ (actually, the sheaf quantization of Guillermou--Kashiwara--Schapira is defined on the universal cover of $\mathrm{Cont}_0(S^*M, \xi_{\mathrm{std}})$). However, we are able to prove the weaker result:

\begin{proposition}\label{prop:compare-gks}
    Let $\Phi$ be a contact isotopy such that $\varphi_0 = \id$, $\varphi_1 = \varphi$ and $d_{C^0}(\id, \varphi_t) \leq \epsilon$ for any $t \in I$. For the sheaf quantization $K^{\Phi} \in \Sh(M \times M \times I)$ induced by Guillermou--Kashiwara--Schapira \Cref{thm: GKS}, $K^\varphi = K^{\Phi}_1$ is isomorphic to the sheaf quantization of $\varphi$ in \Cref{thm:quantization-C0-small}.
\end{proposition}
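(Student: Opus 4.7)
The plan is to verify that $K^\Phi_1$ satisfies the two characterizing properties (1) and (2) of \Cref{thm:quantization-C0-small}, so that its uniqueness statement yields $K^\Phi_1 \simeq K^\varphi$. The singular support bound $\cSS^\infty(K^\Phi_1) \subseteq \Gamma^\varphi$ follows immediately from the microsupport estimate \eqref{for: ms-isotopy} restricted to $t = 1$, so the remaining work is to establish the support and microstalk conditions.

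For the support condition, I would apply a non-characteristic propagation argument in the $t$-direction. By \eqref{for: ms-isotopy}, the projection of $\cSS^\infty(K^\Phi)$ to $M \times M$ is contained in $\bigcup_{t \in I} \pi(\Gamma^{\varphi_t})$, which lies inside $U_\epsilon \subseteq U_{2\epsilon}$ by the assumption $d_{C^0}(\id, \varphi_t) \leq \epsilon$ for every $t \in I$. For any $p \in (M \times M) \setminus U_{2\epsilon}$, pick an open neighborhood $V$ of $p$ disjoint from $\overline{U}_\epsilon$; then $\cSS(K^\Phi|_{V \times I})$ lies in the zero section, so $K^\Phi|_{V \times I}$ is locally constant in $t$. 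Combined with $K^\Phi_0|_V = 1_\Delta|_V = 0$, this forces $K^\Phi|_{V \times I} = 0$, and hence $\supp(K^\Phi_1) \subseteq U_{2\epsilon}$.

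For the microstalk condition, consider the Legendrian movie $\Lambda^\Phi = \bigcup_{t \in I} \Gamma^{\varphi_t} \times \{t\} \subseteq S^*(M \times M \times I)$, which is canonically diffeomorphic to $S^*M \times I$ via the contact isotopy. The microstalk $m_{\Lambda^\Phi}(K^\Phi)$ is a rank one locally constant sheaf on $\Lambda^\Phi$. At $t = 0$, this local system agrees with the distinguished generator of $\msh_{T^*_\Delta(M \times M)}$ corresponding to $1_\Delta$, since $i_0^* K^\Phi = 1_\Delta$. Parallel transport along the $I$-factor of $\Lambda^\Phi$ then identifies the microstalk at $t = 1$ with the trivial local system $1_{\Gamma^\varphi}$ in the normalization fixed by \Cref{thm:quantization-C0-small}. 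Once both properties are verified, uniqueness in \Cref{thm:quantization-C0-small} gives the desired isomorphism.

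The main obstacle is reconciling the microstalk normalization on the two sides. The normalization in \Cref{thm:quantization-C0-small} descends from Guillermou's identification in \Cref{thm:guillermou} via the doubling construction, whereas the GKS microstalk is inherited from $K^\Phi_0 = 1_\Delta$. However, both ultimately come from the same distinguished generator of the microsheaf category along $T^*_\Delta(M \times M)$, so tracking the trivialization through the isotopy and using the absence of monodromy on the simply connected $I$-factor should confirm that the two normalizations coincide.
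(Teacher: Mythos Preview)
Your proposal is correct and follows essentially the same route as the paper: verify the singular support bound via \eqref{for: ms-isotopy} at $t=1$, propagate the support condition from $t=0$ using that $K^\Phi$ is locally constant outside $U_\epsilon \times I$, deduce $m_{\Gamma^\varphi}(K^\Phi_1) = 1_{\Gamma^\varphi}$ from the microlocalization along the movie being a local system that is trivial at $t=0$, and conclude by the uniqueness clause of \Cref{thm:quantization-C0-small}. The paper handles the microstalk normalization in one sentence (the restriction of $m_{\Gamma^\Phi}(K^\Phi) \in \Loc(\Gamma^\Phi)$ at $t=0$ is $1_{S^*\Delta}$, hence at $t=1$ it is $1_{\Gamma^\varphi}$); your concern about reconciling conventions is legitimate but not an actual obstruction, since the footnote in \Cref{thm:guillermou} fixes the Maslov data on $\Gamma^\varphi$ as the pull-back from the diagonal, which is precisely the normalization inherited from $K^\Phi_0 = 1_\Delta$.
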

\begin{proof}
    Consider the sheaf quantization $K^{\Phi} \in \Sh(M \times M \times I)$ in \Cref{thm: GKS}. We know by \Cref{lem: microsupport-convolution-composition} that $\cSS^\infty(K^{\Phi}) \subseteq \Gamma^\Phi$. First, this implies that $\cSS^\infty(K^{\Phi}_1) \subseteq \Gamma^\varphi.$
    Next, since $d_{C^0}(\id, \Phi) \leq \epsilon$, we know that $\cSS^\infty(K^{\Phi}) \subseteq S^*(U_\epsilon \times I)$. 
    This means that $K^{\Phi}|_{(M \times M \setminus U_\epsilon) \times I}$ is a constant sheaf. Since $K^{\Phi}|_{(M \times M \setminus U_\epsilon) \times 0} = 0$, we have $K^{\Phi}|_{(M \times M \setminus U_\epsilon) \times 1} = 0$. 
    Hence we get 
    \[
        \supp(K^{\Phi}_1) \subseteq U_\epsilon.
    \]
    Finally, consider the microlocalization $m_{\Gamma^\Phi}(K^{\Phi}) \in \Loc(\Gamma^\Phi)$. Since the restriction at $0$ is the constant sheaf $1_{S^*\Delta} \in \Loc(S^*\Delta)$, the restriction at $1$ is also the constant sheaf:
    \[
        m_{\Gamma^\varphi}(K^{\Phi}_1) = 1_{\Gamma^\varphi}.
    \]
    Thus, we can simply apply the uniqueness statement \Cref{thm:quantization-C0-small} to deduce the result.  
\end{proof}

\begin{example}
    Let $M = S^1$ and consider the contactomorphism $\varphi_t \colon S^*M \to S^*M, \, (x, \xi) \mapsto (x+t, \xi)$. Then $\varphi_{n}$ is the identity contactomorphism for any $n \in \bN$. However, the sheaf quantization
    by \Cref{thm: GKS} gives different sheaf quantizations
    $K^{\varphi_{n}} = \pi_!1_{\overline{U}_{n}(\Delta)}$
    where $\pi \colon \bR \times \bR \to S^1 \times S^1$ is the projection and $\overline{U}_{n}(\Delta) = \{(x, y) \mid |x - y| \leq n\} \subseteq \bR^2.$ Indeed, we need $\epsilon < {1}/{2}$ for this case.
\end{example}

\subsection{Sheaf quantization of contact homeomorphisms}
We now prove the results on the invariance of categories of sheaves under contact homeomorphisms, which are limits of contactomorphisms under the $C^0$-topology in \Cref{thm:quantization-C0,prop:quantization-micro} (\Cref{thm:main-sheaf-invariance}).

For a contact homeomorphism, the graph is not usually well defined due to the issue that the conformal factors do not converge. However, we can define the pseudo-graph to be a closed subset in $S^*(M \times M)$ as follows:

\begin{definition}
    Let $\varphi \colon S^*M \to S^*M$ be a homeomorphism. Then we define the pseudo-graph of $\varphi$ as the following subset
    \[
        \Gamma^{\varphi}_{\mathrm{pseudo}} = \{(x, \xi, y, \eta) \mid (y, \eta/|\eta|) = \varphi(x, \xi/|\xi|)\} \subseteq S^*(M \times M).
    \]
    In other words, it is the image of $\Gamma^\varphi \times \bR \subseteq S^*M \times S^*M \times \bR$ under the open embedding 
    \begin{align*}
    S^*M \times S^*M \times \bR &\hookrightarrow S^*(M \times M) \\
    ((x,\xi),(y,\eta),t) &\mapsto (x,y,e^t\xi,e^{-t}\eta),
    \end{align*}
    whose image is $S^*(M \times M) \setminus (0_M \times S^*M \cup S^*M \times 0_M)$. 
\end{definition}  
\begin{warning}
    Although the name suggests, the pseudo-graph does not become a graph even when the conformal factors converges. Without a well-defined conformal factor, a contact homeomorphism does not lift to a homogeneous map $\dot T^*M \to \dot T^*M$. Therefore, it does not make sense to define a half-dimensional graph in $\dot T^*(M \times M)$. Indeed, the pseudo-graph is one dimension higher, and fits into a natural fibration $\Gamma^\varphi_{\mathrm{pseudo}} \to \Gamma^\varphi$ whose fiber is $\bR$.
\end{warning}

We now state the sheaf quantization theorem for contact homeomorphisms. We will take the standard contact form $\alpha_{\mathrm{std}}$ on $S^*M$ induced by the complete Riemannian metric and all the contact Hamiltonian functions are the Hamiltonian with respect to this standard contact form $\alpha_{\mathrm{std}}$.

\begin{theorem}
\label{thm:quantization-C0} 
    Let $\varphi_n \in \mathrm{Cont}(S^*M, \xi_{\mathrm{std}})$ be contactomorphisms
    such that $d_{C^0}(\id,\varphi_n) < \epsilon/2$, where $\epsilon>0$ is sufficiently small so that \cref{thm:quantization-C0-small} holds.
    Then for a bounded Reeb flow defined by $\tau$ under the standard contact form, the sheaf quantizations $K^{\varphi_n}$ converge under the interleaving distance 
    to a sheaf quantization $K^{\varphi_\infty}$ of $\varphi_\infty$ such that 
    \[\cSS^\infty(K^{\varphi_\infty}) \subseteq \overline{\Gamma^{\varphi_\infty}_{\mathrm{pseudo}}}\]
    and for any $F \in \Sh(M)$, $\cSS^\infty(K^{\varphi_\infty} \circ F) \subseteq \varphi_\infty(\cSS^\infty(F))$. If the conformal factors $h_n$ are uniformly bounded, then 
    \[\cSS^\infty(K^{\varphi_\infty}) \subseteq \Gamma^{\varphi_\infty}_{\mathrm{pseudo}}.\]
    Moreover, there exists $C_\tau > 0$ such that $d_\tau(1_\Delta, K^{\varphi_\infty}) \leq 2C_\tau\, d_{C^0}(\id, \varphi_\infty)$.
\end{theorem}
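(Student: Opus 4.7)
The plan is to obtain $K^{\varphi_\infty}$ as the limit, in the kernel interleaving distance $d_\tau$ on $\Sh_{\tau_2\ge 0}(M\times M)$, of the sheaf quantizations $K^{\varphi_n}$ produced by \Cref{thm:quantization-C0-small}, and then read off the singular-support and distance estimates from this convergence. To show $\{K^{\varphi_n}\}$ is Cauchy, note that since $\varphi_n \to \varphi_\infty$ in the $C^0$-topology, for $n,m$ large we have $d_{C^0}(\varphi_n,\varphi_m) < \epsilon/2$; setting $\psi_{n,m} \coloneqq \varphi_m \circ \varphi_n^{-1}$ gives $d_{C^0}(\id,\psi_{n,m}) + d_{C^0}(\id,\varphi_n) < \epsilon$, so \Cref{prop:quantization-functorial} yields the factorization $K^{\varphi_m} = K^{\psi_{n,m}} \circ K^{\varphi_n}$. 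Right-convolution by a fixed kernel commutes with the left $K^\tau$-action and preserves the relevant continuation maps, so the $(a,b)$-interleaving data witnessing $d_\tau(1_\Delta, K^{\psi_{n,m}})$ transport to an $(a,b)$-interleaving of $(K^{\varphi_n}, K^{\varphi_m})$. Combined with the $C^0$-estimate of \Cref{thm:quantization-C0-small} this gives
\[
d_\tau(K^{\varphi_n},K^{\varphi_m}) \le d_\tau(1_\Delta, K^{\psi_{n,m}}) \le 2C_\tau\, d_{C^0}(\varphi_n,\varphi_m) \to 0.
\]
Completeness (\Cref{thm:complete-kernel}) then produces the limit $K^{\varphi_\infty}$, and the triangle inequality together with $d_{C^0}(\id,\varphi_n)\to d_{C^0}(\id,\varphi_\infty)$ immediately yields $d_\tau(1_\Delta,K^{\varphi_\infty}) \le 2C_\tau\,d_{C^0}(\id,\varphi_\infty)$.

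For the singular supports, the kernel analog of \Cref{thm:ss-limit} (obtained by rerunning the $\Omega$-lens argument in the proof of \Cref{prop: ss-is-same-kernel}) gives
\[
\cSS^\infty(K^{\varphi_\infty}) \subseteq \bigcap\nolimits_n \overline{\bigcup\nolimits_{k\ge n} \Gamma^{\varphi_k}}.
\]
A sequence $(x_{k_j},\xi_{k_j},y_{k_j},\eta_{k_j}) \in \Gamma^{\varphi_{k_j}}$ converging to $(x,\xi,y,\eta)$ with both $|\xi|>0$ and $|\eta|>0$ satisfies $(y,\eta/|\eta|) = \varphi_\infty(x,\xi/|\xi|)$ by uniform convergence of $\varphi_{k_j}$, placing the limit in $\Gamma^{\varphi_\infty}_{\mathrm{pseudo}}$; any other limit lies in the stratum $\{|\xi|=0\}\cup\{|\eta|=0\}$, which belongs to $\overline{\Gamma^{\varphi_\infty}_{\mathrm{pseudo}}}$. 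Under the uniform bound $|h_n|\le C$ the identity $|\xi|/|\eta| = e^{h_n}$ confines the normalized ratio to a compact subset of $(0,\infty)$, so the limit points cannot reach those strata, yielding the stronger inclusion $\cSS^\infty(K^{\varphi_\infty}) \subseteq \Gamma^{\varphi_\infty}_{\mathrm{pseudo}}$. For the action on $F \in \Sh(M)$, right-convolution by $F$ transports $(a,b)$-interleavings of kernels to $(a,b)$-interleavings of sheaves (the singular-support bound $\cSS^\infty(K^{\varphi_n}\circ F)\subseteq\varphi_n(\cSS^\infty(F))$ makes the requisite continuation maps well-defined), so $K^{\varphi_n}\circ F \to K^{\varphi_\infty}\circ F$ in $d_\tau$, and \Cref{thm:ss-limit} reduces the claim to showing $\bigcap_n \overline{\bigcup_{k\ge n}\varphi_k(\cSS^\infty(F))}\subseteq \varphi_\infty(\cSS^\infty(F))$. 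A limit point $q = \lim_j \varphi_{k_j}(p_{k_j})$ forces $(p_{k_j})$ to stay bounded, since $d(\varphi_{k_j}(p_{k_j}),p_{k_j}) < \epsilon/2$, so extracting a convergent subsequence $p_{k_j}\to p\in\cSS^\infty(F)$ and using the uniform convergence on the compact set containing $p$ yields $q=\varphi_\infty(p)$.

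The most delicate step will be the singular-support analysis above: the rescaling $(\xi,\eta)\mapsto(e^{h_n}\xi,e^{-h_n}\eta)$ along $\Gamma^{\varphi_n}$ can, without a uniform conformal bound, drive points into the strata $0_M\times S^*M$ or $S^*M\times 0_M$ of $S^*(M\times M)$, which is precisely why the closure of the pseudo-graph (rather than the pseudo-graph itself) is the correct target in general. The bounded-conformal-factor hypothesis quantitatively prevents this escape and is exactly what upgrades the conclusion to $\Gamma^{\varphi_\infty}_{\mathrm{pseudo}}$.
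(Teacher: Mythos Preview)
Your proof is correct and follows essentially the same strategy as the paper's: show $\{K^{\varphi_n}\}$ is Cauchy by factoring $K^{\varphi_m} = K^{\varphi_m\circ\varphi_n^{-1}}\circ K^{\varphi_n}$ via \Cref{prop:quantization-functorial} and the distance bound of \Cref{thm:quantization-C0-small}, take the limit by completeness, and read off singular supports from the limsup estimate. Your version is slightly more direct (arbitrary $n,m$ rather than consecutive terms) and you spell out the point-set analysis of $\bigcap_n\overline{\bigcup_{k\ge n}\Gamma^{\varphi_k}}$ and of $\bigcap_n\overline{\bigcup_{k\ge n}\varphi_k(\cSS^\infty(F))}$ more explicitly than the paper does.

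One imprecision worth correcting: you write that any limit landing in the stratum $\{|\xi|=0\}\cup\{|\eta|=0\}$ ``belongs to $\overline{\Gamma^{\varphi_\infty}_{\mathrm{pseudo}}}$'', phrased as if the entire stratum lies in that closure. It does not. What is true is that \emph{such a limit point} lies in the closure: if $(x_j,\xi_j,y_j,\eta_j)\in\Gamma^{\varphi_{k_j}}$ converges to $(x,0,y,\eta)$, then passing to a subsequence so that $\xi_j/|\xi_j|\to\hat\xi$, uniform convergence gives $\varphi_\infty(x,\hat\xi)=(y,\eta)$, and the curve $t\mapsto(x,t\hat\xi,y,\sqrt{1-t^2}\,\eta)\in\Gamma^{\varphi_\infty}_{\mathrm{pseudo}}$ shows $(x,0,y,\eta)\in\overline{\Gamma^{\varphi_\infty}_{\mathrm{pseudo}}}$. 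The case $|\eta|=0$ is symmetric. With this clarification your argument is complete.
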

\begin{proof}
    Without loss of generality, we assume that $d_{C^0}(\varphi_n, \varphi_{n+1}) \leq \epsilon_n$ and $\sum_{n=1}^\infty \epsilon_n < \infty$. Since $\varphi_n$ is a diffeomorphism, we know $d_{C^0}(\varphi_n \circ \varphi_{n+1}^{-1}, \id) = d_{C^0}(\varphi_n, \varphi_{n+1}) \leq \epsilon_n$. 
    Using \Cref{thm:quantization-C0-small} and \Cref{prop:quantization-functorial}, 
    we know that there exist morphisms 
    \[
        K^{\tau}_{-C_\tau\epsilon_n} \circ K^{\varphi_{n+1}} \to K^{\varphi_n} \to K^{\tau}_{C_\tau\epsilon_n} \circ K^{\varphi_{n+1}}, \quad K^{\tau}_{-C_\tau\epsilon_n} \circ K^{\varphi_{n}} \to K^{\varphi_{n+1}} \to K^{\tau}_{C_\tau\epsilon_n} \circ K^{\varphi_{n}}
    \]
    whose compositions are the natural continuation morphisms. Then, we know that $(K^{\varphi_n})_{n \ge N}$ form a Cauchy sequence, and by \Cref{thm:complete} it converges under the interleaving distance to a sheaf kernel $K^{\varphi_\infty}$. By \Cref{thm:ss-limit}, we have
    \begin{align*}
    \cSS^\infty(K^{\varphi_\infty}) \subseteq \bigcap\nolimits_{n\geq 0}\overline{\bigcup\nolimits_{k \geq n} \cSS^\infty(K^{\varphi_k})} = \bigcap\nolimits_{n\geq 0}\overline{\bigcup\nolimits_{k \geq n}\Gamma^{\varphi_k}} \subseteq \overline{\Gamma^{\varphi_\infty}_{\mathrm{pseudo}}}.
    \end{align*}
    The result $\cSS^\infty(K^{\varphi_\infty} \circ F) \subseteq \varphi_\infty(\cSS^\infty(F))$ follows from the estimation \Cref{thm:ss-limit} and the fact that $\varphi_n \to \varphi_\infty$.
    If the conformal factors are pointwise uniformly bounded, say ${-h} \leq {h_n(x, \xi)} \leq h$ for all $n \in \bN$ and $(x, \xi) \in S^*M$, we have
    \begin{align*}
    \cSS^\infty(K^{\varphi_\infty}) &\subseteq \{(x, \xi, y, \eta) \mid (y, \eta/|\eta|) = \varphi_\infty(x, \xi/|\xi|), |\xi|/|\eta| \in [e^{-h}, e^{h}]\} \subseteq \Gamma^{\varphi_\infty}_{\mathrm{pseudo}}.
    \end{align*} 
    This completes the proof of the singular support estimation. Finally, for the distance estimation, we apply \Cref{thm:quantization-C0-small} and get $d_\tau(1_\Delta, K^{\varphi_n}) \leq 2C_\tau\,d_{C^0}(\id, \varphi_n)$. Thus, since $K^{\varphi_n} \to K^{\varphi_\infty}$, we can conclude that $d_\tau(1_\Delta, K^{\varphi_\infty}) \leq 2C_\tau\,d_{C^0}(\id, \varphi_\infty)$.
\end{proof}

We explain in the following example why considering the pseudo-graph is necessary for sheaf quantizations of contact homeomorphisms.

\begin{example}
    Let $M = \bR$. Consider a sequence of smooth increasing functions $f_n \colon \bR \to \bR$ that converges to $f_\infty(x) = x^2 \mathrm{sign}(x)$. Then the sequence of contactomorphisms $\varphi_n(x, \xi) = (f_n(x), \xi)$ converges to the homeomorphism 
    \[\varphi_\infty(x, \xi) = (x^2 \mathrm{sign}(x), \xi).\]
    One can show that conformal factors of $\varphi_n$ are unbounded near $x = 0$. Moreover, since $\varphi_n$ and $\varphi_\infty$ are induced by the homeomorphisms $f_n$ and $f_\infty(x) = x^2 \mathrm{sign}(x)$, the sheaf quantization of $\varphi_\infty$ is
    \[K^{\varphi_n} = 1_{\Gamma^{f_n}}, \quad K^{\varphi_\infty} = 1_{\Gamma^{f_\infty}}.\]
    In particular, the limit of the graphs of $\varphi_n$ does not define a section of the fibration $\Gamma^{\varphi_\infty}_{\mathrm{pseudo}} \to \Gamma^{\varphi_\infty}$, and indeed 
    \[\cSS^\infty(K^{\varphi_\infty}) \cap S^*_{(0,0)}\bR^2 = \overline{\Gamma^{\varphi_\infty}_{\mathrm{pseudo}}} \cap S^*_{(0,0)}\bR^2 = \{(0, 0, \xi, \eta) \mid \mathrm{sign}(\xi) = \mathrm{sign}(\eta)\}.\]
    More generally, for $M = N \times \bR$, one can consider the example of Usher \cite[Section 5.2]{Usher21Conformal} of the contact homeomorphism $\varphi_\infty \colon S^*(N \times \bR) \to S^*(N \times \bR)$ where it is defined by the time-1 flow of the smooth Hamiltonian on $S^*(N \times \bR) \setminus S^*_{N \times 0}(N \times \bR)$, and extends only continuously to $S^*_{N \times 0}(N \times \bR)$, and see that limit of the graphs of $\varphi_n$ does not define a section of the fibration $\Gamma^{\varphi_\infty}_{\mathrm{pseudo}} \to \Gamma^{\varphi_\infty}$.
\end{example}
 
Showing that the sheaf quantization is invertible turns out to be very tricky. Since the singular support estimation above is more complicated when we do not have control on the conformal factors, one cannot deduce that $\cSS(K^{\varphi_\infty^{-1}} \circ K^{\varphi_\infty}) = \cSS(K^{\varphi_\infty^{-1}} \circ K^{\varphi_\infty}) = T^*_\Delta(M \times M)$ just by \Cref{lem: microsupport-convolution-composition}. Nevertheless, we can prove the following result:

\begin{theorem}\label{thm:quantization-invertible}
    Let $M$ be a complete Riemannian manifold and $\varphi_n \in \mathrm{Cont}(S^*M, \xi_{\mathrm{std}})$ be contactomorphisms
    such that $d_{C^0}(\id,\varphi_n) < \epsilon/2$, where $\epsilon>0$ is sufficiently small so that \cref{thm:quantization-C0-small} holds. Suppose that each $\varphi_n$ has bounded conformal factor, $\varphi_n \to \varphi_\infty$ in the $C^0$-topology and $\varphi_\infty$ is a homeomorphism. Then the sheaf quantization $K^{\varphi_\infty}$ is invertible with inverse $K^{\varphi_\infty^{-1}}$.
\end{theorem}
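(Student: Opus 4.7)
The plan is to verify that the composition $K^{\varphi_\infty^{-1}} \circ K^{\varphi_\infty}$ is indistinguishable from $1_\Delta$ under the interleaving distance $d_\tau$, and then invoke the non-degeneracy result \Cref{thm:non-deg_kernel} to upgrade this to an actual isomorphism. Since $\varphi_\infty$ is a homeomorphism, the two-sided $C^0$-distance is equivalent to the one-sided version, so $\varphi_n^{-1} \to \varphi_\infty^{-1}$ in $C^0$; as each $\varphi_n^{-1}$ also has bounded conformal factor, \Cref{thm:quantization-C0} applied to this sequence produces a limit sheaf quantization $K^{\varphi_\infty^{-1}} = \lim_n K^{\varphi_n^{-1}}$. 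At the finite level, \Cref{prop:quantization-functorial} gives $K^{\varphi_n} \circ K^{\varphi_n^{-1}} = K^{\varphi_n^{-1}} \circ K^{\varphi_n} = 1_\Delta$ for each $n$, since $d_{C^0}(\id,\varphi_n) + d_{C^0}(\id,\varphi_n^{-1}) < \epsilon$.

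To prove $K^{\varphi_\infty^{-1}} \circ K^{\varphi_\infty} = 1_\Delta$, I would first show $d_\tau(K^{\varphi_\infty^{-1}} \circ K^{\varphi_\infty}, 1_\Delta) = 0$ via the triangle inequality
\[
d_\tau(K^{\varphi_\infty^{-1}} \circ K^{\varphi_\infty},\, 1_\Delta) \;\leq\; A_N + B_{N,m} + C_{N,m},
\]
with $A_N = d_\tau(K^{\varphi_\infty^{-1}} \circ K^{\varphi_\infty}, K^{\varphi_N^{-1}} \circ K^{\varphi_\infty})$, $B_{N,m} = d_\tau(K^{\varphi_N^{-1}} \circ K^{\varphi_\infty}, K^{\varphi_N^{-1}} \circ K^{\varphi_m})$, and $C_{N,m} = d_\tau(K^{\varphi_N^{-1}} \circ K^{\varphi_m},\, 1_\Delta)$. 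Right invariance of $d_\tau$ reduces the first term to $A_N = d_\tau(K^{\varphi_\infty^{-1}}, K^{\varphi_N^{-1}})$, which tends to zero as $N \to \infty$. For $B_{N,m}$, the left-invariance-up-to-conformal-factor argument from the proof of \Cref{thm:Hofer-quantize-invert}, based on \Cref{lem:conformal-reeb-commute} and \Cref{prop:banach-mazur}, yields $B_{N,m} \leq e^{c_N}\, d_\tau(K^{\varphi_\infty}, K^{\varphi_m})$, where $c_N$ is a finite bound on the conformal factor of $\varphi_N^{-1}$. For fixed $N$ this tends to zero as $m \to \infty$. For $C_{N,m}$, I would apply \Cref{prop:quantization-functorial} to identify $K^{\varphi_N^{-1}} \circ K^{\varphi_m}$ with the single sheaf quantization $K^{\varphi_N^{-1} \circ \varphi_m}$, and then bound by \Cref{thm:quantization-C0-small}:
\[
C_{N,m} \;\leq\; 2 C_\tau\, d_{C^0}(\id,\, \varphi_N^{-1} \circ \varphi_m) \;=\; 2 C_\tau\, d_{C^0}(\varphi_N, \varphi_m),
\]
which vanishes as $N,m \to \infty$ since $(\varphi_n)$ is Cauchy in $C^0$. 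Given any $\delta > 0$, the argument chooses $N$ large enough for $A_N < \delta/3$, then (with $c_N$ now fixed) $m$ large enough for $B_{N,m}, C_{N,m} < \delta/3$, forcing $d_\tau(K^{\varphi_\infty^{-1}} \circ K^{\varphi_\infty}, 1_\Delta) = 0$.

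To upgrade from interleaving-distance zero to equality, \Cref{prop: ss-is-same-kernel} gives $\cSS(K^{\varphi_\infty^{-1}} \circ K^{\varphi_\infty}) = \cSS(1_\Delta) = T^*_\Delta(M \times M)$, which is separated from $S^*M \times 0_M$ and is positively displaceable by the cut-off Reeb flow of $(1-\rho)\tau_2$ provided by \Cref{lem:disjoint-graph}, since the bounded Reeb flow of $\tau$ has no short closed orbits. Both $1_\Delta$ and $K^{\varphi_\infty^{-1}} \circ K^{\varphi_\infty}$ are constructible with smooth conic Lagrangian singular support, so \Cref{thm:non-deg_kernel} applies and yields $K^{\varphi_\infty^{-1}} \circ K^{\varphi_\infty} = 1_\Delta$. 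Swapping the roles of $\varphi_n$ and $\varphi_n^{-1}$ throughout gives the mirror identity $K^{\varphi_\infty} \circ K^{\varphi_\infty^{-1}} = 1_\Delta$. The main obstacle is the left non-invariance of $d_\tau$: left composition dilates the interleaving distance by a factor depending on the conformal factor of the left argument, and our sequence $(\varphi_n^{-1})$ has only non-uniformly bounded conformal factors. This is exactly why the triangle inequality must be set up hierarchically, with $N$ chosen before $m$, rather than passing to the limit symmetrically in both indices; the same obstacle appears and is resolved the same way in the proof of \Cref{thm:Hofer-quantize-invert}.
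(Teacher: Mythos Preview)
Your argument is correct and follows essentially the same route as the paper's proof: both establish $d_\tau(1_\Delta, K^{\varphi_\infty^{-1}} \circ K^{\varphi_\infty}) = 0$ by combining right invariance (your $A_N$), the conformal-factor-scaled left invariance via \Cref{lem:conformal-reeb-commute} and \Cref{prop:banach-mazur} (your $B_{N,m}$), and the finite-level bound from \Cref{thm:quantization-C0-small} and \Cref{prop:quantization-functorial} (your $C_{N,m}$), then conclude via \Cref{prop: ss-is-same-kernel} and \Cref{thm:non-deg_kernel}. The only cosmetic difference is that the paper phrases the limit as two sequences $K^{\varphi_n^{-1}} \circ K^{\varphi_\infty} \to 1_\Delta$ and $K^{\varphi_n^{-1}} \circ K^{\varphi_\infty} \to K^{\varphi_\infty^{-1}} \circ K^{\varphi_\infty}$ sharing a common limit, whereas you write out an explicit three-term triangle inequality with the same hierarchical $N$-then-$m$ choice.
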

\begin{proof}
    We only show that $1_\Delta = K^{\varphi_\infty^{-1}} \circ K^{\varphi_\infty}$. We have $K^{\varphi_n^{-1}} \circ K^{\varphi_\infty}_1 \to K^{\varphi^{-1}_\infty} \circ K^{\varphi_\infty}$ since the interleaving distance on the product is right invariant. 
    On the other hand, we can show that $K^{\varphi^{-1}_n} \circ K^{\varphi_m} \to K^{\varphi^{-1}_n} \circ K^{\varphi_\infty}$ by the same proof as in \Cref{thm:Hofer-quantize-invert}. When $(K^{\varphi_m}, K^{\varphi_{m'}})$ are $(a, b)$-interleaved, we know that there are morphisms such that the compositions
    \begin{gather*}
    K^{\varphi^{-1}_n} \circ K^\tau_{-a} \circ K^{\varphi_m} \to K^{\varphi^{-1}_n}_1 \circ K^{\varphi_{m'}} \to K^{\varphi^{-1}_n} \circ K^\tau_{b} \circ K^{\varphi_m},  \\
    K^{\varphi^{-1}_n} \circ K^\tau_{-b} \circ K^{\varphi_{m'}} \to K^{\varphi^{-1}_n} \circ K^{\varphi_{m}} \to K^{\varphi^{-1}_n} \circ K^\tau_{a} \circ K^{\varphi_{m'}}
    \end{gather*}
    are the continuation morphisms. Then by applying \Cref{lem:conformal-reeb-commute} and by \Cref{prop:banach-mazur}, when the conformal factor of $\varphi_n$ is bounded by $h_n$, we get
    \begin{gather*}
    K^\tau_{-e^{h_n}a}\circ K^{\varphi^{-1}_n} \circ K^{\varphi_m} \to K^{\varphi^{-1}_n} \circ K^{\varphi_{m'}} \to K^\tau_{e^{h_n}b} \circ K^{\varphi^{-1}_n} \circ K^{\varphi_m},  \\
    K^\tau_{-e^{h_n}b} \circ K^{\varphi^{-1}_n} \circ K^{\varphi_{m'}} \to K^{\varphi^{-1}_n} \circ K^{\varphi_{m}} \to K^\tau_{e^{h_n}a} \circ K^{\varphi^{-1}_n} \circ K^{\varphi_{m'}}
    \end{gather*}
    whose compositions are continuation morphisms. Thus, given $n \in \bN$, for any $H_m$ and $H_{m'}$, 
    we know that 
    \[d_\tau(K^{\varphi^{-1}_n} \circ K^{\varphi_m}, K^{\varphi^{-1}_n} \circ K^{\varphi_{m'}}) \leq e^{h_n}\, d_\tau(K^{\varphi_m}, K^{\varphi_{m'}}).\]
    Hence, for any given $n \in \bN$, $K^{\varphi^{-1}_n} \circ K^{\varphi_m} \to K^{\varphi^{-1}_n} \circ K^{\varphi_\infty}$, and since $d_\tau(1_\Delta, K^{\varphi^{-1}_n} \circ K^{\varphi_m}) \leq 2 C_\tau\,d_{C^0}(\id, \varphi_n^{-1} \circ \varphi_m)$ by \Cref{thm:quantization-C0-small}, we know that 
    \[d_\tau(1_\Delta, K^{\varphi^{-1}_n} \circ K^{\varphi_\infty}) \leq 2 C_\tau\,d_{C^0}(\id, \varphi_n^{-1} \circ \varphi_\infty).\]
    Since $\varphi^{-1}_n \circ \varphi_\infty \to \id$ as $n \to \infty$, we know $K^{\varphi^{-1}_n} \circ K^{\varphi_\infty} \to 1_\Delta$. Therefore, $d_\tau(1_\Delta, K^{\varphi^{-1}_\infty} \circ K^{\varphi_\infty}) = 0$. By \Cref{prop: ss-is-same-kernel,thm:non-deg_kernel} we know that $1_\Delta = K^{\varphi_\infty^{-1}} \circ K^{\varphi_\infty}$.
\end{proof}

Kashiwara--Schapira \cite{KS90} constructed quantization of local contactomorphisms on the level of microlocal sheaves. The microlocal quantization can also be deduced using the sheaf quantization of Guillermou--Kashiwara--Schapira \cite{GKS} as explained in \cite{Li23Cobordism}. 
    
\begin{theorem}\label{prop:quantization-micro}
    Let $\varphi_n \in \mathrm{Cont}(S^*M, \xi_{\mathrm{std}})$ be contactomorphisms
    such that $d_{C^0}(\id,\varphi_n) < \epsilon/2$, where $\epsilon>0$ is sufficiently small so that \cref{thm:quantization-C0-small} holds. Suppose that each $\varphi_n$ has bounded conformal factor, $\varphi_n \to \varphi_\infty$ in the $C^0$-topology and $\varphi_\infty$ is a homeomorphism. 
    Then there exists a equivalence between sheaves of categories
    \[
        K^{\varphi_\infty} \colon {\varphi_\infty}_*\msh \to \msh,
    \]
    whose inverse is $K^{\varphi_\infty^{-1}}$. In particular, for any open set $\Omega \subseteq S^*M$, let $\Lambda \subseteq \Omega$ be any closed coisotropic subset. Then there exists a equivalence between sheaves of categories over $\varphi_\infty(\Omega)$
    \[
        K^{\varphi_\infty} \colon {\varphi_\infty}_*\msh_\Lambda \to \msh_{\varphi_\infty(\Lambda)},
    \]
    whose inverse is $K^{\varphi_\infty^{-1}}$, and when $\Lambda$ and $\varphi_\infty(\Lambda)$ are smooth Legendrians, then a  microstalk at $p \in \Lambda$ is sent to a microstalk at $\varphi_\infty(p) \in \varphi_\infty(\Lambda)$ (thus their corresponding corepresentatives are matched).
\end{theorem}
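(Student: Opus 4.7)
The plan is to derive the microsheaf equivalence formally from the invertible kernel $K^{\varphi_\infty}$ produced by \Cref{thm:quantization-C0,thm:quantization-invertible}. By those theorems, convolution $\Phi = K^{\varphi_\infty} \circ (-) \colon \Sh(M) \to \Sh(M)$ is an auto-equivalence with inverse $\Phi^{-1} = K^{\varphi_\infty^{-1}} \circ (-)$, and the singular support bound $\cSSif(\Phi F) \subseteq \varphi_\infty(\cSSif(F))$ holds (using the boundedness of the conformal factors so that $\Phi F$ has singular support in the pseudo-graph itself rather than merely its closure).

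The first step is to upgrade this inclusion to an equality. Applying the same bound to $\Phi^{-1}(\Phi F) = F$ yields $\cSSif(F) \subseteq \varphi_\infty^{-1}(\cSSif(\Phi F))$, hence $\cSSif(\Phi F) = \varphi_\infty(\cSSif(F))$. Consequently, for every open conic $\Omega \subseteq \dot T^*M$, $\Phi$ sends $\Sh_{\Omega^c}(M)$ to $\Sh_{\varphi_\infty(\Omega)^c}(M)$ and $\Sh_{\Lambda \cup \Omega^c}(M)$ to $\Sh_{\varphi_\infty(\Lambda) \cup \varphi_\infty(\Omega)^c}(M)$, and so descends to an equivalence of Verdier quotients
\[
    \msh^{\mathrm{pre}}_{\Lambda}(\Omega) \xrightarrow{\ \sim \ } \msh^{\mathrm{pre}}_{\varphi_\infty(\Lambda)}(\varphi_\infty(\Omega)),
\]
inverse to the one induced by $\Phi^{-1}$. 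These equivalences are natural in $\Omega$ (because $\Phi$ is independent of $\Omega$) and assemble into an isomorphism of presheaves ${\varphi_\infty}_*\msh^{\mathrm{pre}}_{\Lambda} \to \msh^{\mathrm{pre}}_{\varphi_\infty(\Lambda)}$ on $S^*M$; sheafifying produces the asserted equivalence of sheaves of categories.

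For the microstalk statement when $\Lambda$ and $\varphi_\infty(\Lambda)$ are smooth Legendrians, I will invoke \Cref{thm:musheaf}: both $\msh_\Lambda$ and $\msh_{\varphi_\infty(\Lambda)}$ are locally constant sheaves of stable $\infty$-categories on the respective Legendrians, and the microstalk functor at a smooth point coincides with the stalk of the locally constant sheaf at that point. Since the equivalence just constructed is a morphism of sheaves of categories covering the homeomorphism $\varphi_\infty$, it commutes with stalk functors, and therefore sends the microstalk at $p \in \Lambda$ to the microstalk at $\varphi_\infty(p) \in \varphi_\infty(\Lambda)$; dually the sheaf corepresenting the microstalk at $p$ is matched with the corepresentative at $\varphi_\infty(p)$.

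The essential difficulty has already been overcome in the preceding theorems, namely the construction and invertibility of the limit kernel $K^{\varphi_\infty}$ from a sequence of $C^0$-small contactomorphisms with bounded conformal factors. What remains is a formal passage through microlocalization, whose only delicate point is promoting the singular support inclusion to an equality, and this is supplied for free by the invertibility of $\Phi$.
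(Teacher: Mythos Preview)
Your proposal is correct and follows essentially the same route as the paper: use the singular support estimate from \Cref{thm:quantization-C0} to see that $K^{\varphi_\infty}\circ(-)$ carries $\Sh_{\Lambda\cup\Omega^c}(M)$ into $\Sh_{\varphi_\infty(\Lambda)\cup\varphi_\infty(\Omega)^c}(M)$ (and likewise for $\Omega^c$), descend to the quotients defining $\msh^{\mathrm{pre}}$, sheafify, and invoke \Cref{thm:quantization-invertible} for invertibility. Your extra step of promoting the inclusion $\cSSif(\Phi F)\subseteq\varphi_\infty(\cSSif(F))$ to an equality via $\Phi^{-1}$ is exactly what is implicit in the paper when it asserts the equivalence $\Sh_{\Omega^c}(M)\simeq\Sh_{\varphi_\infty(\Omega^c)}(M)$.

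The only minor divergence is in the microstalk argument: the paper argues via $\mu hom$ (simpleness is the condition $\mu hom(F,F)_p=1$, which is preserved by any equivalence of sheaves of categories, and then $m_\Lambda(G)_p=\mu hom(F,G)_p$ for simple $F$), whereas you invoke \Cref{thm:musheaf} to identify the microstalk functor with the stalk of the locally constant sheaf and then use that an equivalence covering $\varphi_\infty$ intertwines stalks. Both yield the conclusion up to autoequivalences of the coefficient category, which is all that is claimed. One small clean-up: the inclusion $\cSSif(K^{\varphi_\infty}\circ F)\subseteq\varphi_\infty(\cSSif(F))$ in \Cref{thm:quantization-C0} is stated without any uniform bound on conformal factors, so your parenthetical about the pseudo-graph versus its closure is unnecessary here (that distinction matters for the kernel's own microsupport, not for the action on objects).
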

\begin{proof}
    Consider the sheaf quantization $K^{\varphi_\infty}$ of $\varphi_\infty$. For an open subset $\Omega \subseteq S^*M$, by \Cref{thm:quantization-C0}, we can restrict the convolution functor to the subcategories, which induces a morphism of presheaves of categories
    \[
    K^{\varphi_\infty} \circ - \colon \Sh_{\Lambda \cup \Omega^c}(M) \to \Sh_{\varphi_\infty(\Lambda \cup \Omega^c)}(M), \quad \Sh_{\Omega^c}(M) \simeq \Sh_{\varphi_\infty(\Omega^c)}(M).
    \]
    Then, by sheafification, we get a morphism of the associated sheaves of categories. The result follows from \Cref{thm:quantization-invertible}.

    Finally, recall that an object $F \in \msh_\Lambda(\Lambda)$ is simple if and only if $\mu hom(F, F)_p = 1$. Hence simpleness is preserved under equivalences of the sheaves of categories. Then the microstalk (up to autoequivalences) is preserved because the microstalk of $G \in \msh_\Lambda(\Lambda)$ is (non-canonically) computed by
    \[
        m_\Lambda(G)_p = \mu hom(F, G)_p.
    \]
    This therefore finishes the proof.
\end{proof}

\begin{remark}
    Since $K^{\varphi_\infty}$ induces an equivalence of sheaves of categories $K^{\varphi_\infty} \colon {\varphi_\infty}_*\msh \xrightarrow{\sim} \msh$, by \cite[Corollary 10.1.5]{Guillermou23}, we know that for any open subset $\Omega \subseteq S^*M$ and $F, G \in \msh(\Omega)$,
    \[
        \varphi_{\infty*}\mu hom(F, G) \simeq \mu hom(K^{\varphi_\infty} \circ F, K^{\varphi_\infty} \circ G).
    \]
\end{remark}

In general, we can also define the sheaf quantization for given a sequence of contactomorphisms $\varphi_n \in \mathrm{Cont}_0(S^*M, \xi_{\mathrm{std}})$ that converges to a homeomorphism $\varphi_\infty$ in the $C^0$-topology, we can also define a sheaf quantization $K^{\varphi_\infty}$ of $\varphi_\infty$ as follows, though in a non-canonical way.

\begin{theorem}[\Cref{thm:main-sheaf-invariance}]\label{thm:sheaf-invariance-summary}
    Let $(S^*M, \xi_{\mathrm{std}})$ be the cosphere bundle with the standard contact structure and $\Lambda \subseteq S^*M$ be a Legendrian embedding. Consider contactomorphisms $\varphi_n \in \operatorname{Cont}_0(Y, \xi)$ each of which has bounded conformal factor $h_n$. Suppose $\varphi_n \to \varphi_\infty$ in the $C^0$-topology and $\varphi_\infty$ is a homeomorphism. Then there exists a (non-canonical) sheaf quantization $K^{\varphi_\infty} \in \Sh(M \times M)$ with inverse $K^{\varphi_\infty^{-1}} \in \Sh(M \times M)$, and the convolutions preserve microstalks.
\end{theorem}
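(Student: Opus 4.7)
The plan is to reduce the statement to a composition of two pieces: a $C^0$-small tail handled by \Cref{thm:quantization-C0} and \Cref{thm:quantization-invertible}, and a single smooth contactomorphism in $\operatorname{Cont}_0$ handled by the GKS kernel of \Cref{thm: GKS}. Concretely, since $(\varphi_n)$ is $C^0$-Cauchy, I would fix $N$ so large that $d_{C^0}(\varphi_N,\varphi_m) < \epsilon/2$ for all $m \geq N$, where $\epsilon$ is the threshold in \Cref{thm:quantization-C0-small}. Setting $\psi_m \coloneqq \varphi_m \circ \varphi_N^{-1}$ and $\psi_\infty \coloneqq \varphi_\infty \circ \varphi_N^{-1}$, the right invariance calculation $d_{C^0}(\id,\psi_m) = \sup_x d(\varphi_N(x),\varphi_m(x)) = d_{C^0}(\varphi_N,\varphi_m)$ places $\psi_m$ within the hypotheses of \Cref{thm:quantization-C0} and \Cref{thm:quantization-invertible}: each $\psi_m$ is a smooth contactomorphism with $d_{C^0}(\id,\psi_m) < \epsilon/2$, and its conformal factor is bounded because the conformal factors of $\varphi_m$ and $\varphi_N^{-1}$ are.

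Applying \Cref{thm:quantization-C0} to $(\psi_m)_{m \geq N}$ produces a sheaf quantization $K^{\psi_\infty} \in \Sh(M\times M)$, and \Cref{thm:quantization-invertible} shows that $K^{\psi_\infty}$ is convolution-invertible with inverse $K^{\psi_\infty^{-1}}$. For the remaining factor $\varphi_N$, since $\varphi_N \in \operatorname{Cont}_0$ I would choose any contact isotopy $\Phi$ from $\id$ to $\varphi_N$ and set $K^{\varphi_N} \coloneqq K^{\Phi}_1$ via \Cref{thm: GKS}; the reverse isotopy yields a convolution inverse $K^{\varphi_N^{-1}}$. I would then define
\[
K^{\varphi_\infty} \coloneqq K^{\psi_\infty} \circ K^{\varphi_N}, \qquad K^{\varphi_\infty^{-1}} \coloneqq K^{\varphi_N^{-1}} \circ K^{\psi_\infty^{-1}},
\]
which are mutually inverse kernels by associativity of $\circ$ and the identities $\psi_\infty \circ \varphi_N = \varphi_\infty$, $\varphi_N^{-1} \circ \psi_\infty^{-1} = \varphi_\infty^{-1}$. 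The singular support and support bounds for $K^{\varphi_\infty}$ follow from the corresponding bound for $K^{\psi_\infty}$ in \Cref{thm:quantization-C0} combined with the singular support estimate \Cref{lem: microsupport-convolution-composition} applied to the composition with the smooth, compactly supported-in-graph kernel $K^{\varphi_N}$.

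For microstalk preservation, I would argue termwise. Convolution with $K^{\varphi_N}$ realizes the GKS equivalence of microsheaves $\varphi_{N,*}\msh \xrightarrow{\sim} \msh$ built into \Cref{thm: GKS} and the Kashiwara--Schapira microlocalization formalism, hence preserves microstalks; convolution with $K^{\psi_\infty}$ preserves microstalks by \Cref{prop:quantization-micro}. The composition therefore preserves microstalks. The non-canonicity is the expected one: both the integer $N$ and the homotopy class of a path from $\id$ to $\varphi_N$ within $\operatorname{Cont}_0(S^*M,\xi_{\mathrm{std}})$ enter the construction. I do not expect a single step to be a serious obstacle since the substantive technical input is absorbed into \Cref{thm:quantization-C0,thm:quantization-invertible,prop:quantization-micro,thm: GKS}; the only care required is in verifying the right-invariance identity $d_{C^0}(\id,\psi_m) = d_{C^0}(\varphi_N,\varphi_m)$, tracking that bounded conformal factors are preserved under composition and inversion, and checking that $(\psi_m)_{m \geq N}$ still satisfies the Cauchy hypothesis needed to invoke \Cref{thm:quantization-C0} (which is inherited from that of $(\varphi_m)$ by a second right-invariance argument).
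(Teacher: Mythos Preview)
Your proposal is correct and follows essentially the same approach as the paper: factor $\varphi_\infty = (\varphi_\infty \circ \varphi_N^{-1}) \circ \varphi_N$, quantize the $C^0$-small tail $\psi_\infty = \varphi_\infty \circ \varphi_N^{-1}$ via \Cref{thm:quantization-C0} and \Cref{thm:quantization-invertible}, quantize $\varphi_N$ via GKS, and compose. The paper uses the slightly stronger threshold $d_{C^0}(\varphi_n,\varphi_\infty) < \epsilon/4$ to land safely within the $\epsilon/2$ hypothesis of \Cref{thm:quantization-invertible}, but your direct choice of $N$ with $d_{C^0}(\varphi_N,\varphi_m) < \epsilon/2$ accomplishes the same thing.
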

\begin{proof}
    Let $\epsilon>0$ be sufficiently small such that \Cref{thm:quantization-C0-small} holds and suppose $d_{C^0}(\varphi_n, \varphi_\infty) < \epsilon/4$ for $n \geq N$. 
    We define $K^{\varphi_N}$ by \Cref{thm: GKS} and  $K^{\varphi_\infty \circ \varphi_N^{-1}}$ by \Cref{thm:quantization-C0}. 
    Now we can define $K^{\varphi_\infty} \coloneqq K^{\varphi_\infty \circ \varphi_N^{-1}} \circ K^{\varphi_N}$. Similarly, we define $K^{\varphi_N^{-1}}$ by \Cref{thm: GKS} and $K^{\varphi_N \circ \varphi_\infty^{-1}}$ by \Cref{thm:quantization-C0}. Let $K^{\varphi_\infty^{-1}} \coloneqq K^{\varphi_N^{-1}} \circ K^{\varphi_N \circ \varphi_\infty^{-1}}$. Then the result follows from \Cref{thm:quantization-invertible,prop:quantization-micro}.
\end{proof}

\begin{warning}
    The sequence $(K^{\varphi_n})_{n\in\bN}$ is not canonical, and thus neither is the limit $K^{\varphi_\infty}$. In general, it is an interesting question when such sequence (and its limit) is canonical.
\end{warning}

The sheaf quantization result allows us to recover and strengthen the theorem of Dimitroglou Rizell--Sullivan \cite{DRG24C0Legendrian}. For a sequence of compactly supported contactomorphisms $\varphi_n$ that converges to a homeomorphism $\varphi_\infty$, we can show that for any (not necessarily properly embedded) Legendrian $\Lambda \subseteq S^*M$, if $\varphi_\infty(\Lambda)$ is smooth, then it has to be a smooth Legendrian.

\begin{theorem}[\Cref{thm:main-Maslov} Part 1]\label{thm:rigidity}
    Let $M$ be a complete Riemannian manifold and $\Lambda \subseteq S^*M$ be a smooth Legendrian. Let $\varphi_n \in \operatorname{Cont}(S^*M, \xi_{\mathrm{std}})$ be contactomorphisms, each of which has bounded conformal factor $h_n$. Suppose $\varphi_n \to \varphi_\infty$ in the $C^0$-topology, $\varphi_\infty$ is a homeomorphism and $\varphi_\infty(\Lambda)$ is smooth. Then $\varphi_\infty(\Lambda)$ is a smooth Legendrian.
    Moreover, if $\varphi_n \in \operatorname{Cont}_0(S^*M, \xi_{\mathrm{std}})$ for any $n$, $\Lambda$ and $\varphi_\infty(\Lambda)$ have the same Maslov class: $\mu(\Lambda) = \varphi_\infty^*\mu(\varphi_\infty(\Lambda))$.
\end{theorem}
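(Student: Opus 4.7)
The plan is to reduce to the case where $\varphi_\infty$ is $C^0$-close to the identity and then transport sheaf-theoretic data through an invertible sheaf kernel. Fix $N$ large enough that $\psi_n \coloneqq \varphi_n \circ \varphi_N^{-1}$ satisfies the $C^0$-smallness hypothesis of \Cref{thm:quantization-C0} for all $n \ge N$; the sequence $\psi_n$ converges to the contact homeomorphism $\psi_\infty \coloneqq \varphi_\infty \circ \varphi_N^{-1}$, and each $\psi_n$ still has bounded conformal factor. Since $\varphi_N$ is itself a smooth contactomorphism, $\varphi_N(\Lambda)$ is automatically a smooth Legendrian and $\psi_\infty(\varphi_N(\Lambda)) = \varphi_\infty(\Lambda)$, so it suffices to prove both assertions for $\psi_\infty$ acting on $\varphi_N(\Lambda)$. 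Under this reduction, \Cref{thm:quantization-C0} and \Cref{thm:quantization-invertible} provide an invertible sheaf kernel $K^{\psi_\infty}$; combining the singular support estimate $\cSS^\infty(K^{\psi_\infty} \circ F) \subseteq \psi_\infty(\cSS^\infty(F))$ with its counterpart for $K^{\psi_\infty^{-1}}$ yields the key equality
\[
    \cSS^\infty(K^{\psi_\infty} \circ F) = \psi_\infty(\cSS^\infty(F)) \quad \text{for every } F \in \Sh(M).
\]

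For the first conclusion, fix $p \in \varphi_N(\Lambda)$ and pick a small open conic neighborhood $U \subseteq S^*M$ of $p$ on which $\varphi_N(\Lambda) \cap U$ is a smooth Legendrian front. Choose a simple sheaf $F \in \Sh(M)$ whose singular support inside $U$ coincides with $\varphi_N(\Lambda) \cap U$ (for instance, the indicator sheaf of a local hypersurface realizing this front). Let $G \coloneqq K^{\psi_\infty} \circ F$. By the equality above, $\cSS^\infty(G) \cap \psi_\infty(U)$ is an open subset of the smooth $(n-1)$-dimensional submanifold $\varphi_\infty(\Lambda)$. The conic lift $\cSS(G) \subseteq T^*M$ is a closed conic involutive subset, hence of dimension at least $n$; since it is contained in the $n$-dimensional conic submanifold $\bR_{>0} \cdot \varphi_\infty(\Lambda)$ over $\psi_\infty(U)$, it must equal that submanifold locally and be Lagrangian. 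Therefore $\varphi_\infty(\Lambda)$ is Legendrian near $\psi_\infty(p)$, and since $p$ was arbitrary, $\varphi_\infty(\Lambda)$ is globally Legendrian.

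For the second conclusion, assume additionally $\varphi_n \in \operatorname{Cont}_0(S^*M, \xi_{\mathrm{std}})$. Then $\varphi_N$ can be quantized by \Cref{thm: GKS}, and \Cref{prop:quantization-micro} applied to $K^{\psi_\infty}$ together with the action of $\varphi_N$ produces an equivalence of sheaves of categories
\[
    K^{\varphi_\infty} \colon \varphi_{\infty *}\msh_\Lambda \xrightarrow{\sim} \msh_{\varphi_\infty(\Lambda)}
\]
that preserves microstalks. By \Cref{thm:musheaf}, both $\msh_\Lambda$ and $\msh_{\varphi_\infty(\Lambda)}$ are locally constant sheaves of categories, classified by the composites of the Lagrangian Gauss map with the delooping of the $J$-homomorphism $\Lambda \to U/O \to B\mathrm{Pic}(\mathbb{S})$ and similarly for $\varphi_\infty(\Lambda)$. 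Preservation of microstalks identifies the monodromy data, so the two classifying maps agree after pulling back along $\varphi_\infty$. Specializing to $\SC = \Mod(\mathbb{Z})$ and reading off the resulting class in $H^1(\Lambda; \pi_1 B\mathrm{Pic}(\Mod(\mathbb{Z}))) = H^1(\Lambda; \mathbb{Z})$, which recovers the Maslov class, yields $\mu(\Lambda) = \varphi_\infty^*\mu(\varphi_\infty(\Lambda))$.

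The main obstacle is upgrading the one-sided singular support inclusion supplied by \Cref{thm:ss-limit} into the equality used in the Legendrian argument; this depends on the two-sided invertibility $K^{\psi_\infty^{-1}} \circ K^{\psi_\infty} \simeq 1_\Delta$ from \Cref{thm:quantization-invertible}, where the bounded conformal factor hypothesis plays an essential role. A secondary point is to verify that the simple sheaf $F$ can be chosen so that $\cSS^\infty(F)$ really realizes an open piece of $\varphi_N(\Lambda)$, but this follows from standard local front constructions for smooth Legendrians.
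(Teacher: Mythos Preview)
Your proof is correct and follows the same overall architecture as the paper: reduce to the $C^0$-small case via composition with $\varphi_N^{-1}$, invoke the invertible sheaf quantization of \Cref{thm:quantization-C0} and \Cref{thm:quantization-invertible}, and conclude using the coisotropicity theorem of Kashiwara--Schapira. The differences are in packaging rather than substance.

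For the Legendrian conclusion, you work pointwise with a locally constructed simple sheaf and use the two-sided singular support equality $\cSS^\infty(K^{\psi_\infty}\circ F)=\psi_\infty(\cSS^\infty(F))$ (which you correctly deduce from invertibility) to realize an open piece of $\varphi_\infty(\Lambda)$ as a microsupport. The paper instead invokes the equivalence of microsheaf categories from \Cref{prop:quantization-micro} with coefficients in the orbit category $\Mod(\bZ/2\bZ)_{/[1]}$, so that \Cref{thm:musheaf} guarantees a \emph{global} rank~1 object on $\varphi_\infty(\Lambda)$; this avoids the local front construction but requires the orbit-category trick to kill the Maslov obstruction. Your route is more elementary in that it sidesteps this choice of coefficients.

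For the Maslov class, you pass through the full $J$-homomorphism over the sphere spectrum (Jin's \Cref{thm:musheaf}) and then specialize to $\Mod(\bZ)$. The paper deliberately avoids this: it works over $\Mod(\bZ/2\bZ)$ and uses only Guillermou's identification of the twist with the Maslov class. Your argument is valid but invokes more machinery than needed; indeed, the paper reserves Jin's theorem for the stronger statement \Cref{thm:maslov-J} (Part~2 of \Cref{thm:main-Maslov}), and notes explicitly that the rest of the main body does not depend on it. As a minor point, what identifies the classifying maps is the equivalence of locally constant sheaves of categories itself, not the microstalk preservation per se; the latter is a consequence of the former.
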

\begin{proof}
    Let $\epsilon>0$ be sufficiently small so that \cref{thm:quantization-C0-small} holds. Without loss of generality, we may assume that there exists $N$ such that $d_{C^0}(\varphi_n,\varphi_\infty)<\epsilon/4$ for $n \ge N$. Then we know that there exist sheaf quantizations $K^{\varphi_n \circ \varphi_N^{-1}}$ that converges to $K^{\varphi_\infty \circ \varphi_N^{-1}}$, and by \Cref{prop:quantization-micro}, that $\varphi_\infty \circ \varphi_N^{-1}$ induces an equivalence between sheaves of categories
    \[
        (\varphi_{\infty} \circ \varphi_N^{-1})_* \msh_{\varphi_N(\Lambda)}
        \simeq 
        \msh_{\varphi_\infty(\Lambda)}.
    \]
    
    Let the coefficients be $\Mod(\bZ/2\bZ)_{/[1]}$. 
    Then $\msh_{\varphi_N(\Lambda)}(\varphi_N(\Lambda))$ admits a global rank 1 object by \Cref{thm:musheaf}, and thus $\msh_{\varphi_\infty(\Lambda)}(\varphi_\infty(\Lambda))$ also admits a global rank 1 object.
    Then, by the coisotropicity theorem of singular supports \cite[Theorem 6.5.4]{KS90}, we know that the support of the rank 1 object $\varphi_\infty(\Lambda)$ is (cone) coisotropic in the sense of \cite[Definition 6.5.1]{KS90}. Since it is a smooth submanifold, we can conclude that it is a Legendrian submanifold.

    For the assertion for the Maslov class, we apply the construction in \Cref{thm:sheaf-invariance-summary}, which implies that when $\varphi_n \in \mathrm{Cont}_0(S^*M, \xi_\text{std})$, we have an equivalence of sheaves of categories
    \[
        \msh_{\varphi_\infty(\Lambda)} \simeq \varphi_{\infty*}\msh_{\Lambda}.
    \]
    We note that Guillermou's result \Cref{thm:musheaf} \cite[Section 10.3 \& 10.6]{Guillermou23} for $\Mod(\bZ/2\bZ)$ shows that $\msh_{\Lambda}$ is a locally constant sheaf of categories twisted by the Maslov class $\mu(\Lambda) \in H^1(\Lambda; \bZ)$ via degree shifting.
    This implies that $\msh_{\varphi_\infty(\Lambda)} \simeq \varphi_{\infty*} \msh_{\Lambda}$ is a locally constant sheaf of categories twisted by the Maslov class $\varphi_{\infty*}\mu(\Lambda) \in H^1(\varphi_\infty(\Lambda); \bZ)$. 
    Hence we get $\mu(\varphi_{\infty}(\Lambda)) = \varphi_{\infty*}\mu(\Lambda)$.
\end{proof}

\begin{remark}
    In the situation of \cref{thm:rigidity}, if $\Lambda$ is closed, then we can prove that $\Lambda$ and $\varphi_\infty(\Lambda)$ have the same relative Stiefel--Whitney class as follows.
    Since $\varphi_\infty|_{\Lambda} \colon \Lambda \to \varphi_\infty(\Lambda)$ is a homeomorphism, each Wu class $v_k(\Lambda)$ is preserved by the homeomorphism $\varphi_\infty$. Then by the formula $w_2(\Lambda)=v_2(\Lambda)+v_1(\Lambda)^2$, we obtain the desired equality. 
\end{remark}

We are now able to prove the invariance of Maslov data of any embedded Legendrian under contact homeomorphisms in cosphere bundles \Cref{thm:maslov-J} (\Cref{thm:main-Maslov}). For readers that prefer to work with the classical dg categories (or stable $\infty$-categories over a discrete ring), we note that the following theorem is the only part in the main body of the paper that depends on Jin's result \Cref{thm:musheaf} over the sphere spectrum. \footnote{\cref{thm:Hausdorff-cgcat,thm:Hasusdorff-ringspec} in the appendix also depend on Jin's result.}

\begin{theorem}[\Cref{thm:main-Maslov} Part 2]\label{thm:maslov-J}
    Let $M$ be a complete Riemannian manifold and $\Lambda \subseteq S^*M$ be a smooth Legendrian. Let $\varphi_n \in \operatorname{Cont}_0(S^*M, \xi_{\mathrm{std}})$ be contactomorphisms, each of which has bounded conformal factor $h_n$. Suppose $\varphi_n \to \varphi_\infty$ in the $C^0$-topology, $\varphi_\infty$ is a homeomorphism and $\varphi_\infty(\Lambda)$ is smooth. Then when $\varphi_\infty(\Lambda)$ is smooth (and hence Legendrian), the compositions of the Lagrangian Gauss map and the $J$-homomorphism are homotopically commutative via $\varphi_\infty$:
    \[\begin{tikzcd}[row sep=scriptsize]
    \Lambda \ar[dr,dashed] \ar[drr] \ar[dd, "\varphi_\infty"] & & \\
    & U/O \ar[r,dashed] & B\mathrm{Pic}(\mathbb{S}) \\
    \varphi_\infty(\Lambda). \ar[ur,dashed] \ar[urr] & &
    \end{tikzcd}\]
\end{theorem}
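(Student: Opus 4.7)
The plan is to extract the homotopy of classifying maps from the equivalence of locally constant sheaves of categories furnished by the sheaf quantization of $\varphi_\infty$. The starting point is \Cref{thm:sheaf-invariance-summary} (whose Legendrian version is already stated in \Cref{prop:quantization-micro}), which applied to a Legendrian $\Lambda$ produces an equivalence of sheaves of categories
\[
K^{\varphi_\infty} \colon \varphi_{\infty*}\msh_\Lambda \xrightarrow{\sim} \msh_{\varphi_\infty(\Lambda)}
\]
on $\varphi_\infty(\Lambda)$. By \Cref{thm:rigidity}, $\varphi_\infty(\Lambda)$ is a smooth Legendrian, so both sides are locally constant sheaves of categories in the sense of \Cref{thm:musheaf}.

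The next step is to invoke Jin's refinement of \Cref{thm:musheaf}, which states that for a smooth Legendrian $L \subseteq S^*M$, the sheaf of categories $\msh_L$ is the locally constant sheaf of $\Mod(\mathbb{S})$-modules whose stalk is $\Mod(\mathbb{S})$ and whose classifying map $L \to B\mathrm{Pic}(\mathbb{S})$ is given by the composition of the Lagrangian Gauss map $L \to U/O$ with the delooping of the $J$-homomorphism $U/O \to B\mathrm{Pic}(\mathbb{S})$. Since classifying maps into $B\mathrm{Pic}(\mathbb{S})$ are uniquely determined up to homotopy by the underlying locally constant sheaf of $\Mod(\mathbb{S})$-module categories, the equivalence $K^{\varphi_\infty}$ translates into a homotopy between the classifying map of the pushforward $\varphi_{\infty*}\msh_\Lambda$ and that of $\msh_{\varphi_\infty(\Lambda)}$.

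Explicitly, the classifying map of $\varphi_{\infty*}\msh_\Lambda$ is the composition of the homeomorphism $\varphi_\infty^{-1}\colon \varphi_\infty(\Lambda) \to \Lambda$ with the classifying map of $\msh_\Lambda$, which by Jin's theorem is the $J$-homomorphism composition for $\Lambda$. Meanwhile the classifying map of $\msh_{\varphi_\infty(\Lambda)}$ is the $J$-homomorphism composition for $\varphi_\infty(\Lambda)$. The equivalence $K^{\varphi_\infty}$ therefore gives a homotopy
\[
\bigl(\Lambda \to U/O \to B\mathrm{Pic}(\mathbb{S})\bigr) \simeq \bigl(\Lambda \xrightarrow{\varphi_\infty} \varphi_\infty(\Lambda) \to U/O \to B\mathrm{Pic}(\mathbb{S})\bigr),
\]
which is exactly the commutativity of the diagram in the statement. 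The Maslov class and relative second Stiefel--Whitney class assertions then follow by pulling back the universal Maslov class on $U/O$ and applying the corresponding truncations of $B\mathrm{Pic}(\mathbb{S})$.

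The main obstacle is ensuring that the equivalence of sheaves of categories actually upgrades to a homotopy of classifying maps into $B\mathrm{Pic}(\mathbb{S})$, rather than merely giving equality of isomorphism classes; this is exactly the content of Jin's theorem that the obstruction is realized by the universal $J$-homomorphism, so the subtlety has already been isolated in \Cref{thm:musheaf}. A secondary subtlety is that \Cref{prop:quantization-micro} is a statement about microsheaves over arbitrary open subsets of $S^*M$, so we need to restrict carefully to the Legendrian locus $\varphi_\infty(\Lambda)$ and use that both $\msh_\Lambda$ and $\msh_{\varphi_\infty(\Lambda)}$ are supported there. Since $\varphi_\infty|_\Lambda \colon \Lambda \to \varphi_\infty(\Lambda)$ is a homeomorphism of smooth manifolds and classifying maps into $B\mathrm{Pic}(\mathbb{S})$ are homotopy-invariant, this causes no difficulty.
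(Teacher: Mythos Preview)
Your proposal is correct and follows essentially the same route as the paper: both arguments obtain the equivalence $\varphi_{\infty*}\msh_\Lambda \simeq \msh_{\varphi_\infty(\Lambda)}$ from \Cref{prop:quantization-micro} (via \Cref{thm:sheaf-invariance-summary} for the non-small case), then invoke Jin's identification of the classifying map with the $J$-homomorphism over $\Mod(\mathbb{S})$ to extract the desired homotopy. Your treatment is in fact slightly more explicit than the paper's about why an equivalence of locally constant sheaves of categories yields a homotopy of classifying maps, and about how the pushforward precomposes the classifying map with $\varphi_\infty^{-1}$.
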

\begin{proof}
    Since $\varphi_n \to \varphi_\infty$ in the uniform topology, by \Cref{thm:quantization-C0}, we know that there exist sheaf quantizations $K^{\varphi_n}$ that converges to $K^{\varphi_\infty}$, and by \Cref{prop:quantization-micro}, we know that induces an equivalence between sheaves of categories
    \[
        \msh_{\varphi_\infty(\Lambda)}(\varphi_\infty(\Lambda)) \simeq \msh_{\Lambda}(\Lambda).
    \]
    Then, by Jin's result \Cref{thm:musheaf} \cite[Theorem 1.1]{Jin20Jhomomorphism}, we know that for the coefficient $\Mod(\bS)$ the sheaf of categories $\msh_{\varphi_\infty(\Lambda)}$ is classified by the composition of the Lagrangian Gauss map and the $J$-homomorphism:
    \[
        \Lambda \to U/O \to B\mathrm{Pic}(\mathbb{S}).
    \]
    This implies that the map $\varphi_\infty(\Lambda) \to U/O \to B\mathrm{Pic}(\mathbb{S})$ is homotopic to $\Lambda \to U/O \to B\mathrm{Pic}(\mathbb{S})$.
\end{proof}

Finally, combining our sheaf quantization theorem with the  Guillermou--Viterbo $\gamma$-coisotropicity theorem \cite[Theorem 1.2]{GV24}, we conclude the $C^0$-rigidity of coisotropic submanifolds. We remark that while the last part relies on Floer-theoretic spectral invariants, it is also possible to reprove the results using sheaves.

We follow the definition of coisotropic submanifolds in contact manifolds by Huang \cite{Huang15}, which is also used in \cite{RosenZhang20Dichotomy,Usher21Conformal}. In particular, by \cite[Proposition 4.1]{RosenZhang20Dichotomy}, for a contact manifold $(Y, \xi)$, $C \subseteq Y$ is coisotropic if and only if in the symplectization, $C \times \bR_{>0} \subseteq Y \times \bR_{>0}$ is symplectic coisotropic. 

The main criterion of (symplectic) coisotropicity we will use is as follows, based on the $\gamma$-coisotropicity introduced by Viterbo \cite{Viterbo22} and closely related to the local rigidity of Usher \cite{Usher22}:

\begin{lemma}[Usher {\cite[Theorem 2.1]{Usher22}}]\label{lem: coisotropicity}
    Let $(X, \omega)$ be a symplectic manifold and $C \subseteq X$ be a smooth submanifold. Then $C$ is coisotropic if there is an open dense subset $C_0 \subseteq C$ such that for every $x \in C_0$, there is a $\gamma$-coisotropic subset $L_x \subseteq C_0$ with $x \in L_x$.
\end{lemma}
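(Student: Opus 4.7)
The plan is to reduce coisotropicity of $C$ to a pointwise tangent-space inclusion at every point of $C_0$, and then verify that inclusion by contradiction against the $\gamma$-coisotropic hypothesis on $L_x$. Since $C$ is a smooth submanifold, the condition $T_xC^\omega \subseteq T_xC$ is closed in $x$ (because both $T_xC$ and $T_xC^\omega$ depend smoothly on $x \in C$), so if it holds on the open dense set $C_0 \subseteq C$, it automatically propagates to all of $C$. Thus it suffices to fix $x \in C_0$ and prove $T_xC^\omega \subseteq T_xC$.

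I would argue by contradiction: suppose there is $v \in T_xC^\omega$ with $v \notin T_xC$. Choose Darboux coordinates near $x$ and a compactly supported Hamiltonian $H_\epsilon$ supported in an $\epsilon$-ball around $x$ with $X_{H_\epsilon}(x) = v$. Because $v$ is $\omega$-orthogonal to $T_xC$, one has $dH_\epsilon|_{T_xC} = 0$ at $x$, so $H_\epsilon$ vanishes to first order along $C$ at $x$. A standard quadratic estimate then gives $\sup|H_\epsilon| = O(\epsilon^2)$ (and likewise for the $\gamma$-norm), while the time-one flow of $H_\epsilon$ transversally displaces a relative $c\epsilon$-neighborhood of $x$ in $C$, and in particular in $L_x \subseteq C$, off itself, with $c > 0$ independent of $\epsilon$.

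On the other hand, $\gamma$-coisotropicity of $L_x$, as developed in \cite{Viterbo22, Usher22}, is designed precisely to force the $\gamma$-displacement energy of a relative $\delta$-neighborhood of a point of $L_x$ to satisfy a linear (rather than superlinear) lower bound in $\delta$. Matching the two estimates with $\delta = c\epsilon$ and letting $\epsilon \to 0$ yields a contradiction, from which $T_xC^\omega \subseteq T_xC$ and hence the lemma follows. The main obstacle I expect is the rigorous extraction of this linear lower displacement bound from the $\gamma$-coisotropicity hypothesis, because $L_x$ need not be a smooth submanifold and may sit quite delicately inside $C$; this is exactly where the analytical work of \cite{Usher22} is concentrated. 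Granted that estimate, the local Darboux perturbation above cleanly closes the argument.
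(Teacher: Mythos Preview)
Your approach is essentially the same as the paper's, and it is correct. Both argue by contraposition: if $C$ fails to be coisotropic at some point, construct cheap displacing Hamiltonians and contradict the $\gamma$-coisotropic hypothesis on $L_x$. The paper simply packages your $H_\epsilon$ construction as a black-box citation of Usher's Theorem~2.1 (not coisotropic $\Rightarrow$ not locally rigid in the Hofer sense) and then invokes the elementary inequality $\gamma \leq \|\cdot\|_{\mathrm{Hofer}}$ from \cite{Viterbo22} to pass from ``not locally rigid'' to ``not $\gamma$-coisotropic''. Monotonicity under the inclusion $L_x \subseteq C$ then finishes the contradiction.

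Two remarks on your write-up. First, you locate the difficulty in the wrong place: the lower displacement bound you seek is essentially the \emph{definition} of $\gamma$-coisotropicity of $L_x$ (positivity of the local $\gamma$-displacement energy), so nothing needs to be extracted---the analytical content of \cite{Usher22} that the lemma invokes is precisely your $H_\epsilon$ construction (the direction ``not coisotropic $\Rightarrow$ not locally rigid''), not any lower bound. Second, the bound that $\gamma$-coisotropicity provides is a \emph{positive constant} uniform over all sufficiently small balls, not a linear-in-$\delta$ bound; this only strengthens your quadratic-versus-positive comparison, so the contradiction goes through unchanged.
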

\begin{proof}
    We argue by contrapositive. If $C \subseteq X$ is not coisotropic at $x \in C$, then there is an open neighborhood $V \subseteq C$ that is nowhere coisotropic. By \cite[Theorem 2.1]{Usher22}, we know that $V \subseteq C$ is nowhere locally rigid in the sense of \cite[Definition 1.1]{Usher22}, that is, the infimum of the Hofer norm of Hamiltonians to displace a neighborhood of $p$ in $C$ from a small open ball around $p$ is zero. Then, since the $\gamma$-metric is bounded by the Hofer metric \cite[Proposition 7.5~(4)]{Viterbo22}, we can conclude that $V \subseteq C$ is nowhere $\gamma$-coisotropic, that is, the infimum of the spectral norm of Hamiltonians to displace a neighborhood of $p$ in $C$ from a small open ball around $p$ is zero.
\end{proof}

\begin{theorem}[\Cref{thm:main-rigidity}]\label{thm:coiso-rigidity}
    Let $M$ be a complete Riemannian manifold and $C \subseteq S^*M$ be a (locally closed) smooth coisotropic submanifold. Let $\varphi_n \in \operatorname{Cont}(S^*M, \xi_{\mathrm{std}})$ be contactomorphisms, each of which has bounded conformal factor $h_n$. Suppose $\varphi_n \to \varphi_\infty$ in the $C^0$-topology, $\varphi_\infty$ is a homeomorphism and $\varphi_\infty(C)$ is smooth. Then $\varphi_\infty(C)$ is also coisotropic. 
\end{theorem}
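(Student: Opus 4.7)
The strategy is to lift the problem to the symplectization $T^*M \setminus 0_M$ and then apply the characterization of coisotropicity via $\gamma$-coisotropic subsets from \Cref{lem: coisotropicity}. By \cite[Proposition 4.1]{RosenZhang20Dichotomy}, a smooth submanifold of $(S^*M, \xi_{\mathrm{std}})$ is contact coisotropic if and only if its conic lift to $T^*M \setminus 0_M$ is symplectic coisotropic. Writing $\widehat{Z} = \bR_{>0} \cdot Z$ for the cone over $Z \subseteq S^*M$, it therefore suffices to prove that $\widehat{\varphi_\infty(C)}$ is symplectic coisotropic in $T^*M \setminus 0_M$.

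The first step is to construct the candidate $\gamma$-coisotropic subsets. At any smooth point $y$ of $C$, the contact coisotropic hypothesis produces a germ of a local Legendrian $\Lambda_y \subseteq C$ through $y$: if $C$ is itself Legendrian take $\Lambda_y = C$, and otherwise integrate a maximal isotropic subdistribution of $TC \cap \xi$ to obtain a local Legendrian leaf contained in $C$. The cone $\widehat{\Lambda}_y \subseteq \widehat{C}$ is then a local Lagrangian in $T^*M \setminus 0_M$. Under the homeomorphism induced by $\varphi_\infty$ (lifted in the obvious way on cones), the images $\widehat{\varphi_\infty(\Lambda_y)}$ cover an open dense subset $C_0' \subseteq \widehat{\varphi_\infty(C)}$, and these will serve as the candidates $L'_x$ in \Cref{lem: coisotropicity}.

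The heart of the proof is verifying that each $\widehat{\varphi_\infty(\Lambda_y)}$ is $\gamma$-coisotropic. By the Guillermou--Viterbo $\gamma$-coisotropicity theorem \cite[Theorem 1.2]{GV24}, any $\gamma$-limit of exact Lagrangians with uniformly bounded spectral norm is $\gamma$-coisotropic, so the task reduces to exhibiting $\widehat{\varphi_\infty(\Lambda_y)}$ as the $\gamma$-limit of the honest Lagrangians $\widehat{\varphi_n(\Lambda_y)}$. Combining \Cref{thm:main-C0-distance} with the right-invariance of $d_\tau$ and the functoriality of the sheaf quantization (\Cref{prop:quantization-functorial,thm:quantization-invertible}), we get
\[
    d_\tau(K^{\varphi_n}, K^{\varphi_\infty}) \le 2 C_\alpha\, d_{C^0}(\varphi_n, \varphi_\infty) \longrightarrow 0.
\]
Convolving these kernels with a sheaf quantization of $\widehat{\Lambda}_y$ (produced locally via \Cref{thm:guillermou} after choosing a separating Reeb flow near $y$), and invoking the standard Guillermou--Viterbo identification of the sheaf-theoretic interleaving distance with Viterbo's spectral norm on exact Lagrangians in $T^*M$, translates this bound into $\gamma\bigl(\widehat{\varphi_n(\Lambda_y)}, \widehat{\varphi_\infty(\Lambda_y)}\bigr) \to 0$. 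Applying the $\gamma$-coisotropicity theorem and then feeding the resulting $\gamma$-coisotropic subsets into \Cref{lem: coisotropicity} finishes the argument.

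The main obstacle is the last step, namely transferring the sheaf-theoretic interleaving distance $d_\tau$ on $\Sh(M \times M)$ into Viterbo's symplectic spectral distance $\gamma$ on $T^*M$ compatibly with the conic lift to the symplectization. The hypothesis that each conformal factor $h_n$ is bounded is exactly what is needed at this point: it prevents the conic lifts $\widehat{\varphi_n(\Lambda_y)}$ from escaping to infinity in the fiber direction, so that the lifts all lie in a fixed conic region of $T^*M$ where the sheaf-to-$\gamma$ dictionary is uniform and the $\gamma$-limit of $\widehat{\varphi_n(\Lambda_y)}$ genuinely equals $\widehat{\varphi_\infty(\Lambda_y)}$.
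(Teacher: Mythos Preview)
Your overall architecture—lift to the symplectization, cover $C$ by local Legendrians $\Lambda_y$, show the cones over $\varphi_\infty(\Lambda_y)$ are $\gamma$-coisotropic, and feed this into \Cref{lem: coisotropicity}—is the same as the paper's. The divergence, and the gap, is in how you obtain $\gamma$-coisotropicity of $\widehat{\varphi_\infty(\Lambda_y)}$.

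You try to exhibit $\widehat{\varphi_\infty(\Lambda_y)}$ as a $\gamma$-limit of the honest Lagrangians $\widehat{\varphi_n(\Lambda_y)}$ by quantizing $\Lambda_y$, convolving with $K^{\varphi_n}$, and then passing from $d_\tau$ to $\gamma$. This has two concrete problems. First, $\Lambda_y$ is only a local Legendrian germ, so \Cref{thm:guillermou} (which requires a properly embedded Legendrian with a separating Reeb flow in a product $M' \times \bR$) does not furnish a sheaf quantization of it, and the identification of $d_\tau$ with Viterbo's $\gamma$ is established for closed exact Lagrangians, not for cones over such germs; your ``sheaf-to-$\gamma$ dictionary'' step is therefore not available. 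Second, you have misstated \cite[Theorem 1.2]{GV24}: that result does not assert that $\gamma$-limits of Lagrangians are $\gamma$-coisotropic; it asserts that any subset on which the microsheaf category has nowhere-vanishing stalks (equivalently, the $\gamma$-support of a nonzero sheaf) is $\gamma$-coisotropic.

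The paper sidesteps both issues by never quantizing $\Lambda_y$ and never invoking $\gamma$-convergence. From \Cref{prop:quantization-micro} the invertible kernel $K^{\varphi_\infty \circ \varphi_N^{-1}}$ induces an equivalence of sheaves of categories $(\varphi_\infty \circ \varphi_N^{-1})_* \msh_{\varphi_N(\Lambda_x)} \simeq \msh_{\varphi_\infty(\Lambda_x)}$; since the source is locally constant with nowhere-zero stalks, so is the target, and \cite[Theorem 1.2]{GV24} then yields $\gamma$-coisotropicity of $\varphi_\infty(\Lambda_x) \times \bR_{>0}$ directly. Note also that the bounded-conformal-factor hypothesis enters not to prevent the conic lifts from escaping (as you suggest) but to guarantee invertibility of the limiting kernel via \Cref{thm:quantization-invertible}, which is what makes the microsheaf equivalence possible.
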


\begin{proof}
    First, by \cite[Corollary 4.10]{Usher21Conformal}, we know there is an open dense subset $C_0 \subseteq C$ such that for each $x \in C_0$, locally there exists a smooth Legendrian $\Lambda_x \subseteq C$ with $x \in \Lambda$. Then by \Cref{prop:quantization-micro}, 
    for a sufficiently large $N$, there is an equivalence between sheaves of categories
    \[
        K^{\varphi_\infty \circ \varphi_N^{-1}} \colon (\varphi_\infty \circ \varphi_N^{-1})_* \msh_{\varphi_N(\Lambda_x)} \to \msh_{\varphi_\infty(\Lambda_x)}.
    \]
    In particular, since $\msh_{\varphi_N(\Lambda_x)}$ is a locally constant sheaf of categories, the stalks of $\msh_{\varphi_N(\Lambda_x)}$ are nowhere zero,
    and thus the stalks of $\msh_{\varphi_\infty(\Lambda_x)}$ are nowhere zero. Then, by the $\gamma$-coisotropicity theorem \cite[Theorem 1.2]{GV24}, we know that the conic subset $\varphi_\infty(\Lambda_x) \times \bR_{>0}$ is $\gamma$-coisotropic in $T^*M$. Then by \Cref{lem: coisotropicity} we know that $\varphi_\infty(C) \times \bR_{>0}$ is a coisotropic submanifold in $T^*M$. By \cite[Proposition 4.1]{RosenZhang20Dichotomy}, this implies $\varphi_\infty(C)$ is a coisotropic submanifold in $T^*M$.
\end{proof}

\begin{remark}
    The above result does not say anything about convex hypersurfaces \cite{Giroux}. As noted in \cite[Example 3.4]{RosenZhang20Dichotomy}, any hypersurface in a contact manifold is a coisotropic submanifold, while not every hypersurface is a convex hypersurface (in fact, $C^0$-flexibility of convex surfaces is recently shown in \cite{SerrailleStokic25}). Neither do we say anything about rigidity of coisotropic submanifolds where $TC \cap \xi$ is of constant rank ($C^0$-rigidity of such type of coisotropic surfaces is recently shown in \cite{SerrailleStokic25}, where it is called regular coisotropic).
\end{remark}

\appendix

\section{Local \texorpdfstring{$C^0$}{C0}-/Hausdorff-Rigidity without Interleaving Distance}\label{sec:appendix}

In this section, we give a simpler proof of the local $C^0$-rigidity of Legendrians, without using interleaving distances. We also show a result on rigidity of certain Hausdorff limits of Legendrians. 

\begin{proposition}\label{prop:local-rigid}
    Let $M$ be a smooth manifold, and $\varphi_n \in \mathrm{Cont}_0(S^*M, \xi_{\mathrm{std}})$ be a sequence of contactomorphism that are equal to the identity on a fixed open subset $\Omega \subseteq S^*M$. Suppose $\varphi_n \rightarrow \varphi_\infty$. Then for any connected smooth properly embedded Legendrian $\Lambda \subseteq S^*M$ such that $\Lambda \cap \Omega \neq \varnothing$, there does not exist any open subset $\Lambda_0 \subseteq \Lambda$ such that $\varphi_\infty(\Lambda_0)$ is smooth but nowhere Legendrian.
\end{proposition}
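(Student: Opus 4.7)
The plan is a proof by contradiction. Suppose such an open subset $\Lambda_0 \subseteq \Lambda$ exists, with $\varphi_\infty(\Lambda_0)$ smooth but nowhere Legendrian. The pointwise $C^0$-limit of maps that fix $\Omega$ pointwise still fixes $\Omega$ pointwise, so $\varphi_\infty|_\Omega = \id$. Hence if $\Lambda_0 \cap \Omega \neq \varnothing$, then $\varphi_\infty(\Lambda_0 \cap \Omega) = \Lambda_0 \cap \Omega$ is a nonempty Legendrian open piece of $\varphi_\infty(\Lambda_0)$, immediately contradicting ``nowhere Legendrian''. I would then reduce to the case $\Lambda_0 \cap \Omega = \varnothing$, and use connectedness of $\Lambda$ to pick a compact arc $\gamma \subseteq \Lambda$ joining some $p_0 \in \Lambda \cap \Omega$ to some $q_0 \in \Lambda_0$.

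The goal is to produce a sheaf $F_\infty \in \Sh(M)$ with $\cSS^\infty(F_\infty) \subseteq \varphi_\infty(\Lambda)$ and with nontrivial microstalk at $\varphi_\infty(q_0)$; the Kashiwara--Schapira coisotropicity theorem \cite[Theorem~6.5.4]{KS90} will then force $\varphi_\infty(\Lambda_0)$ to be coisotropic near $\varphi_\infty(q_0)$, and since it is smooth of Legendrian dimension, it must be Legendrian there. First I would construct a seed sheaf $F \in \Sh(V)$ on a small neighborhood $V \subseteq M$ of $\pi(p_0)$ with $\cSS^\infty(F) = \Lambda \cap S^*V$ and nontrivial microstalk --- for instance the constant sheaf $1_U$ on a half-space whose boundary locally defines $\Lambda$. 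Since $\varphi_n \in \Cont_0$, I can pick an isotopy $\Phi^{(n)}$ from $\id$ to $\varphi_n$, apply GKS quantization \Cref{thm: GKS}, and set $F_n \coloneqq K^{\varphi_n} \circ F$; the microsupport estimate \eqref{for: ms-isotopy-object} yields $\cSS^\infty(F_n) \subseteq \varphi_n(\Lambda)$, and because $\varphi_n$ fixes $\Omega$ pointwise, $F_n$ agrees with $F$ on a fixed neighborhood of $p_0$. I would then propagate $F_n$ along $\varphi_n(\gamma)$ using Kashiwara's non-characteristic deformation \cite[Proposition~2.7.2]{KS90}: since $\varphi_n(\gamma)$ is a compact smooth arc in the smooth Legendrian $\varphi_n(\Lambda)$ with $F_n$ already pinned near $p_0$, the extension of $F_n$ to a neighborhood of $\varphi_n(q_0)$ is unique up to canonical isomorphism.

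The hard part will be passing to the limit $n \to \infty$ without invoking the interleaving distance machinery of the main text. The plan is to exploit that the isomorphism type of $F_n$ on a small tubular neighborhood of $\varphi_n(\gamma)$ is fully determined by the fixed seed $F$ together with the smooth Legendrian arc $\varphi_n(\gamma) \subseteq \varphi_n(\Lambda)$, so that the dependence of $F_n$ on $n$ is controlled purely by geometric data. The $C^0$-convergence $\varphi_n \to \varphi_\infty$, together with properness of $\Lambda$, yields Hausdorff convergence of $\varphi_n(\Lambda) \to \varphi_\infty(\Lambda)$ on compact subsets, so inside the category of constructible sheaves with microsupport in a fixed compact conic region around $\overline{\varphi_\infty(\gamma)}$, a diagonal/compactness argument should extract a subsequential limit $F_\infty$ satisfying $\cSS^\infty(F_\infty) \subseteq \varphi_\infty(\Lambda)$ and with nontrivial microstalk at $\varphi_\infty(q_0)$. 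The local nature of the propagation, together with the rigidity of sheaves pinned by a fixed seed on $\Omega$, is what lets us avoid the global interleaving framework.

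Once $F_\infty$ is in hand the conclusion is automatic: $\cSS^\infty(F_\infty)$ is conic coisotropic by \cite[Theorem~6.5.4]{KS90}, and since $\varphi_\infty(\Lambda_0)$ is smooth of Legendrian dimension with a coisotropic subset of full dimension contained in it near $\varphi_\infty(q_0)$, that subset is Legendrian, contradicting the assumption. The genuine obstacle to making this proof fully rigorous is the limit-extraction step; I expect the resolution to come from the local uniqueness inherent in Kashiwara's propagation plus a careful choice of compact conic region in $\dot T^*M$, so that the relevant constructible sheaf category becomes small enough for a compactness argument on the finite combinatorial data classifying $F_n$.
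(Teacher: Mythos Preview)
Your overall strategy---build a sheaf with microsupport in $\varphi_\infty(\Lambda)$ and invoke the coisotropicity theorem---matches the paper, but the limit-extraction step is a genuine gap, and the compactness heuristic you describe does not work. Constructible sheaves do not form a sequentially compact space in any useful sense here: the $F_n$ live in different constructible categories (the stratifications move with $n$), and there is no finite combinatorial model that survives the limit. The paper bypasses this entirely with an algebraic trick: rather than extracting a limit, it \emph{defines}
\[
F_\infty \coloneqq \mathrm{Fib}\Big(\bigoplus_{n\in\bN} K^{\varphi_n}\circ F \;\to\; \prod_{n\in\bN} K^{\varphi_n}\circ F\Big),
\]
a single sheaf whose singular support is automatically contained in $\bigcap_N \overline{\bigcup_{n\geq N}\varphi_n(\Lambda)} = \varphi_\infty(\Lambda)$ by the closure of $\Sh_X(M)$ under limits and colimits. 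No convergence or compactness is needed.

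Two further differences are worth noting. First, the paper's seed sheaf $F$ is \emph{global}: it uses the doubling construction (\Cref{thm:doubling}) over $\Mod(\bZ/2\bZ)_{/[1]}$ to obtain a microlocal-rank-one sheaf in $\Sh_{\Lambda\cup\Lambda^\tau_\epsilon}(M\times\bR)$, so no local propagation along arcs is required. Second, and more importantly, the paper does not try to show directly that the microstalk of $F_\infty$ is nonzero at $\varphi_\infty(q_0)$---which would be delicate since the microstalk of a $\mathrm{Fib}(\bigoplus\to\prod)$ is hard to control at moving points. Instead it runs the construction a \emph{second} time with $\varphi_n^{-1}$, producing $G_\infty$ with $\cSS^\infty(G_\infty)\subseteq \Lambda\cup\Lambda^\tau_\epsilon\setminus\Lambda_0$. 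Now everything lives over the fixed smooth Legendrian $\Lambda$: the microstalk of $G_\infty$ vanishes on $\Lambda_0$ but is nonzero on $\Lambda\cap\Omega$ (where all $\varphi_n^{\pm 1}$ act trivially), and local constancy of microstalks along the connected $\Lambda$ yields the contradiction.
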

\begin{proof}
    For some open subset $\Lambda_0 \subseteq \Lambda$, assume that $\varphi_\infty(\Lambda_0)$ is smooth but nowhere Legendrian. Let the coefficients be $\Mod(\bZ/2\bZ)_{/[1]}$. We begin with considering the doubling construction \Cref{thm:doubling}. The choice of coefficients ensures $\msh_{\Lambda}(\Lambda) = \Loc(\Lambda)$ so there is a microlocal rank 1 sheaf $F \in \Sh_{\Lambda \cup \Lambda^\tau_\epsilon}(M \times \bR)$. Let $K^{\varphi_n}\in \Sh(M \times M)$ be the sheaf quantizations of $\varphi_n$ by \Cref{thm: GKS}. Consider the sheaf
    \[F_\infty \coloneqq \mathrm{Fib}\Big(\bigoplus_{n\in \bN}K^{\varphi_n} \circ F \to \prod_{n\in \bN}K^{\varphi_n} \circ F \Big).\]
    Since $\cSS^\infty(F) = \Lambda \cup \Lambda^\tau_\epsilon$, by the singular support estimation of direct sums and products \cite[Proposition 3.4]{GV24}, we know that $\cSS^\infty(F_\infty) \subseteq \varphi_\infty(\Lambda \cup \Lambda^\tau_\epsilon)$. Since $\varphi_\infty(\Lambda_0)$ is smooth but nowhere Legendrian, by the coisotropicity theorem \cite[Theorem 6.5.4]{KS90}, we know that in fact $\cSS^\infty(F_\infty) \subseteq \varphi_\infty(\Lambda \cup \Lambda^\tau_\epsilon \setminus \Lambda_0)$. Moreover, since the contact isotopies are all supported away from $\Omega$, by \cite[Theorem 7.5.11]{KS90} we know that the microstalk of $F_\infty$ in $\Lambda \cap \Omega$ is just $\mathrm{Fib}(\bigoplus_{n\in \bN}1 \to \prod_{n\in \bN}1).$
    
    Given that $\varphi_n \to \varphi_\infty$ in the $C^0$-topology, we know that $\varphi_n^{-1} \to \varphi_\infty^{-1}$ in the $C^0$-topology. Then consider the sheaf
    \[G_\infty = \mathrm{Fib}\Big(\bigoplus_{n\in \bN}K^{\varphi_n^{-1}} \circ F_\infty \to \prod_{n\in \bN}K^{\varphi_n^{-1}} \circ F_\infty \Big).\]
    Since $\cSS^\infty(F_\infty) \subseteq \varphi_\infty(\Lambda \cup \Lambda^\tau_\epsilon \setminus \Lambda_0)$, by the singular support estimation of direct sums and products \cite[Proposition 3.4]{GV24}, we know that $\cSS^\infty(G_\infty) \subseteq \Lambda \cup \Lambda^\tau_\epsilon \setminus \Lambda_0$. This means that the microstalk of $G_\infty$ along $\Lambda_0$ is zero. Moreover, since the contact isotopies are all supported away from $\Omega$, by \cite[Theorem 7.5.11]{KS90}, the microstalk of $G_\infty$ in $\Lambda \cap \Omega$ is $\mathrm{Fib}(\bigoplus_{n\in \bN}V \to \prod_{n\in \bN}V)$ where $V = \mathrm{Fib}(\bigoplus_{n\in \bN}1 \to \prod_{n\in \bN}1).$ 
    
    Since $\Lambda$ is a connected smooth Legendrian, by isotopy extension theorem, there exists a contact isotopy that sends a point in $\Lambda$ to any other point in $\Lambda$. By the invariance of microlocalization under contactomorphisms \cite[Theorem 7.2.1]{KS90}, we know that the microstalk along $\Lambda$ must be locally constant. This leads to a contradiction.
\end{proof}

We can strengthen the above result to show the local rigidity of $C^0$-limits of Legendrians:

\begin{theorem}
    Let $M$ be a smooth manifold, and $\varphi_n \in \mathrm{Cont}_0(S^*M, \xi_{\mathrm{std}})$ be a sequence of contactomorphism that are equal to the identity on a fixed open subset $\Omega \subseteq S^*M$. Suppose $\varphi_n \rightarrow \varphi_\infty$. Then for any connected smooth properly embedded Legendrian $\Lambda \subseteq S^*M$ such that $\Lambda \cap \Omega \neq \varnothing$, $\varphi_\infty(\Lambda)$ is Legendrian if it is smooth. Moreover, if the Maslov class vanishes $\mu(\Lambda) = 0$, then we also have $\mu(\varphi_\infty(\Lambda)) = 0$.
\end{theorem}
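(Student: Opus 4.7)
The first assertion follows from \Cref{prop:local-rigid} via a continuity argument. Since $\varphi_\infty$ is a homeomorphism, $\dim\varphi_\infty(\Lambda) = \dim\Lambda = n-1$, the Legendrian dimension in $S^*M$; being tangent to the contact distribution at a point is a closed condition on smooth $(n-1)$-submanifolds, so failure of Legendrianness is open. If $\varphi_\infty(\Lambda)$ were smooth but not Legendrian at some $\varphi_\infty(p_0)$, pulling back an open non-Legendrian neighborhood via the homeomorphism $\varphi_\infty|_\Lambda$ would yield an open $\Lambda_0 \subseteq \Lambda$ with $\varphi_\infty(\Lambda_0)$ smooth and nowhere Legendrian, contradicting \Cref{prop:local-rigid}.

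For the Maslov assertion, I work with coefficients $\cC = \Mod(\bZ)$. By \Cref{thm:musheaf} and $\mu(\Lambda) = 0$, there is a nonzero object $G \in \msh_\Lambda(\Lambda)$ with stalks bounded complexes (say microstalk $\bZ$ in degree $0$). Applying the doubling construction (\Cref{thm:doubling}) lifts $G$ to $F \in \Sh_{\Lambda \cup \Lambda^\tau_\epsilon}(M)$; letting $K^{\varphi_n}$ denote the GKS sheaf quantizations (\Cref{thm: GKS}) and $F_n \coloneqq K^{\varphi_n} \circ F$, I set
\[
F_\infty \coloneqq \mathrm{Fib}\Big(\bigoplus_{n\in\bN} F_n \to \prod_{n\in\bN} F_n\Big) \in \Sh(M).
\]
The singular support estimation for direct sums and products \cite[Proposition 3.4]{GV24} combined with uniform convergence $\varphi_n \to \varphi_\infty$ gives $\cSS^\infty(F_\infty) \subseteq \varphi_\infty(\Lambda \cup \Lambda^\tau_\epsilon)$, so $F_\infty$ determines a global object of $\msh_{\varphi_\infty(\Lambda)}(\varphi_\infty(\Lambda))$. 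Picking $p \in \Lambda \cap \Omega$ (nonempty by hypothesis), each $F_n$ agrees with $F$ on a neighborhood of $p$ since $\varphi_n|_\Omega = \id$; because microlocalization (computed via $\mu hom$ against a compact test sheaf) commutes with the countable $\bigoplus$ and $\prod$,
\[
\mu_p F_\infty \simeq \mathrm{Fib}\Big(\bigoplus_{n\in\bN} \bZ \to \prod_{n\in\bN} \bZ\Big),
\]
a nonzero bounded complex concentrated in cohomological degrees $-1$ and $0$.

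By \Cref{thm:musheaf}, $\msh_{\varphi_\infty(\Lambda)}$ is a locally constant sheaf of categories on the connected manifold $\varphi_\infty(\Lambda)$ with monodromy valued in $\mathrm{Pic}(\Mod(\bZ))$ acting by cohomological shift, an operation that preserves bounded complexes. Hence $F_\infty$ is a nonzero object of $\msh_{\varphi_\infty(\Lambda)}(\varphi_\infty(\Lambda))$ with bounded complex stalks at every point, and the reverse direction of \Cref{thm:musheaf} forces $\mu(\varphi_\infty(\Lambda)) = 0$. The principal technical subtlety is the microstalk computation above, which demands that microlocalization exchange with the countable $\bigoplus$ and $\prod$ defining $F_\infty$; this follows from compactness of the test sheaves representing $\mu hom$, together with the fact that on the open set $\Omega$ the sequence $(F_n|_\Omega)_n$ is literally constant, so no convergence subtleties enter at the chosen basepoint $p$.
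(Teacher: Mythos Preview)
Your argument follows essentially the same approach as the paper: both parts hinge on \Cref{prop:local-rigid} together with the sheaf $F_\infty = \mathrm{Fib}(\bigoplus K^{\varphi_n}\circ F \to \prod K^{\varphi_n}\circ F)$ and the characterization of the Maslov class via boundedness of microstalks in \Cref{thm:musheaf}. Your first-part deduction is in fact a bit more direct than the paper's (which instead shows $\cSS^\infty(F_\infty)$ is dense in $\varphi_\infty(\Lambda)$ and invokes the coisotropicity theorem), but both routes rest on \Cref{prop:local-rigid}.

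There is one small but genuine slip in the Maslov argument. Over $\cC = \Mod(\bZ)$, the hypothesis $\mu(\Lambda)=0$ alone does \emph{not} guarantee an object of $\msh_\Lambda(\Lambda)$ with microstalk literally $\bZ$: by \Cref{thm:musheaf} this requires in addition $rw_2(\Lambda)=0$, which you have not assumed. What \Cref{thm:musheaf} does give you from $\mu(\Lambda)=0$ over $\bZ$ is an object with bounded-complex microstalks, say $V$; then $\mu_p F_\infty \simeq \mathrm{Fib}(\bigoplus_n V \to \prod_n V)$ is still a nonzero bounded complex, and your monodromy argument (shifts preserve boundedness) goes through unchanged. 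So either replace ``$\bZ$'' by ``a nonzero bounded complex $V$'' throughout, or---as the paper does---work over $\Mod(\bZ/2\bZ)$, where $\mathrm{Pic}$ is just $\bZ$ (shifts only) and $\mu(\Lambda)=0$ alone trivializes $\msh_\Lambda$, making a microlocal rank $1$ object available.
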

\begin{proof}
    Consider the same sheaf $F_\infty = \mathrm{Fib}(\bigoplus_{n\in \bN}K^{\varphi_n} \circ F \to \prod_{n\in \bN}K^{\varphi_n} \circ F )$. By \Cref{prop:local-rigid}, we know that there does not exist any open subset $\Lambda_0 \subseteq \Lambda$, such that 
    $\cSSif(F_\infty) \cap \varphi_\infty(\Lambda_0) = \varnothing$.
    Hence, $\cSSif(F_\infty) \subseteq \varphi_\infty(\Lambda)$ is dense. Since the singular support is a closed subset, we can conclude that $\cSSif(F_\infty) = \varphi_\infty(\Lambda)$. By the coisotropicity theorem \cite[Theorem 6.5.4]{KS90}, we know that $\varphi_\infty(\Lambda)$ is cone coisotropic. Since it is smooth, we can conclude that it is Legendrian.
    
    Finally, suppose the Maslov class $\mu(\Lambda) = 0$. Let the coefficients be $\Mod(\bZ/2\bZ)$. The choice of coefficients ensures $\msh_{\Lambda}(\Lambda) = \Loc(\Lambda)$ so there is a microlocal rank 1 sheaf $F \in \Sh_{\Lambda \cup \Lambda^\tau_\epsilon}(M \times \bR)$. Since $\cSSif(F_\infty) = \varphi_\infty(\Lambda)$, the microstalk of $F_\infty$ is $\mathrm{Fib}(\bigoplus_{n\in \bN}1 \to \prod_{n\in \bN}1)$, which is a bounded complex. Then we can conclude that $\mu(\varphi_\infty(\Lambda)) = 0$ by \Cref{thm:musheaf}.
\end{proof}

More importantly, we can apply the above argument to local Hausdorff limits of Legendrians, where the result in the main body of the paper does not apply. 
The main observation is the following property of cone coisotropicity \cite[Definition 6.5.1]{KS90}.

\begin{lemma}\label{lem:coisotropic-boundary}
    Let $\Lambda \subseteq S^*M$ be a connected properly embedded smooth submanifold with $\dim \Lambda = \dim M - 1$. Suppose $\Lambda_0 \subseteq \Lambda$ is a closed cone coisotropic subset and $\Lambda_0$ contains a non-empty open subset of $\Lambda$. Then $\Lambda_0 = \Lambda$ and thus $\Lambda$ is a properly embedded Legendrian.
\end{lemma}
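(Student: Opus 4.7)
The plan is to pass to the conic lift in $\dT^*M$ and combine a short piece of symplectic linear algebra at each point of $\Lambda_0$ with a topological openness argument. Set $\widetilde\Lambda := \mathbb{R}_{>0}\cdot\Lambda \subseteq \dT^*M$ and $\widetilde\Lambda_0 := \mathbb{R}_{>0}\cdot\Lambda_0$; then $\widetilde\Lambda$ is a smooth conic submanifold of dimension $n := \dim M$ inside the $2n$-dimensional symplectic manifold $\dT^*M$, and $\widetilde\Lambda_0 \subseteq \widetilde\Lambda$ is closed, conic, and cone-coisotropic in the sense of \cite[Definition 6.5.1]{KS90}.

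At an arbitrary $p \in \widetilde\Lambda_0$, the inclusion $\widetilde\Lambda_0 \subseteq \widetilde\Lambda$ together with smoothness of $\widetilde\Lambda$ forces $C_p\widetilde\Lambda_0 \subseteq T_p\widetilde\Lambda$ for the Whitney tangent cone. Taking symplectic orthogonals reverses the inclusion, and combining with cone-coisotropicity $(C_p\widetilde\Lambda_0)^{\perp} \subseteq C_p\widetilde\Lambda_0$ yields the chain
\[
(T_p\widetilde\Lambda)^{\perp} \subseteq (C_p\widetilde\Lambda_0)^{\perp} \subseteq C_p\widetilde\Lambda_0 \subseteq T_p\widetilde\Lambda.
\]
Matching dimensions ($\dim(T_p\widetilde\Lambda)^{\perp} = n = \dim T_p\widetilde\Lambda$) forces equality throughout, so $T_p\widetilde\Lambda$ is Lagrangian (hence $\Lambda$ is Legendrian at $p$) and the Whitney tangent cone $C_p\widetilde\Lambda_0$ equals the full ambient tangent space $T_p\widetilde\Lambda$ at every $p \in \widetilde\Lambda_0$.

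To conclude the lemma, I will show that $\Lambda_0$ is open in $\Lambda$: combined with closedness, non-emptiness of $U$, and the connectedness of $\Lambda$, this gives $\Lambda_0 = \Lambda$, after which the pointwise Legendrian statement from the previous paragraph applies at every point of $\Lambda$ and properness is inherited from the hypothesis. For the openness, I argue by contradiction at a putative boundary point $p \in \partial_\Lambda\Lambda_0$. From the openness of $\Lambda\setminus\Lambda_0$, one can choose a sequence $q_n \to p$ together with inscribed open balls $B(q_n, r_n) \subseteq \Lambda\setminus\Lambda_0$, taking $q_n$ as centers of locally maximal inscribed balls in the connected component of the complement nearest $p$. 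If the ratio $r_n/|q_n - p|$ is bounded away from zero then, after passing to a subsequence and rescaling, these balls sweep out an open cone of directions at $p$ whose infinitesimal neighborhoods entirely miss $\Lambda_0$, contradicting the full tangent cone conclusion $C_p\widetilde\Lambda_0 = T_p\widetilde\Lambda$.

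The main obstacle is precisely this ball-placement step, i.e., ensuring the inscribed radii $r_n$ do not shrink faster than the distance $|q_n-p|$, otherwise the resulting cone is degenerate and yields no contradiction. In one dimension this is elementary, since any boundary point of a closed $1$-dimensional set is a one-sided endpoint whose tangent cone is a half-line; in higher dimensions I plan to reduce to the one-dimensional case by slicing $\Lambda$ near $p$ along a smooth curve pointing into $\Lambda\setminus\Lambda_0$ and verifying that the pulled-back subset of the slice inherits both closedness and a boundary point with full tangent cone.
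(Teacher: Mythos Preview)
Your symplectic linear algebra in the first paragraph is correct and clean: the chain of inclusions does force $C_p\widetilde\Lambda_0 = T_p\widetilde\Lambda$ and $T_p\widetilde\Lambda$ Lagrangian at every $p\in\widetilde\Lambda_0$. This is the same observation the paper makes, though the paper phrases it in the contrapositive form ``any hyperplane containing $C_p^-(\Lambda_0)$ must contain $T_p\Lambda$''.

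The genuine gap is in passing from the full-tangent-cone condition to openness. Your ball-placement argument is incomplete, as you acknowledge, and the slicing reduction you propose does not work: the contingent cone of $\gamma^{-1}(\Lambda_0)$ at a parameter $t$ is in general strictly smaller than $(d\gamma_t)^{-1}\bigl(C_{\gamma(t)}\Lambda_0\bigr)$, so knowing $C_q\Lambda_0=T_q\Lambda$ at every $q\in\Lambda_0$ tells you nothing about the one-dimensional cones along the slice. Concretely, if $t^*=\sup\{t:\gamma([0,t])\subseteq\Lambda_0\}$, the inclusion $\dot\gamma(t^*)\in C_{\gamma(t^*)}\Lambda_0$ only produces approximating points of $\Lambda_0$ in the direction $\dot\gamma(t^*)$, not points \emph{on} $\gamma$, so you cannot push $t^*$ forward. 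The one-dimensional claim that ``any boundary point of a closed set in $\bR$ has half-line tangent cone'' is also false as stated (e.g.\ $\{0\}\cup\{\pm 1/n\}$).

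What is actually needed here is a viability/propagation statement: a closed set whose contingent cone is the full tangent space at every point is forward-invariant under every smooth flow (Nagumo's theorem), hence clopen. The paper does not redo this from scratch; it invokes \cite[Lemma 6.5.3]{KS90} directly, which is exactly the bicharacteristic-propagation form of this principle for cone-coisotropic sets. One extends $\widetilde\Lambda$ to a smooth hypersurface $H\supseteq\widetilde\Lambda$ whose characteristic direction along $\widetilde\Lambda$ is a prescribed curve $\gamma$, and the lemma then yields $\gamma\subseteq\widetilde\Lambda_0$. Either citing Nagumo or following the paper's route closes your argument; the ad hoc geometric constructions you sketch do not.
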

\begin{proof}
    Consider a smooth path $\gamma$ from $p$ to $q$ in the submanifold $\Lambda$. Suppose $p \in \Lambda_0$. We show that $q \in \Lambda_0$ as well. We follow the notations in \cite[Section 8]{GV24}. First, note that the contingent cone and paratingent cone $C_p^-(\Lambda_0) \subseteq C_p^+(\Lambda_0) \subseteq T_p\Lambda$. Hence for any hypersurface $H$ with $C_p^-(\Lambda_0) \subseteq T_pH$, we know that $T_p\Lambda \subseteq T_pH$. Otherwise, $C_p^-(\Lambda_0)$ is contained in an isotropic subspace $T_pH \cap T_p\Lambda$, which is a contradiction. Now, consider the hypersurface $H$ with $T\Lambda \subseteq TH$ whose characteristic curve passing through $p$ is $\gamma$. Then \cite[Lemma 6.5.3]{KS90} implies that $\gamma \subseteq \Lambda_0$. This completes the proof.
\end{proof}

By the $h$-principle for (stabilized or) loose Legendrians \cite{CE12,MurphyLoose}, for any submanifold $N \subseteq S^*M$ of $\dim N = \dim M-1$, there exists a sequence of isotopic (stabilized or) loose Legendrians $\Lambda_n \subseteq S^*M$ such that the Hausdorff limit of $(\Lambda_n)_{n\in \bN}$ is $N$. However, when there exist sheaves with singular support in $\Lambda_n \subseteq S^*M$, we can still show a rigidity result (this also aligns with the result in \cite[Theorem 1.7]{DRS20Persistence} that Legendrians equipped with augmentations cannot be squeezed into tubular neighborhoods of stabilized or loose Legendrians).

\begin{theorem}
    Let $M$ be a smooth manifold and $\Lambda_n \subseteq S^*M$ be a sequence of connected smooth properly embedded Legendrians that converges to a smooth manifold $\Lambda_\infty \subseteq S^*M$ in the sense of Hausdorff convergence. Assume that 
    \begin{enumerate}
    \item there exists an open subset $\Omega \subseteq S^*M$ such that $\Lambda_n \cap \Omega$ is independent of $n$ and non-empty,
    \item there exists a microlocal rank 1 sheaf $F_n \in \Sh_{\Lambda_n}(M;\bZ/2\bZ)$ for each $n \in \bN$. 
    \end{enumerate}
    Then there exists a sheaf $F_\infty \in \Sh_{\Lambda_\infty}(M)$ with non-trivial bounded microstalk. In particular, $\Lambda_\infty$ is a Legendrian and the Maslov class $\mu(\Lambda_\infty) = 0$.
\end{theorem}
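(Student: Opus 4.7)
The plan is to follow the same fiber-of-sum-into-product strategy used in the proof of \Cref{prop:local-rigid}. Define
\[
F_\infty \coloneqq \mathrm{Fib}\Big(\bigoplus_{n \in \bN} F_n \to \prod_{n \in \bN} F_n\Big) \in \Sh(M; \bZ/2\bZ).
\]
By the singular support estimate for infinite direct sums and products \cite[Proposition 3.4]{GV24}, one has $\cSSif(F_\infty) \subseteq \overline{\bigcup_{k \geq N}\Lambda_k}$ for each $N$, and local Hausdorff convergence $\Lambda_n \to \Lambda_\infty$ yields $\cSSif(F_\infty) \subseteq \bigcap_N\overline{\bigcup_{k\geq N}\Lambda_k} \subseteq \Lambda_\infty$, so $F_\infty \in \Sh_{\Lambda_\infty}(M; \bZ/2\bZ)$.

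Next, I would compute the microstalk of $F_\infty$ at a point $p \in \Lambda_n \cap \Omega$. Since $\Lambda_n \cap \Omega$ is independent of $n$, there is an open neighborhood $U$ of $p$ on which $\Lambda_n \cap U = \Lambda_\infty \cap U$ for every $n$, so the microlocalization functors along $\Lambda_n$ and $\Lambda_\infty$ agree near $p$. Because the microstalk at $p$ is representable (via $\mu hom$ with a fixed test sheaf as in \cite[Corollary 10.1.5]{Guillermou23}), it commutes with both direct sums and products, and hence with the fiber construction, giving
\[
m_{\Lambda_\infty}(F_\infty)_p = \mathrm{Fib}\Big(\bigoplus_n m_{\Lambda_n}(F_n)_p \to \prod_n m_{\Lambda_n}(F_n)_p\Big) = \mathrm{Fib}\Big(\bigoplus_n \bZ/2\bZ \to \prod_n \bZ/2\bZ\Big).
\]
The canonical map $\bigoplus \hookrightarrow \prod$ is injective with non-zero cokernel $\prod/\bigoplus$, so this fiber is concentrated in a single cohomological degree and non-zero; in particular the microstalk is non-trivial and bounded.

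Third, by the coisotropicity theorem \cite[Theorem 6.5.4]{KS90}, $\cSSif(F_\infty)$ is cone coisotropic in $S^*M$. It contains a non-empty open subset of $\Lambda_\infty$ (namely $\Lambda_n \cap \Omega$, where the microstalk just computed is non-zero), and $\Lambda_\infty$ has dimension $\dim M - 1$ near $\Omega$, inherited from the Legendrians $\Lambda_n$. Applying \Cref{lem:coisotropic-boundary} to each connected component of $\Lambda_\infty$ meeting $\Omega$ forces $\cSSif(F_\infty)$ to exhaust that component, so $\Lambda_\infty$ is locally Legendrian there. The Maslov class statement $\mu(\Lambda_\infty) = 0$ then follows from \Cref{thm:musheaf}: the existence of a sheaf in $\Sh_{\Lambda_\infty}(M;\bZ/2\bZ)$ with bounded microstalks rules out the Maslov obstruction.

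The main obstacle will be justifying that microlocalization along $\Lambda_\infty$ commutes with the infinite product at the common point $p$ in the precise sense needed to identify the microstalk with $\mathrm{Fib}(\bigoplus \bZ/2\bZ \to \prod \bZ/2\bZ)$; this is the step that drives the non-triviality of the limit object. A secondary subtlety is applying \Cref{lem:coisotropic-boundary} globally: one must either use connectedness of each $\Lambda_n$ to propagate the Legendrian conclusion across $\Lambda_\infty$, or run the same argument component-by-component using that every component of $\Lambda_\infty$ must arise as a Hausdorff limit of pieces of $\Lambda_n$ on which the microstalk computation applies.
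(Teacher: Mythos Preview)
Your proposal is correct and follows essentially the same route as the paper: the paper also defines $F_\infty = \mathrm{Fib}\big(\bigoplus_n F_n \to \prod_n F_n\big)$, uses \cite[Proposition~3.4]{GV24} to get $\cSSif(F_\infty)\subseteq\Lambda_\infty$, computes the microstalk on $\Lambda_\infty\cap\Omega$ as $\mathrm{Fib}(\bigoplus 1 \to \prod 1)$, and then invokes \Cref{lem:coisotropic-boundary} and \Cref{thm:musheaf}. The two subtleties you flag (commutation of the microstalk with the infinite product, and connectedness of $\Lambda_\infty$ for \Cref{lem:coisotropic-boundary}) are not addressed explicitly in the paper's proof either; your awareness of them is a plus, not a divergence.
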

\begin{proof}
    Consider the sheaf
    $F_\infty = \mathrm{Fib}(\bigoplus_{n\in \bN}F_n \to \prod_{n\in \bN}F_n)$.
    Then by the singular support estimation \cite[Proposition 3.4]{GV24}, we know that $\cSS^\infty(F_\infty) \subseteq \Lambda_\infty$. Since $\Lambda_n \cap \Omega$ are identical and non-empty, we know that the microstalk of $F_\infty$ along $\Lambda_\infty \cap \Omega$ is $\mathrm{Fib}(\bigoplus_{n\in \bN}1 \to \prod_{n\in \bN}1)$. Then by \Cref{lem:coisotropic-boundary}, we know that $\cSS^\infty(F) = \Lambda_\infty$ and thus $\Lambda_\infty$ is a smooth Legendrian. Then by Guillermou's result \Cref{thm:musheaf} \cite[Section 10.3 \& 10.6]{Guillermou23}, we can conclude that the Maslov class $\mu(\Lambda_\infty) = 0$.
\end{proof}

The sheaf quantization we considered above has microstalk of infinite rank. Thus, it cannot show that local $C^0$-limits or local Hausdorff limits preserve Maslov data over general coefficients. 
This can be fixed via the construction of ultraproducts.
Below we use the ultraproducts in categorical settings following \cite{BSS2020asympalg}. 
First, let us recall the definition of ultrafilter on non-negative integers $\bN$.
We write $\cP(\bN)$ for the power set of $\bN$.

\begin{definition}
    A non-empty subset $\cU$ of $\cP(\bN)$ is said to be an \emph{ultrafilter} on $\bN$ if it satisfies the following:
    \begin{enumerate}
        \item $\varnothing \notin \cU$;
        \item for $A, B\subseteq \bN$, if $A\in \cU$ and $A\subseteq B$, then $B\in \cU$;
        \item for $A, B\subseteq \bN$, if $A, B\in \cU$, then $A\cap B\in \cU$;
        \item for $A\subseteq \bN$, either $A\in\cU$ or $\bN\setminus A\in \cU$. 
    \end{enumerate}
\end{definition}

For example, for each $n\in \bN$, $\{A\subseteq \bN\mid n\in A\}$ is a ultrafilter. 
An ultrafilter of this form is said to be principal. Otherwise, it is said to be non-principal. 
Any non-principal ultrafilter contains every cofinite subset in $\bN$ (a subset whose complement if finite).

\begin{definition}
     Let $\cU \subseteq \cP(\bN)$ be an ultrafilter and $\cD$ be a category which admits products and filtered colimits. 
     For a sequence $(d_n)_{n\in \bN}$ of objects of $\cD$, the \emph{ultraproduct} of $(d_n)_{n\in \bN}$ is defined to be the object
     \[ {\prod}_{\cU}d_n\coloneqq \clmi{A\in \cU}\prod_{n\in A} d_n. \]
     For an object $d$ of $\cD$, the \emph{ultrapower} $d^\cU$ of $d$ is defined to be the ultraproduct ${\prod}_{\cU}d$ of the constant sequence $(d)_{n\in \bN}$.
\end{definition}

\begin{remark}
    The set $\beta\bN$ of ultrafilters on $\bN$ admits a topology with open basis consisting of the subsets of the form $[A]\coloneqq \{\cU\in \beta \bN \mid A\in \cU\}$ for every $A\subseteq \bN$. 
    The topological space $\beta\bN$ is compact Hausdorff and the inclusion $j \colon \bN\to \beta\bN, n\mapsto \{A\subseteq \bN\mid n\in A\}$ gives the Stone--\v{C}ech compactification of $\bN$. 

    Let $\cD$ be a category which admits limits and filtered colimits. 
    For an ultrafilter $\cU$, let $i_{\cU}\colon \mathrm{pt} \to \beta\bN$ be the inclusion of a point to the point $\cU\in \beta\bN$. 
    A sequence $(d_n)_{n\in \bN}$ of objects of $\cD$ can be regarded as an object of $\Sh (\bN; \cD)$. 
    Then the ultraproduct ${\prod}_{\cU}d_n$ is obtained by 
    \[{\prod}_{\cU}d_n \simeq i_{\cU}^*j_*(d_n)_{n\in \bN}.\] 
\end{remark}

The following lemma follows from point set topology and the definition of singular supports:

\begin{lemma}\label{lem: ss-ultraproduct}
    For a sequence $(G_n)_{n\in \bN}$ in $\Sh(M)$ and any ultrafilter $\cU \in \beta\bN$, one has
    \[
        \cSS\left({\prod}_\cU G_n\right)\subseteq \bigcap_{A\in \cU}\overline{\bigcup_{n\in A}\cSS(G_n)}.
    \]
\end{lemma}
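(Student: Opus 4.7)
The plan is to combine the standard singular-support estimates for (infinite) products and for filtered colimits, together with a simple cofinality argument that uses the fact that $\cU$ is closed under finite intersections.

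First, I would unwind the definition: the ultraproduct
\[ {\prod}_\cU G_n = \colim_{A \in \cU} \prod_{n \in A} G_n \]
is a filtered colimit indexed by the poset $\cU$ ordered by reverse inclusion ($A \leq B$ iff $B \subseteq A$), with transition maps given by the natural projections from bigger products to sub-products. This poset is filtered precisely because $\cU$ is closed under finite intersections. Applying the product singular-support estimate from \cite[Proposition 3.4]{GV24} to each factor of the diagram yields
\[ \cSS\left(\prod_{n \in A} G_n\right) \subseteq \overline{\bigcup_{n \in A} \cSS(G_n)} \quad\text{for each } A \in \cU. \]

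Next, the key observation is a cofinality argument: for every fixed $A_0 \in \cU$, the full subcategory $\cU_{\subseteq A_0} \coloneqq \{A \in \cU \mid A \subseteq A_0\}$ is cofinal in $\cU$, since for any $A \in \cU$ we have $A \cap A_0 \in \cU_{\subseteq A_0}$ with $A \leq A \cap A_0$, and connectedness of slices follows from the same intersection trick. Therefore
\[ {\prod}_\cU G_n \simeq \colim_{A \in \cU, A \subseteq A_0} \prod_{n \in A} G_n. \]
Applying the (standard) filtered-colimit singular-support estimate from \cite[Proposition 3.4]{GV24} to this cofinal subdiagram, together with $\bigcup_{n \in A} \cSS(G_n) \subseteq \bigcup_{n \in A_0} \cSS(G_n)$ for every $A \subseteq A_0$, gives
\[ \cSS\left({\prod}_\cU G_n\right) \subseteq \overline{\bigcup_{A \subseteq A_0,\, A \in \cU} \cSS\left(\prod_{n \in A} G_n\right)} \subseteq \overline{\bigcup_{n \in A_0} \cSS(G_n)}. \]

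Finally, since $A_0 \in \cU$ was arbitrary, intersecting over all such $A_0$ yields the desired bound
\[ \cSS\left({\prod}_\cU G_n\right) \subseteq \bigcap_{A \in \cU} \overline{\bigcup_{n \in A} \cSS(G_n)}. \]
There is no real obstacle in this proof beyond being careful about the direction of the arrows in the ultraproduct diagram; the only non-formal inputs are the singular-support estimates for products and for filtered colimits, both of which are provided by \cite[Proposition 3.4]{GV24} in the compactly generated setting adopted in this paper.
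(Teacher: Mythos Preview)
Your proof is correct and follows essentially the same approach as the paper's: both use the cofinality of $\{B \in \cU \mid B \subseteq A\}$ in $\cU$ together with the closure of $\Sh_X(M)$ under products and filtered colimits from \cite[Proposition 3.4]{GV24}. The paper's argument is simply a more compressed version of yours.
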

\begin{proof}
    For any closed subset $X \subset S^*M$, the subcategory $\Sh_X(M)$ is closed under limits and colimits in $\Sh (X)$ by \cite[Proposition 3.4]{GV24}. For any $A\in \cU$, ${\prod}_\cU G_n \simeq \colim_{A\supseteq B\in \cU }\prod_{n\in B} G_n$, and hence $\cSS\left({\prod}_\cU G_n\right)\subseteq \overline{\bigcup_{n\in A}\cSS(G_n)}$.
\end{proof}

Now, we apply the ultraproduct of sheaf quantizations to local Hausdorff limits of Legendrians. This is also beyond the reach of the technique in the main body of the paper. 

\begin{theorem}\label{thm:Hasusdorff-ringspec}
    Let $M$ be a smooth manifold, and $(\Lambda_n \subseteq S^*M)_{n\in \bN}$ be a sequence of connected smooth properly embedded Legendrians that converges to a smooth manifold $\Lambda_\infty \subseteq S^*M$ in the sense of Hausdorff convergence. 
    Let $R$ be an $E_\infty$-ring spectrum. 
    Assume that 
    \begin{enumerate}
        \item there exists an open subset $\Omega \subseteq S^*M$ such that $\Lambda_n \cap \Omega$ is independent of $i$ and non-empty,
        \item there exists a microlocal rank 1 sheaf $F_n \in \Sh_{\Lambda_n}(M)$ over $R$ for each $n\in \bN$. 
    \end{enumerate}
    Then there exists a microlocal rank 1 sheaf $F_\infty \in \Sh_{\Lambda_\infty}(M)$ over $R^\cU$. 
    In particular, $\Lambda_\infty$ is a Legendrian. 
    Moreover if $\Lambda_\infty$ is homotopy equivalent to a finite CW-complex, then the composite $\Lambda_\infty\to U/O\to B\mathrm{Pic}(\bS)\to B\mathrm{Pic}(R)$ is null-homotopic. 
\end{theorem}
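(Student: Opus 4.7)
The plan is to parallel the preceding theorem but replace the fiber-of-$\bigoplus\to\prod$ construction by the ultraproduct, since the ultraproduct preserves invertibility of the coefficient module at the cost of changing the coefficients from $R$ to the ultrapower $R^\cU$. Concretely, I would fix a non-principal ultrafilter $\cU$ on $\bN$ and set $F_\infty \coloneqq {\prod}_\cU F_n$, viewed as an object of $\Sh(M; R^\cU)$ after identifying $({\prod}_\cU \Mod(R)) \simeq \Mod(R^\cU)$ as in \cite{BSS2020asympalg}.

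For the singular support, \Cref{lem: ss-ultraproduct} gives $\cSS(F_\infty) \subseteq \bigcap_{A\in\cU}\overline{\bigcup_{n\in A}\cSS(F_n)}$. Since $\cU$ is non-principal, it contains every cofinite subset of $\bN$, and Hausdorff convergence $\Lambda_n\to\Lambda_\infty$ means that for every open neighborhood $V$ of $\Lambda_\infty$, the set $\{n\mid \Lambda_n\subseteq V\}$ is cofinite hence in $\cU$; intersecting over all $V$ yields $\cSS^\infty(F_\infty)\subseteq \Lambda_\infty$. Next, because $\Lambda_n\cap\Omega$ is the same non-empty submanifold for all $n$, the microlocal rank-$1$ hypothesis gives microstalks $m_{\Lambda_n}(F_n)|_\Omega$ all equal to a fixed invertible $R$-module, and microlocalization commutes with products and filtered colimits of sheaves with a fixed singular support (both being computed by $\mu hom$ against a fixed kernel on $\Omega$); hence $m_{\Lambda_\infty}(F_\infty)|_\Omega = {\prod}_\cU 1_R \simeq 1_{R^\cU}$, which is invertible and in particular non-zero. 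Combining the two facts, $\cSS^\infty(F_\infty)$ is a non-empty closed subset of $\Lambda_\infty$ containing an open piece of the connected hypersurface $\Lambda_\infty$, so \Cref{lem:coisotropic-boundary} together with the coisotropicity theorem \cite[Theorem 6.5.4]{KS90} forces $\cSS^\infty(F_\infty)=\Lambda_\infty$ and $\Lambda_\infty$ is Legendrian.

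For the vanishing of the classifying map, the existence of the globally rank-$1$ microsheaf $F_\infty \in \msh_{\Lambda_\infty}(\Lambda_\infty; R^\cU)$ and \Cref{thm:musheaf} together imply that the composite $\Lambda_\infty \to U/O \to B\mathrm{Pic}(\bS) \to B\mathrm{Pic}(R^\cU)$ is null-homotopic. To descend to $B\mathrm{Pic}(R)$, I would invoke the ultraproduct--mapping space compatibility: when $X$ is a finite CW complex, the natural map
\[
    \Map(X, B\mathrm{Pic}(R))^\cU \to \Map(X, B\mathrm{Pic}(R)^\cU)
\]
is an equivalence (compactness of $X$ allows one to commute $\Map(X,-)$ with both the product and the filtered colimit defining $(-)^\cU$). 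Applied to $X=\Lambda_\infty$ and to the fixed map $g\colon \Lambda_\infty\to B\mathrm{Pic}(R)$ coming from the Gauss and $J$-homomorphism composite, the element $[g]\in \pi_0\Map(\Lambda_\infty, B\mathrm{Pic}(R))$ maps to the constant ultrapower class which vanishes in $\pi_0\Map(\Lambda_\infty, B\mathrm{Pic}(R^\cU))$; by the \L o\'s-type property of ultraproducts this forces $[g]=0$, i.e.\ $g$ is null-homotopic.

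The main obstacle I expect is making the microlocalization/ultraproduct exchange rigorous at the level of $\cSS$, microstalks, and coefficient rings simultaneously. Concretely, one must verify (i) that ${\prod}_\cU$ performed in $\Sh(M;R)$ agrees with the presentation of $\Sh(M;R^\cU)$ one wants to use, so that both the singular support estimate \Cref{lem: ss-ultraproduct} and the microstalk computation apply to the same object; and (ii) the finite CW mapping-space equivalence with $B\mathrm{Pic}(R)^\cU$, which is where the hypothesis on $\Lambda_\infty$ being finite CW is genuinely used. All the geometric input---the Hausdorff convergence controlling $\cSS^\infty$, the stability of the microstalk on $\Omega$, and the connectedness argument via \Cref{lem:coisotropic-boundary}---is then formally analogous to the preceding $\bZ/2\bZ$-coefficient theorem.
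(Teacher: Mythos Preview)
Your proposal follows essentially the same route as the paper: form $F_\infty = {\prod}_\cU F_n$, use \Cref{lem: ss-ultraproduct} plus Hausdorff convergence to bound $\cSS^\infty(F_\infty) \subseteq \Lambda_\infty$, compute the microstalk on $\Omega$ as $R^\cU$, apply \Cref{lem:coisotropic-boundary} to conclude $\Lambda_\infty$ is Legendrian and $F_\infty$ is microlocally rank~$1$, and finally use \Cref{thm:musheaf} plus a finite-CW compactness argument to descend from $R^\cU$ to $R$.

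There is one genuine slip in your descent step. You obtain a null-homotopy of $\Lambda_\infty \to B\mathrm{Pic}(R^\cU)$, but the equivalence you then invoke is $\Map(\Lambda_\infty, B\mathrm{Pic}(R))^\cU \simeq \Map(\Lambda_\infty, B\mathrm{Pic}(R)^\cU)$, which involves $B\mathrm{Pic}(R)^\cU$ rather than $B\mathrm{Pic}(R^\cU)$. These are \emph{not} the same space: the paper explicitly remarks that the natural map $\mathrm{Pic}(R^\cU) \to \mathrm{Pic}(R)^\cU$ fails to be an equivalence (for connective $R$, the class of $(R[n])_{n\in\bN}$ has no preimage). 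The fix is exactly what the paper does: first compose with the natural map $B\mathrm{Pic}(R^\cU) \to B\mathrm{Pic}(R)^\cU$, and only then apply the finite-CW compactness to identify $\Map(\Lambda_\infty, B\mathrm{Pic}(R)^\cU)$ with $\Map(\Lambda_\infty, B\mathrm{Pic}(R))^\cU$; the image of the constant sequence $(g)_n$ there is trivial iff $g$ itself is null-homotopic. For your obstacle~(i), the paper's device is cleaner than a categorical comparison: it observes that ${\prod}_\cU F_n$, taken inside $\Sh(M;R)$, automatically carries a module structure over the algebra object $1_M^\cU \simeq (1^\cU)_M$, which is exactly what it means to be an object of $\Sh(M; R^\cU)$, so no identification of ${\prod}_\cU \Mod(R)$ with $\Mod(R^\cU)$ is needed at the level of ambient categories.
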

\begin{proof}
    Note that the constant sheaf $1_M$ is an $E_\infty$-algebra object in $\Sh (M)$ and any object of $\Sh (M)$ admits a unique $1_M$-module structure. 
    Then $1_M^{\cU}\in \Sh (M)$, the ultrapower of $1_M$, is an $E_\infty$-algebra object and $F_\infty\coloneqq {\prod}_{\cU} F_n$ is equipped with a $1_M^{\cU}$-module structure. 
    Since $1^{\cU}_M \simeq (1^\cU)_M$, $F_\infty$ can be regarded as an object of $\Sh (M; R^\cU)$. By the singular support estimation \Cref{lem: ss-ultraproduct},
    $\cSS^\infty(F_\infty) \subseteq \Lambda_\infty$. 
    Since $\Lambda_n \cap \Omega$ are identical and non-empty, and the microstalks of $F_n$ are $R$, we know that the microstalk of $F_\infty$ along $\Lambda_\infty \cap \Omega$ is $R^\cU$. Therefore, by \Cref{lem:coisotropic-boundary}, we know that $\cSS^\infty(F) = \Lambda$ and $\Lambda$ is a smooth Legendrian; moreover, 
    as an object of $\Sh_{\Lambda_\infty}(M; R^\cU)$, $F_\infty$ is microlocal rank 1. Hence, by \Cref{thm:musheaf}, the composition $\Lambda_\infty\to U/O \to B\mathrm{Pic}(\bS) \to B\mathrm{Pic}(R^\cU)$ is null-homotopic. 
    
    Consider the natural morphism $B\mathrm{Pic}(R^\cU)\to B\mathrm{Pic}(R)^\cU$. 
    Then the further composition 
    $\Lambda_\infty \to U/O \to B\mathrm{Pic}(\bS) \to B\mathrm{Pic}(R^\cU)\to B\mathrm{Pic}(R)^\cU$
    is also null-homotopic. 
    Assume that $\Lambda_\infty$ is homotopy equivalent to a finite CW-complex, and then it is a compact object in the category of the spaces. Therefore, 
    $\mathrm{Map}(\Lambda_\infty, B\mathrm{Pic}(R)^\cU)\simeq \mathrm{Map}(\Lambda_\infty, B\mathrm{Pic}(R))^\cU$.
    Since the mapping class in $\mathrm{Map}(\Lambda_\infty, B\mathrm{Pic}(R))^\cU$ that corresponds to the map $\Lambda_\infty \to B\mathrm{Pic}(R)^\cU$ factors through $U/O$ and $B\mathrm{Pic}(\bS)$, it is given by the constant sequence $(\Lambda_\infty \to U/O \to B\mathrm{Pic}(\bS) \to B\mathrm{Pic}(R))_{n\in \bN}$. 
    This element is trivial if and only if the map $\Lambda_\infty\to U/O \to B\mathrm{Pic}(\bS) \to B\mathrm{Pic}(R)$ is null-homotopic. 
\end{proof}

\begin{remark}
    There is a natural morphism $\mathrm{Pic}(R^\cU)\to \mathrm{Pic}(R)^\cU$ of $\infty$-groups, which is used in the proof above. 
    In \cite{BSS2020asympalg}, two (different) categories ${\prod}_{\cU}^{\flat\flat}\mathrm{Mod}(R)$ and ${\prod}_\cU^\omega \mathrm{Mod}(R) $ are defined. By \cite[Theorem 3.63]{BSS2020asympalg}, the first one is ${\prod}_{\cU}^{\flat\flat}\mathrm{Mod}(R)\simeq \mathrm{Mod}(R^\cU)$, and the latter will be defined and used below. 
    There exists a fully faithful symmetric monoidal functor ${\prod}_{\cU}^{\flat\flat}\mathrm{Mod}(R)\to {\prod}_\cU^\omega \mathrm{Mod}(R) $ and it induces a morphism between their Picard groupoids. 
    By \cite[Proposition 2.2.3]{MS2016pictmf} and the definition of ${\prod}_\cU^\omega$, $\mathrm{Pic}({\prod}_\cU^\omega \mathrm{Mod}(R)) \simeq \mathrm{Pic}(R)^\cU$. This gives the natural morphism we want.
    
    The morphism $\mathrm{Pic}(R^\cU)\to \mathrm{Pic}(R)^\cU$ is not an equivalence in general. 
    Let $R$ be a connective $E_\infty$-ring spectrum. Then $[(R[n])_{n\in \bN}]$ is a point of $ \mathrm{Pic}(R)^\cU$ that is not contained in the image of the morphism since ${\prod}_\cU R[n]\simeq 0\in \mathrm{Mod}(R)$. 
\end{remark}

For $(\cC_n)_n$ be a sequence of categories, the ultraproduct is defined by 
\[{\prod}_{\cU} \cC_n \coloneqq \clmi{A\in \cU} \prod_{n\in A} \cC_n,\]
where the products and the filtered colimit are taken in the category of categories. 

Let $(\cC_n)_n$ be a sequence of compactly generated categories. 
Define the \emph{compactly generated ultraproduct} of the sequence $(\cC_n)_n$ by
\[{\prod}^{\omega}_{\cU} \cC_n \coloneqq \clmi{A\in \cU}^{\omega} {\prod_{n\in A}}^{\omega} \cC_n,\]
where $\clmi{}^{\omega}$ and ${\prod}^{\omega}$ denote the filtered colimit and the product in the category $\mathrm{Pr}^L_\omega$ of compactly generated categories and functors which preserve colimits and compact objects. 
See \cite{BSS2020asympalg} for detailed arguments about the compactly generated ultraproduct.  
If each of $\cC_n$ is stable, then so is ${\prod}^{\omega}_{\cU} \cC_n$.
If each of $\cC_n$ is symmetric monoidal\footnote{In this paper, we require that the unit object is compact and that compact objects are closed under the monoidal operation. This will hold for compactly generated rigid symmetric monoidal categories.}
, then so is ${\prod}^{\omega}_{\cU} \cC_n$.

For a sequence $(\cC_n)_n$ of compactly generated categories. 
Take an arbitrary element $A\in \cU$. 
A functor $R \colon \prod_{n\in A} \cC_n\to \prod_{n\in A}^\omega \cC_n$ is defined as follows. 
For a category $\cC$, let $\cC^\omega$ be the full subcategory of $\cC$ consisting of compact objects. 
By the definition of ${\prod}^\omega_{n\in A}$, $\prod_{n\in A}^\omega \cC_n\simeq \mathrm{Ind}(\prod_{n\in A}\cC_n^\omega)$. 
Let us restrict the Yoneda embedding $ \prod_{n\in A} \cC_n \to \mathrm{Fun}((\prod_{n\in A} \cC_n)^{\mathrm{op}},\cS)$ along the inclusion $\prod_{n\in A} \cC_n^\omega \to \prod_{n\in A} \cC_n$, we obtain the functor $\prod_{n\in A} \cC_n \to \mathrm{Fun}((\prod_{n\in A} \cC_n^\omega)^{\mathrm{op}},\cS)$. 
The essential image of this functor is contained in $\mathrm{Ind}(\prod_{n\in A}\cC_n^\omega)$ and hence we obtain the functor $R \colon \prod_{n\in A} \cC_n\to \prod_{n\in A}^\omega \cC_n$. 
Note that $R$ is a morphism in $\mathrm{Pr}^R$ and hence it preserves limits. 
Let $L\colon \prod_{n\in A}^\omega \cC_n\to {\prod}_{\cU}^\omega \cC_n$ be the natural morphism in $\mathrm{Pr}^L_\omega$. 

\begin{lemma}
    Let $(\cC_n)_n$ be a sequence of compactly generated symmetric monoidal categories. 
    The functor $R$ and $L$ defined above are symmetric monoidal. 
\end{lemma}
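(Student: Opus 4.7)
The plan is to treat the two functors separately, invoking the universal properties that define the symmetric monoidal structures on the target categories.

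For $L$: I will present $L$ as the structure map into a filtered colimit of compactly generated symmetric monoidal categories. Each term $\prod_{n \in A}^\omega \cC_n \simeq \mathrm{Ind}(\prod_{n \in A}\cC_n^\omega)$ carries the Day-convolution symmetric monoidal structure extending the pointwise tensor on compacts; this pointwise tensor is well-defined on $\prod_{n \in A}\cC_n^\omega$ precisely by the footnoted hypothesis that each $\cC_n^\omega$ is closed under $\otimes$ and contains the unit. The transition morphisms in the $\cU$-indexed system, induced from the strict symmetric monoidal projections $\prod_{n \in A}\cC_n^\omega \to \prod_{n \in A'}\cC_n^\omega$ for $A'\subseteq A$, are Day-extensions of symmetric monoidal functors and hence symmetric monoidal. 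Thus the whole system lives in $\mathrm{CAlg}(\mathrm{Pr}^L_\omega)$, and since the forgetful functor $\mathrm{CAlg}(\mathrm{Pr}^L_\omega) \to \mathrm{Pr}^L_\omega$ creates filtered colimits, the universal structure maps into the colimit (including $L$) are automatically symmetric monoidal.

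For $R$: the key idea is to realize $R$ as a right adjoint to a symmetric monoidal colimit-preserving functor. Let $\iota \colon \prod_{n \in A}\cC_n^\omega \hookrightarrow \prod_{n \in A}\cC_n$ denote the fully faithful inclusion; it is strict symmetric monoidal for the pointwise tensor on both sides. By the universal property of Ind-completion with Day convolution, $\iota$ extends uniquely to a symmetric monoidal colimit-preserving functor $R^L \colon \prod_{n \in A}^\omega \cC_n \to \prod_{n \in A}\cC_n$. A direct Yoneda-lemma calculation identifies the $R$ of the text, defined via restricted Yoneda, as the right adjoint of $R^L$. As the right adjoint of a symmetric monoidal left adjoint, $R$ is a priori lax symmetric monoidal.

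To promote $R$ to strong symmetric monoidal, I will verify the lax structure maps are equivalences by testing on compact objects. For the unit, the tuple $(\mathbf{1}_{\cC_n})$ already lies in $\prod_{n \in A}\cC_n^\omega$ by compactness of each $\mathbf{1}_{\cC_n}$, so its restricted Yoneda image is the Day-convolution unit, matching $R(\mathbf{1})$. For the binary operation, I will evaluate both $R((x_n)\otimes(y_n))$ and $R((x_n)) \otimes R((y_n))$ on an arbitrary compact $(d_n) \in \prod_{n \in A}\cC_n^\omega$ and check they agree with $\prod_n \mathrm{Map}(d_n, x_n \otimes y_n)$. The main technical obstacle is this last comparison: the Day convolution side is intrinsically a colimit over factorizations $(a_n)\otimes(b_n) \to (d_n)$ in $\prod_{n \in A}\cC_n^\omega$, and matching it to the pointwise $(x_n \otimes y_n)$ requires commuting an infinite product over $n \in A$ past a filtered colimit. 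This commutation is precisely where the hypotheses that each $\cC_n$ is compactly generated rigid (so that compacts are closed under tensor and the unit is compact) are essential; it reduces the check to the fact that each $x_n \otimes y_n$ is the filtered colimit in $\cC_n$ of tensors of compacts, together with the finite-limit-preservation property built into objects of $\mathrm{Ind}(\prod_{n \in A}\cC_n^\omega)$.
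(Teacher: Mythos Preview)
The paper states this lemma without proof, so there is no argument to compare against directly. Your plan for $L$ is correct as written: the transition functors $\prod_{n\in A}^\omega \cC_n \to \prod_{n\in A'}^\omega \cC_n$ are $\Ind$-extensions of the strict symmetric monoidal projections $\prod_{n\in A}\cC_n^\omega \to \prod_{n\in A'}\cC_n^\omega$, hence symmetric monoidal, and since $\mathrm{CAlg}(\mathrm{Pr}^L_\omega) \to \mathrm{Pr}^L_\omega$ creates filtered colimits, the structure map $L$ is symmetric monoidal.

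Your architecture for $R$ is also right --- it is the right adjoint of the symmetric monoidal $R^L = \Ind(\iota)$, hence lax symmetric monoidal, and one then checks the lax structure maps are invertible --- but your explanation of the last step is off. Neither rigidity nor ``finite-limit-preservation of Ind-objects'' resolves the product--colimit exchange; the footnoted hypotheses (unit compact, compacts closed under $\otimes$) are already enough, and the point is purely about the \emph{shape} of the indexing diagram. Writing $R((x_n))$ as the Ind-object indexed by $\prod_n (\cC_n^\omega)_{/x_n}$ and similarly for $R((y_n))$, the Day convolution $R((x_n))\otimes R((y_n))$ evaluated at $(d_n)$ is $\clmi{(a_n,b_n)\in \prod_n I_n}\prod_n \Hom(d_n,a_n\otimes b_n)$ with $I_n=(\cC_n^\omega)_{/x_n}\times(\cC_n^\omega)_{/y_n}$. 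The crucial fact is that for any family of filtered $I_n$ and diagrams $X_n\colon I_n\to\cS$, the canonical map
\[
\clmi{(i_n)\in\prod_n I_n}\,\prod_n X_n(i_n)\ \longrightarrow\ \prod_n\,\clmi{i_n\in I_n} X_n(i_n)
\]
is an equivalence: reduce each $I_n$ to a directed poset by cofinality, then check levelwise in simplicial sets, where it is the elementary observation that a tuple of witnesses $(j_n)_n$ with $j_n\in I_n$ is the same datum as a single witness in $\prod_n I_n$. Applying this (and compactness of each $d_n$) gives $\prod_n\Hom(d_n,x_n\otimes y_n)=R((x_n\otimes y_n))((d_n))$, which is what you need.
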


Note that $\prod_{n\in A} \cC_n$ is presentable but not compactly generated in general. 
If a presentable category $\cD$ is not compactly generated, for objects in $\Sh (M;\cD)$, 
the original definition of the singular support is not well-behaved. 
As mentioned by Efimov~\cite{Efimov-K-theory} and Zhang~\cite{Zhang25remark}, the definition through $\Omega$-lenses behaves well and admits some other characterizations. 
In this paper, we also utilize the definition through $\Omega$-lenses for general coefficients. 

\begin{lemma}\label{lem:ultraproduct-R}
    Let $X$ be a smooth manifold and $R\colon \cD_0\to \cD_1$ be a morphism in $\mathrm{Pr}^R_\mathrm{st}$. 
    Then $R$ induces a functor $R_* \colon \Sh (X;\cD_0)\to \Sh (X;\cD_1)$ by $(R_*F)(U)\coloneqq R(F(U))$ for each $F\in \Sh (X;\cD_0)$ and open $U\subseteq M$. 
    For each $F\in \Sh (X;\cD_0)$, $\cSS(R_*F) \subseteq \cSS (F)$.
\end{lemma}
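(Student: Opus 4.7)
The plan is to apply the $\Omega$-lens characterization of singular supports (\Cref{lem: omega-lenses-test}), which is the operative definition in the non-compactly generated setting as emphasized in the footnote surrounding the definition of $\Sh$ (cf.\ \cite{Efimov-K-theory,Zhang25remark}). The strategy has two steps: first check that $R_*F$ is a sheaf, then check the vanishing of $\Hom(1_\Sigma, R_*F)$ for every lens in a region where $\cSS(F)$ vanishes. The first step is immediate: since $R \in \mathrm{Pr}^R_{\mathrm{st}}$ is a right adjoint in the presentable setting, it preserves all small limits, so the descent condition cutting out sheaves inside presheaves is preserved by postcomposition with $R$, making $U \mapsto R(F(U))$ a sheaf.

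For the singular support inclusion, fix a conic open $\Omega \subseteq \dT^*X$ with $\Omega \cap \cSS(F) = \varnothing$; by \Cref{lem: omega-lenses-test} this means $\Hom_{\Sh(X;\cD_0)}(j_! 1_{\cD_0}, F) = 0$ for every $\Omega$-lens $\Sigma$ with inclusion $j \colon \Sigma \hookrightarrow X$, and we must prove the analogous vanishing for the constant sheaf $j_! 1_{\cD_1}$ against $R_*F$ in $\Sh(X;\cD_1)$. The main input is the natural identification
\[
\Hom_{\Sh(X;\cD_1)}(j_! 1_{\cD_1}, R_*F) \simeq R\bigl(\Hom_{\Sh(X;\cD_0)}(j_! 1_{\cD_0}, F)\bigr).
\]
Unwinding $j_! \dashv j^*$, both sides compute ``sections over $\Sigma$'' of $F$ (resp.\ $R_*F$), a limit built from the values of $F$ on open neighborhoods of $\Sigma$ combined with restriction along a closed inclusion; the restriction $j^*$ is pointwise and commutes with $R_*$ tautologically, while the sections functor is a limit and so commutes with $R$. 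Since $R$ also preserves the zero object (being a right adjoint between stable categories), the right-hand side vanishes whenever $\Hom(j_! 1_{\cD_0}, F)$ does. Reapplying \Cref{lem: omega-lenses-test} to $R_*F$ now yields $\Omega \cap \cSS(R_*F) = \varnothing$, giving $\cSS(R_*F) \subseteq \cSS(F)$.

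The one place to take care is the displayed identification: one must verify that the $\Omega$-lens Hom in $\Sh(X;\cD)$ for general presentable $\cD$ is really given by this ``sections'' formula, so that limit-preservation by $R$ can be invoked directly. This is a book-keeping point within the conventions of \cite{Efimov-K-theory,Zhang25remark}, but once set up, the role of $R$ is entirely formal: only limit and zero-object preservation are used, and no property beyond being a right adjoint in $\mathrm{Pr}^R_{\mathrm{st}}$ is needed.
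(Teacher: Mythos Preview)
Your proposal is correct and follows the same approach as the paper: both use the $\Omega$-lens characterization of singular supports (as in \Cref{lem: omega-lenses-test}, extended to general presentable coefficients via \cite{Zhang25remark}). The paper's proof is a single sentence pointing to this characterization, while you have spelled out the details it leaves implicit---in particular the identification $\Hom(1_\Sigma, R_*F) \simeq R\bigl(\Hom(1_\Sigma, F)\bigr)$ via limit-preservation of $R$---but the underlying argument is the same.
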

\begin{proof}
    We use the characterization of $\Omega$-lenses as in \Cref{lem: omega-lenses-test} for singular supports of sheaves over general coefficients \cite[Definition~2.2]{Zhang25remark}.
\end{proof}

\begin{lemma}\label{lem:ultraproduct-L}
    Let $X$ be a smooth manifold and $L\colon \cD_0\to \cD_1$ be a morphism in $\mathrm{Pr}^L_{\mathrm{st}}$. 
    Then $L$ induces a functor $L_* \colon \Sh (X;\cD_0)\to \Sh (X;\cD_1)$ by 
    $\Sh (X;\cD_0)\simeq \Sh (X)\otimes \cD_0\to \Sh (X)\otimes \cD_1\simeq \Sh (X;\cD_1)$. 
    For each $F\in \Sh (X;\cD_0)$, $\cSS(L_*F) \subseteq \cSS (F)$.
\end{lemma}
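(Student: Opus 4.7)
The plan is to mirror the proof of the $R$ case in \Cref{lem:ultraproduct-R}, this time using the adjunction $L \dashv R$ in place of the sectionwise action of $R$. Since $L \in \mathrm{Pr}^L_{\mathrm{st}}$ admits a right adjoint $R \colon \cD_1 \to \cD_0$, the tensored functor $L_* \simeq \id_{\Sh(X)} \otimes L \colon \Sh(X;\cD_0) \to \Sh(X;\cD_1)$ is a morphism in $\mathrm{Pr}^L_{\mathrm{st}}$ and admits a right adjoint $R_*$ obtained by tensoring the adjunction $(L \dashv R)$ with $\Sh(X)$. In particular, $R_*$ is $\Sh(X)$-linear and satisfies $R_*(1_\Sigma \otimes d) \simeq 1_\Sigma \otimes R(d)$ on pure tensors, which is exactly the formula that plays the role dual to the sectionwise identity $R_*F(U) = R(F(U))$ used in \Cref{lem:ultraproduct-R}.

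I will then invoke the $\Omega$-lens characterization of singular supports for general coefficients used in \Cref{lem:ultraproduct-R}: for any $G \in \Sh(X;\cE)$ and conic open $\Omega \subseteq \dot T^*X$, one has $\cSS(G) \cap \Omega = \varnothing$ if and only if $\sHom_{\Sh(X)}(1_\Sigma, G) = 0$ in $\cE$ for every $\Omega$-lens $\Sigma$. Under the hypothesis $\cSS(F) \cap \Omega = \varnothing$, that is, $\sHom_{\Sh(X)}(1_\Sigma, F) = 0$ in $\cD_0$ for each $\Omega$-lens $\Sigma$, the goal $\sHom_{\Sh(X)}(1_\Sigma, L_*F) = 0$ in $\cD_1$ reduces by the Yoneda lemma to showing that $\Hom_{\cD_1}\bigl(d, \sHom_{\Sh(X)}(1_\Sigma, L_*F)\bigr) = 0$ for every $d \in \cD_1$. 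The adjunction chain
\[
    \Hom_{\cD_1}\bigl(d, \sHom_{\Sh(X)}(1_\Sigma, L_*F)\bigr) \simeq \Hom_{\Sh(X;\cD_1)}(1_\Sigma \otimes d, L_*F) \simeq \Hom_{\Sh(X;\cD_0)}(1_\Sigma \otimes R(d), F) \simeq \Hom_{\cD_0}\bigl(R(d), \sHom_{\Sh(X)}(1_\Sigma, F)\bigr)
\]
identifies this with $\Hom_{\cD_0}(R(d), 0) = 0$, and applying the $\Omega$-lens criterion in the reverse direction yields $\cSS(L_*F) \cap \Omega = \varnothing$.

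The main point to verify is the identification of the right adjoint of $L_*$ with the $\Sh(X)$-tensored $R_*$, in particular the formula $R_*(1_\Sigma \otimes d) \simeq 1_\Sigma \otimes R(d)$ on pure tensors that makes the middle step of the adjunction chain work. This is a standard consequence of the behavior of the presentable tensor product under right adjoints, and together with the $\Omega$-lens criterion for general coefficients it makes the argument completely parallel to the $R$ case.
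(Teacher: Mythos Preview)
The middle step of your adjunction chain is where the argument breaks. You have set up $L \dashv R$ and hence $L_* \dashv R_*$, but to pass from $\Hom_{\Sh(X;\cD_1)}(1_\Sigma \otimes d, L_*F)$ to something of the form $\Hom_{\Sh(X;\cD_0)}(-, F)$ you would need a \emph{left} adjoint to $L_*$, not a right one: the adjunction $L_* \dashv R_*$ only lets you move $L_*$ out of the \emph{source} of a $\Hom$, not the target. Your claim that the right adjoint is $\id_{\Sh(X)} \otimes R$ and in particular $\Sh(X)$-linear with $R_*(1_\Sigma \otimes d) \simeq 1_\Sigma \otimes R(d)$ is also unjustified --- the Lurie tensor $\id \otimes R$ is not even defined when $R$ fails to preserve colimits, and in general the right adjoint of a module functor is only \emph{lax} $\Sh(X)$-linear --- but even granting that formula, it is the non-existent adjunction $R_* \dashv L_*$ that your chain is invoking.

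The underlying issue is that the $\Omega$-lens test $\sHom(1_\Sigma, -) = 0$ is a limit-type condition, which is why it interacts well with the limit-preserving $R_*$ of \Cref{lem:ultraproduct-R} but not with the colimit-preserving $L_*$. The paper accordingly does \emph{not} reuse that criterion here: it invokes a different characterization of the singular support (from Efimov and Zhang) that is phrased in terms compatible with the $\Sh(X)$-module structure and with colimit-preserving functors, so that it is visibly preserved by $L_* = \id_{\Sh(X)} \otimes L$. If you want to stay close to your outline, the fix is to replace the contravariant test $\sHom(1_\Sigma,-)$ by a covariant one (a tensor or $!$-type criterion) so that $\Sh(X)$-linearity and colimit-preservation of $L_*$ apply directly; that is precisely what the cited references supply.
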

\begin{proof}
    We use the characterization of singular supports given in \cite[Remark.~4.23~2)]{Efimov-K-theory} or \cite[Proposition~2.6]{Zhang25remark}.
\end{proof}

\begin{theorem}\label{thm:Hausdorff-cgcat}
    Let $M$ be a smooth manifold, and $(\Lambda_n \subseteq S^*M)_{n\in \bN}$ be a sequence of connected smooth properly embedded Legendrians that converges to a smooth manifold $\Lambda_\infty \subseteq S^*M$ in the sense of Hausdorff convergence. 
    Let $(\cC_n)_{n\in \bN}$ be a sequence of compactly generated stable rigid symmetric monoidal categories. 
    Assume that 
    \begin{enumerate}
        \item there exists an open subset $\Omega \subseteq S^*M$ such that $\Lambda_n \cap \Omega$ is independent of $n$ and non-empty,
        \item there exists a microlocal rank 1 sheaf $F_n \in \Sh_{\Lambda_n}(M; \cC_n)$ for each $n \in \bN$. 
    \end{enumerate}
    Then there exists a microlocal rank 1 sheaf $F \in \Sh_{\Lambda_\infty}(M; {\prod}^{\omega}_{\cU} \cC_n)$. 
    In particular, $\Lambda_\infty$ is a Legendrian. 
    Moreover if $\Lambda_\infty$ is homotopy equivalent to a finite CW-complex, then there exists $N\in \bN$ such that the composite $\Lambda_\infty\to U/O\to B\mathrm{Pic}(\bS)\to B\mathrm{Pic}(\cC_n)$ is null-homotopic for each $n\geq N$. 
\end{theorem}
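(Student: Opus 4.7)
The plan is to mirror the proof of \Cref{thm:Hasusdorff-ringspec}, replacing the ultrapower of a fixed $E_\infty$-ring spectrum $R^\cU$ by the compactly generated ultraproduct ${\prod}^\omega_\cU \cC_n$ of the varying coefficient categories. For any $A \in \cU$, the product $\prod_{n \in A} F_n$ is an object of $\Sh(M; \prod_{n \in A} \cC_n)$; pushing it through the symmetric monoidal composition
\[
\prod_{n \in A} \cC_n \xrightarrow{R} {\prod}^\omega_{n \in A} \cC_n \xrightarrow{L} {\prod}^\omega_\cU \cC_n
\]
and applying \Cref{lem:ultraproduct-R} and \Cref{lem:ultraproduct-L} produces a microlocal rank $1$ sheaf $F \in \Sh(M; {\prod}^\omega_\cU \cC_n)$. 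These same two lemmas give the singular support estimate
\[
\cSS^\infty(F) \subseteq \bigcap_{A \in \cU} \overline{\bigcup_{n \in A} \cSS^\infty(F_n)} \subseteq \overline{\bigcup_n \Lambda_n} = \Lambda_\infty,
\]
where the first inclusion is obtained exactly as in \Cref{lem: ss-ultraproduct} using the $\Omega$-lens characterization of singular support applicable to general coefficients.

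Next, I would use assumption (1): since $\Lambda_n \cap \Omega$ is constant in $n$ and each $F_n$ has unit microstalk along it, the microstalk of $F$ along $\Lambda_\infty \cap \Omega$ is the unit of ${\prod}^\omega_\cU \cC_n$, which is nonzero by construction. Hence $\cSS^\infty(F)$ is a closed subset of the connected smooth submanifold $\Lambda_\infty$ containing a non-empty open piece, and \Cref{lem:coisotropic-boundary} forces $\cSS^\infty(F) = \Lambda_\infty$. Therefore $\Lambda_\infty$ is a smooth Legendrian and $F$ is microlocal rank $1$ in $\Sh_{\Lambda_\infty}(M; {\prod}^\omega_\cU \cC_n)$. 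Applying \Cref{thm:musheaf} with coefficients ${\prod}^\omega_\cU \cC_n$ then yields that the composite $\Lambda_\infty \to U/O \to B\mathrm{Pic}(\bS) \to B\mathrm{Pic}({\prod}^\omega_\cU \cC_n)$ is null-homotopic.

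For the concrete null-homotopy statement over each $\cC_n$, I would use a natural equivalence $\mathrm{Pic}({\prod}^\omega_\cU \cC_n) \simeq \prod_\cU \mathrm{Pic}(\cC_n)$, valid because $\mathrm{Pic}$ only sees compact invertible objects, together with the fact that, for $\Lambda_\infty$ a finite CW-complex, $\mathrm{Map}(\Lambda_\infty, B\mathrm{Pic}(\cC_n))^\cU \simeq \mathrm{Map}(\Lambda_\infty, B\mathrm{Pic}(\cC_n)^\cU)$. The null class in the target corresponds to the sequence of composites $\Lambda_\infty \to B\mathrm{Pic}(\cC_n)$, and so must be null-homotopic for every $n$ in some $A \in \cU$. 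To upgrade to \emph{all} $n \geq N$, I would argue by contradiction: if no such $N$ exists, then the set $S \subseteq \bN$ of $n$ on which the composite is non-null is infinite; choose a non-principal ultrafilter $\cU$ containing $S$ and observe that the resulting $A \in \cU$ satisfies $A \cap S \neq \varnothing$, contradicting the definition of $S$.

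The main obstacle is verifying the equivalence $\mathrm{Pic}({\prod}^\omega_\cU \cC_n) \simeq \prod_\cU \mathrm{Pic}(\cC_n)$ in the varying-coefficient setting, together with the compatibility of symmetric monoidal structures along $R$ and $L$ needed to identify the Picard class of $F$ with the pointwise sequence of Picard classes of the $F_n$. This generalizes the fixed-coefficient computation sketched in the remark after \Cref{thm:Hasusdorff-ringspec}; once it is in place, the rest of the argument is a direct adaptation of the proof of \Cref{thm:Hasusdorff-ringspec}.
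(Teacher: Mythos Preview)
Your proposal is correct and follows essentially the same route as the paper: form the product sheaf, push through $R$ and $L$, use \Cref{lem:ultraproduct-R,lem:ultraproduct-L} for the singular support bound, invoke \Cref{lem:coisotropic-boundary} and \Cref{thm:musheaf}, and then run the contradiction with a well-chosen non-principal ultrafilter. Two small points: your displayed chain $\bigcap_{A \in \cU} \overline{\bigcup_{n \in A} \cSS^\infty(F_n)} \subseteq \overline{\bigcup_n \Lambda_n} = \Lambda_\infty$ has a slip in the last equality (Hausdorff convergence does not give $\overline{\bigcup_n \Lambda_n} = \Lambda_\infty$); you should instead use directly that the full intersection over $A \in \cU$ equals $\Lambda_\infty$, since $\cU$ contains all cofinite sets. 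And your ``main obstacle'' $\mathrm{Pic}({\prod}^\omega_\cU \cC_n) \simeq {\prod}_\cU \mathrm{Pic}(\cC_n)$ is exactly what the paper uses, citing \cite[Proposition 2.2.3]{MS2016pictmf} together with $\mathrm{Pic}(\cC_n) \simeq \mathrm{Pic}(\cC_n^\omega)$.
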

\begin{proof}
    For any $A \subseteq \bN$, we can define a sheaf $F_A\in \Sh (M;\prod_{n\in A} \cC_n)$ by $F_A(U) = (F_n(U))_{n\in A}\in \prod_{n\in A} \cC_n$. 
    Define $F_\infty \coloneqq (L_*\circ R_*)(F_A)$. 
    Note that $F_\infty $ is the sheafification of a presheaf $F_\infty^\mathrm{pre}$ that assigns  to each open $U\subseteq M$ the the functor $[(c_n)_{n\in A}]\mapsto {\prod}_\cU \Hom ( c_n , F_n(U))$ which is an object of $\mathrm{Ind}(\prod_{n\in A}\cC_n^\omega)$, where $c_n$ is a compact object of $\cC_n$. 
    By the singular support estimations for $L_*$ and $R_*$ in \Cref{lem:ultraproduct-R,lem:ultraproduct-L}, as we vary $A \in \cU$, we know that 
    \[
        \cSS^\infty (F_\infty)\subseteq \bigcap_{A\in \cU}\cSS^\infty(F_A) = \bigcap_{A\in \cU}\ol{\bigcup_{n\in A}\cSS^\infty(F_n)} = \Lambda_\infty.
    \]
    Since $\Lambda_n \cap \Omega$ is independent of $n \in \bN$, the sheaf $F_n$ is simple along $\Lambda_n \cap \Omega$, and $L$ and $R$ are symmetric monoidal functors, we can conclude that $F_\infty$ is simple along $\Lambda_\infty \cap \Omega$. 
    After a perturbation by an ambient contact isotopy (and taking $\Omega$ smaller), we may assume $\Lambda_\infty \cap \Omega\to M$ is embedding and $\Lambda_\infty \cap \pi^{-1}(\pi (\Omega))=\Lambda_\infty \cap \Omega$. Then the microstalk at $\Lambda_\infty$ is identified with a cone of a map between stalks. The stalks are preserved by a colimit preserving functor. Hence we obtain the statement for $L_*$.
    Then by \Cref{lem:coisotropic-boundary}, we know that $\cSS^\infty(F_\infty) = \Lambda_\infty$ and $\Lambda_\infty$ is a Legendrian; moreover, $F_\infty$ is simple along $\Lambda_\infty$. Hence, by \Cref{thm:musheaf}, $\Lambda_\infty \to U/O \to B\mathrm{Pic}({\prod}^{\omega}_{\cU} \cC_n)$ is null-homotopic.
    
    When $\Lambda_\infty$ is homotopy equivalent to a finite CW-complex, we show that there is some $N \in \bN$ such that $\Lambda_\infty \to U/O \to B\mathrm{Pic}(\cC_n)$ is null homotopic for $n \geq N$. Otherwise, the cardinality of the set $A_0\coloneqq \{n\in \bN \mid \text{$\Lambda_\infty \to B\mathrm{Pic}(\cC_n)$ is not null-homotopic} \}$ is infinite. 
    Choose a non-principal ultrafilter $\cU$ so that $A_0\in \cU$. 
    Note that $\mathrm{Pic}({\prod}^{\omega}_{\cU} \cC_n)\simeq {\prod}_{\cU}  \mathrm{Pic}(\cC_n)$ by \cite[Proposition 2.2.3]{MS2016pictmf}. Note also that $\mathrm{Pic}(\cC_n)\simeq \mathrm{Pic}(\cC_n^\omega)$ since every invertible object in $\cC_n$ is compact. Since $\Lambda_\infty$ is a compact object in the category of spaces, we have
    $\mathrm{Map}(\Lambda_\infty , B\mathrm{Pic}({\prod}^{\omega}_{\cU} \cC_n))\simeq \mathrm{Map}(\Lambda_\infty, {\prod}_{\cU} B\mathrm{Pic}(\cC_n))\simeq {\prod}_{\cU} \mathrm{Map}(\Lambda_\infty , B\mathrm{Pic}(\cC_n))$.
    We know the map $\Lambda_\infty \to B\mathrm{Pic}(\bS)\to B\mathrm{Pic}({\prod}^{\omega}_{\cU}\cC_n)$ that classifies the locally constant sheaf of microsheaves is trivial. However, by our assumption, for any $n \in A_0$, the composition $\Lambda_\infty \to B\mathrm{Pic}(\cC_n)$ is non-trivial. Since $A_0 \in \cU$, we know that the corresponding map in ${\prod}_{\cU} \mathrm{Map}(\Lambda_\infty , B\mathrm{Pic}(\cC_n))$ is also non-trivial. This leads to a contradiction.
\end{proof}

\printbibliography

\end{document}